\documentclass[11pt]{amsart}
\numberwithin{equation}{section}
\usepackage[margin=1in]{geometry}
\usepackage{amsmath,amssymb,amsthm,mathtools}
\usepackage{enumitem}
\usepackage{needspace}
\usepackage{xcolor}
\usepackage{hyperref}
\usepackage{dsfont}
\usepackage{mathrsfs}
\usepackage{mathtools}
\usepackage{cancel}
\usepackage{soul}
\usepackage{graphicx}
\usepackage{tikz}
\usepackage{comment}
\usetikzlibrary{shapes.geometric}
\usetikzlibrary{arrows.meta,decorations.markings,shapes.geometric}

\definecolor{figureink}{HTML}{26364A}
\definecolor{floworange}{HTML}{D76B42}
\definecolor{curveblue}{HTML}{286FB4}
\definecolor{scalegreen}{HTML}{39926E}
\definecolor{pointcyan}{HTML}{28A7B8}

\usepackage{capt-of}
\usepackage[foot]{amsaddr}
\newcommand{\eps}{\epsilon}
\newcommand{\pzi}{\partial_z^{-1}}
\newcommand{\dd}{\,d}
\newcommand{\norm}[1]{\left\|#1\right\|}
\newcommand{\abs}[1]{\left|#1\right|}
\newcommand{\one}{\mathbf 1}
\newcommand{\wh}{w}
\newcommand{\hh}{h}

\renewcommand{\S}{\mathscr{S}}

\newcommand{\R}{\mathds{R}}
\newcommand{\J}{\mathcal{J}}

\newcommand{\tildes}{{\tilde s}}
\newcommand{\tildez}{{\tilde z}}

\newcommand{\tildeepsilon}{{\tilde \epsilon}}

\allowdisplaybreaks[4]
\DeclarePairedDelimiterX\set[1]\lbrace\rbrace{#1}

\makeatletter
\renewcommand\paragraph{%
  \@startsection{paragraph}{4}{\z@}%
  {3.25ex \@plus 1ex \@minus .2ex}%
  {1ex}%
  {\normalfont\normalsize\itshape}}
\makeatother
\theoremstyle{plain}
\newtheorem{theorem}{Theorem}[section]
\newtheorem{lemma}[theorem]{Lemma}
\newtheorem{proposition}[theorem]{Proposition}
\newtheorem{corollary}[theorem]{Corollary}
\newtheorem{remark}[theorem]{Remark}

\usepackage[doi=false,isbn=false,url=false,eprint=false]{biblatex}
\title{Near-blowup asymptotic analysis and analytical extension for the inviscid primitive equations}

\author[1]{Charles Collot$^{1}$}
\address[1]{Laboratoire de Mathematiques AGM, UMR CNRS 8088, Cergy Paris Université, 2 Avenue Adolphe Chauvin, 95302 Cergy-Pontoise Cedex, France}
\email[1]{charles.collot@cyu.fr}

\author[2]{Slim Ibrahim$^{2,3}$}
\address[2]{Department of Mathematics and Statistics
University of Victoria, 3800 Finnerty Road, Victoria
BC V8P 5C2, Canada}
\address[3]{
Pacific Institute for Mathematical Sciences, 4176-2207 Main Mall, Vancouver, BC V6T 1Z4,
Canada}
\email[2]{ibrahims@uvic.ca}

\author[3]{Lingjun Qian$^2$}
\email[3]{ljqian@uvic.ca}

\date{}

\begin{document}

\begin{abstract}

The primitive equations widely approximate the Navier--Stokes equations in large-scale geophysical fluid dynamics. Their finite-time singularity formation-which occurs exclusively in the inviscid case-sheds light on the mechanisms driving small-scale creation. Building on \cite{Collot2023}, which constructed 2D horizontally odd solutions where the on-axis derivative $\partial_X u(t,0,Z)$ blows up at rate $(T-t)^{-1}$ with boundary concentration scale $|\ln (T-t)|^{-1}$, this paper characterizes this blow-up as forming between hyperbolic stagnation points. 

We derive precise asymptotic expansions for on-axis derivatives beyond leading order, identifying a horizontal concentration scale of $(T-t)^{3/2}$ characteristic of generic Burgers' singularities. Furthermore, we justify the 2D solution's expansion near the symmetry axis by proving $(T-t)^{3/2}$ is a universal lower bound on the radius of analyticity.

\end{abstract}

\maketitle

\pagestyle{plain}

\section{Introduction}

\subsection{The inviscid primitive equations}

While the Navier--Stokes equations are the most fundamental model for fluid motion, solving them numerically becomes computationally prohibitive as domain size or resolution increases. For large-scale geophysical flows where the vertical scale is significantly smaller than the horizontal scale, such as in weather forecasting and ocean circulation, the primitive equations provide an accurate, computationally tractable alternative that retains the essential dynamics. Under appropriate rescaling, the primitive equations arise rigorously from the Navier-Stokes equations via the hydrostatic limit \cite{Azrad2001,Gr99, FGHHKW25}. \par

Understanding the well-posedness and singularity formation of fluid models is a central problem in fluid dynamics. For the viscous three-dimensional primitive equations, viscosity exerts a smoothing effect that prevents singularities, as demonstrated by Cao and Titi's proof of global existence and uniqueness of strong solutions \cite{Cao2007, K06, Hieber-Kashiwabara}. However, inviscid PEs with zero temperature-also known as the \textit{hydrostatic Euler equations}-exhibit a severe Kelvin-Helmholtz instability, causing ill-posedness for general data in the scale of Sobolev spaces. See \cite{Renardy2009} for the linear instability, \cite{HanKwan2016} for the nonlinear instability\footnote{The solution map from the Sobolev spaces to $L^2$ fails to be H\"older continuous.}, and \cite{ILT21} for the nonlinear instability in the presence of coriolis rotation. Notice that Masmoudi and Wong \cite{Masmoudi2012} proved local well-posedness in Sobolev spaces $H^s$ under a local Rayleigh condition and a zero-average velocity assumption. 
Consequently, local well-posedness is largely restricted to analytic spaces \cite{Kukavica2011, GILT}, barring specific structural assumptions or damping \cite{Masmoudi2012, CLT19, CT10}.
Given only local well-posedness or ill-posedness for the inviscid primitive equations, it is natural to ask whether finite-time singularities form. The first blow-up result was obtained by Cao et al.\ \cite{Cao2015} for a family of explicit, non-smooth, initial conditions satisfying symmetry assumptions on the tangential velocity. Independently, Wong \cite{Wong2014} proved finite-time blow-up for a class of smooth initial data satisfying explicit structural conditions without requiring symmetry.

\par

When a singularity forms, characterizing its properties, such as the blow-up rate, self-similar profiles, asymptotic expansions, and stability under perturbations, provides deeper insight into the breakdown mechanism. This quantitative description was first established on the vertical symmetry axis by Collot et al.\ \cite{Collot2023}, which serves as the starting point for the present study.\par

By assuming $Y$-independence, the three-dimensional inviscid primitive equations reduce to the two-dimensional system:
\begin{equation}\label{2d primitive intro}
    \begin{cases}
        u_t+u u_X+w u_Z+p_X=0,\\
        p_Z=0,\\
        u_X+w_Z=0,
    \end{cases}
\end{equation}
where $u$ and $w$ denote horizontal and vertical velocities, respectively, and $p$ is the pressure. We consider the strip domain $D=\set{(X,Z):-L\leq X\leq L,\, 0\leq Z\leq 1}$. Although the horizontal domain can be either periodic or unbounded ($L=\infty$; see \cite{Kukavica2011}), for simplicity we assume periodicity in $X$. The system \eqref{2d primitive intro} is supplemented with the non-penetration and periodicity boundary conditions:
\begin{equation}\label{2d primitive boundary condition}
    \begin{split}
        &w(t,X,0)=w(t,X,1)=0,\\
        &u(t,-L,Z) = u(t,L,Z), \qquad Z\in [0,1],
    \end{split}
\end{equation}
The vertical velocity $w$ and pressure $p$ can be reconstructed from $u$ via
\begin{equation}\label{w and p}
\begin{split}
    w(t,X,Z)&=-\int_0^Z u_X(t,X,\tilde{Z})\, d\tilde{Z},\\
    p_{XX}&=-\int_0^1 (u^2)_{XX}\, dZ,
\end{split}
\end{equation}
reducing \eqref{2d primitive intro} to a closed PDE for the horizontal velocity $u$, while \eqref{2d primitive boundary condition} imposes the mean-free condition
\begin{equation}\label{mean-free}
\int_0^1 u_X(t,X,Z)\,dZ =0.
\end{equation}

\subsection{Singularity formation near a stagnation point on a vertical axis} \label{subsec:physical-picture}

A primary mechanism for singularity formation in fluid models is spatial concentration near hyperbolic stagnation points at a scale that vanishes in finite time. To study this phenomenon, we consider solutions to \eqref{2d primitive intro}--\eqref{2d primitive boundary condition} that are odd in $X$:
\begin{equation}\label{x-odd}
    u(t,-X,Z)=-u(t,X,Z),
\end{equation}
a symmetry preserved by the flow. The vertical axis $\{(0,Z): 0\leq Z\leq 1\}$ then plays a distinguished role: the velocity field $(u,w)$ is purely vertical along it, and its endpoints $(0,0)$ and $(0,1)$ are stagnation points where $(u,w)$ vanishes.

Specifically, we consider solutions where $u_X(t,0,0)<0$ (attaining its axial minimum at $Z=0$) and $u_X(t,0,1)>0$, properties propagated by the flow\footnote{This follows directly from the reduced one-dimensional model \eqref{proudman-johnson-equation}; see \cite{proudman-johnson-equation}.}. Physically, this corresponds to horizontal compression at the bottom of the axis and horizontal dilation at the top. By \eqref{w and p}, this implies $w_Z (t,0,0)>0$ and $w_Z(t,0,1)<0$, making $(0,0)$ and $(0,1)$ hyperbolic stagnation points.

Anticipating our main results, the singular solutions to \eqref{2d primitive intro} described in this paper correspond to the physical geometry illustrated in Figure~\ref{fig:characteristic-rescaling}.
\begin{center}
\resizebox{0.8\textwidth}{!}{
\begin{minipage}{16.5cm}
\centering
\begin{tikzpicture}[
  x=3.3cm,y=7.3cm,
  font=\large,
  >=Stealth,
  physical/.style={floworange,line width=1.05pt,-{Stealth[length=2.4mm,width=1.7mm]}},
  characteristic/.style={curveblue,line width=1.45pt,
    postaction={decorate},
    decoration={markings,mark=at position .57 with
      {\arrow{Stealth[length=2.4mm,width=1.8mm]}}}},
  measure/.style={scalegreen,line width=1pt}
]
  \fill[scalegreen!8] (-.27,0) rectangle (.27,1);
  \draw[scalegreen!65,densely dashed,line width=.6pt]
    (-.27,0)--(-.27,1) (.27,0)--(.27,1);

  \def\eps{.035}
  \foreach \xx in {-1.14,-.73,-.32,.32,.73,1.14}{
    \foreach \zz in {.025,.075,.14,.30,.52,.74,.91}{
      \pgfmathsetmacro{\ww}{\zz*(1-\zz)/(\zz+\eps)}
      \pgfmathsetmacro{\wp}{(\eps-2*\eps*\zz-\zz*\zz)/(\zz+\eps)^2}
      \pgfmathsetmacro{\uu}{-\xx*\wp}
      \pgfmathsetmacro{\length}{sqrt(\uu*\uu+4.84*\ww*\ww)}
      \pgfmathsetmacro{\dxx}{.13*\uu/\length}
      \pgfmathsetmacro{\dzz}{.12*\ww/\length}
      \draw[physical,opacity=.9] (\xx,\zz) -- ++(\dxx,\dzz);
    }
  }
  \foreach \zz in {.055,.16,.35,.56,.78,.93}{
    \draw[physical] (0,\zz) -- ++(0,.055);
  }

  \foreach \c in {.06,.115,.17,.23,.30,.37}{
    \pgfmathsetmacro{\bb}{\c/1.47}
    \pgfmathsetmacro{\root}{sqrt((1-\bb)^2-4*\bb*\eps)}
    \pgfmathsetmacro{\zstart}{.015}
    \pgfmathsetmacro{\zend}{(1-\bb+\root)/2}
    \draw[characteristic] plot[domain=\zstart:\zend,
      samples=130,variable=\q] ({\c*(\q+\eps)/(\q*(1-\q))},\q);
    \draw[characteristic] plot[domain=\zstart:\zend,
      samples=130,variable=\q] ({-\c*(\q+\eps)/(\q*(1-\q))},\q);
  }

  \draw[figureink,line width=1.2pt] (-1.48,0)--(1.48,0);
  \draw[figureink,line width=1.2pt] (-1.48,1)--(1.48,1);
  \draw[figureink!75,line width=.95pt] (0,0)--(0,1);
  \foreach \xx in {.43,.86,1.25}{
    \draw[physical] (\xx,0)--++(-.18,0);
    \draw[physical] (-\xx,0)--++(.18,0);
    \draw[physical] (\xx,1)--++(.18,0);
    \draw[physical] (-\xx,1)--++(-.18,0);
  }
  \fill[pointcyan] (0,0) circle[radius=1.8pt];
  \fill[pointcyan] (0,1) circle[radius=1.8pt];

  \node[figureink,anchor=east] at (-1.52,0) {$Z=0$};
  \node[figureink,anchor=east] at (-1.52,1) {$Z=1$};
  \node[figureink,anchor=south] at (0,1.02) {$X=0$};

  \draw[measure,{Stealth[length=2mm]}-{Stealth[length=2mm]}]
    (-.27,-.105)--(.27,-.105);
  \node[scalegreen,anchor=north,font=\Large] at (0,-.126)
    {$(T-t)^{3/2}$};
  \draw[measure,{Stealth[length=2mm]}-{Stealth[length=2mm]}]
    (1.60,.16)--(1.60,1);
  \node[scalegreen,anchor=west,font=\Large] at (1.67,.59) {$1$};
  \draw[measure,{Stealth[length=1.9mm]}-{Stealth[length=1.9mm]}]
    (1.60,0)--(1.60,.16);
  \node[scalegreen,anchor=west,font=\Large] at (1.71,.08)
    {$\frac{1}{|\log(T-t)|}$};

  \draw[curveblue,line width=1.45pt,-{Stealth[length=2.3mm]}]
    (-1.64,-.31)--(-1.42,-.31);
  \node[figureink,anchor=west] at (-1.37,-.31) {Characteristic Lines};
  \draw[physical] (.06,-.31)--(.28,-.31);
  \node[figureink,anchor=west] at (.33,-.31) {Velocity Field (Unscaled)};
  \fill[pointcyan] (-.87,-.43) circle[radius=2.2pt];
  \node[figureink,anchor=west] at (-.78,-.43)
    {Hyperbolic Stagnation Points};

  \path (-2.43,-.54) rectangle (2.43,1.10);
\end{tikzpicture}
\captionof{figure}{Visualization of the blow-up schema.}
\label{fig:characteristic-rescaling}
\end{minipage}
}
\end{center}

Concentration near hyperbolic stagnation points is a recurring mechanism for small-scale creation across incompressible fluid models. In 2D Euler flows, Kiselev and \v{S}ver\'ak \cite{KiselevSverak2014} constructed smooth solutions whose vorticity gradient grows at the optimal double-exponential rate, providing a smooth dynamical realization of the hyperbolic structure in the stationary Bahouri-Chemin example \cite{BahouriChemin1994}. A related scenario occurs in the 3D axisymmetric Euler equations with boundary, where numerical computations by Luo and Hou \cite{LuoHou2014a,LuoHou2014b} indicated finite-time blow-up near a ring of hyperbolic stagnation points on the boundary. Following studies of reduced models \cite{ChoiHouKiselevLuoSverakYao2017}, Chen and Hou \cite{ChenHou2021,ChenHou2025} established rigorous finite-time blow-up in this axisymmetric setting via a computer-assisted proof based on the numerically observed nearly self-similar profile. Elgindi's finite-time blow-up construction for the 3D Euler equations \cite{Elgindi2021} is related in spirit, though it is driven by a self-similar singularity centered at the origin rather than a smooth hyperbolic stagnation point. Although these works share the theme of strong geometric compression, their precise local mechanisms remain distinct.

\subsection{Blow-up of the first derivative and vertical concentration with anomalous self-similarity}

For $X$-odd solutions \eqref{x-odd} of the system \eqref{2d primitive intro}--\eqref{2d primitive boundary condition}, the trace of the horizontal derivative along the symmetry axis, $a(t,Z)=-u_X(t,0,Z)$, satisfies a closed equation:
\begin{equation} \label{proudman-johnson-equation}
\partial_t a -a^2+\left(\int_0^Z a (\tilde Z)\,d\tilde Z\right) \partial_Z a+2 \int_0^1a^2(\tilde Z)\,d\tilde Z=0,
\end{equation}
with initial data $a(0,Z)=-u_X(0,0,Z)$. Equation \eqref{proudman-johnson-equation} is the Proudman--Johnson equation \cite{proudman-johnson-equation}, which also arises as a 1D reduction of the 2D Euler and incompressible porous medium (IPM) equations. For example, Childress et al.\ \cite{Childress1989} studied stagnation-point-form solutions to the 2D Euler equations that reduce to \eqref{proudman-johnson-equation} and exhibit finite-time singularities. The local well-posedness and blow-up of the Proudman--Johnson equation and its generalizations were further investigated in \cite{Sarria2012,Okamoto2009}. We refer also to \cite{EIS21} for a variant version of active scalar model. More recently, Collot et al.\ \cite{Collot2026} proved the asymptotic stability of a family of steady states for \eqref{proudman-johnson-equation}, resulting in a self-similar blow-up for the IPM equations.

Which singularity mechanisms of the reduced 1D model \eqref{proudman-johnson-equation} are relevant to the full 2D primitive equations? Sarria and Saxton \cite{Sarria2012} showed that for $a(0,\cdot)\in C^2([0,1])$, if the initial datum either reaches its maximum strictly inside $(0,1)$ or reaches its maximum at the boundary $Z\in\{0,1\}$ with a vanishing derivative, the corresponding solution $a$ is global. Although solutions with $C^{1,\alpha}$ regularity ($0\leq \alpha<1$) can blow up, and Cao et al.\ \cite{Cao2015} constructed non-smooth self-similar blow up profiles in this class, such profiles cannot be directly embedded into the 2D problem, which requires real-analytic data for local well-posedness. In contrast, Collot et al.\ \cite{Collot2023} constructed an open set of smooth $C^2$ initial data where $u_X(0,0,\cdot)$ attains its minimum at $Z=0$ with $u_{XZ}(0,0,0)>0$, leading to finite-time blow-up with the leading-order asymptotics
\begin{equation}\label{leading order expansion}
    u_X(t,0,Z)=-\frac{1}{T-t}\left(\phi\left(\frac{Z}{\abs{\log (T-t)}^{-1}}\right)+O\big(\abs{\log (T-t)}^{-\delta}\big)\right),
\end{equation}
where $\phi(z)=e^{-z}$ and $\delta\in (0,1)$. Here and throughout, writing $u_X(t,0,Z)=-a(t,Z)$ is a slight abuse of notation: the 1D solution $a$ to \eqref{proudman-johnson-equation} exists up to $T$, whereas the full 2D solution $u$ could a priori become singular off the axis before time $T$.

The blow-up in \eqref{leading order expansion} is governed by the profile $\phi$, exhibiting \emph{anomalous self-similarity} at the \emph{logarithmic vertical scale} $Z\sim |\ln (T-t)|^{-1}$ driven by subleading but non-negligible nonlocal pressure effects (the $\int_0^1a^2$ term in \eqref{proudman-johnson-equation}; see \cite{Collot2023}). While \eqref{leading order expansion} captures the leading-order behavior, two fundamental questions remain:
\begin{enumerate}
    \item Can the blow-up asymptotics be refined beyond leading order, and can that describe the behavior outside the self-similar zone ($Z\gg |\ln (T-t)|^{-1}$)?
    \item What are the implications for the full 2D solution $u$, and what is the characteristic horizontal concentration scale?
\end{enumerate}
Obtaining higher-order terms in the expansion of $u_X(t,0,Z)$ is crucial for two reasons. First, the subleading corrections of order $|\ln (T-t)|^{-1}\ln |\ln (T-t)|$ and $|\ln (T-t)|^{-1}$ exert a non-negligible influence on the higher horizontal derivatives governing the 2D problem. Second, \eqref{leading order expansion} is too coarse to resolve the upper region $Z\gg |\ln (T-t)|^{-1}$, where $u_X$ becomes positive and the flow is outgoing from the axis.

Our first theorem refines the analysis of \cite{Collot2023} to capture these subleading terms.

\begin{theorem}\label{refined C1 expansion}
There exists an open set of initial data $a_0=-u_X(0,0,\cdot)$ in $C^2([0,1])$ such that the corresponding solution\footnote{As noted above, this theorem concerns the 1D equation \eqref{proudman-johnson-equation}; the identification $a=-u_X(\cdot,0,\cdot)$ holds as long as the 2D solution $u$ exists.} $a=-u_{X}(t,0,Z)$ to \eqref{proudman-johnson-equation} blows up at a finite time $T>0$ with the expansion
    \begin{equation} \label{id:expansion-uX}
    \begin{split}
          u_X(t,0,Z)&=-\frac{1}{T-t}\left(\phi\left(\frac{Z}{\abs{\log (T-t)}^{-1}}\right)+\frac{\log\abs{\log (T-t)} }{\abs{\log (T-t)}}U_{0,\log}\left(\frac{Z}{\abs{\log (T-t)}^{-1}}\right)\right.
          \\&\qquad\qquad \quad  \,\, + \left.\frac{1}{\abs{\log (T-t)}} U_0\left(\frac{Z}{\abs{\log (T-t)}^{-1}}\right)\right)+ R_0(t,Z),
    \end{split}
    \end{equation}
    where $\phi(z)=e^{-z}$, the subleading profiles $U_{0,\log}$ and $U_0$ are given by
    \begin{equation}
            U_{0,\log}(z) =-2z e^{-z}, \qquad U_0(z)=(2+(1+C_\nu)z) e^{-z}-1,
    \end{equation}
    for some constant $C_\nu(a_0)\in \mathbb R$, and the remainder satisfies, for $k=0,1$,
    $$
    \| \partial_Z^k R_0(t,\cdot)\|_{L^\infty ([0,1])} \lesssim  \frac{(\log\abs{\log (T-t)} )^2}{(T-t)(\abs{\log (T-t)})^{2-k}}.
    $$
\end{theorem}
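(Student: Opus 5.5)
We propose to revisit the self-similar stability analysis of \cite{Collot2023}, but push the modulation one order further. We work in the variables
\[
s=-\log(T-t), \qquad z=sZ, \qquad a(t,Z)=\frac{1}{T-t}\,A(s,z),
\]
so that \eqref{proudman-johnson-equation} becomes
\[
\partial_s A + A - A^2 + \frac{1}{s}\Big(\int_0^z A\Big)\partial_z A + \frac{z}{s}\partial_z A + \frac{2}{s}\int_0^{s}A^2\,dz=0 ,
\]
posed on $z\in[0,s]$. Here the transport and nonlocal terms carry an explicit factor $s^{-1}$, because $dZ=dz/s$. At order $s^0$ the equation is $A-A^2=0$ pointwise, and $\phi=e^{-z}$ is selected among its degenerate family of solutions by the $O(s^{-1})$ terms. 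This is exactly the anomalous self-similarity of \cite{Collot2023}.

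\textbf{Formal expansion.} First I would compute the formal expansion
\[
A=\phi+\frac{\log s}{s}U_{0,\log}+\frac{1}{s}U_0+\cdots .
\]
Plugging in, the $s^{-1}$ and $s^{-1}\log s$ orders give linear equations for the correctors. The operator involved is the linearization $\mathcal L U=(1-2\phi)U+\big(\int_0^z\phi+z\big)\partial_z\phi$-type transport, combined with the $\partial_s$ derivative. Acting on $s^{-1}\log s$ and $s^{-1}$, the time derivative produces the terms $-s^{-2}\log s$ and $-s^{-2}$, which are lower order. The genuine structure therefore comes from solving $(1-2\phi)U=F$, together with a compatibility condition at $z=0$ where $1-2\phi(0)=-1$.

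Here I expect the nonlocal term $2s^{-1}\int_0^s A^2\,dz$ to give the constant $-1$ in $U_0$; indeed $\int_0^\infty \phi^2=1/2$, hence the $2\cdot\tfrac12=1$. The logarithmic term $U_{0,\log}=-2ze^{-z}$ is a generalized eigenfunction associated with the scaling symmetry $z\mapsto\lambda z$ (note $\partial_\lambda \phi(\lambda z)=-ze^{-z}$). It arises because the resonance between the $s^{-1}$ forcing and the neutral scaling mode forces a $\log s$ drift of the effective scale. Finally, $C_\nu$ is the free coefficient of the neutral mode $ze^{-z}$, encoding the modulated scale. I would verify the stated profiles by direct substitution.

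\textbf{Remainder and modulation.} Next I would write
\[
A=\phi_{\nu(s)}+\frac{\log s}{s}U_{0,\log}+\frac1sU_0+\varepsilon ,
\]
with a modulation parameter $\nu(s)$ fixed by an orthogonality condition killing the neutral direction. I would then rerun the energy/maximum-principle estimates of \cite{Collot2023}, in a weighted $L^\infty$ and $C^1$ topology on $[0,s]$, now with the improved error: the residual of the approximate solution is $O(s^{-2}(\log s)^2)$. The linearized operator is damping away from the neutral modes: it has spectrum $\le -1$ after removing the scaling and time-translation directions, the latter handled by adjusting $T$ as in \cite{Collot2023}. A bootstrap then gives
\[
|\varepsilon|\lesssim s^{-2}(\log s)^2, \qquad \nu(s)\to\nu_\infty \ \text{at rate } s^{-1}\log s .
\]
Re-expanding $\phi_{\nu(s)}$ around $\nu_\infty$ absorbs the scale into the constant $C_\nu$. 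Converting back to $Z$ yields the stated bound on $R_0$, with $\partial_Z=s\,\partial_z$ accounting for the loss of one power of $s$ when $k=1$. The outer region $z\gtrsim \log s$ is treated by characteristics: there $\phi$ is negligible and $A\approx -s^{-1}$, which matches the $-1$ in $U_0$.

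\textbf{Main obstacle.} The hard step will be controlling the neutral scaling direction at the next order. Its projection of the residual is exactly the resonant $s^{-1}$ term, which produces the $\log s$, and the modulation equation must be integrated precisely enough that its error is summable, namely $O(s^{-2}\log s)$. I would first try to derive $\dot\nu=s^{-2}(c_1+c_2\log s)+O(s^{-3}\log^2 s)$ by testing against the dual neutral mode, and then integrate this ODE explicitly.
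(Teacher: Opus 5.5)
\textbf{The rescaled equation is wrong, and the formal step built on it fails.} In $\bigl(\int_0^Z a\bigr)\partial_Z a$ the factor $s^{-1}$ from $dZ=dz/s$ cancels against $\partial_Z=s\,\partial_z$. So with $a=e^{s}A(s,z)$ the equation is
\[
\partial_sA+A-A^2+(\partial_z^{-1}A)\,\partial_zA+\frac zs\,\partial_zA+\frac2s\int_0^sA^2\,dz=0,
\]
which agrees with the transport field $b=\partial_z^{-1}(\phi+\epsilon)+z/s$ in \eqref{eq:transport-tangent-boundary}. With your version, $\phi$ would not even solve the leading order, since $\phi-\phi^2\neq0$.

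The leading order is the ODE $A-A^2+(\partial_z^{-1}A)A_z=0$, solved by the whole family $\phi(\mu z)$. The linearization is the transport--nonlocal operator $\mathcal L_0U=(1-2\phi)U+(\partial_z^{-1}\phi)\,U_z+\phi'\,\partial_z^{-1}U$, not multiplication by $1-2\phi$. One checks that $\mathcal L_0(ze^{-z})=0$, and that $\mathcal L_0U_0=ze^{-z}-1$ holds for every value of $C_\nu$. So substitution at orders $\frac{\log s}{s}$ and $\frac1s$ fixes the profiles only modulo $ze^{-z}$. It cannot produce the coefficient $-2$ in $U_{0,\log}$, which is a purely dynamical effect.

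\textbf{The modulation law is mis-specified.} Integrating $\dot\nu=s^{-2}(c_1+c_2\log s)+O(s^{-3}\log^2 s)$ with coefficients computed from the equation gives $\nu=\nu_\infty-\frac{c_1+c_2}{s}-c_2\frac{\log s}{s}+\dots$. That leaves a free limit $\nu_\infty$ and a determined $\frac1s$ coefficient, and you then absorb $\nu_\infty$ into $C_\nu$. The theorem has the opposite structure. In $z=sZ$ the scale must tend to exactly $1$, otherwise the leading profile would be $\phi(\nu_\infty z)$. The $\frac{\log s}{s}$ coefficient is universal, and the free constant $C_\nu$ sits at order $\frac1s$.

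What is missing is the restoring effect of the nonlocal pressure through the amplitude. At $z=0$ the transport vanishes, and one has exactly $\frac{d}{ds}\log\bigl(-\partial_zA(s,0)\bigr)=A(s,0)-1-\frac1s$. Meanwhile the unstable $z=0$ equation $\partial_sA(s,0)=A(s,0)^2-A(s,0)-\frac2s\int_0^sA^2$ slaves the amplitude to $A(s,0)-1\approx\frac{1}{s\mu}$ for a profile $\phi(\mu z)$. This yields a resonant linear law $\frac{d}{ds}\bigl[s(\mu-1)\bigr]=\frac{\kappa}{s}+(\text{integrable})$. Its solutions $\mu=1+\frac{\kappa\log s+\mathrm{const}}{s}$ put the free constant at order $\frac1s$ and a forced logarithm at order $\frac{\log s}{s}$; the theorem corresponds to $\kappa=2$.

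\textbf{How the paper gets this.} It works in the dynamical variables of Theorem~\ref{th:base-result-CIL}, where $\lambda$ and $\nu$ enforce $\tilde\epsilon(0)=\partial_{\tilde z}\tilde\epsilon(0)=0$. This removes both the amplitude direction, which grows at rate $+1$ in fixed variables, and the neutral scaling direction. That double vanishing is what the weight $X_\theta=(1-e^{-\tilde z})^2Y_\theta$ and the maximum-principle supersolution exploit; one orthogonality condition does not provide it.

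In these variables there is no logarithm: $\tilde\epsilon=\nu\tilde U+\tilde v$ with $\tilde v=O(\tilde s^{-2})$ in $C^1$. The modulation equations give $(1/\nu)_{\tilde s}=1+\nu+O(\nu^2)$ and $\lambda\nu=Ke^{-\tilde s}$. Hence $\tilde s=s+\log s+C$ and $1/\nu=s+2\log s-C_\nu+\dots$. The profile $U_{0,\log}=-2ze^{-z}$ appears only when $\phi(\tilde z)$ is re-expanded with $\tilde z=z\bigl(1+\frac{2\log s-C_\nu}{s}+\dots\bigr)$.
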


\begin{remark}
Compared to \cite{Collot2023}, Theorem \ref{refined C1 expansion} reveals that above the boundary layer ($Z\gg |\ln (T-t)|^{-1}$), $u_X$ approaches the positive, $Z$-independent value $(T-t)^{-1}|\ln (T-t)|^{-1}$. Consequently, $\lim_{t\uparrow T} u_X(t,0,Z)=\infty$ for every fixed $Z\in (0,1]$, and at the top boundary $u_X(t,0,1)\sim (T-t)^{-1}|\ln (T-t)|^{-1}>0$, confirming the physical picture in Subsection \ref{subsec:physical-picture}.
\end{remark}

\subsection{Blow-up of the third derivative and appearance of the Burgers horizontal scaling}

Having established the vertical scale $|\ln (T-t)|^{-1}$, we turn to the horizontal scale, which is formally comparable to $\sqrt{|u_X /u_{XXX}|}$. For an $X$-odd solution \eqref{x-odd} to \eqref{2d primitive intro}, the trace of the third horizontal derivative on the axis, $b(t,Z)=u_{XXX}(t,0,Z)$, satisfies a closed linear equation coupled to $a(t,Z)=-u_X(t,0,Z)$:
\begin{equation} \label{id:equation-uXXX}
b_t-4ab+\left(\int_0^Z a(\tilde Z)\,d\tilde Z\right)\partial_Z b+3 \partial_Z a \left(\int_0^Z b(\tilde Z)\,d\tilde Z\right)+8 \int_0^1 a(\tilde Z)b(\tilde Z)\,d\tilde Z=0.
\end{equation}
When $a$ satisfies Theorem \ref{refined C1 expansion}, our second theorem shows that $b=u_{XXX}(\cdot,0,\cdot)$ also blows up and provides its asymptotic expansion up to logarithmically subleading terms. Remarkably, unlike the vertical scale, subleading pressure effects do not induce logarithmic corrections in the horizontal scale, which instead exhibits the pure algebraic scaling $(T-t)^{3/2}$ characteristic of generic Burgers singularities.

\begin{theorem}\label{first shell mode expansions}
Let $a=-u_X(\cdot,0,\cdot)$ be a solution\footnote{Again, this result concerns the 1D system \eqref{proudman-johnson-equation} and \eqref{id:equation-uXXX}, matching the 2D trace as long as $u$ exists.} to \eqref{proudman-johnson-equation} satisfying Theorem \ref{refined C1 expansion} with blow-up time $T>0$. Then for any initial datum $b_0=u_{XXX}(0,0,\cdot)\in C^1([0,1])$, the solution $b=u_{XXX}(\cdot,0,\cdot)$ to \eqref{id:equation-uXXX} exists on $[0,T)$, and there exists $f_\infty\in \mathbb R$ such that
    \begin{align*}
        u_{XXX}(t,0,Z)&=\frac{1}{(T-t)^4}\left( c(t)\phi \left(\frac{Z}{|\ln (T-t)|^{-1}}\right)+\frac{\ln |\ln (T-t)|}{|\ln (T-t)|}\Phi_{1,0}\left(\frac{Z}{|\ln (T-t)|^{-1}}\right) \right. \\
        & \qquad \qquad \qquad \qquad +\left.\frac{1}{|\ln (T-t)|} \Psi_{1,0}\left(\frac{Z}{|\ln (T-t)|^{-1}}\right) \right)+R_{1}(t,Z),
    \end{align*}
    where $c(t)=(T-t)^4u_{XXX}(t,0,0)=f_\infty+O(\ln |\ln(T-t)|/|\ln (T-t)|)$, $\phi(z)=e^{-z}$, and
    $$
     \Phi_{1,0}(z)=-2 f_\infty z e^{-z}, \qquad \Psi_{1,0}(z)=f_\infty\left(-1+C_\nu ze^{-z}-\frac{1}{2} e^{-z}+e^{-2z}+\frac{1}{2}e^{-3z}\right),
    $$
    with $C_\nu\in \R$ as in Theorem \ref{refined C1 expansion}, and the remainder satisfies
    $$
    \| R_1(t,\cdot)\|_{L^\infty ([0,1])} \lesssim  \frac{|\log\abs{\log (T-t)} |^{3} }{(T-t)^4(\abs{\log (T-t)})^{2}}.
    $$
\end{theorem}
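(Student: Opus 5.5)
We treat \eqref{id:equation-uXXX} as a linear nonlocal transport equation for $b$, with coefficients given by the solution $a$ of Theorem \ref{refined C1 expansion}, and pass to the self-similar variables of that theorem: $s=-\ln(T-t)$, $z=sZ$, and
$$b(t,Z)=(T-t)^{-4}B(s,z),\qquad a(t,Z)=(T-t)^{-1}A(s,z).$$
Here $A=\phi+\frac{\ln s}{s}U_{0,\log}+\frac1s U_0+\tilde R_0$, with $\tilde R_0=O(s^{-2}\ln^2 s)$ by Theorem \ref{refined C1 expansion}. Local existence of $b$ on $[0,T)$ is immediate: the equation is linear with coefficients continuous in time up to $T$, and the nonlocal terms $\int_0^Z b$ and $\int_0^1 ab$ are bounded operators on $C^1$. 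The content of the theorem is therefore the asymptotics.

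In the new variables the equation becomes
$$\partial_s B + \mathcal L B = \mathcal N(s)B,\qquad \mathcal L B= 4B-4\phi B+(1-e^{-z})\partial_z B+3\phi'\!\int_0^z B,$$
where we use $\int_0^z\phi=1-e^{-z}$. In passing to $\partial_s$ one must account for the drift $z/s\,\partial_z$ coming from the $s$-dependence of the scale $Z\sim s^{-1}$. The operator $\mathcal N(s)$ collects all corrections of size $O(\ln s/s)$, namely the perturbation of $A$ by $U_{0,\log}$ and $U_0$, this drift, and the pressure term $8s^{-1}\int_0^\infty AB\,dz$. The leading operator $\mathcal L$ has the kernel direction $\phi$. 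This follows from the identity $\partial_s A+\mathcal L_A A=0$ obeyed by $\phi$ in the setting of \cite{Collot2023}, and is checked directly by substituting $B=\phi$: we get $4e^{-z}-4e^{-2z}-(1-e^{-z})e^{-z}-3e^{-z}(1-e^{-z})=0$. So $B\approx c(s)\phi$.

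The key structural step is to show that $\mathcal L$ is dissipative on the complement of $\phi$. At $z=\infty$ the transport field $(1-e^{-z})\partial_z$ is outgoing. The potential is $4-4\phi\ge0$, and it equals $4$ at infinity, so at infinity the decay rate in $s$ is $e^{-4s}$ times the characteristic growth; this matches $b\sim(T-t)^{-4}$ only through the kernel mode. We would decompose
$$B=c(s)\phi+\epsilon,$$
with $\epsilon$ orthogonal to $\phi$ in a suitable sense, for instance the condition $\epsilon(s,0)=0$, which matches the normalization $c(t)=(T-t)^4u_{XXX}(t,0,0)$. We then prove the decay of $\epsilon$ along characteristics. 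The characteristics $\dot z=1-e^{-z}$ leave $z=0$ exponentially, and the source $3\phi'\int_0^z\epsilon$ is handled by a Gronwall argument in a weighted $L^\infty$ norm, exactly as in the linearized analysis of \cite{Collot2023}, since $b$ obeys the linearization of the $a$-equation up to coefficients.

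With this in place we proceed in three steps. First, the modulation equation for $c$ has the form $\dot c=O(\ln s/s^2)\,c+\ldots$. This is integrable, so $c\to f_\infty$ with rate $\ln s/s$. Second, we construct the correctors. We solve $\mathcal L\Phi_{1,0}=f_\infty\times(\text{coefficient of }\ln s/s\text{ in }\mathcal N\phi)$ and the analogous equation for $\Psi_{1,0}$, where the $\Psi_{1,0}$ equation also involves the terms generated by $\partial_s(\ln s/s)$. This is an ODE in $z$, solvable explicitly after differentiating once to remove $\int_0^z$. The solutions are the stated combinations of $ze^{-z}$, $e^{-kz}$ and constants, and the constant $-f_\infty$ is fixed by the pressure term. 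Third, we bound the remainder $B-(c\phi+\frac{\ln s}{s}\Phi_{1,0}+\frac1s\Psi_{1,0})$ in $L^\infty$ by $s^{-2}\ln^3 s$. This uses the semigroup decay together with the source $O(s^{-2}\ln^2 s)$ coming from $\tilde R_0$ and from the error of $c$ against $f_\infty$. The extra power of the logarithm comes from integrating in $s$.

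The main obstacle is the second-order remainder estimate uniformly on all of $[0,1]$, that is for $z\in[0,s]$. Far from the layer, the corrector contains the constant $-f_\infty/s$, and the drift term $z/s\,\partial_z$ is no longer small. There we would switch to the original variable $Z$ and use the outgoing characteristics together with the explicit outer behaviour of $a\approx (T-t)^{-1}s^{-1}$ given by Theorem \ref{refined C1 expansion}. Matching the inner and outer regions, while the nonlocal term $\int_0^1 ab$ couples them, is where the most care is needed.
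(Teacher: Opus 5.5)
The overall plan matches the paper's: write $B=c(s)\phi+\tilde v$ with $\tilde v(s,0)=0$, prove weighted $L^\infty$ decay of $\tilde v$, show the modulation equation is integrable, and use correctors that vanish at $z=0$. The genuine gap is how you treat the transport term, and it is exactly the step you leave open.

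\textbf{Transport and the outer region.} You linearize around the field $(1-e^{-z})\partial_z$ and put $\bigl(\int_0^z(A-\phi)\,dy\bigr)\partial_z$ and the drift $\frac zs\partial_z$ into $\mathcal N(s)$. These are first-order operators, unbounded on (weighted) $L^\infty$. A Duhamel argument against the $\mathcal L$-evolution therefore loses a derivative at every step.

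Worse, the true domain is $0\le z\le s$, and its upper edge moves with speed $1$, while $1-e^{-z}=1-e^{-s}<1$ there. So $z=s$ is an inflow boundary for $\mathcal L$, and the evolution you perturb from is not even closed on the strip. This is why the region $z\sim s$ seems to you to need an inner--outer matching in $Z$, which the proposal does not supply.

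The paper needs no matching. It keeps the full field $b(s,z)=\int_0^zA\,dy+\frac zs$ in the principal part. The mean-free condition $\int_0^sA\,dz=0$ gives $b(s,0)=0$ and $b(s,s)=1$, so both edges of the moving strip are characteristic and a maximum principle applies.

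Decay then comes from the supersolution $e^{-ps}\bar X_\theta$ with $\bar X_\theta=(1-e^{-z})Y_\theta$, which is increasing. In this computation the drift appears only as $\frac zs\,\bar X_\theta'/\bar X_\theta\ge0$, so its size never matters. The effective damping is $4-4\phi+\phi\ge1$ up to $o(1)$. Only the bounded operator $(\phi-\eps_z)\pzi$, made small by $Y_\theta$, and the rank-one pressure term go through Duhamel--Gronwall. Since $\bar X_\theta\approx_\theta1$ for $z\ge1$, this controls all of $[0,s]$, i.e.\ all $Z\in[0,1]$, at once.

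The drift acting on the correctors is also harmless: $\Phi_{1,0}'$ and $\Psi_{1,0}'$ decay exponentially, and $\partial_z$ kills the constant $-f_\infty$. Your two ``large'' transport pieces actually nearly cancel, $\int_0^z(A-\phi)\,dy+\frac zs=O((\ln s)^2/s)$ uniformly on $[0,s]$, because $A-\phi\approx-\frac1s$ far out. Smallness in sup norm still does not let you treat a first-order operator perturbatively.

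\textbf{The modulation equation.} The form $\dot c=O(\ln s/s^2)\,c+\dots$ does not follow from bounding $\mathcal N(s)$ by $O(\ln s/s)$; it rests on an exact cancellation you must exhibit. At $z=0$ your equation reads $\dot c=4\bigl(A(s,0)-1\bigr)c-\frac8s\int_0^sAB\,dz$. The pressure part contains $-\frac8s c\int_0^sA\phi\,dz=-\frac{4c}{s}+O(\frac{\ln s}{s^2})c$, which is not integrable on its own. It is cancelled only because $A(s,0)-1=\frac1s+O(\frac{(\ln s)^2}{s^2})$, i.e.\ because $U_0(0)=1$ and $U_{0,\log}(0)=0$ in Theorem~\ref{refined C1 expansion}.

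The same identity is the solvability condition at $z=0$ for your corrector ODEs, since $\mathcal LV(0)=0$ for every $V\in C^1$.

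The leftover coupling $-\frac8s\int_0^sA\tilde v\,dz$ is integrable only once $\tilde v=O(\ln s/s)$ is known, but $\tilde v$ is itself forced by $c$. So $c$ and $\tilde v$ must be closed jointly before taking $c\to f_\infty$. The paper does this with $H(s)=\sup_{[s_1,s]}|c|+\sup_{t\in[s_1,s]}\frac{t}{\log t}\|\tilde v(t)/\bar X_\theta\|_{L^\infty}$.

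\textbf{The third logarithm.} Integrating $\tau^{-2}(\ln\tau)^2e^{-(s-\tau)/4}$ produces no extra logarithm, so it does not come from the time integration. In the paper it comes from the term $\partial_z\tilde R_0\cdot\pzi\bigl(\frac{\ln s}{s}\Phi_{1,0}+\frac1s\Psi_{1,0}\bigr)$, where the primitive is bounded by $z\|\cdot\|_{L^\infty}$ with $z\le s$.
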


\begin{remark}
\begin{itemize}
\item\emph{(i) Appearance of the Burgers horizontal scale.} Combining Theorems \ref{refined C1 expansion} and \ref{first shell mode expansions} shows that, for $f_\infty\neq 0$, the characteristic horizontal scale is $\sqrt{|u_X /u_{XXX}|}\sim (T-t)^{3/2}$. This matches the generic blow-up scale of the inviscid Burgers equation $u_t+uu_X=0$, governed by the self-similar solution $u(t,x)=(T-t)^{1/2}\Psi_1(x/(T-t)^{3/2})$ where $\Psi_1^{-1}(y)= -y-y^3$ (see \cite{COLLOT2022_burgers}). {Note that for an odd solution $g$ to the Burgers equation, the condition $g'''(0,0)>0$ is required for the first singularity to occur at the origin. By analogy, it could be possible that $f_\infty>0$ is required for $T$ to be the blow-up time of the original solution $u$ to \eqref{2d primitive intro}, while for $f_\infty<0$ it could become singular off the axis before.}

\smallskip
\item\noindent \emph{(ii) Cancellation of logarithmic corrections.} At first glance, Theorem \ref{refined C1 expansion} suggests that the horizontal scale should acquire logarithmic corrections in $(T-t)$. Indeed, substituting the asymptotics of $a$ into \eqref{id:equation-uXXX} suggests the toy ODE $b_t=4(T-t)^{-1}(1+\alpha |\ln (T-t)|^{-1})b$, which yields $b\sim c (T-t)^{-4}|\ln (T-t)|^{4\alpha}$. The absence of logarithmic factors is due to a surprising cancellation: once properly renormalized, the leading-order dynamics of \eqref{id:equation-uXXX} admits a one-dimensional kernel on which the subleading logarithmic corrections from $u_X$ in \eqref{id:expansion-uX} exactly cancel the nonlocal pressure contribution ($8\int_0^1 ab\,d\tilde Z$ in \eqref{id:equation-uXXX}); see Section \ref{sec:first order mode} for details.
{\smallskip
\item\noindent \emph{(iii)} For an initial datum in the admissible class, its corresponding $f_\infty$ is generically non-zero. (See Remark \ref{rmk:nonzero limit}.)}
\end{itemize}
\end{remark}

\subsection{Asymptotic expansion for higher-order derivatives}

To obtain refined expansions and extend the solution to 2D, we study the full hierarchy of axial derivatives $(u_{i,k})_{i, k\geq 0}$, defined by $u_{i,k}(t,Z)=\partial_X^{2i+1}\partial_Z^k u(t,0,Z)$. These derivatives satisfy the infinite system of PDEs:
\begin{equation} \label{system-hierarchy-original-variables}
\begin{aligned}
\partial_t u_{i,k}
&+\sum_{p=0}^{k}\sum_{q=0}^{i}
  \binom{k}{p}\binom{2i+1}{2q+1}
  u_{q,p}\,u_{i-q,k-p}-\sum_{p=0}^{k}\sum_{q=0}^{i}
  \binom{k}{p}\binom{2i+1}{2q}
  u_{q,p-1}\,u_{i-q,k-p+1}\\
&-\mathbf{1}_{\{k=0\}}
  \sum_{p=0}^{i}\binom{2i+2}{2p+1}
  \int_0^1 u_{p,0}\,u_{i-p,0}\,dZ
=0,
\end{aligned}
\end{equation}
where $u_{\cdot,-1}=\int_0^Z u_{\cdot,0}\,d\tilde Z$ and $\partial_{Z}u_{i,k}=u_{i,k+1}$.
Notice that the modes in the $N$-th shell ($i+k=N$) depend only on lower shells ($i+k\leq N$), allowing the hierarchy to be solved inductively. For $(i,k)=(0,0)$ and $(1,0)$, \eqref{system-hierarchy-original-variables} reduces to \eqref{proudman-johnson-equation} and \eqref{id:equation-uXXX} (up to a sign). Theorem \ref{refined C1 expansion} characterizes $u_{0,0}$ and $u_{0,1}$, while Theorem \ref{first shell mode expansions} characterizes $u_{1,0}$. Much like the lowest-order terms in a Taylor expansion, the zeroth and first shells ($i+k\leq 1$) govern the dynamics of all higher shells ($i+k\geq 2$), making their precise characterization essential.

Due to space constraints, although we do not include the proof in this paper which can be found in \cite{lingjun}, the same method can be applied to obtain the following result, describing the asymptotic expansions for each fixed function of the hierarchy.

\begin{remark}\label{general shell mode expansions}
Let $N>1$, and let $a=-u_{0,0}=-u_X(0,0,\cdot)$ and $b=u_{1,0}=u_{XXX}(0,0,\cdot)$ be solutions\footnote{As before, this theorem concerns the 1D system \eqref{system-hierarchy-original-variables}.} to \eqref{proudman-johnson-equation} and \eqref{id:equation-uXXX} satisfying the conclusions of Theorems \ref{refined C1 expansion} and \ref{first shell mode expansions} with blow-up time $T>0$ and parameters $C_\nu,f_\infty \in \mathbb R$, assuming further that their initial data belong to $C^{N+1}([0,1])$ and $C^{N}([0,1])$, respectively. Then it can be proved, see \cite{lingjun}, that for any initial data $u_{i,0,0}=\partial_{X}^{2i+1}u(0,0,\cdot)\in C^{N+1-i}([0,1])$ with $1<i\leq N$, the solution $(u_{i,k})_{i+k\leq N}$ to \eqref{system-hierarchy-original-variables} with initial data $u_{i,k}(0,\cdot)=\partial_{Z}^k u_{i,0,0}$ exists on $[0,T)$ and satisfies
\begin{align*}
\partial_X^{2i+1} u(t,0,Z) & = \frac{1}{(T-t)^{1+3i}}\left( (1+\alpha_i(t))\phi_i\left(\frac{Z}{|\ln (T-t)|^{-1}}\right)+\frac{\ln |\ln (T-t)|}{|\ln (T-t)|}U_{i,\log} \left(\frac{Z}{|\ln (T-t)|^{-1}}\right)\right. \\
&\qquad \qquad \qquad \left.+\frac{1}{|\ln (T-t)|}U_{i} \left(\frac{Z}{|\ln (T-t)|^{-1}}\right) \right)+R_i(t,Z),
\end{align*}
where the profiles $\phi_i,U_{i,\log},U_{i}\in C^\infty([0,\infty)) \cap L^\infty([0,\infty))$ depend on $f_\infty$ and $C_\nu$, and the remainders satisfy, for $0\leq k \leq N-i$,
    \begin{equation}
        \abs{\alpha_i(t)}=O\left(\frac{\ln |\ln (T-t)|}{|\ln (T-t)|}\right),\qquad \norm{\partial_Z^k R_{i}(t,\cdot)}_{L^\infty([0,1])} =O\left(\frac{\ln^3|\ln (T-t)|}{(T-t)^{1+3i}|\ln (T-t)|^{2-k}}\right).
    \end{equation}
\end{remark}

Remark \ref{general shell mode expansions} extends our asymptotic expansions to any finite truncation $(\partial_X^{2i+1}\partial_Z^k u(t,0,Z))_{0\leq i+k\leq N}$ of the hierarchy. However, because the implicit constants may grow uncontrollably with $i$ and $k$, this finite-shell expansion does not by itself yield convergence of the Taylor series in $X$. To reconstruct the 2D solution, we must establish uniform bounds across the entire infinite hierarchy via a suitable real-analytic norm.

\subsection{Rigorous two-dimensional extension and lower bound on the analyticity radius}

In a related study of the 2D unsteady Prandtl equations, Collot et al.\ \cite{Collot2022} showed that the axial trace of the horizontal derivative blows up in finite time $T$ and controlled the infinite hierarchy of axial derivatives in an analytic norm, proving that the 2D solution remains real-analytic up to time $T$ in a neighborhood of the symmetry axis of width at least $(T-t)^{7/4}$. Recall that for the inviscid primitive equations, local well-posedness requires real-analytic initial data \cite{Kukavica2011}.\par

Our fourth theorem establishes an analogous 2D extension for the inviscid primitive equations. While our approach shares the overarching philosophy of \cite{Collot2022}-providing, to our knowledge, the only other quantitative stability analysis of singularity formation in an analytic setting-the underlying mechanisms differ substantially due to the nonlocal pressure in the primitive equations versus vertical viscosity in the Prandtl system. Consequently, we uncover a different horizontal analyticity radius scaling of order $(T-t)^{3/2}$.

\begin{theorem} \label{thm:physical-x-radius_intro}
Let $a$ and $b$ satisfy the conclusions of Theorems \ref{refined C1 expansion} and \ref{first shell mode expansions}, and fix $t_0$ sufficiently close to $T$, setting $a_0=a(t_0)$ and $b_0=b(t_0)$. Let $u_0$ be real-analytic on $\mathbb R\times [0,1]$ and $2L$-periodic, $X$-odd \eqref{x-odd}, and satisfy the mean-free condition \eqref{mean-free}, with axial traces $\partial_X u_0(0,\cdot)=-a_0$ and $\partial_{X}^3 u_{0}(0,\cdot)=b_0$. The following conclusions hold:
\begin{enumerate}[label=\textup{(\roman*)},leftmargin=2.3em]
\item \emph{Physical derivative bounds.}
The solution\footnote{As before, this concerns the system \eqref{system-hierarchy-original-variables}, coinciding with the 2D derivatives while $u$ exists.} $(u_{i,k})_{i,k\geq 0}$ to \eqref{system-hierarchy-original-variables} with initial data $u_{i,k}(0,\cdot)=\partial_X^{2i+1}\partial_Z^k u_0(0,\cdot)$ exists on $[0,T)$. Moreover, there exist constants $M>0$ and $\rho>0$ such that for all $t\in[0,T)$, $Z\in [0,1]$, and $i,k\geq 0$,
\begin{equation}\label{eq:physical-derivative-conclusion_intro}
\left|\partial_X^{2i+1}\partial_Z^k u(t,0,Z)\right| \leq M\rho^{-2i-k-1} (2i+1)! (T-t)^{-3i-1}|\ln (T-t)|^{k}.
\end{equation}
\item \emph{Analytic extension in \(X\).} The Taylor series
\begin{equation}\label{eq:u-taylor-series_intro}
  \widetilde u(t,X,Z)
  :=\sum_{i=0}^{\infty}
  \frac{u_{i,0}(t,Z)}{(2i+1)!}X^{2i+1}
\end{equation}
has a radius of convergence of at least
\begin{equation}\label{eq:physical-radius_intro}
  R_X(t):=(T-t)^{3/2}\rho,
\end{equation}
and defines a solution $\widetilde u$ to the 2D primitive equations \eqref{2d primitive intro} on the domain $\{(t,X,Z): 0\leq t<T, \, |X|<R_X(t), \, 0\leq Z\leq 1\}$. Furthermore, $\widetilde u$ provides the unique real-analytic continuation of $u(t,\cdot,Z)$ in $|X|<R_X(t)$, satisfying
\begin{equation}\label{eq:physical-reconstruction_intro}
  \widetilde u(t,X,Z)=u(t,X,Z)
  \qquad\text{for }|X|<R_X(t)
\end{equation}
for as long as the 2D solution $u$ to \eqref{2d primitive intro} exists.
\end{enumerate}
\end{theorem}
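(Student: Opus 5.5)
The plan is to control the whole hierarchy \eqref{system-hierarchy-original-variables} with one weighted analytic-type norm, closed by a bootstrap on $[t_0,T)$. Set $\tau=T-t$ and $\lambda(t)=|\ln\tau|^{-1}$, and use the self-similar variable $z=Z/\lambda$. Each mode is renormalized as
\[
v_{i,k}(s,z)=\frac{\rho^{2i+k+1}\,\tau^{3i+1}\lambda^{k}}{(2i+1)!}\,u_{i,k}(t,Z),
\]
where $s=-\ln\tau$ is the renormalized time. The target is the uniform bound $\sup_{i,k}\|v_{i,k}(s)\|_{L^\infty}\le M$, which is exactly \eqref{eq:physical-derivative-conclusion_intro}.

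The shells $i+k\le1$ are supplied by Theorems \ref{refined C1 expansion} and \ref{first shell mode expansions}. For $k=1$ these give $\partial_Z a\sim\tau^{-1}\lambda^{-1}$, which is consistent with the weight. Choosing $\rho$ small makes these renormalized modes as small as needed. Since $t_0$ is close to $T$ and the data are analytic, the initial bound holds for all $(i,k)$ once $\rho$ is small, with the $t_0$-dependence absorbed into $M$. A bootstrap on $[t_0,T)$ then suffices; the interval $[0,t_0]$ is covered by local analytic well-posedness \cite{Kukavica2011}.

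For shells $N=i+k\ge2$ I would write the renormalized equation for $v_{i,k}$. Its linear part contains the transport term $(\int_0^Z a)\partial_Z$, which in $z$ is $\lambda\tau\int_0^z a\,\partial_z$, together with the scaling terms from differentiating the weights. The reaction terms come from the extreme summands $q\in\{0,i\}$, $p\in\{0,k\}$, and pair $u_{i,k}$ with $a$, $\partial_Z a$ or $\int a$. Because $a\le\tau^{-1}(\phi+o(1))$, the combined reaction coefficient is at most $(2i+1)+\dots$, while the weight $\tau^{3i+1}$ produces a damping $-(3i+1)$. The gap $(3i+1)-(2i+1)-1=i-1$, together with $k$ from $\lambda^k$ against the stretching $-k\,\partial_Z\!\int a$, gives a net damping of order $c(i+k)$ for $N\ge2$. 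The $N=1$ modes are neutral, which is why they had to be treated separately by the earlier theorems. The transport term is handled by characteristics, which preserve $L^\infty$.

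The nonlinear terms with both indices in the interior are bilinear convolutions in $(q,p)$. After renormalization their binomial coefficients become
\[
\binom{2i+1}{2q+1}\frac{(2q+1)!(2i-2q-1)!}{(2i+1)!}\le 1,
\]
and the $\tau$ and $\rho$ powers match. Each such term is bounded by $M^2\rho\cdot(\text{number of terms})$. The expected losses from summing over $O(ik)$ terms, and from the derivative shift $u_{q,p-1}u_{i-q,k-p+1}$, are absorbed by the damping factor $c(i+k)$ if I weight by $(i+k)!$-compatible factorials; if necessary I would use the Gevrey weight $k!$ in $Z$ or Cauchy estimates. The pressure term $\mathbf 1_{k=0}\int_0^1$ is treated the same way, with $\int_0^1$ costing a factor $\lambda$ in $z$-variables.

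The main obstacle is the coupling term $u_{q,p-1}u_{i-q,k-p+1}$, which carries $k+1$ $Z$-derivatives on one factor and thus loses a derivative. At $q=0$, $p=0$ it is the transport term, which is fine. At $q=0$, $p=1$ it becomes $u_{0,0}\,u_{i,k}$-type stretching. For $q\ge1$ the factor $u_{q,-1}$ vanishes at $Z=0$ and is small, like $Z\,u_{q,0}$. I would absorb this loss by the weight $\lambda^k$, since $|\ln\tau|$ grows only slowly, possibly also using a time-dependent shrinking radius $\rho(t)$ in the Cauchy–Kovalevskaya style to generate a $\partial_Z$-absorbing term.

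With (i) established, the rest is routine. Summing $|u_{i,0}||X|^{2i+1}/(2i+1)!\le M\rho^{-1}\tau^{-1}\sum(|X|/(\rho\tau^{3/2}))^{2i+1}$ gives convergence for $|X|<R_X(t)$, with uniform convergence of all derivatives on compact subsets. Termwise substitution shows that $\widetilde u$ solves \eqref{2d primitive intro}, since the hierarchy is exactly the coefficient identity. Uniqueness of analytic continuation yields \eqref{eq:physical-reconstruction_intro}: while $u$ exists it is analytic in $X$, and its Taylor coefficients solve the same hierarchy with the same data, so they coincide by uniqueness for each shell.
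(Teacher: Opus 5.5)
\textbf{There is a genuine gap, and it is in the damping count your bootstrap relies on.}

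In the self-similar variables the zeroth-order coefficient for $F_{i,k}$ is $\frac ks+3i+1-(2i+2-k)(\phi+\eps)$, see \eqref{eq:Lik}. The weight $\lambda^k$ contributes only $k/s$. The stretching term contributes $k(\phi+\eps)$, which is of size $k$ only in the boundary layer $z=O(1)$. For $z\gg1$, where $\phi\approx0$ and $\eps\approx-1/s$, these two $k$-dependent pieces cancel to leading order. So in an unweighted $L^\infty$ norm the net far-field damping is only about $3i+1$ (about $1$ for the modes $(0,k)$), not $c(i+k)$.

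The sources, by contrast, do grow:
\begin{itemize}
\item Your binomial identity has an index slip. With the factorials $(2q+1)!\,(2(i-q)+1)!$ the ratio is $2(i-q)+1$, not $\le1$.
\item More seriously, once $Z$-factorials are included (they must be, see below), the shifted terms $\binom{k}{p+1}F_{q,p}F_{i-q,k-p}$ coming from $\partial_Z^k$ of the transport nonlinearity still carry a factor $\sim 1+i+k$, even after a polynomial correction. Compare \eqref{eq:shell-sum-counted}, or \eqref{eq:shifted-F01-first} for the term $F_{0,1}F_{i,k-1}$.
\item On the growing strip $[0,s]$, the primitives $\pzi F_{q,0}$ of active zero modes are only bounded by $z\|F_{q,0}\|_{L^\infty}\le s\|F_{q,0}\|_{L^\infty}$. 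This is exactly the $|\ln\tau|$ loss you hope to absorb ``since it grows slowly''. A comparison argument cannot absorb a source that grows in $s$.
\end{itemize}
None of these losses is controlled by an $O(1+i)$ rate when $k\gg 1+i$ or $s\to\infty$. A Cauchy--Kovalevskaya radius shrinking at a constant rate $\gtrsim M$ would at best absorb the $k$-loss. It would also give $\rho\sim e^{-cMs}$, i.e.\ radius $(T-t)^{3/2+cM}$ instead of $(T-t)^{3/2}$.

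\textbf{The missing idea is to put the boundary-layer structure into the norm.} The paper bounds $F_{i,0}$ by $h_0=e^{-z}+s^{-1}$, and $F_{i,k}$ for $k\ge1$ by $h_+=e^{-z}+\Lambda^2(\log s)^2/s^2$. This does two things:
\begin{itemize}
\item It makes primitives of zero modes bounded (Corollary~\ref{lem:mixed-weight-estimates}).
\item It makes every $k$-growing source of size $(1+i+k)h_+$, hence tiny in the far field.
\end{itemize}
Only then can the shrinking radius $-\rho'/\rho=C_0\Lambda^2(\log s)^2/s^2$ supply the missing far-field damping $(2i+k-1)C_0\Lambda^2(\log s)^2/s^2$ at an integrable rate. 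That keeps $\rho\ge\rho_0/2$, so the sharp scale survives. In the boundary layer the local reaction coefficient already gives damping of order $(2i+k)e^{-z}$. Together these two pieces form the reference rate $\mathcal D_{i,k}$ of \eqref{eq:Dik}.

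The price is that, in the weighted norm, the linear nonlocal terms for $k=0$ become comparable to the damping. This is notably true of the pressure average, since $w_0\sim s$ far out. That is why the paper needs the net-gap Lemma~\ref{lem:zero-mode-gap} and the modified comparison principle, Lemma~\ref{lem:moving-strip-comparison}.

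\textbf{Two smaller corrections.}
\begin{itemize}
\item The $X$-shift terms $(\pzi F_{q,0})F_{i-q,k+1}$ are not a derivative loss. They lie in the same or a lower shell, and ordering each shell as $(0,m),(1,m-1),\dots,(m,0)$ handles them; $\lambda^k$ plays no role here.
\item The normalization must use $(2i+k+1)!$ with a $(1+i+k)^{-4}$ correction. With $(2i+1)!$ alone, even your initial bound fails for data that are analytic but not entire in $Z$, since $\partial_Z^k u_0\sim k!\,r^{-k}$. Accordingly, Proposition~\ref{thm:physical-x-radius} proves the bound with $\frac{(2i+k+1)!}{(1+i+k)^4}\rho^{1-2i-k}$. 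This is weaker than the literal $(2i+1)!$ bound stated in (i).
\end{itemize}
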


Part (i) of Theorem \ref{thm:physical-x-radius_intro} complements the finite-shell asymptotic expansions by establishing uniform Gevrey/analytic bounds across the entire hierarchy, demonstrating the local persistence of analyticity up to the blow-up time $T$ at the expense of detailed profile expansions. Part (ii) shows that the full 2D solution $u$ can be rigorously reconstructed from its axial derivatives inside a shrinking strip of width $|X|< \rho(T-t)^{3/2}$, and that the local solution $\tilde u$ rigorously displays the physical blow-up geometry depicted in Subsection \ref{subsec:physical-picture}.

\subsection{Aspects of the proof and methodology}

The proofs of our asymptotic expansion results (Theorems \ref{refined C1 expansion}, \ref{first shell mode expansions}, and Remark \ref{general shell mode expansions}) share a common framework inspired by \cite{Collot2022}. We work with the hierarchy of renormalized axial derivatives $(F_{i,k})$ (defined in \eqref{setting-renormalization}) in fixed self-similar coordinates $(s,z)$ (defined in \eqref{self-similar-coordinates}), where $s=-\log(T-t)$ and $z=Z/|\ln(T-t)|^{-1}=sZ$, factoring out the expected leading-order algebraic blow-up rates. Here, $i$ indexes the $(2i+1)$-th horizontal derivative and $k$ indexes the $k$-th vertical derivative. A central theme is to decompose each mode into a neutral component spanning the kernel of the leading-order linear operator and a remainder that vanishes at the origin $z=0$ (which fixes the modulation parameter), and then control the remainder via the decaying dynamics on the complementary stable subspace. In self-similar coordinates, the remainder PDEs take the form of a linear evolution driven by a ``transport + nonlocal'' operator plus nonlinear source terms.\par

To bound the remainders, we first establish decay estimates for the linear evolution in a carefully designed weighted $L^\infty$ norm of the form $\norm{(\cdot)(B Y_\theta)^{-1}}_{L^\infty}$. Thanks to the mean-free condition \eqref{mean-free}, we can construct a supersolution and apply the maximum principle to show exponential decay for the pure transport evolution. Here, the weight $B(z)\geq 0$ vanishes sufficiently fast at $z=0$ to exploit the remainder's vanishing condition at the origin, ensuring strong damping from the transport operator. Meanwhile, the weight $Y_\theta(z)\approx 1$ is bounded away from zero but grows rapidly near the origin for large $\theta>0$, making the nonlocal operator's contribution arbitrarily small in the weighted norm. Combining these ingredients via Duhamel's formula and Gronwall's lemma yields exponential decay for the full linear operator.\par

A key distinction in the proof of Theorem \ref{refined C1 expansion} is that, rather than working directly in the fixed self-similar coordinates $(s,z)$ with the remainder $\eps$ from \eqref{leading order expansion}, we perform the bulk of the analysis in the dynamical self-similar coordinates $(\tilde s, \tilde z)$ of \cite{Collot2023}. In these coordinates, $-u_X(t,0,Z)$ is decomposed via a crude remainder $\tilde \eps(\tilde s,\tilde z)$ linked to physical coordinates through dynamical modulation parameters $(\lambda,\nu)$. Imposing higher-order vanishing conditions at the origin determines the modulation dynamics and aligns $\tilde \eps$ with our weighted linear theory. Specifically, we first identify the leading-order profile of $\tilde\eps$ matching its behavior near the origin, and then peel off this profile to obtain a finer remainder, which we bound using the weighted linear evolution method. Finally, translating from $(\tilde s, \tilde z)$ back to the fixed self-similar coordinates $(s,z)$ via the asymptotic expansions of the modulation parameters $(\lambda,\nu)$ yields the expansion for $\eps$. Working in dynamical coordinates is essential because the leading subleading timescale turns out to be $\frac{\log s}{s}$ rather than $\frac{1}{s}$, a logarithmic correction generated dynamically by the modulation parameters.\par 

To verify that our horizontal renormalization scale is optimal, Theorem \ref{first shell mode expansions} shows that the renormalized first-shell mode $F_{1,0}$ (corresponding to $u_{XXX}\rvert_{X=0}$) converge to non-trivial O(1) profiles. To avoid the tedious algebra of dynamical coordinates $(\tilde s,\tilde z)$, we enforce the vanishing-at-zero condition via a two-step decomposition in fixed self-similar coordinates $(s,z)$. First, noting that the leading-order profile of $F_{1,0}$ is a multiple of $\phi$, we decompose $F_{1,0}(s,z) = F_{1,0}(s,0)\phi(z) + \tilde r(s,z)$ so that $\tilde r(s,0)=0$, yielding a coupled system for the trace $F_{1,0}(s,0)$ and the remainder $\tilde r$. We then construct a combined energy functional $H(s)$ that simultaneously controls the boundedness of $F_{1,0}(s,0)$ and the decay of $\tilde r$. Crucially, the expansion of $\eps(s,0)$ from Theorem \ref{refined C1 expansion} triggers an exact cancellation in the linear $s^{-1}$ coefficient of the ODE for $F_{1,0}(s,0)$, rendering the perturbation integrable in $s$. Once the convergence of $F_{1,0}(s,0)$ and the coarse decay of $\tilde r$ are established, the second step peels off the explicit subleading profiles at scales $\frac{\log s}{s}$ and $\frac{1}{s}$ (both vanishing at $z=0$) to isolate a finer remainder $r_{1,0}$. Because $r_{1,0}$ satisfies a single uncoupled PDE, the linear evolution method directly proves that it decays strictly faster than $\frac{1}{s}$. \par

The generalized expansion in Remark \ref{general shell mode expansions} is proved \cite{lingjun} via a nested induction on the shell index $N=i+k$, exploiting the lower-triangular structure of the hierarchy \eqref{system-hierarchy-original-variables} with Theorem \ref{first shell mode expansions} serving as the base case. For each shell the analysis extends that of the proof of Theorems \ref{refined C1 expansion} and \ref{first shell mode expansions}: we determine the leading and subleading profiles, decompose the modes, and bound a shell-specific energy functional.

\par

In contrast, the proof of the 2D analytic extension (Theorem \ref{thm:physical-x-radius_intro}) requires a fundamentally different approach: because the Duhamel--Gronwall method loses uniformity as $i+k\to\infty$, we must establish uniform bounds across all shells simultaneously. Treating the zeroth and first shells (Theorems \ref{refined C1 expansion} and \ref{first shell mode expansions}) as given background coefficients, we induct on shells $i+k\geq 2$ for the normalized variables
$$
U_{i,k} = \frac{(1+i+k)^{a+1}}{(2i+k+1)!} \rho^{2i+k-1} h_k^{-1} F_{i,k},
$$
where $a=3$ is a Sobolev-type polynomial correction, $\rho(s)$ is the time-dependent analyticity radius in self-similar coordinates, and $h_k(s,z)$ is a spatiotemporal weight tailored to the profiles found in Theorem \ref{refined C1 expansion} and Theorem \ref{first shell mode expansions}. Specifically, since for these two modes, they decay like $e^{-z}$ for $z=O(1)$ but plateau at order $\frac{1}{s}$ (for $k=0$) or $(\frac{\log s}{s})^2$ (for $k\geq 1$) in the far field $z\sim s$. As we expect this behavior to persist for all modes, the weight $h_k$ is then constructed as a sum interpolating between these two spatial regimes. Since our horizontal renormalization is optimal, $\rho(s)$ remains bounded below by a strictly positive constant as $s\to\infty$, while allowing $\rho(s)$ to decrease slowly provides crucial extra damping in the evolution equations for $U_{i,k}$.\par

Methodologically, the uniform control of $U_{i,k}$ relies on two pillars: a modified maximum/comparison principle and combinatorial nonlinear source estimates. For each mode $U_{i,k}$, the comparison principle shows that if the initial datum is bounded and the nonlinear source $S_{i,k}$ is small relative to the linear damping rate, then $U_{i,k}$ remains uniformly bounded for all time. For the $k=0$ modes, this maximum principle must be modified to account for the nonlocal operator, which competes with the local linear damping; we prove that the net damping rate (local damping minus the worst-case nonlocal contribution) remains uniformly positive. Finally, assuming the inductive bound holds for lower shells, we use combinatorial estimates on the quadratic interactions in $S_{i,k}$ to show that choosing the analyticity radius scale $\rho$ sufficiently small makes the nonlinear source arbitrarily small relative to the linear damping, thereby closing the induction.

\medskip

\noindent\textbf{Organization of the paper.} In Section 2, we introduce the self-similar renormalization, derive the hierarchy of equations for $(F_{i,k})$, and outline the main notation. Section \ref{sec: asymptotic expansion of eps} establishes the refined subleading asymptotics of $u_X\rvert_{X=0}$, proving Theorem \ref{refined C1 expansion}. Section \ref{sec:first order mode} derives the asymptotic expansion of the renormalized third horizontal derivative $F_{1,0}$, proving Theorem \ref{first shell mode expansions}.
Finally, Section \ref{sec: analytical extension} establishes the uniform analytic bounds and 2D Taylor reconstruction, proving Theorem \ref{thm:physical-x-radius_intro}.

\section{Notation}

{The profile for the first order tangential derivative on the axis is
\[
  \phi(z)=e^{-z}.
\]
We use the following notation for the antiderivative that vanishes at the origin,
$$
  \pzi f(z)=\int_0^z f(y)\dd y.
$$
{For notational simplicity, we adopt the convention
\begin{equation}
    \pzi f g:= (\pzi f) g.
\end{equation}
}
Derivatives will sometimes be denoted by subscripts to ease notation
$$
f_x=\partial_x f.
$$
We will use indicator functions 
$$
\one_{k=0}=\left\{ \begin{array}{l l} 1 & \mbox{if }k=0,\\ 0 & \mbox{if }k\geq 1, \end{array} \right.  \quad \mbox{and} \quad \one_{i\geq 1}=\left\{ \begin{array}{l l} 1 & \mbox{if }i\geq 1,\\ 0 & \mbox{if }i=0, \end{array} \right. 
$$
and straightforward adaptations of this notation.}

{We introduce $\chi$ a smooth positive cut-off function on $[0,\infty)$ such that
\begin{equation} \label{def-cut-off}
\chi(y)=1 \mbox{ for }y\leq 1 \quad \mbox{and} \quad \chi(y)=0 \mbox{ for }y\geq 2.
\end{equation}
}
{For simplicity, we also introduce  $L_\theta$ as
\begin{equation}\label{def-L_theta}
     L_\theta =2\log(\theta).
\end{equation}
Together with $\chi$ defined in \eqref{def-cut-off}, they will be used to construct $L^\infty$ weights, as mentioned in introduction.
}

{For $\tau_1\leq\tau_2$, let
$\mathscr S_{\mathcal T}(\tau_1,\tau_2)$ denote the linear evolution
operator associated with $\mathcal T$. Given data $f$ at time $\tau_1$,
the function $\mathscr S_{\mathcal T}(\tau_1,\tau_2)f$ is the solution
at time $\tau_2$ of
\(\partial_\tau v=\mathcal T v,\, v(\tau_1)=f.\)}

{Throughout the paper, $C>0$ denotes a universal constant whose value may vary from line to line. More generally, $C_{\alpha_1,\ldots,\alpha_m}>0$ denotes a constant depending only on the parameters $\alpha_1,\ldots,\alpha_m$, whose value may likewise vary from line to line. We write $A\lesssim B$ if $A\leq CB$ for some universal constant $C>0$, and $A\lesssim_{\alpha_1,\ldots,\alpha_m} B$ if $A\leq C_{\alpha_1,\ldots,\alpha_m}B$.}

\section{Renormalized 2D primitive equations in self-similar coordinates}\label{sec:equations and goal}

\subsection{The self-similar renormalization}

The starting point of the present article is the finite-time blow-up construction for $X$-odd solutions $u(t,X,Z)$ to the 2D primitive equations \eqref{2d primitive intro} established in \cite[Theorem 1.1]{Collot2023}. Specifically, \cite{Collot2023} showed that the axial trace of the horizontal derivative, $-u_X(t,0,Z)$, blows up at time $T$ at rate $(T-t)^{-1}$ and concentrates near $(X,Z)=(0,0)$ at the vertical scale $Z\sim \vert{}\ln (T-t)\vert{}^{-1}$. Anticipating Theorem \ref{first shell mode expansions}, which shows that $u_{XXX}(t,0,Z)$ blows up at rate $(T-t)^{-4}$, the characteristic horizontal scale is of order $(T-t)^{3/2}$. We therefore introduce the self-similar variables
\begin{equation} \label{self-similar-coordinates}
s=-\log(T-t), \quad z=\frac{Z}{\abs{\log(T-t)}^{-1}}, \quad x=\frac{X}{(T-t)^{3/2}}, \quad U(s,x,z)=(T-t)^{-1/2}u(t,X,Z).
\end{equation}
In these coordinates, the 2D inviscid primitive equations \eqref{2d primitive intro} take the form
\begin{equation}\label{2d primitive boundary condition-renormalized}
\begin{split}
& U_s-\frac{1}{2} U+\left(U+\frac{3}{2} x\right)U_x+\left(-\partial_z^{-1} U_x+\frac{z}{s}\right) U_z+P_x=0,\\
& P_z=0, \quad P_{xx}=-\frac{1}{s}\int_0^s (U^2)_{xx} \,dz,
\end{split}
\end{equation}
on the domain $(x,z)\in [-e^{3s/2}L, e^{3s/2}L]\times [0,s]$, where $P(s,x)=(T-t)^{-1}p(t,X)$, supplemented by the periodic boundary and mean-free conditions
$$ U(s,-e^{3s/2}L,z) =  U(s,e^{3s/2}L,z) \quad \mbox{and} \quad \int_0^s U_x(s,x,z)\,dz=0.$$

\subsection{Trace of derivatives on the axis}

For this $X$-odd blow-up solution, we first derive the governing equations for the higher-order axial derivatives of $u(t,0,Z)$ in the self-similar coordinates $(s,z)$, renormalized by their expected blow-up rates:
\begin{equation}\label{setting-renormalization}
\begin{split}
     &\partial_X^{2i+1} \partial_Z^{k} u(t,0,Z)=(T-t)^{-3i-1}\abs{\log(T-t)}^{k} F_{i,k}(s,z),\\
     &F_{0,0}(s,z)=(T-t)\partial_X u(t,0,Z)=-\phi(z)-\eps(s,z),
\end{split}
\end{equation}
where $\phi(z)=e^{-z}$, and the remainder satisfies $\norm{\eps}_{L^\infty}\lesssim s^{-\delta}$ for some $\delta>0$ by \cite{Collot2023}. Equivalently, $F_{i,k}(s,z)= \partial_x^{2i+1} \partial_z^k U(s,0,z)$, with $F_{0,0}(s,z)=\partial_{x} U(s,0,z)$.
\subsection{The system of partial differential equations for the hierarchy $(F_{i,k})_{i,k\geq 0}$}

The renormalized primitive equations \eqref{2d primitive boundary condition-renormalized} for $x$-odd solutions, restricted on the axis $\{x=0, \ 0\leq z \leq s\}$, take the form of the following system (below $i+k\geq 1$)
\begin{equation}\label{eq:raw-hierarchy}
\partial_sF_{i,k}
=
\mathcal T_{i,k}F_{i,k}+N_{i,k}
-\one_{k=0}(1+\one_{i\ge1})(2i+2)s^{-1}
\int_0^s(\phi+\eps)F_{i,0}\dd z, 
\end{equation}
where the linear operator satisfies
\begin{equation}\label{eq:Tik}
\mathcal T_{i,k}F_{i,k}
=
L_{i,k}F_{i,k}
+
\one_{i\ge1}\one_{k=0}(2i+1)(\phi-\eps_z)\pzi F_{i,0},
\end{equation}
\begin{equation}\label{eq:Lik}
\begin{aligned}
L_{i,k}F_{i,k}
={}&
\left(-\frac{k}{s}-3i-1+(2i+2-k)(\phi+\eps)\right)F_{i,k}
\\
&-
\left(\pzi(\phi+\eps)+\frac{z}s\right)\partial_zF_{i,k},
\end{aligned}
\end{equation}
and the nonlinear source is defined as
\begin{equation}\label{eq:Nik}
  \begin{aligned}
N_{i,k}
={}&
\one_{i\ge1}\one_{k\ge1}(2i+1)\pzi F_{i,0}\,F_{0,k+1}
+
\one_{i\ge2}\sum_{q=1}^{i-1}\binom{2i+1}{2q}\pzi F_{q,0}\,F_{i-q,k+1}
\\
&-
\sum_{\substack{0\le p\le k,\ 0\le q\le i\\(q,p)\ne(0,0),(i,k)}}
\binom{k}{p}\binom{2i+1}{2q+1}F_{q,p}F_{i-q,k-p}
\\
&+
\one_{k\ge1}
\sum_{\substack{0\le p\le k-1,\ 0\le q\le i\\(q,p)\ne(0,0)}}
\binom{k}{p+1}\binom{2i+1}{2q}F_{q,p}F_{i-q,k-p}
\\
&+
\one_{k=0}\one_{i\ge2}
\sum_{q=1}^{i-1}\binom{2i+2}{2q+1}s^{-1}
\int_0^sF_{q,0}F_{i-q,0}\dd z.
\end{aligned} 
\end{equation}
{
Here, the spatial domain becomes $0\le z\le s$ and 
System \eqref{eq:raw-hierarchy} is subject to the average-free compatibility conditions
\begin{equation}\label{eq:background-mass-compat}
  \pzi(\phi+\eps)(s,s)
  =\int_0^s(\phi+\eps)(s,y)\dd y=0 \quad \mbox{and} \quad \int_0^s F_{i,0}(s,z)dz=0 \mbox{ for }i\geq 1,
\end{equation}
}
and it inherits 
\begin{equation}\label{eq: derivative-compatability}
    \partial_z F_{i,k}=F_{i,k+1}.
\end{equation}
Moreover, the transport coefficient
satisfies
\begin{equation}\label{eq:transport-tangent-boundary}
  b(s,z):=\pzi(\phi+\eps)(s,z)+\frac{z}s,\quad b(s,0)=0,
  \qquad
  b(s,s)=1.
\end{equation}

\section{The first order tangential derivative on the axis} \label{sec: asymptotic expansion of eps}

\subsection{Setting in the dynamical self-similar coordinate system}

{
Recall from the introduction that, for odd in $X$ solutions to the $2D$ primitive equations \eqref{2d primitive intro}, the trace of the tangential derivative on the axis $-u_X(t,X=0,Z)$ solves the Proudman-Johnson equation 
\begin{equation} \label{proudman-johnson-equation-2}
a_t-a^2+\left(\int_0^Z a(\tilde Z)d\tilde Z\right)\partial_Z a +2 \int_0^1 a^2(\tilde Z)d\tilde Z=0
\end{equation}
on the set $\{0\leq Z\leq 1\}$, supplemented with the mean-free condition $\int_0^1 a(Z)dZ=0$. While the result of \cite{Collot2023} provides an open set of initial data in $C^2([0,1])$ for \eqref{proudman-johnson-equation-2} whose solutions blow up with the leading order form of $(T-t)^{-1}\phi(|\ln (T-t)|Z)$, the estimates they provide for the remainder do not permit the analysis for the hierarchy $F_{i,k}$. This is because the next terms in the expansion do not decay fast enough (like $(T-t)^{-1}|\ln (T-t)|^{-1}$ and $(T-t)^{-1}|\ln \ln (T-t)||\ln (T-t)|^{-1}$ which are non-integrable functions on $[0,T]$) and our first task is therefore to single them out.
}

{In \cite{Collot2023}, instead of the fixed self-similar coordinates \eqref{self-similar-coordinates}, the authors adopted a dynamic self-similar coordinate system $(\tildes,\tildez)$ which forces the remainder term $\tilde \epsilon$ and its derivative to vanish at $\tildez=0$. This setting is more suitable for the present section, as we shall see that the weighted norm or linear evolution estimate requires the function to vanish at the origin at a certain rate. The precise result of \cite{Collot2023}, stated in these dynamic self-similar coordinates, is the following.}

{
\begin{theorem}[Section 5 in \cite{Collot2023}] \label{th:base-result-CIL}
Let $\kappa=2/3$. For any sufficiently large $\tilde s_0^*\gg 1$, there exists a mean-free initial condition $a_0\in C^2([0,1])$ such that the corresponding solution
$-u_X(t,X=0,Z)$ to \eqref{proudman-johnson-equation-2} blows up in finite time $T>0$ and satisfies the following. It can be decomposed as
\begin{equation}
    -u_X(t,X=0,Z)=\frac{1}{\lambda}(\phi(\tilde z)+\tilde \epsilon(\tilde s,\tilde z))
\end{equation}
where the renormalization variables are
\begin{equation} \label{dynamic-self-similar-variables}
\lambda=-\frac{1}{u_X(t,0,0)}>0, \quad \tilde z =\frac{Z}{\nu}, \quad \nu= \frac{-u_X(t,0,0)}{\partial_Z u_X(t,0,0)}>0, \quad \tilde s=\tilde s_0+\int_0^t \frac{dt}{\lambda}.
\end{equation}
Here $\lambda(t)=(T-t)(1+o_{t\uparrow T}(1))$ and $\nu(t)=|\ln (T-t)|^{-1}(1+o_{t\uparrow T}(1))$ and $\tilde\epsilon$ and $\nu$ satisfy in renormalized variables for $\tilde s \in [\tilde s_0^*,\infty)$,
\begin{equation}\label{a priori bound for tilde epsilon new-1}
    \begin{split}
        & \| \tilde\epsilon(\tildes) \|_{L^\infty (0, \nu^{-1}(\tilde s))}=O(\tildes^{-\kappa}) \\
        &\tildeepsilon (\tilde z)=O(\tildez^2) \text{ as }\tildez\rightarrow 0^+\\
        &\abs{\frac{\nu_\tildes}{\nu}}\lesssim \tildes^{-1},\, \nu = \tildes^{-1} (1+o_{\tilde s \to \infty}(1)), \nu>0\in C^1.
    \end{split}
\end{equation}
Moreover, for any small enough perturbation in $C^2([0,1])$ of $a_0$, the same conclusions hold true.
\end{theorem}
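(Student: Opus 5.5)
This is a result quoted from \cite{Collot2023}, Section 5, so the plan is to reconstruct its proof by a modulated stability argument in the dynamical coordinates \eqref{dynamic-self-similar-variables}.

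\textbf{Renormalized equation and modulation.} Write $a=\lambda^{-1}(\phi+\tilde\epsilon)(\tilde s,Z/\nu)$ and substitute into \eqref{proudman-johnson-equation-2}. Using $d\tilde s/dt=\lambda^{-1}$, this gives
\[
\partial_{\tilde s}\tilde\epsilon=\mathcal L\tilde\epsilon+\mathrm{Mod}+\mathrm{NL}+\mathrm{NLoc}.
\]
The linearized operator is
\[
\mathcal L\tilde\epsilon=(1+2\phi)\tilde\epsilon-\pzi\phi\,\partial_{\tilde z}\tilde\epsilon-\pzi\tilde\epsilon\,\phi'+\ldots,
\]
where the dots denote the scaling terms. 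The modulation terms involve $\lambda_{\tilde s}/\lambda+1$ and $\nu_{\tilde s}/\nu$, and the nonlocal pressure term is $2\lambda\nu\int_0^{1/\nu}(\phi+\tilde\epsilon)^2$.

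The definitions $\lambda=1/a(t,0)$ and $\nu=a(t,0)/(-\partial_Z a(t,0))$ enforce $\tilde\epsilon(\tilde s,0)=\partial_{\tilde z}\tilde\epsilon(\tilde s,0)=0$. Projecting the equation at $\tilde z=0$ then yields two ODEs for the parameters:
\[
\lambda_{\tilde s}/\lambda=-1+O(\text{pressure}),\qquad \nu_{\tilde s}/\nu=-c\,\nu(1+o(1)).
\]
The pressure term is of order $\nu\int\phi^2\sim\nu$, and this is what produces $\nu\approx\tilde s^{-1}$. In particular $|\nu_{\tilde s}/\nu|\lesssim\tilde s^{-1}$, which is the anomalous logarithmic scale. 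Integrating $d\tilde s=dt/\lambda$ gives $\lambda\approx T-t$ and $\tilde s\approx|\ln(T-t)|$.

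\textbf{Bootstrap on the remainder.} The assumptions are $\|\tilde\epsilon\|_{L^\infty}\le \tilde s^{-\kappa}$ together with the vanishing at $0$. I would close them as follows.

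First, with the vanishing $\tilde\epsilon=O(\tilde z^2)$, the transport part of $\mathcal L$ is damping in a weighted norm $\|\tilde\epsilon/B\|_{L^\infty}$, with $B\sim\tilde z^2$ near $0$. This is proved by a supersolution and maximum principle argument along characteristics $\dot{\tilde z}=\pzi\phi$, which flow outward. The mean-free condition ensures that the outgoing flux stays controlled near $\tilde z=\nu^{-1}$.

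Second, the nonlocal term $\pzi\tilde\epsilon\,\phi'$ is handled by a weight that grows near the origin, so that it becomes small, as in the $Y_\theta$ construction.

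Third, the source terms are all $O(\nu)=O(\tilde s^{-1})$. Duhamel's formula with the decay $e^{-c(\tilde s-\tau)}$ then gives $\tilde\epsilon=O(\tilde s^{-1})$ in the weighted norm near the origin. Far away, $\phi$ is negligible and $\tilde\epsilon$ is only bounded, and the region where it is controlled shrinks like $\tilde s^{-\kappa}$ with $\kappa<1$ after interpolation.

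Initial data are chosen with $\tilde\epsilon(\tilde s_0)$ small in $C^2$. Openness under $C^2$ perturbations follows because the constraints are closed by the dynamics themselves (no unstable directions remain after modulation), plus continuity of the flow.

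\textbf{Main obstacle.} The hardest step is the coupling between the $O(\nu)$ pressure term and the modulation: the source is only $\sim\tilde s^{-1}$, so it is not integrable. One must therefore show that it is absorbed exactly into $\nu_{\tilde s}/\nu$ and $\lambda$ rather than accumulating in $\tilde\epsilon$. A second difficulty is controlling the far field $\tilde z\sim\nu^{-1}$, where the transport damping degenerates. There I would use a truncated weight and the mean-free identity.
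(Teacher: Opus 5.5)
The paper does not prove this statement; it imports it from \cite{Collot2023}. Your outline is the same machinery the paper runs in Section~\ref{sec: asymptotic expansion of eps}: modulation enforcing $\tilde\epsilon(\tilde s,0)=\partial_{\tilde z}\tilde\epsilon(\tilde s,0)=0$, a weighted $L^\infty$ norm with $B\sim\tilde z^2$ and a $Y_\theta$-type factor, a supersolution on the moving strip, then Duhamel. Your modulation laws are right: projecting at $\tilde z=0$ gives $\lambda_{\tilde s}/\lambda+1=P:=2\nu\int_0^{\nu^{-1}}(\phi+\tilde\epsilon)^2\,d\tilde z$ and $\lambda_{\tilde s}/\lambda+\nu_{\tilde s}/\nu=-1$, so $(1/\nu)_{\tilde s}=P/\nu\approx 1$. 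Openness is also fine. The only non-decaying directions of $-1+2\phi-\partial_{\tilde z}^{-1}\phi\,\partial_{\tilde z}+\phi\,\partial_{\tilde z}^{-1}$ are $\phi$ (eigenvalue $1$) and $\tilde z\phi'$ (eigenvalue $0$), and the two conditions at the origin remove exactly these.

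The closure as you describe it contains errors, the first being a sign. The zeroth-order coefficient is $\lambda_{\tilde s}/\lambda+2\phi\approx-1+2\phi$, not $1+2\phi$. The $2\phi$ growth near the origin is cancelled exactly by $\partial_{\tilde z}^{-1}\phi\,B'/B=2\phi$ for $B=(1-e^{-\tilde z})^2$. That leaves the rate $1$ coming from $\lambda_{\tilde s}/\lambda\approx-1$, uniformly on $[0,\nu^{-1}]$. With your sign, $B e^{-p\tilde s}$ is not a supersolution for any $p>0$.

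Your far-field discussion is also mistaken, and the ``interpolation'' step has no content. Where $\phi$ is negligible the damping is the full rate $1$, so nothing degenerates there. Also $X_\theta\approx_\theta 1$ for $\tilde z\ge 1$, so the weighted norm is a global $L^\infty$ norm.

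After modulation the forcing is exactly $P\tilde U$, with $\tilde U=(1+\tilde z)e^{-\tilde z}-1=O(\tilde z^2)$. Duhamel therefore gives the global bound $\|\tilde\epsilon\|_{L^\infty(0,\nu^{-1})}\lesssim\tilde s^{-1}$, with $\tilde\epsilon\approx-\nu$ far out, as $\int_0^{\nu^{-1}}\tilde\epsilon\,d\tilde z=-(1-e^{-1/\nu})$ requires. At $\tilde z=\nu^{-1}$ the mean-free condition serves only to make the moving boundary characteristic.

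For the same reason your ``main obstacle'' is misplaced. The non-integrable $O(\tilde s^{-1})$ forcing is not absorbed into $(\lambda,\nu)$. It produces the $\nu\tilde U$ part of $\tilde\epsilon$, and the exponential decay of the linear flow prevents accumulation.

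What you actually omit is the lower constraint on $\kappa$. The pressure integrates $\tilde\epsilon^2$ over an interval of length $\nu^{-1}\sim\tilde s$, so $P/\nu=1+O(\|\tilde\epsilon\|_{L^\infty})+O(\tilde s\|\tilde\epsilon\|_{L^\infty}^2)$. In a bootstrap that assumes only $\|\tilde\epsilon\|_{L^\infty}\le\tilde s^{-\kappa}$, getting $\nu=\tilde s^{-1}(1+o(1))$ and $|\nu_{\tilde s}/\nu|\lesssim\tilde s^{-1}$ therefore needs $\kappa>1/2$. Compare the quadratic pressure term in the proof of Lemma~\ref{tilde epsilon iterative}.

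At the other end, $\kappa\le 1$ is forced: the mean-free identity gives $\|\tilde\epsilon\|_{L^\infty}\gtrsim\nu$, so ``small'' initial data means $O(\tilde s_0^{-1})$, not arbitrarily small. Taking $\kappa<1$ strictly just leaves room to beat the $C\tilde s^{-1}$ output by enlarging $\tilde s_0^*$.
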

}

{The main result of this section is the following refined expansion in fixed self-similar coordinates \eqref{self-similar-coordinates}, providing a detailed expansion of the first element $F_{0,0}$ of the hierarchy $(F_{i,k})_{i,k\geq 0}$.}

\begin{theorem}\label{asymptotic expansion of epsilon}
{Let $u_X$ satisfy the conclusions of Theorem \ref{th:base-result-CIL} and let $F_{0,0}(s,z)=(T-t)u_X(t,X=0,Z)$ with $(s,z)$ given by \eqref{self-similar-coordinates}. Recall by \eqref{setting-renormalization} the decomposition $F_{0,0}(s,z)=-\phi(z)-\epsilon(s,z)$. Then $\epsilon$ satisfies}
    \begin{equation}
        \norm{\epsilon-\frac{\log s}{s}U_{1,\log}-\frac{1}{s}U_1}_{C_z^1([0,s])}=O((\frac{\log s}{s})^2),
    \end{equation}
    where the profiles are defined to be
    \begin{equation}
            U_{1,\log}(z) =-2z e^{-z}, \qquad U_1(z)=(2+(1+C_\nu)z) e^{-z}-1,
    \end{equation}
    for some $C_\nu\in \R$. Consequently, the mode $F_{0,1}$ defined in \eqref{setting-renormalization} has the following decomposition
    \begin{equation}
          F_{0,1}(s,z)=(1+\epsilon_0(s))\phi(z)
+\frac{\log s}{s}\Phi_{0,1}(z)
+\frac{1}s\Psi_{0,1}(z)+r_{0,1}(s,z),
    \end{equation}
    where the profiles are defined as 
    \begin{equation}
             \Phi_{0,1}(z)
=2e^{-z}-2ze^{-z}, \qquad \Psi_{0,1}(z)
=\bigl((1+C_\nu)z-C_\nu\bigr)e^{-z}
    \end{equation}
{and the remainders $\eps_0 =\epsilon\rvert_{z=0},r_{0,1}$ satisfy
\begin{equation}
    \begin{split}
       \eps_0 &=\frac{1}{s}-2\frac{\log s}{s^2}+O(\frac{1}{s^2}),\quad r_{0,1}=O(\frac{(\log s)^2}{s^2}).
    \end{split}
\end{equation}
}
\end{theorem}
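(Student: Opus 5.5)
The plan is to work in the dynamical coordinates $(\tilde s,\tilde z)$ of Theorem~\ref{th:base-result-CIL}, refine both the modulation laws for $(\lambda,\nu)$ and the remainder $\tilde\epsilon$ there, and only at the end translate back to the fixed coordinates $(s,z)$.

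\textbf{Step 1: the equation for $\tilde\epsilon$ and the modulation ODEs.} Substituting $-u_X=\lambda^{-1}(\phi(\tilde z)+\tilde\epsilon)$ into \eqref{proudman-johnson-equation-2} gives an evolution equation of the form
\[
\partial_{\tilde s}\tilde\epsilon=\mathcal L\tilde\epsilon+\Big(\tfrac{\lambda_{\tilde s}}{\lambda}+1\Big)(\phi+\tilde\epsilon)+\tfrac{\nu_{\tilde s}}{\nu}\tilde z\,\partial_{\tilde z}(\phi+\tilde\epsilon)-2\lambda^2\nu^{\,}\!\!\int(\phi+\tilde\epsilon)^2\,d\tilde z+\text{NL}.
\]
Here the linear operator is $\mathcal L\tilde\epsilon=-\tilde\epsilon+2\phi\tilde\epsilon-\partial_{\tilde z}^{-1}\phi\,\partial_{\tilde z}\tilde\epsilon-\phi'\partial_{\tilde z}^{-1}\tilde\epsilon$, and the pressure term is, after normalisation, of size $\nu\int_0^{1/\nu}(\phi+\tilde\epsilon)^2\approx \nu/2\approx 1/(2\tilde s)$.

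The constraints $\tilde\epsilon(\tilde s,0)=\partial_{\tilde z}\tilde\epsilon(\tilde s,0)=0$ fix the modulation. Evaluating the equation at $\tilde z=0$ gives $\lambda_{\tilde s}/\lambda+1=\nu\int(\phi+\tilde\epsilon)^2$. Differentiating once and evaluating at $0$ gives an ODE expressing $\nu_{\tilde s}/\nu$ in terms of the pressure term and $\partial^2_{\tilde z}\tilde\epsilon(0)$.

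\textbf{Step 2: leading profile of $\tilde\epsilon$.} The source in the $\tilde\epsilon$ equation is $O(1/\tilde s)$: it consists of the constant pressure contribution and the modulation terms. I would solve for a profile $\tilde\epsilon=\tilde s^{-1}V+\tilde\epsilon_2$ with $V(0)=V'(0)=0$ by solving the stationary linear equation $\mathcal LV=-(\text{source profile})$. This is an explicit ODE with exponential solutions; it should produce terms like $\tilde z^2e^{-\tilde z}$ together with the constant $-1$ plateau for large $\tilde z$.

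Plugging $V$ back into the modulation ODEs should give
\[
\tfrac{\nu_{\tilde s}}{\nu}=-\tfrac1{\tilde s}+\tfrac{c}{\tilde s^2}+\dots,\qquad\text{so}\qquad \nu=\tilde s^{-1}\big(1+c'\tfrac{\log\tilde s}{\tilde s}+\tfrac{C_\nu}{\tilde s}+\dots\big).
\]
The free integration constant is $C_\nu$, and this is where the $\log s/s$ scale is generated. Similarly $\lambda=(T-t)\big(1+O(1/\tilde s)\big)$, and $\tilde s=s+O(\log s)$.

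\textbf{Step 3: bounding the finer remainder $\tilde\epsilon_2$.} I would show $\tilde\epsilon_2=O(\tilde s^{-2}\log^2\tilde s)$ in $C^1$ by the weighted linear evolution method described in the introduction. Decay of $\mathscr S_{\mathcal L}$ is proved in the norm $\|(\cdot)(BY_\theta)^{-1}\|_{L^\infty}$: a supersolution and the maximum principle handle the transport part, the large-$\theta$ weight $Y_\theta$ makes the nonlocal term small, and Duhamel plus Gronwall then absorb the sources, which are $O(\tilde s^{-2}\log\tilde s)$. The derivative bound comes from the same argument applied to the differentiated equation, which has an even stronger damping term.

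\textbf{Step 4: change of variables.} Write $z=sZ=s\nu\tilde z$ and $F_{0,0}=-(T-t)\lambda^{-1}(\phi(\tilde z)+\tilde\epsilon)$, and insert the expansions of $\nu,\lambda,\tilde s$. Taylor expanding $\phi\big(z/(s\nu)\big)$ produces the $-2ze^{-z}\log s/s$ term and the $(1+C_\nu)ze^{-z}/s$ term. Together with $V$ and the $\lambda$ correction, this yields $U_{1,\log}$ and $U_1$; identifying $C_\nu$ is a matter of bookkeeping. The statements about $F_{0,1}=\partial_zF_{0,0}$, $\epsilon_0=\epsilon(s,0)$ and $r_{0,1}$ then follow by differentiating the expansion and evaluating at $z=0$, using the $C^1$ control.

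\textbf{Main obstacle.} The delicate step is Step 2 combined with Step 3. Extracting the $\log\tilde s/\tilde s$ correction to $\nu$ requires $\partial^2_{\tilde z}\tilde\epsilon(0)$ to precision $o(1/\tilde s)$. So the remainder estimate must hold in a weighted norm that vanishes like $\tilde z^3$ at the origin, and it must be uniform on the growing domain $[0,1/\nu]$ despite the nonzero plateau of $V$ at infinity. I would first try the weight $B(\tilde z)=\tilde z^3/(1+\tilde z^3)$, and impose a third vanishing condition by also subtracting a second-order profile at rate $\tilde s^{-2}\log\tilde s$ if needed.
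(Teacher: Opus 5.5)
Your overall architecture is the paper's: dynamical coordinates, a profile of size $\nu$ peeled off from $\tilde\epsilon$, weighted $L^\infty$ decay in $\|(\cdot)(BY_\theta)^{-1}\|_{L^\infty}$, a separate estimate for $\partial_{\tilde z}\tilde v$, and conversion back to $(s,z)$. However, the mechanism you give for the $\frac{\log s}{s}$ term, which is the heart of the statement, is wrong, and your ``main obstacle'' targets a non-issue.

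\textbf{The $\nu$-law.} Differentiating the equation once and evaluating at $\tilde z=0$ does not involve $\partial_{\tilde z}^2\tilde\epsilon(0)$. The only terms carrying $\partial_{\tilde z}^2$ come with a factor $\tilde z$ or $\partial_{\tilde z}^{-1}(\phi+\tilde\epsilon)$, and both vanish at the origin. One obtains the exact law $\frac{\lambda_{\tilde s}}{\lambda}+\frac{\nu_{\tilde s}}{\nu}=-1$. Hence $\lambda\nu=Ke^{-\tilde s}$ and $\frac{\nu_{\tilde s}}{\nu}=-2\nu\int_0^{1/\nu}(\phi+\tilde\epsilon)^2\,d\tilde z$.

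\textbf{Where the logarithm comes from.} Your intermediate law $\frac{\nu_{\tilde s}}{\nu}=-\frac{1}{\tilde s}+\frac{c}{\tilde s^2}$ integrates to $\nu=K\tilde s^{-1}(1+O(\tilde s^{-1}))$, which contains no logarithm at all. The logarithm comes from the autonomous relation $\frac{\nu_{\tilde s}}{\nu}+\nu=-\nu^2+O(\tilde s^{-3})$, i.e. $(1/\nu)_{\tilde s}=1+\nu+O(\nu^2)$. Its $\nu^2$-coefficient is obtained by inserting $\tilde\epsilon=\nu\tilde U+\tilde v$ with $\tilde U=(1+\tilde z)e^{-\tilde z}-1$:
\begin{itemize}
\item the cross term gives $-4\nu^2\int\phi\tilde U=+\nu^2$;
\item the plateau $\tilde U\to-1$, integrated over the growing interval of length $1/\nu$, gives $-2\nu^3\int_0^{1/\nu}\tilde U^2=-2\nu^2+O(\nu^3)$.
\end{itemize}
This yields $1/\nu=\tilde s+\log\tilde s+\tilde C+o(1)$. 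Then $\lambda\nu=Ke^{-\tilde s}$ together with $T-t=\int_{\tilde s}^\infty\lambda(\bar s)\,d\bar s$ gives $\tilde s=s+\log s+C_{\tilde s}+O(\frac{\log s}{s})$, a second logarithm. So $1/\nu=s+2\log s-C_\nu+\dots$, which is where the factor $2$ in $U_{1,\log}$ comes from. The precise $\lambda^{-1}=e^s(1+\frac1s-2\frac{\log s}{s^2}+\frac{C_\nu}{s^2}+\dots)$ then gives the $\frac1s\phi$ part of $U_1$ and the expansion of $\epsilon_0$. Your $\tilde s=s+O(\log s)$ and $\lambda=(T-t)(1+O(\tilde s^{-1}))$ are too coarse for this bookkeeping.

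\textbf{What the remainder estimate must deliver.} It is not cubic vanishing at the origin. You need $\|\tilde v\|_{L^\infty(0,1/\nu)}=o(\nu)$ uniformly on the whole growing domain, because $\nu\int\phi\tilde v$, $\nu^2\int_0^{1/\nu}\tilde U\tilde v$ and $\nu\int_0^{1/\nu}\tilde v^2$ must all be $O(\tilde s^{-2-\delta})$. The paper's weight $(1-e^{-\tilde z})^2Y_\theta$, which is $\approx 1$ away from the origin, does exactly this.

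\textbf{The bootstrap.} The source depends on $\tilde v$ through $\frac{\nu_{\tilde s}}{\nu}+\nu$, and the a priori bound is only $\tilde s^{-2/3}$. So you cannot treat the sources as given $O(\tilde s^{-2}\log\tilde s)$. The paper instead iterates $\tilde s^{-\alpha}\mapsto\tilde s^{-\min(2,1+\alpha,2\alpha)}$ until it reaches $\tilde s^{-2}$.

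\textbf{The third vanishing condition is unavailable.} With only the two parameters $(\lambda,\nu)$, $\tilde v''(0)=\tilde\epsilon''(0)+\nu$ obeys a damped ODE forced at order $\nu^2$. It is generically of order $\nu^2$, not zero. Imposing it would require an extra modulated profile with its own ODE, which is unnecessary here.

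\textbf{Minor point.} Expanding in $\tilde s^{-1}$ rather than in $\nu$ creates mismatch sources of size $\log\tilde s/\tilde s^2$. Using $\nu$ as the expansion parameter keeps the modulation system autonomous and avoids this.
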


{The rest of the section is devoted to the proof of Theorem \ref{asymptotic expansion of epsilon}. We now fix for the rest of the section $u_X$ as given by Theorem \ref{th:base-result-CIL}. Our ultimate goal is to show that $\epsilon$ given by \eqref{setting-renormalization} has a more precise asymptotic decomposition. Our plan is to show that $\tilde \epsilon$ given in Theorem \ref{th:base-result-CIL}} has a more precise expansion {within the dynamic self-similar coordinate system $(\tildes,\tildez)$ given by \eqref{dynamic-self-similar-variables}}, then recover the expansion of $\epsilon$ by converting the result back to the original fixed self-similar coordinates $(s,z)$ at the end. We shall show that the expansion of $\eps$ is in $C^1$, as this information not only enables us to conclude an asymptotic expansion on $F_{0,1}$ (via \eqref{eq: derivative-compatability} and \eqref{setting-renormalization}), but it is also needed for the subsequent analysis of $F_{1,0}$.

In the dynamical self-similar coordinates $(\tildes,\tildez)$ of Theorem \ref{th:base-result-CIL}, we have that $\tildeepsilon$ satisfies the following evolution equation (see \cite{Collot2023} for the computations),
\begin{equation}\label{equation of tilde epsilon new}
    \begin{split}
        &\tildeepsilon_\tildes-\frac{\lambda_\tildes}{\lambda}\tildeepsilon-\frac{\nu_\tildes}{\nu}\tildez\tildeepsilon_\tildez-2\phi\tildeepsilon+\partial_{\tilde z}^{-1}\phi \tildeepsilon_\tildez-\partial_\tildez^{-1}\tildeepsilon \phi-\tildeepsilon^2+\partial_\tildez^{-1}\tildeepsilon \tildeepsilon_\tildez\\
        &\qquad=(\frac{\lambda_\tildes}{\lambda}+1)\phi-\frac{\nu_\tildes}{\nu}\tildez\phi-2\nu\int_0^{\nu^{-1}} (\phi+\tildeepsilon)^2 d\tilde z,\\
        &{\int_0^{\nu^{-1}} ( \phi+\tildeepsilon )d\tildez =0,}
    \end{split}
\end{equation}
{on the domain $\set{(\tildes,\tildez):\tildes\geq s_0^*,\, 0\leq \tildez\leq \nu^{-1}(\tildes)}$, while the modulation equations of $(\lambda,\nu)$ are}
\begin{equation}\label{lambda and nu new}
\begin{split}
        &\frac{\lambda_\tildes}{\lambda}+1=2\nu\int_0^{\nu^{-1}} (\phi+\tildeepsilon)^2 d\tilde z\\
     &\frac{\lambda_\tildes}{\lambda}+\frac{\nu_\tildes}{\nu}=-1.
\end{split}
\end{equation}
{Note that this enforces that
\begin{equation}
\begin{split}
     & \tildeepsilon(\tildez=0)=\tildeepsilon_\tildez(\tildez=0)=0\\
    &\tildeepsilon=O(\tildez^2),\tildeepsilon_\tildez=O(\tildez)\text{ as }\tildez\rightarrow 0^+.
\end{split}
\end{equation}
}
We begin by decomposing $\tilde \epsilon$ in the {following ansatz}
{
\begin{equation}\label{decomp of tilde epsilon new}
    \tilde \epsilon (\tilde s,\tilde z)=\nu (\tilde s) \tilde U(\tilde z)+\tilde v(\tilde s,\tilde z).
\end{equation}
}
{By matching the first power of $\nu$ in Equation \eqref{equation of tilde epsilon new}, expecting $\tilde v=O(\nu^2)$, one sees that the suitable choice for the $\nu$-profile $\tilde U$ is}
\begin{equation}\label{nu profile new}
\begin{split}
    \tilde U(\tilde z)&=(1+\tilde z)e^{-\tildez}-1,
\end{split}    
\end{equation}
{
which satisfies the ODE
\begin{equation}
        \left((-1+2\phi)-\partial_\tildez^{-1} \phi \partial_\tildez+\phi\partial_\tildez^{-1}\right)\tilde U+\phi+\tildez\phi-1=0
\end{equation}
with boundary conditions $\tilde U(0)=\tilde U'(0)=0$.} If we substitute \eqref{nu profile new} into the equation of \eqref{equation of tilde epsilon new}, upon using \eqref{lambda and nu new} we obtain the equation of $\tilde v$ as
\begin{align}\label{equation of remainder new}
\partial_{\tilde s}\tilde v
&=
\left(-1-\frac{\nu_{\tilde s}}{\nu}+2\phi+2\nu \tilde U+\tilde v\right)\tilde v 
+\left(\frac{\nu_{\tilde s}}{\nu}\tilde z-\partial_{\tilde z}^{-1}(\phi+\tilde\epsilon)\right)\tilde v_{\tilde z}+\left(\phi-\nu \tilde U_{\tilde z}\right)\partial_{\tilde z}^{-1}\tilde v \notag\\
&\quad -\left(\frac{\nu_{\tilde s}}{\nu}+\nu\right)
\left(\tilde U+\nu(2\tilde U-\tilde z\,\tilde U_{\tilde z})\right) +\nu^2\left(\tilde U^2-(\partial_{\tilde z}^{-1}\tilde U)\tilde U_{\tilde z}
+2\tilde U-\tilde z\,\tilde U_{\tilde z}\right), \notag\\[0.75em]
\frac{\nu_{\tilde s}}{\nu}+\nu
&=
-\nu^2+\frac{11}{2}\nu^3
-4\nu\int_0^{\nu^{-1}}\phi\,\tilde v\,d\tilde z -4\nu^2\int_0^{\nu^{-1}}\tilde U\,\tilde v\,d\tilde z
-2\nu\int_0^{\nu^{-1}}\tilde v^2\,d\tilde z \\
&\quad
-8\nu^2(1+\nu)e^{-1/\nu}
+\left(4\nu+6\nu^2+\frac52\nu^3\right)e^{-2/\nu}, \notag\\[0.75em]
\int_0^{\nu^{-1}}\tilde v\,d\tilde z
&=
-2\nu+2(1+\nu)e^{-1/\nu}.
\end{align}
Differentiating this equation gives

\begin{equation}\label{remainder derivative new}
    \begin{split}
        \partial_{\tilde s}\tilde v_{\tilde z}
&=
\left(-1+\phi+\tilde\epsilon\right)\tilde v_{\tilde z} 
+\left(
\frac{\nu_{\tilde s}}{\nu}\tilde z
-\partial_{\tilde z}^{-1}(\phi+\tilde\epsilon)
\right)\tilde v_{\tilde z\tilde z} +\left(\nu \tilde U_{\tilde z}-\phi\right)\tilde v
-\left(\phi+\nu \tilde U_{\tilde z\tilde z}\right)\partial_{\tilde z}^{-1}\tilde v \\
&\quad
-\left(\frac{\nu_{\tilde s}}{\nu}+\nu\right)
\left(
\tilde U_{\tilde z}
+\nu(\tilde U_{\tilde z}-\tilde z\,\tilde U_{\tilde z\tilde z})
\right)  +\nu^2\left(
\tilde U\tilde U_{\tilde z}
-(\partial_{\tilde z}^{-1}\tilde U)\tilde U_{\tilde z\tilde z}
+\tilde U_{\tilde z}
-\tilde z\,\tilde U_{\tilde z\tilde z}
\right).
    \end{split}
\end{equation}
By Theorem \ref{th:base-result-CIL}, we see that $\tilde v$ satisfies the following estimates
{
\begin{equation}\label{a priori bound for tilde epsilon new}
    \begin{split}
        &\| \tilde v(\tildes)\|_{L^\infty([0,\nu^{-1}(\tilde s))}=O(\tildes^{-\kappa}), \quad \tilde v=O(\tildez^2) \text{ as }\tildez\rightarrow 0^+, \quad \abs{\frac{\nu_\tildes}{\nu}}\lesssim \tildes^{-1},\quad \nu = \tildes^{-1} (1+o_{\tilde s \to \infty}(1))
    \end{split}
\end{equation}
Our ideal goal is to show that $\abs{\tilde v}$ can be improved to be of order $\nu^2$ or $\tildes^{-2}$.} {For simplicity, let us fix, once for all that $\kappa=\frac{2}{3}$, based on Theorem \ref{th:base-result-CIL}.}

\subsection{$L^\infty$ estimate of the remainder in dynamical self-similar coordinates}
\subsubsection{Settings}
We begin by decomposing
\begin{equation}\label{tilde epsilon source new}
    \begin{split}
        \partial_s \tilde v&=\mathcal{H} \tilde v+F\\
        F&=-\left(\frac{\nu_{\tilde s}}{\nu}+\nu\right)
\left(\tilde U+\nu(2\tilde U-\tilde z\,\tilde U_{\tilde z})\right)  +\nu^2\left(\tilde U^2-(\partial_{\tilde z}^{-1}\tilde U)\tilde U_{\tilde z}
+2\tilde U-\tilde z\,\tilde U_{\tilde z}\right), \\
    \end{split}
\end{equation}
where the linear operator $\mathcal{H}$ is further decomposed into ``transport+non-local''. We shall include the non-linear term within the transport operator, as it is partially known to us, thanks to the a priori estimate 
\eqref{a priori bound for tilde epsilon new}.
\begin{equation}\label{decomp of op for tilde epsilon new} 
\begin{split}
      \mathcal{H}&=\mathcal{T}+\mathcal{K}\\
      \mathcal{T}&=\left(-1-\frac{\nu_{\tilde s}}{\nu}+2\phi+2\nu \tilde U+\tilde v\right)\,\mathds{1}+
\left(\frac{\nu_{\tilde s}}{\nu}\tilde z-\partial_{\tilde z}^{-1}(\phi+\tilde\epsilon)\right)\,\partial_{\tilde z}\\
\mathcal{K}&=\left(\phi-\nu \tilde U_{\tilde z}\right)\partial_{\tilde z}^{-1}
\end{split}
\end{equation}
For the equation of $\nu$, we simplify this as
\begin{equation}\label{tilde epsilon nu simplified new}
    \begin{split}
        \frac{\nu_\tildes}{\nu}+\nu&=
-\nu^2+O(\nu^3)
-4\nu\int_0^{\nu^{-1}}\phi\,\tilde v\,d\tilde z 
-4\nu^2\int_0^{\nu^{-1}}\tilde U\,\tilde v\,d\tilde z
-2\nu\int_0^{\nu^{-1}}\tilde v^2\,d\tilde z, \\
    \end{split}
\end{equation}
where $\tilde U$ is defined in \eqref{nu profile new}.
We begin by introducing weights $X_\theta:\R^{\geq 0}\mapsto \R^{>0}$, $\theta>0$ such that
\begin{equation}\label{weights for tilde epsilon new}
        X_\theta(\tildez)=B(\tildez)Y_\theta(\tildez), \quad B=(1-e^{-\tildez})^2, \quad Y_\theta=\exp(\theta \int_0^\tildez \chi(\frac{\xi}{L_\theta}) d\xi),
\end{equation}
where the cut-off $\chi$ is defined by \eqref{def-cut-off} and $L_\theta$ in \eqref{def-L_theta}.
These weights have the following properties
\begin{enumerate}
    \item {$0\leq X_\theta \in C^1[0,\infty)$}
    \item $Y'_\theta(\tilde z)=\theta Y_\theta \chi(\frac{\tildez}{L_\theta})\geq 0$, and therefore $Y_\theta,X_\theta$ is increasing.
    \item $\text{supp}(Y'_\theta(\tildez)),\text{supp}(\frac{Y'_\theta(\tildez)}{Y_\theta(\tildez)})\subseteq [0,2L_\theta]$.
    \item $Y_\theta=e^{\theta\tildez}$ if $0\leq \tildez\leq L_\theta$.
    \item $B,X_\theta= O(\tildez^2)$ as $\tildez \rightarrow 0^+$ and $X_\theta=0\iff \tildez=0.$
    \item $Y_\theta\approx_\theta 1$ for all $\tildez\geq 0$, $X_\theta\approx_\theta \tildez^2$ for $\tildez\leq 1$  and $X_\theta\approx_\theta 1$ for $\tildez\geq 1$ .
\end{enumerate}
\subsubsection{Linear evolution estimates}
\paragraph{Linear evolution estimates for the Transport operator}
We begin by attempting to derive an estimate on the linear evolution of the transport operator $\mathcal T$.
\begin{lemma}\label{tilde epsilon transport new}
    There exists $\theta^*$ universal, such that for any $0<p<1$, $\theta>\theta^*$, there exists $\tildes^*_1(p,\theta)$ so that when {$u_0 X_\theta^{-1}\in L^\infty$} and $\tildes\geq \tildes_1\geq\tildes_1^*$, then
    \begin{equation}
        \norm{\frac{\S_\mathcal{T}(\tilde s_1,\tilde s) u_0}{X_\theta}}_{L^\infty([0,\frac{1}{\nu(\tildes)}])}\leq   \norm{\frac{ u_0}{X_\theta}}_{L^\infty([0,\frac{1}{\nu(\tildes_1)}])}e^{-p(\tilde s-\tildes_1)}.
    \end{equation}
\end{lemma}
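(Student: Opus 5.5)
We work with the weighted unknown. Set $v(\tilde s)=\S_\mathcal{T}(\tilde s_1,\tilde s)u_0$ and $w=v/X_\theta$. Since $\mathcal T$ is a zeroth-order term plus a transport term, $w$ solves a pure transport equation with a potential:
\begin{equation*}
\partial_{\tilde s} w=\Big(c+b\,\frac{X_\theta'}{X_\theta}\Big)w+b\,\partial_{\tilde z}w,\qquad c=-1-\frac{\nu_{\tilde s}}{\nu}+2\phi+2\nu\tilde U+\tilde v,\quad b=\frac{\nu_{\tilde s}}{\nu}\tilde z-\partial_{\tilde z}^{-1}(\phi+\tilde\epsilon).
\end{equation*}
The lemma follows along characteristics $\dot{\tilde z}=-b$, or by a comparison argument with the supersolution $\norm{u_0X_\theta^{-1}}_{L^\infty}e^{-p(\tilde s-\tilde s_1)}$, provided two things hold. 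First, we need the potential bound
\begin{equation*}
V:=c+b\,\frac{X_\theta'}{X_\theta}\le -p
\end{equation*}
on $[0,\nu^{-1}(\tilde s)]$ for $\tilde s\ge\tilde s_1^*$. Second, we need that no information enters through the endpoints $\tilde z=0$ and $\tilde z=\nu^{-1}(\tilde s)$.

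\textbf{Boundary behaviour.} At $\tilde z=0$ we have $b=0$, so this endpoint is a fixed characteristic. At the moving endpoint, the mean-free condition in \eqref{equation of tilde epsilon new} gives $\partial_{\tilde z}^{-1}(\phi+\tilde\epsilon)(\nu^{-1})=0$, hence $b(\nu^{-1})=\nu_{\tilde s}/\nu^2$. This equals exactly $-\frac{d}{d\tilde s}(\nu^{-1})$. So the boundary is itself a characteristic, and characteristics starting in $[0,\nu^{-1}(\tilde s_1)]$ stay in the moving domain and cover it. The bound on $w$ then reduces to integrating $V$ along characteristics, together with Gronwall.

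\textbf{Main computation: the bound on $V$.} Write $X_\theta'/X_\theta=B'/B+\theta\chi(\tilde z/L_\theta)$ with $B'/B=2e^{-\tilde z}/(1-e^{-\tilde z})$. The main terms cancel exactly. Since $\partial_{\tilde z}^{-1}\phi=1-e^{-\tilde z}$, the leading part of $b\,B'/B$ is $-(1-e^{-\tilde z})\cdot\frac{2e^{-\tilde z}}{1-e^{-\tilde z}}=-2\phi$, which cancels the $+2\phi$ in $c$. The leading part of the $\theta$ term is $-\theta(1-e^{-\tilde z})\chi\le 0$. Hence
\begin{equation*}
V\le -1+\mathcal E,
\end{equation*}
where the error $\mathcal E$ collects four contributions:
\begin{enumerate}[label=(\alph*)]
\item $|\nu_{\tilde s}/\nu|\,(1+\tilde z\,B'/B+\theta\tilde z\chi)\lesssim \tilde s^{-1}(1+\theta L_\theta)$, using that $\tilde z B'/B$ is bounded and that $\chi$ is supported in $[0,2L_\theta]$;
\item $2\nu|\tilde U|+|\tilde v|\lesssim \tilde s^{-1}+\tilde s^{-\kappa}$;
\item $|\partial_{\tilde z}^{-1}\tilde\epsilon|\,B'/B\le \tilde z\norm{\tilde\epsilon}_{L^\infty}\cdot C\tilde z^{-1}\lesssim \tilde s^{-\kappa}$;
\item $\theta\chi\,|\partial_{\tilde z}^{-1}\tilde\epsilon|\lesssim \theta L_\theta\,\tilde s^{-\kappa}$.
\end{enumerate}
All of these are $O_\theta(\tilde s^{-\kappa})$. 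Given $p<1$ and $\theta$, we choose $\tilde s_1^*(p,\theta)$ so that $\mathcal E\le 1-p$, which yields $V\le -p$. I do not expect this range of $\theta$ to be used in an essential way. The threshold $\theta^*$ is only needed for uniformity, for instance to make $Y_\theta$ compatible with the nonlocal estimate that follows, and any fixed $\theta^*$ suffices here.

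\textbf{Main obstacle.} The delicate point is term (c), near $\tilde z=0$. There the weight $B'/B\sim 2/\tilde z$ is singular, so the errors in $b$ must vanish at order $\tilde z$. This holds because $\partial_{\tilde z}^{-1}\tilde\epsilon=O(\tilde z\norm{\tilde\epsilon}_\infty)$ and because $\nu_{\tilde s}\tilde z/\nu$ carries an explicit factor $\tilde z$. A second point to justify is the regularity of $w$ at $\tilde z=0$: the hypothesis $u_0X_\theta^{-1}\in L^\infty$ and the fact that $0$ is a stationary characteristic keep $w$ bounded there. If the characteristic argument is awkward at $\tilde z=0$, I would instead apply the maximum principle to $e^{p(\tilde s-\tilde s_1)}w-\norm{u_0/X_\theta}_{L^\infty}-\delta$ on $[\eta,\nu^{-1}]$ and let $\eta,\delta\to0$.
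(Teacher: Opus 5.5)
Your proposal is correct and is essentially the paper's proof. Showing that the potential $V=c+b\,X_\theta'/X_\theta\le -p$ for $w=v/X_\theta$ is the same computation as the paper's check that $e^{-p\tilde s}X_\theta$ is a supersolution of $\partial_{\tilde s}-\mathcal T$: the exact cancellation $\partial_{\tilde z}^{-1}\phi\,B'/B=2\phi$, discarding the nonnegative term $\theta\chi\,\partial_{\tilde z}^{-1}\phi$, and bounding the remaining errors by $O_\theta(\tilde s^{-2/3})$, with the same boundary argument via $b(\tilde s,0)=0$ and the mean-free condition at $\tilde z=\nu^{-1}$. Your remark that the threshold $\theta^*$ plays no essential role is also accurate, since the argument only needs $\theta>1$ so that $L_\theta>0$ and the weight is defined.
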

\begin{proof}
 We compute, by definition, that {for any $0<p<1$,}
\begin{equation}\label{weights for tilde epsilon new}
    \begin{split}
        & (\partial_\tildes-\mathcal{T})(X_\theta e^{-p\tildes})=X_\theta e^{-p\tildes}(-p-\frac{\mathcal T X_\theta}{X_\theta})\\
        &=X_\theta e^{-p\tildes} \left(-p+1-2\phi+\partial^{-1}_\tildez \phi \frac{B'}{B}\right. +\frac{\nu_\tildes}{\nu}-2\nu \tilde U-\tilde v-(\frac{\nu_\tildes}{\nu} \tilde z-\partial_\tildez ^{-1} \tildeepsilon)\frac{B'}{B} \left. +(\partial_\tildez^{-1}\phi-\frac{\nu_\tildes}{\nu}\tildez+\partial_\tildez^{-1}\tildeepsilon)\frac{Y_\theta'}{Y_\theta}\right),
    \end{split}
\end{equation}
where we have used the definition of decomposition of $\tildeepsilon$ \eqref{decomp of tilde epsilon new}, definition of $\mathcal{T}$ \eqref{decomp of op for tilde epsilon new}  and \eqref{weights for tilde epsilon new} for the weights and $\frac{B'}{B}=\frac{2\phi}{\partial_z^{-1}\phi}$. We estimate
\begin{equation*}
    \begin{split}
        -p+1-2\phi+\partial^{-1}_\tildez \phi \frac{B'}{B}&=-p+1-2\phi+\partial^{-1}_\tildez \phi\frac{2\phi}{\partial^{-1}_\tildez \phi}=-p+1\\
        \frac{\nu_\tildes}{\nu}-2\nu \tilde U-\tilde v&\geq -C\tildes^{-1}-C\tildes^{-1}-C\tildes^{-\frac{2}{3}}\\
        -(\frac{\nu_\tildes}{\nu} \tilde z-\partial_\tildez ^{-1} \tildeepsilon)\frac{B'}{B}&=-\frac{\nu_\tildes}{\nu} \frac{2\phi\tilde z}{\partial^{-1}_\tildez \phi}+\partial_\tildez ^{-1} \tildeepsilon \frac{2\phi}{\partial^{-1}_\tildez \phi}\\
        &\geq -2\norm{ \frac{\phi\tilde z}{\partial^{-1}_\tildez \phi}}_{L^\infty}(\abs{\frac{\nu_\tildes}{\nu} }+\norm{ \tildeepsilon}_{L^\infty[0,\nu^{-1}(\tildes)])}) \geq -2\cdot 1\cdot(C \tildes^{-1}+C\tildes^{-\frac{2}{3}})\\
        (\partial_\tildez^{-1}\phi-\frac{\nu_\tildes}{\nu}\tildez+\partial_\tildez^{-1}\tildeepsilon)\frac{Y_\theta'}{Y_\theta}&=(\partial_\tildez^{-1}\phi-\frac{\nu_\tildes}{\nu}\tildez+\partial_\tildez^{-1}\tildeepsilon) \theta \chi(\frac{\tildez}{L_\theta})\\
         &\geq -\theta(\abs{\frac{\nu_\tildes}{\nu}}+\norm{\tildeepsilon}_{L^\infty([0,\frac{1}{\nu(\tildes)}])}) \norm{\tildez  \chi(\frac{\tildez}{L_\theta})}_{L^\infty(\R^{\geq 0})} \geq -\theta (C\tildes^{-1}+C\tildes^{-\frac{2}{3}}) (2L_\theta),
    \end{split}
\end{equation*}
where we have used the \eqref{weights for tilde epsilon new} to simplify $\frac{B'}{B}$, \eqref{a priori bound for tilde epsilon new} for $\tildeepsilon$, $\frac{\nu_\tildes}{\nu}$ and $\nu$, the fact that $U$ is uniformly bounded, the computation of $Y'_\theta(z)$, as well as $\norm{\tildez  \chi(\frac{\tildez}{L_\theta})}_{L^\infty(\R^{\geq 0})}=\norm{\tildez  \chi(\frac{\tildez}{L_\theta})}_{L^\infty[0,2L_\theta]}\leq 2L_\theta$, as $\chi$ is smooth cutoff function. Consequently, for $0<p<1$, we have
\begin{equation}
\begin{split}
      (\partial_\tildes-\mathcal{T})(X_\theta e^{-p\tildes})&\geq (1-p-C_\theta \tildes^{-\frac{2}{3}})(X_\theta e^{-p\tildes})\\
      &\geq\frac{1-p}{2}X_\theta e^{-p\tildes}\geq 0,\text{ for }\tildes\geq \tildes^*(p,\theta) \text{ large enough.}
\end{split}
\end{equation}
We verify that under the same condition, 
{
\begin{equation}
    \norm{\frac{u_0}{X_\theta}}_{L^\infty([0,\frac{1}{\nu(\tildes_1)}])} e^{-p\tildes} X_\theta\pm e^{-p \tildes_1} \S_\mathcal{T}(\tilde s_1,\tilde s)u_0,
\end{equation}
}
is a supersolution for $\tildes\geq\tilde s_1\geq \tildes^*(p,\theta)$ and $v_1=O(z^2)$ as $z\rightarrow 0^+$. At the boundary, this can be argued similarly as in \cite{Collot2023}. In particular, we notice that the transport speed $\partial_\tildez^{-1}\phi-\frac{\nu_\tildes}{\nu}\tildez+\partial_\tildez^{-1}\tildeepsilon$ evaluated at boundary $\tildez^*=\nu^{-1}$ coincides with $\frac{d\tildez^*}{d\tildes}=-\frac{\nu_\tildes}{\nu^2}$, thanks to the compatibility condition (second equation of \eqref{equation of tilde epsilon new}). {Since we now have a supersolution in the interior, as well as at the boundary $\tilde z=0,\nu^{-1}(\tilde s)$, the desired result follows by applying the maximum principle.}
\end{proof}

\paragraph{Bound estimate for the Non-local operator}
Next, we wish to find a bound on the operator $K$ with respect to this weighted norm.
\begin{lemma}\label{tilde epsilon nonlocal new}
    There exists a universal $\theta^*>0$ and an universal time $s^*$, such that when $\theta\geq \theta^*$ and $s\geq s^*$ the following holds. For any $\tildes\geq \tildes_1\geq s^*$,
    \begin{equation}
        \norm{\frac{\mathcal K u}{X_\theta}}_{L^\infty([0,\frac{1}{\nu(\tildes)}])}\leq 2\theta^{-1}  \norm{\frac{u}{X_\theta}}_{L^\infty([0,\frac{1}{\nu(\tildes)}])},
    \end{equation}
    whenever {$u X_\theta^{-1}\in L^\infty$}.
\end{lemma}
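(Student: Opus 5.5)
We need to bound $\mathcal K u=(\phi-\nu\tilde U_{\tilde z})\partial_{\tilde z}^{-1}u$ in the weighted norm $\|\cdot/X_\theta\|_{L^\infty}$. Write $M=\|u/X_\theta\|_{L^\infty([0,\nu^{-1}])}$, so that $|u(y)|\le M B(y)Y_\theta(y)$. Then
\[
\frac{|\mathcal K u(\tilde z)|}{X_\theta(\tilde z)}\le M\,\frac{|\phi-\nu\tilde U_{\tilde z}|(\tilde z)}{B(\tilde z)Y_\theta(\tilde z)}\int_0^{\tilde z}B(y)Y_\theta(y)\,dy .
\]
Since $B$ is increasing, $\int_0^{\tilde z}BY_\theta\le B(\tilde z)\int_0^{\tilde z}Y_\theta$. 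The factor $B$ cancels, and it remains to show
\[
\sup_{\tilde z}\,|\phi-\nu\tilde U_{\tilde z}|(\tilde z)\,\frac{\int_0^{\tilde z}Y_\theta}{Y_\theta(\tilde z)}\le 2\theta^{-1}.
\]

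On $[0,L_\theta]$ we have $Y_\theta=e^{\theta\tilde z}$, so $\int_0^{\tilde z}Y_\theta/Y_\theta(\tilde z)\le\theta^{-1}$. Moreover $|\phi-\nu\tilde U_{\tilde z}|\le 1+C\nu$ there, because $\tilde U_{\tilde z}=-\tilde z e^{-\tilde z}$ is bounded. This gives the bound $(1+C\nu)\theta^{-1}\le \tfrac32\theta^{-1}$ for $\tilde s$ large.

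On $[L_\theta,2L_\theta]$ we have $Y_\theta'/Y_\theta=\theta\chi\ge 0$ with $Y_\theta$ increasing. The crude bound is $\int_0^{\tilde z}Y_\theta\le \tilde z\,Y_\theta(\tilde z)\le 2L_\theta Y_\theta(\tilde z)$. The smallness then comes from the coefficient: $\phi(\tilde z)\le e^{-L_\theta}=\theta^{-2}$ by the choice $L_\theta=2\log\theta$. Combining these gives $\phi\cdot 2L_\theta\le 4\theta^{-2}\log\theta$, which is at most $\tfrac12\theta^{-1}$ for $\theta\ge\theta^*$.

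For $\tilde z\ge 2L_\theta$, $Y_\theta$ is constant and equals $Y_\theta(\tilde z)$. The same argument gives $\int_0^{\tilde z}Y_\theta\le \tilde z Y_\theta(\tilde z)$, and $\tilde z\phi(\tilde z)\le 2L_\theta e^{-2L_\theta}$, which is tiny.

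The $\nu\tilde U_{\tilde z}$ part is the main obstacle, since $\tilde z$ can reach $\nu^{-1}$. I handle it using $|\tilde U_{\tilde z}|=\tilde z e^{-\tilde z}$. This gives $\nu\tilde z\cdot\tilde z e^{-\tilde z}\le C\nu$ on $\tilde z\ge L_\theta$, which is at most $\tfrac12\theta^{-1}$ once $\tilde s\ge s^*$, since $\nu\sim\tilde s^{-1}$ by \eqref{a priori bound for tilde epsilon new}. Summing the regimes yields the constant $2\theta^{-1}$.
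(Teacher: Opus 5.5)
Your argument follows the paper's proof. You use monotonicity of $B$ and $Y_\theta$ to reduce to bounding $|\phi-\nu\tilde U_{\tilde z}|\,\partial_{\tilde z}^{-1}X_\theta/X_\theta$. You get the factor $\theta^{-1}$ on $[0,L_\theta]$ from $Y_\theta=e^{\theta\tilde z}$ and the crude factor $\tilde z$ beyond $L_\theta$, then exploit $e^{-L_\theta}=\theta^{-2}$. Your extra split at $2L_\theta$ is harmless but unnecessary.

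One step, however, does not deliver the statement as written. For the $\nu\tilde U_{\tilde z}$ contribution on $\tilde z\ge L_\theta$ you use the crude bound $\nu\tilde z^2e^{-\tilde z}\le C\nu$ and then require $C\nu\le\frac12\theta^{-1}$. Since $\nu\simeq\tilde s^{-1}$, this forces $\tilde s\gtrsim\theta$, so your time threshold grows with $\theta$. The lemma instead asserts a single universal $s^*$ valid for every $\theta\ge\theta^*$.

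The fix is the same decay you already used for the $\phi$ term. Since $\tilde z^2e^{-\tilde z}$ is decreasing on $[2,\infty)$, for $\tilde z\ge L_\theta$ it is at most $L_\theta^2e^{-L_\theta}=4(\log\theta)^2\theta^{-2}\le\theta^{-1}$ once $\theta$ is large. This gives $\nu\tilde z^2e^{-\tilde z}\le\nu\theta^{-1}$ and an overall bound $(1+\nu)\theta^{-1}\le2\theta^{-1}$ that only needs $\nu\le1$. That is exactly how the paper keeps $s^*$ universal.

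So the range $\tilde z\sim\nu^{-1}$ is not really the main obstacle: the factor $e^{-\tilde z}$ controls it independently of $\nu$. Your $\theta$-dependent version would still suffice for Corollary \ref{tilde epsilon combined new}, whose threshold $\tilde s^{**}(\theta)$ may depend on $\theta$ anyway. It is nonetheless weaker than what the lemma claims.
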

\begin{proof}
we first notice that
\begin{equation}
\begin{split}
     \abs{\frac{\mathcal Ku}{X_\theta}}&=\abs{\phi-\nu \tilde U_\tildez} \abs{\frac{\partial_\tildez^{-1} u}{X_\theta}}\leq\abs{\phi-\nu \tilde U_\tildez}\frac{\partial_\tildez^{-1} X_\theta}{X_\theta} \norm{\frac{u}{X_\theta}}_{L^\infty([0,\frac{1}{\nu(\tildes)}])},
\end{split}
\end{equation}
for $u=O(\tildez^2)$ as $z\rightarrow 0^+$. We estimate that
\begin{equation}\label{integral quotient weight new}
\begin{split}
    \frac{\partial_\tildez^{-1} X_\theta}{X_\theta}=&\begin{cases}
       \frac{\int_0^\tildez e^{\theta \hat z} B(\hat z) d\hat z}{e^{\theta \tildez} B(\tildez) } &\quad \text{ if }\tildez\leq L_\theta\\
        \frac{\int_0^\tildez Y_\theta(\hat z) B(\hat z) d\hat z}{Y_\theta(\tildez) B(\tildez) }&\quad \text{ if }\tildez> L_\theta
    \end{cases}
    \leq\begin{cases}
       \frac{B(\tilde z)\int_0^\tildez e^{\theta \hat z}  d\hat z}{e^{\theta \tildez} B(\tildez) } &\quad \text{ if }\tildez\leq L_\theta\\
        \frac{Y_\theta(\tildez) B(\tildez)\int_0^\tildez 1 d\hat z}{Y_\theta(\tildez) B(\tildez) }&\quad \text{ if }\tildez> L_\theta
    \end{cases}=\begin{cases}
       \frac{1}{\theta}\frac{e^{\theta \tildez}-1}{e^{\theta \tildez}} &\quad \text{ if }\tildez\leq L_\theta\\
        \tildez&\quad \text{ if }\tildez> L_\theta
    \end{cases}
\end{split}
\end{equation}
where we have used $\tildez\leq L_\theta$, $Y_\theta=e^{\theta\tildez}$ and $B,Y_\theta$ is increasing. By direct computation, we have
\begin{equation}\label{inner weight new}
\begin{split}
     \frac{\partial_\tildez^{-1} X_\theta}{X_\theta}\abs{(\phi-\nu \tilde U_\tildez)}&\leq \frac{1}{\theta }(e^{-\tildez}+\nu \tildez e^{-\tildez})
     \leq \frac{1}{\theta}(1+\nu e^{-1}) \text{ for }\tilde z\leq L_\theta\\
\end{split}
\end{equation}
 Since $L_\theta=2\log \theta$, we have $e^{-L_\theta/2}=\frac{1}{\theta}$ and if we choose $L_\theta \geq 20$, then
\begin{equation}
    L_\theta^{1} e^{-L_\theta}\leq L_\theta^{2} e^{-L_\theta}\leq  e^{-\frac{L_\theta}{2}}=\frac{1}{\theta}
\end{equation}
On the other hand, if $\tildez\geq L_\theta\geq 2$, then $\tildez^{1} e^{-\tildez},\tildez^{2} e^{-\tildez}$ are decreasing. Therefore, for $\theta$ large enough such that $L_\theta\geq 20$
\begin{equation}\label{outside weight new}
\begin{split}
     \frac{\partial_\tildez^{-1} X_\theta}{X_\theta}\abs{(\phi-\nu \tilde U_\tildez)}&\leq (\tildez e^{-\tildez}+\nu \tildez^2 e^{-\tildez})\leq \frac{1}{\theta}(1+\nu), \text{ for }\tilde z> L_\theta.
\end{split}
\end{equation}
If $\theta $ and $L_\theta$ satisfy these assumptions, then combining \eqref{inner weight new} in the inner region and \eqref{outside weight new} in the outer region, we find that 
\begin{equation}
    \begin{split}
       \abs{(\phi-\nu \tilde U_\tildez)}\frac{\partial_\tildez^{-1} X_\theta}{X_\theta}\leq (1+\nu)\theta^{-1}.
    \end{split}
\end{equation}
The bound can now be obtained by using \eqref{a priori bound for tilde epsilon new}.
\end{proof}

\paragraph{Combined linear evolution estimate}
We now have an estimate on the linear evolution of the combined operator.
\begin{corollary}\label{tilde epsilon combined new}
    There exists a universal $\theta^{**}$, such that the following holds for the linear evolution of $\mathcal H$. For any $\theta\geq \theta^{**}$, there exists $\tildes^{**}(\theta)$ large enough, such that 
    \begin{equation}
         \norm{\frac{\S_\mathcal{H}(\tilde s_1,\tilde s) u_0}{X_\theta}}_{L^\infty([0,\frac{1}{\nu(\tildes)}])}\leq   \norm{\frac{ u_0}{X_\theta}}_{L^\infty([0,\frac{1}{\nu(\tildes_1)}])}e^{-\frac{1}{2}(\tildes-\tildes_1)},
    \end{equation}
    whenever $\tildes\geq \tildes_1\geq \tildes^{**}$ and {$u_0 X_\theta^{-1}\in L^\infty$}.
\end{corollary}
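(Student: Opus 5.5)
The corollary follows from a perturbative Duhamel argument on top of the transport estimate, with $\mathcal K$ treated as a bounded perturbation in the weighted norm. Write $\mathcal H=\mathcal T+\mathcal K$. For data $u_0$ with $u_0X_\theta^{-1}\in L^\infty$, the solution $v(\tildes)=\S_\mathcal{H}(\tildes_1,\tildes)u_0$ satisfies the Duhamel formula
\begin{equation*}
v(\tildes)=\S_\mathcal{T}(\tildes_1,\tildes)u_0+\int_{\tildes_1}^{\tildes}\S_\mathcal{T}(\sigma,\tildes)\,\mathcal K v(\sigma)\,d\sigma .
\end{equation*}
Before using it, I will check that $\mathcal K v(\sigma)=O(\tildez^2)$ at the origin, so that Lemma \ref{tilde epsilon transport new} applies to each integrand. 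This holds because $v=O(\tildez^2)$ forces $\pzi v=O(\tildez^3)$. The property $v=O(\tildez^2)$ is itself propagated by $\mathcal H$: the transport speed vanishes at $\tildez=0$, and $\mathcal K$ maps $O(\tildez^2)$ functions to $O(\tildez^3)$ functions.

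Fix $p\in(1/2,1)$, for instance $p=3/4$. I take $\theta^{**}\geq\theta^*$ from both lemmas, large enough that $2/\theta^{**}<p-1/2$. Then I take $\tildes^{**}(\theta)\ge\max(\tildes_1^*(p,\theta),s^*)$. Set
\begin{equation*}
g(\tildes)=e^{p(\tildes-\tildes_1)}\norm{v(\tildes)X_\theta^{-1}}_{L^\infty([0,\nu^{-1}(\tildes)])}.
\end{equation*}
Lemma \ref{tilde epsilon transport new} bounds the first Duhamel term by $e^{-p(\tildes-\tildes_1)}\norm{u_0X_\theta^{-1}}_{L^\infty}$. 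For the integral term, Lemma \ref{tilde epsilon transport new} followed by Lemma \ref{tilde epsilon nonlocal new} bounds each integrand by $e^{-p(\tildes-\sigma)}\,2\theta^{-1}\norm{v(\sigma)X_\theta^{-1}}_{L^\infty}$. Together these give
\begin{equation*}
g(\tildes)\le g(\tildes_1)+2\theta^{-1}\int_{\tildes_1}^{\tildes}g(\sigma)\,d\sigma .
\end{equation*}
Gronwall's lemma then yields $g(\tildes)\le g(\tildes_1)e^{2\theta^{-1}(\tildes-\tildes_1)}$. Hence
\begin{equation*}
\norm{v X_\theta^{-1}}_{L^\infty}\le \norm{u_0X_\theta^{-1}}_{L^\infty}\,e^{-(p-2/\theta)(\tildes-\tildes_1)}\le \norm{u_0X_\theta^{-1}}_{L^\infty}\,e^{-\frac12(\tildes-\tildes_1)} .
\end{equation*}

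The main technical point is justifying the Duhamel representation on the moving domain $[0,\nu^{-1}(\tildes)]$. At the right endpoint, the transport speed coincides with the boundary velocity, as already used in the proof of Lemma \ref{tilde epsilon transport new}. Consequently the characteristics of $\mathcal T$ cover the domain exactly and no boundary data is needed, so the variation-of-constants formula holds along characteristics.

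If one prefers to avoid Duhamel altogether, there is an alternative: rerun the supersolution argument of Lemma \ref{tilde epsilon transport new} directly for $\mathcal H$. In that argument the nonlocal term is absorbed using the bound $|\mathcal K u|\le 2\theta^{-1}X_\theta\norm{uX_\theta^{-1}}$, via a comparison at a first touching time. Either route gives the stated rate $1/2$.
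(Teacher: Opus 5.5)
Your proposal is correct and follows the paper's own route. The paper proves this corollary in one line, by a Duhamel--Gronwall combination of Lemma~\ref{tilde epsilon transport new} (with some $p\in(1/2,1)$) and Lemma~\ref{tilde epsilon nonlocal new}; you have written out the details, including the choice $2/\theta<p-\tfrac12$ that gives the rate $\tfrac12$, and one implicit point shared with the paper is that Gronwall needs $\sigma\mapsto\norm{v(\sigma)X_\theta^{-1}}_{L^\infty}$ to be a priori locally bounded.
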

\begin{proof}
    Following a standard Duhamel-Gronwall argument, this corollary is proved by combining Lemma \ref{tilde epsilon transport new} and Lemma \ref{tilde epsilon nonlocal new}
\end{proof}
\subsubsection{Estimate in the weighted $L^\infty$ norm via a \textit{bootstrap} argument}

\begin{lemma}\label{tilde epsilon iterative}
    {Let $\tilde v$ be the solution of \eqref{equation of remainder new}.} There exists $\theta^{**}$ large enough such that for all $\theta\geq \theta^{**}$, an $\bar s_0^*(\theta)\gg 1$ exists, such that the following holds. For any fixed $\tilde s_0^*\geq \bar s_0^*(\theta)$ and $\alpha>\frac{1}{2}$, if there exists a constant $C_{\tilde s_0^*,\alpha,\theta}>0$ such that
    \begin{equation}\label{tilde epsilon iterative simple original state}
    \norm{{\tilde v(\tilde s)}}_{L^\infty [0,\nu^{-1}(\tildes)]}\leq C_{\tilde s_0^*,\alpha,\theta}\tildes^{-\alpha} \text{ for }\tildes\geq s_0^*,
\end{equation}
then there exists another constant $\tilde C_{\tilde s_0^*,\alpha,\theta}>0$ such that
 \begin{equation}\label{tilde epsilon iterative simple imrpoved state}
    \norm{\frac{\tilde v(\tilde s)}{X_\theta}}_{L^\infty [0,\nu^{-1}(\tildes)]}\leq \tilde C_{\tilde s_0^*,\alpha,\theta}\tildes^{-\min(2,1+\alpha,2\alpha)} \text{ for }\tildes\geq s_0^*,
\end{equation}
\end{lemma}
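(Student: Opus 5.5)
We plan to argue by Duhamel's formula in the weighted norm $\norm{\cdot/X_\theta}_{L^\infty}$, using Corollary \ref{tilde epsilon combined new}. We write $\tilde v$ as the solution of $\partial_{\tilde s}\tilde v=\mathcal H\tilde v+F$, with the nonlinear term $\tilde v\cdot\tilde v$ kept inside $\mathcal T$ as in \eqref{decomp of op for tilde epsilon new}. Then
\begin{equation*}
\tilde v(\tilde s)=\S_{\mathcal H}(\tilde s_0^*,\tilde s)\tilde v(\tilde s_0^*)+\int_{\tilde s_0^*}^{\tilde s}\S_{\mathcal H}(\sigma,\tilde s)F(\sigma)\,d\sigma .
\end{equation*}
The initial term is admissible, since $\tilde v(\tilde s_0^*)=O(\tilde z^2)$ and $X_\theta\approx_\theta \tilde z^2$ near $0$, so $\tilde v(\tilde s_0^*)/X_\theta\in L^\infty$. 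It therefore decays like $e^{-(\tilde s-\tilde s_0^*)/2}$, which is better than any power. Everything reduces to bounding $\norm{F(\sigma)/X_\theta}_{L^\infty}\lesssim \sigma^{-\min(2,1+\alpha,2\alpha)}$. The integral $\int e^{-(\tilde s-\sigma)/2}\sigma^{-\beta}d\sigma\lesssim \tilde s^{-\beta}$ then gives \eqref{tilde epsilon iterative simple imrpoved state}.

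To bound $F$, we first check that it vanishes to order $\tilde z^2$ at the origin. This holds because $\tilde U(0)=\tilde U'(0)=0$, and the other profiles $2\tilde U-\tilde z\tilde U_{\tilde z}$ and $\tilde U^2-(\partial_{\tilde z}^{-1}\tilde U)\tilde U_{\tilde z}$ are also $O(\tilde z^2)$. These profiles are bounded (or grow at most polynomially times bounded functions), and $X_\theta\approx_\theta 1$ for $\tilde z\ge1$. So each profile divided by $X_\theta$ is bounded by a $\theta$-dependent constant. We must be careful that $\tilde z\,\tilde U_{\tilde z}$ is bounded on $[0,\nu^{-1}]$, which holds since $\tilde U_{\tilde z}=-\tilde z e^{-\tilde z}$. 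The $\nu^2$ term then contributes $O(\nu^2)=O(\tilde s^{-2})$. For the modulation term we use \eqref{tilde epsilon nu simplified new}:
\begin{equation*}
\Big|\frac{\nu_{\tilde s}}{\nu}+\nu\Big|\lesssim \nu^2+\nu\Big|\int_0^{\nu^{-1}}\phi\tilde v\Big|+\nu^2\int_0^{\nu^{-1}}|\tilde U\tilde v|+\nu\int_0^{\nu^{-1}}\tilde v^2 .
\end{equation*}
By hypothesis \eqref{tilde epsilon iterative simple original state}, these are respectively $O(\tilde s^{-2})$, $O(\tilde s^{-1-\alpha})$, $O(\tilde s^{-2}\cdot\tilde s\cdot\tilde s^{-\alpha})=O(\tilde s^{-1-\alpha})$ (the length of the interval is $\nu^{-1}$), and $O(\tilde s^{-1}\cdot\tilde s\cdot\tilde s^{-2\alpha})=O(\tilde s^{-2\alpha})$. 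We use $\nu\approx\tilde s^{-1}$ from \eqref{a priori bound for tilde epsilon new}. Multiplying by the bounded profile $\tilde U+\nu(\cdots)$ gives $\norm{F/X_\theta}\lesssim \tilde s^{-\min(2,1+\alpha,2\alpha)}$.

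The main obstacle is applying the linear estimate, because $\mathcal H$ depends on $\tilde v$ itself. Lemma \ref{tilde epsilon transport new} and Corollary \ref{tilde epsilon combined new} only used the a priori bound $\|\tilde v\|_{L^\infty}\lesssim \tilde s^{-\kappa}$ from \eqref{a priori bound for tilde epsilon new}, so they apply with $\tilde s_0^*\ge \bar s_0^*(\theta):=\tilde s^{**}(\theta)$. A second subtlety is the moving boundary $\tilde z=\nu^{-1}(\tilde s)$, where the semigroup is only defined on a growing domain. Here we rely on the tangency of the transport speed to the boundary, already handled in Lemma \ref{tilde epsilon transport new}, so Duhamel's formula makes sense with each $F(\sigma)$ restricted to $[0,\nu^{-1}(\sigma)]$. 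The requirement $\alpha>\frac12$ is what makes $2\alpha>1$, so that the quadratic term $\nu\int\tilde v^2$ improves on the input rate; this is what allows the lemma to be iterated.
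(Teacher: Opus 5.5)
Your proposal is correct and follows the paper's proof essentially step for step. It uses Duhamel's formula with the weighted decay of Corollary \ref{tilde epsilon combined new} (applicable because it only relies on the a priori bound \eqref{a priori bound for tilde epsilon new}), bounds $\frac{\nu_{\tilde s}}{\nu}+\nu$ via \eqref{tilde epsilon nu simplified new} and the hypothesis to get the rate $\tilde s^{-\min(2,1+\alpha,2\alpha)}$, controls $F/X_\theta$ through the $O(\tilde z^2)$ vanishing of the profiles, and uses exponential decay of the initial term. Only your closing heuristic is off: since $\min(2,1+\alpha,2\alpha)>\alpha$ for every $\alpha\in(0,2)$, the gain does not hinge on $2\alpha>1$, though this does not affect the argument.
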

\begin{proof}

Fix $\theta\geq \theta^{**}$ as in Corollary \ref{tilde epsilon combined new}. Set $$\bar s_0^*(\theta)=\max(\tilde s^{**}(\theta),50,\hat s^*), $$where $\tilde s^{**}$ is given in Corollary \ref{tilde epsilon combined new}, $\frac{1}{2}\tildes ^{-1}\leq \nu\leq 2\tildes^{-1}$ whenever $\tildes\geq \hat s^*$, as well as \eqref{tilde epsilon nu simplified new} can be written. Let $\tilde s_0^*\geq \bar s_0^*(\theta)$ be fixed. For $\alpha>\frac{1}{2}$, given that 
\begin{equation}\label{tilde epsilon iterative simple}
    \norm{{\tilde v(\tilde s)}}_{L^\infty [0,\nu^{-1}(\tildes)]}\leq C_{\tilde s_0^*,\alpha,\theta}\tildes^{-\alpha} \text{ for }\tildes\geq s_0^*.
\end{equation}
We begin by estimating the source term, from \eqref{tilde epsilon nu simplified new}
\begin{equation}\label{tilde epsilon nu simplified estimate new}
    \begin{split}
        \abs{\frac{\nu_\tildes}{\nu}+\nu}
&\leq 4\tildes^{-2}+C\tildes^{-3}+8 \tildes^{-1}\norm{{\tilde v(\tilde s)}}_{L^\infty [0,\nu^{-1}(\tildes)]}\int_0^{\infty}\phi\,d\tilde z + 16 \tildes^{-2}\norm{{\tilde v(\tilde s)}}_{L^\infty [0,\nu^{-1}(\tildes)]}\norm{\tilde U}_{L^\infty }\int_0^{\nu^{-1}}\,d\tildez\\
&\quad+4\tildes^{-1} \norm{{\tilde v(\tilde s)}}_{L^\infty [0,\nu^{-1}(\tildes)]}^2\int_0^{\nu^{-1}}1\,d\tildez\\
&\leq (4+C)\tildes^{-2}+8 C_{\tilde s_0^*,\alpha,\theta}\tildes^{-\alpha-1}+16\tildes^{-2} C_{\tilde s_0^*,\alpha,\theta}\tildes^{-\alpha}\cdot 1 \cdot 2\tildes+4\tildes^{-1} C_{\tilde s_0^*,\alpha,\theta}^2\tildes^{-2\alpha}\cdot 2\tildes\\
&\leq 3\max (4+C,40 C_{\tilde s_0^*,\alpha,\theta},8 C_{\tilde s_0^*,\alpha,\theta}^2)\tildes^{-\min(2,1+\alpha,2\alpha)},
    \end{split}
\end{equation}
for $\tildes \geq \tildes_0^*$, where we have used the definition of $\bar s_0^*$ in the first and second line, \eqref{tilde epsilon iterative simple} and definition of $\tilde U$ in the third line.

For $\tildes \geq \tilde s_0^*$, we estimate the source term \eqref{tilde epsilon source new} in weighed norm as
\begin{equation}
    \begin{split}
       &  \norm{\frac{F}{X_\theta}}_{L^\infty[0,\nu^{-1}(s)]} \leq \left|\frac{\nu_{\tilde s}}{\nu}+\nu\right|\norm{\frac{\tilde U}{X_\theta}}_{L^\infty }
+\abs{\nu} \left|\frac{\nu_{\tilde s}}{\nu}+\nu\right|\norm{\frac{2\tilde U-\tilde z\,\tilde U_{\tilde z}}{X_\theta}}_{L^\infty} 
+\nu^2 \norm{\frac{ \tilde U^2-(\partial_{\tilde z}^{-1}\tilde U)\tilde U_{\tilde z}
+2\tilde U-\tilde z\,\tilde U_{\tilde z}}{X_\theta}}_{L^\infty} \\
&\leq C'_{s_0^*,\alpha,\theta} C_\theta \tildes^{-\min(2,1+\alpha,2\alpha)} +2 \tilde s^{-1} C_{\tilde s_0^*,\alpha,\theta}' C_\theta \tildes^{-\min(2,1+\alpha,2\alpha)}+4\tilde s^{-2} C_\theta \leq C_{\tilde s_0^*,\alpha,\theta}''  \tildes^{-\min(2,1+\alpha,2\alpha)},
    \end{split}
\end{equation}
where we have used \eqref{tilde epsilon nu simplified estimate new} and 
$\tilde U,\tilde z \tilde U_\tildez,\tilde U^2,(\partial_{\tilde z}^{-1}\tilde U)\tilde U_{\tilde z},
2\tilde U,\tilde z\,\tilde U_{\tilde z}$ are $O(\tildez^2)$ at $0$ and $O(1)$ at $\infty$. Consequently, when $\tildes \geq \tilde s_0^*$, Duhamel's principle and linear evolution estimate (Corollary \ref{tilde epsilon combined new}) gives
\begin{equation}\label{tilde epsilon eq 1}
    \begin{split}
       & \norm{\frac{\tilde v(s)}{X_\theta}}_{L^\infty[0,\nu^{-1}(\tilde s)]} \leq \norm{ \frac{\S_\mathcal{H}(\tildes_0^*,\tildes)\tilde v (\tilde s_0^*)}{X_\theta}}_{L^\infty[0,\nu^{-1}(\tilde s)]}+\int_{\tildes_0^*}^{\tildes} \norm{\frac{\S_\mathcal{H}(\tau,\tildes) F}{X_\theta}}_{L^\infty[0,\nu^{-1}(\tilde s)]} d\tau\\
        &\leq e^{-\frac{1}{2}(\tildes-\tildes_0^*)}\norm{ \frac{\tilde v (\tildes_0^*)}{X_\theta}}_{L^\infty[0,\nu^{-1}(\tilde s_0^*)]}+\int_{\tildes_0^*}^{\tildes} e^{-\frac{1}{2}(\tildes-\tau)} \norm{\frac{ F}{X_\theta}}_{L^\infty[0,\nu^{-1}(\tau)]} d\tau\\
        &\leq e^{\frac{1}{2} \tildes_0^*}\left( \norm{ \frac{\tilde \epsilon (\tildes_0^*)}{X_\theta}}_{L^\infty[0,\nu^{-1}(\tilde s_0^*)]}+\nu(\tilde s_0^*)\norm{ \frac{\tilde U}{X_\theta}}_{L^\infty}\right)e^{-\frac{1}{2} \tildes} +C_{\tilde s_0^*,\alpha,\theta}'' \int_{1}^{\tildes} e^{-\frac{1}{2}(\tildes-\tau)}  \tau^{-\min(2,1+\alpha,2\alpha)} d\tau\\
         &\leq e^{\frac{1}{2} \tildes_0^*}\left( \norm{ \frac{\tilde \epsilon (\tildes_0^*)}{X_\theta}}_{L^\infty[0,\nu^{-1}(\tilde s_0^*)]}+\frac{2C_\theta}{{\tildes_0^{*}}}+C_{\tilde s_0^*,\alpha,\theta}'''\right)\tildes^{-\min(2,1+\alpha,2\alpha)} =  \tilde C_{\tilde s_0^*,\alpha,\theta}\tildes^{-\min(2,1+\alpha,2\alpha)}
    \end{split}
\end{equation}
where we have used \eqref{tilde epsilon nu simplified estimate new} and \eqref{decomp of tilde epsilon new} in the third line, the fact that for $\tildes\geq \tildes_0^*\geq\bar s_0^* \geq 50$,
$\norm{ \frac{\tilde \epsilon (\tildes_0^*)}{X_\theta}}_{L^\infty[0,\nu^{-1}(\tilde s_0^*)]}$ is bounded, as $\epsilon (\tildes_0^*)$ is $O(\tildez^2)$ at $\tildez=0$ and uniformly bounded when $\tildez\geq 0$ (See the definition of initial closeness of \cite{Collot2023}),
\begin{equation}\label{tilde epsilon convenient inequality}
    e^{-\frac{1}{2}\tilde s}\leq e^{-\frac{1}{4}\tildes}\leq \tildes^{-2}\leq \tildes^{-\min(2,1+\alpha,2\alpha)}.
\end{equation}
as well as $\alpha>1/2$ and $\int_{1}^{\tildes} e^{-\frac{1}{2}(\tildes-\tau)}  \tau^{-\min(2,1+\alpha,2\alpha)} d\tau \lesssim C_\alpha  \tildes^{-\min(2,1+\alpha,2\alpha)}$.
\end{proof}

\begin{corollary}\label{tilde epsilon remainder estimate new}
    There exists $\theta^{**}$ large enough such that for all $\theta\geq \theta^{**}$, an $\bar s_0^*(\theta)\gg 1$ exists, such that the following holds. For any fixed $\tilde s_0^*\geq \bar s_0^*(\theta)$, there exists a constant $C_{\tilde s_0^*,\theta}>0$ such that
     \begin{equation}
          \norm{\frac{\tilde v(\tilde s)}{X_\theta}}_{L^\infty [0,\nu^{-1}(\tildes)]}\leq C_{\tilde s_0^*,\theta}\tildes^{-2} \text{ for }\tildes\geq s_0^*.
     \end{equation}
\end{corollary}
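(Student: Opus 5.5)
The plan is to iterate Lemma \ref{tilde epsilon iterative} finitely many times, starting from the a priori bound of Theorem \ref{th:base-result-CIL}. Fix $\theta\geq\theta^{**}$ and $\tilde s_0^*\geq \bar s_0^*(\theta)$ as in Lemma \ref{tilde epsilon iterative}. By \eqref{a priori bound for tilde epsilon new}, the hypothesis \eqref{tilde epsilon iterative simple original state} holds with $\alpha_0=\kappa=\frac23>\frac12$. The lemma then gives a bound for $\tilde v/X_\theta$ with exponent
\[
\alpha_1=\min(2,1+\alpha_0,2\alpha_0)=\tfrac43 .
\]

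To feed this back into the lemma, which assumes an unweighted $L^\infty$ bound, we use that $B\leq 1$ and that $Y_\theta$ is bounded above uniformly in $\tilde z$. Indeed, $Y_\theta\leq \exp(2\theta L_\theta)$, since $\chi(\xi/L_\theta)$ vanishes for $\xi\geq 2L_\theta$. Hence $X_\theta\lesssim_\theta 1$ and
\[
\|\tilde v\|_{L^\infty[0,\nu^{-1}]}\leq \|X_\theta\|_{L^\infty}\,\|\tilde v/X_\theta\|_{L^\infty}\lesssim_\theta \tilde s^{-\alpha_1}.
\]
So the hypothesis of Lemma \ref{tilde epsilon iterative} holds again with $\alpha_1=\frac43$, and a new constant depending only on $\tilde s_0^*,\theta$.

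Applying the lemma a second time gives the exponent $\min(2,1+\frac43,\frac83)=2$, which is the claimed bound. Two iterations suffice. In general the map $\alpha\mapsto\min(2,1+\alpha,2\alpha)$ strictly increases any $\alpha\in(\frac12,2)$, by at least $\min(\alpha-\frac12,1)$ per step. It reaches $2$ after finitely many steps and cannot exceed $2$, which explains the $\tilde s^{-2}$ barrier: it comes from the $\nu^2$ source terms in $F$ and the $-\nu^2$ term in \eqref{tilde epsilon nu simplified new}.

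The only point needing care is that the constants stay independent of $\tilde s$ and that the admissible times line up. The lemma is stated for all $\tilde s\geq \tilde s_0^*$ with the same $\bar s_0^*(\theta)$, which does not depend on $\alpha$, so both iterations use the same starting time. The constant $C_{\tilde s_0^*,\theta}$ is the output of the second iteration, with $\alpha$ fixed to the explicit values $\frac23$ and $\frac43$. I expect no genuine obstacle beyond checking the uniform upper bound on $X_\theta$ that converts the weighted estimate back into an unweighted one.
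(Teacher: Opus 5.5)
Your proposal is correct and follows the paper's proof: iterate Lemma \ref{tilde epsilon iterative} from $\alpha_0=\kappa=\frac23$, using $X_\theta\lesssim_\theta 1$ to turn each weighted output back into the unweighted hypothesis, and your explicit chain $\frac23\to\frac43\to2$ makes the paper's ``finitely many steps'' concrete. One side remark is slightly off: the claim that each step gains at least $\min(\alpha-\frac12,1)$ fails for $\alpha>\frac54$, where the gain is only $2-\alpha$, but in that regime the next exponent is already $2$, so the argument is unaffected.
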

\begin{proof}
    Fix $\theta\geq 2\theta^{**}$ and set initial $\alpha$ to be $1>\alpha_0=\frac{2}{3}>\frac{1}{2}$, thanks to the fact that $\norm{\cdot}_{L^\infty[0,\nu^{-1}(\tildes)]}\lesssim_\theta \norm{\frac{\cdot}{X_\theta}}_{L^\infty[0,\nu^{-1}(\tildes)]}$, we can iteratively apply Lemma \ref{tilde epsilon iterative} to obtain a sequence of $(\alpha_n)$ that is increasing and reaches to $2$ in finite steps.
\end{proof}
\subsection{$C^1$ estimate of the remainder in dynamical self-similar coordinates}
{We now establish a bound on the derivative of the remainder term.}
\subsubsection{Settings}
To simplify notations, we denote $\tilde w=\tilde v_\tildez$, from \eqref{remainder derivative new}, we introduce
\begin{equation}\label{tilde epsilon source new derivative}
\begin{split}
    \partial_s \tilde w&={\mathcal  J} \tilde w+G\\
    G&=\left(\nu \tilde U_{\tilde z}-\phi\right)\tilde v
-\left(\phi+\nu \tilde U_{\tilde z\tilde z}\right)\partial_{\tilde z}^{-1}\tilde v 
-\left(\frac{\nu_{\tilde s}}{\nu}+\nu\right)
\left(
\tilde U_{\tilde z}
+\nu(\tilde U_{\tilde z}-\tilde z\,\tilde U_{\tilde z\tilde z})
\right) \\
&\quad
+\nu^2\left(
\tilde U\tilde U_{\tilde z}
-(\partial_{\tilde z}^{-1}\tilde U)\tilde U_{\tilde z\tilde z}
+\tilde U_{\tilde z}
-\tilde z\,\tilde U_{\tilde z\tilde z}
\right),
\end{split}
\end{equation}
and the operator $\mathcal J$ is defined to be
\begin{equation}\label{decomp of op for tilde epsilon new derivative}
    \mathcal J= \left(-1+\phi+\tilde\epsilon\right)\mathds{1} 
+\left(
\frac{\nu_{\tilde s}}{\nu}\tilde z
-\partial_{\tilde z}^{-1}(\phi+\tilde\epsilon)
\right)\partial_\tildez 
\end{equation}
\subsubsection{Transport operator estimate}
We begin by introducing a weight $\bar X_\theta:\R^{\geq 0}\mapsto \R^{>0}$, $\theta>1$ such that
\begin{equation}\label{weights for tilde epsilon new derivative}
        \bar X_\theta(\tildez)=\bar B(\tildez)Y_\theta(\tildez), \quad \bar B=(1-e^{-\tildez}), \quad Y_\theta \text{ the same defined in \eqref{weights for tilde epsilon new}.} 
\end{equation}

\begin{lemma}\label{tilde epsilon transport new derivative}
    For any $\theta>1$ and {$u_0 \bar X_\theta^{-1}\in L^\infty$}, there exists $\tildes^*_1(\theta)$ such that when $\tildes\geq \tildes_1\geq\tildes_1^*$,
    \begin{equation}
        \norm{\frac{\S_\mathcal{J}(\tildes_1,\tildes) u_0}{\bar X_\theta}}_{L^\infty([0,\frac{1}{\nu(\tildes)}])}\leq   \norm{\frac{ u_0}{\bar X_\theta}}_{L^\infty([0,\frac{1}{\nu(\tildes_1)}])}e^{-\frac{1}{2}(s-\tildes_1)},
    \end{equation}
\end{lemma}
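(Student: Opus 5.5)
I will follow the scheme of Lemma \ref{tilde epsilon transport new}: build an explicit supersolution of the form $\bar X_\theta e^{-p\tilde s}$ with $p=\frac12$, then conclude by the maximum principle. The only change is that the operator $\mathcal J$ has zeroth-order coefficient $-1+\phi+\tilde\epsilon$ in place of $-1-\frac{\nu_{\tilde s}}{\nu}+2\phi+\dots$, and the weight uses $\bar B=1-e^{-\tilde z}$ in place of $B=(1-e^{-\tilde z})^2$.

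\textbf{Step 1: the supersolution computation.} I compute
\[
(\partial_{\tilde s}-\mathcal J)(\bar X_\theta e^{-p\tilde s})=\bar X_\theta e^{-p\tilde s}\Big(-p+1-\phi-\tilde\epsilon+\big(\partial_{\tilde z}^{-1}(\phi+\tilde\epsilon)-\tfrac{\nu_{\tilde s}}{\nu}\tilde z\big)\big(\tfrac{\bar B'}{\bar B}+\tfrac{Y_\theta'}{Y_\theta}\big)\Big).
\]
Since $\bar B'/\bar B=\phi/\partial_{\tilde z}^{-1}\phi$, the leading terms cancel exactly:
\[
-\phi+\partial_{\tilde z}^{-1}\phi\,\frac{\bar B'}{\bar B}=0 .
\]
The $Y_\theta$ contribution splits into two parts:
\begin{itemize}
\item the part $\partial_{\tilde z}^{-1}\phi\,\theta\chi\geq 0$, which has a good sign and can be dropped;
\item the part $(\partial_{\tilde z}^{-1}\tilde\epsilon-\frac{\nu_{\tilde s}}{\nu}\tilde z)\theta\chi$, which is bounded below by $-2\theta L_\theta(C\tilde s^{-1}+C\tilde s^{-2/3})$.
\end{itemize}
This uses $\|\tilde\epsilon\|_{L^\infty}\lesssim \tilde s^{-2/3}$ from Theorem \ref{th:base-result-CIL} and the support property of $\chi$.

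The error terms in the $\bar B'/\bar B$ part are $\big(\partial_{\tilde z}^{-1}\tilde\epsilon-\frac{\nu_{\tilde s}}{\nu}\tilde z\big)\phi/\partial_{\tilde z}^{-1}\phi$. I bound these by $\|\tilde z\phi/(1-e^{-\tilde z})\|_{L^\infty}(|\nu_{\tilde s}/\nu|+\|\tilde\epsilon\|_{L^\infty})\lesssim \tilde s^{-2/3}$. Here I use $|\partial_{\tilde z}^{-1}\tilde\epsilon|\le \tilde z\|\tilde\epsilon\|_{L^\infty}$, and the fact that $\tilde z e^{-\tilde z}/(1-e^{-\tilde z})\le 1$.

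Together with $-\tilde\epsilon\geq -C\tilde s^{-2/3}$, this gives
\[
(\partial_{\tilde s}-\mathcal J)(\bar X_\theta e^{-\tilde s/2})\geq \Big(\tfrac12-C_\theta\tilde s^{-2/3}\Big)\bar X_\theta e^{-\tilde s/2}\geq 0
\]
for $\tilde s\geq \tilde s_1^*(\theta)$. Any $p<1$ would work here, and $p=\frac12$ suffices.

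\textbf{Step 2: comparison.} Next I show that
\[
\Big\|\frac{u_0}{\bar X_\theta}\Big\|_{L^\infty}e^{-\frac12(\tilde s-\tilde s_1)}\bar X_\theta\pm \S_{\mathcal J}(\tilde s_1,\tilde s)u_0
\]
is nonnegative. It is nonnegative at time $\tilde s_1$ by definition. The boundary is handled in two places:
\begin{itemize}
\item At $\tilde z=0$, the transport speed $\frac{\nu_{\tilde s}}{\nu}\tilde z-\partial_{\tilde z}^{-1}(\phi+\tilde\epsilon)$ vanishes, so no boundary data is needed there. Since $\bar X_\theta$ vanishes linearly, the solution must also vanish at $\tilde z=0$. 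This holds because $\tilde w=\tilde v_{\tilde z}=O(\tilde z)$, and is preserved by the flow because $\tilde z=0$ is a characteristic.
\item At the moving boundary $\tilde z=\nu^{-1}(\tilde s)$, I use the compatibility condition $\int_0^{\nu^{-1}}(\phi+\tilde\epsilon)=0$. It shows that the characteristic speed equals $-\nu_{\tilde s}/\nu^2=\frac{d}{d\tilde s}\nu^{-1}$, so the boundary is itself a characteristic and no inflow condition arises.
\end{itemize}
The maximum principle along characteristics then yields the bound.

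\textbf{Main obstacle.} The step I expect to be most delicate is the behaviour at $\tilde z=0$ with the weaker weight. There $\bar X_\theta\sim\tilde z$ and $\bar B'/\bar B\sim 1/\tilde z$ is singular, so I must check that the error terms times $\bar B'/\bar B$ stay bounded. They do, because the coefficients $\partial_{\tilde z}^{-1}\tilde\epsilon$ and $\tilde z\,\nu_{\tilde s}/\nu$ vanish at least linearly at $\tilde z=0$. I must also check that the ratio $u/\bar X_\theta$ stays bounded up to the boundary. I would argue this along characteristics directly, writing the solution via the flow map and integrating the zeroth-order coefficient.
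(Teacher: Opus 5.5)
Your proposal is correct and follows the paper's proof essentially step for step. It uses the same supersolution $\bar X_\theta e^{-q\tilde s}$ and the same exact cancellation $-\phi+\partial_{\tilde z}^{-1}\phi\,\bar B'/\bar B=0$, which leaves the margin $1-q$; the same $O_\theta(\tilde s^{-2/3})$ lower bounds on the remaining terms; and the same maximum-principle argument with $q=\frac12$, in which the bottom boundary $\tilde z=0$ and the moving boundary $\tilde z=\nu^{-1}$ are both characteristic (the latter by the mean-free condition). Two small comments: your treatment of the $\bar B'/\bar B$ error terms is slightly cleaner than the paper's, which carries over the factor $2\phi/\partial_{\tilde z}^{-1}\phi$ from $B$; and the separate check that $u/\bar X_\theta$ stays bounded is unnecessary, because the pointwise comparison $|\mathscr S_{\mathcal J}(\tilde s_1,\tilde s)u_0|\le \|u_0/\bar X_\theta\|_{L^\infty}e^{-\frac12(\tilde s-\tilde s_1)}\bar X_\theta$ already gives it.
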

\begin{proof}
We compute, by definition, that {for any $0<q<1$,}
\begin{equation}\label{weights for tilde epsilon new derivative}
    \begin{split}
        & (\partial_\tildes-\mathcal{J})(\bar X_\theta e^{-q\tildes}) =\bar X_\theta e^{-q\tildes}(-q-\frac{\mathcal J \bar X_\theta}{\bar X_\theta})\\
        &=\bar X_\theta e^{-q\tildes} \left(-q+1-\phi+\partial^{-1}_\tildez \phi \frac{\bar B'}{\bar B}\right. -\tildeepsilon-(\frac{\nu_\tildes}{\nu} \tilde z-\partial_\tildez ^{-1} \tildeepsilon)\frac{\bar B'}{\bar B} \left. +(\partial_\tildez^{-1}\phi-\frac{\nu_\tildes}{\nu}\tildez+\partial_\tildez^{-1}\tildeepsilon)\frac{Y_\theta'}{Y_\theta}\right),
    \end{split}
\end{equation}
where we have used the definition of $\mathcal{J}$ \eqref{decomp of op for tilde epsilon new derivative}.
\begin{equation*}
    \begin{split}
        -q+1-\phi+\partial^{-1}_\tildez \phi \frac{\bar B'}{\bar B}&=-q+1-\phi+\partial^{-1}_\tildez \phi\frac{\phi}{\partial^{-1}_\tildez \phi}=-q+1\\
        -\tildeepsilon-(\frac{\nu_\tildes}{\nu} \tilde z-\partial_\tildez ^{-1} \tildeepsilon)\frac{B'}{B}&=-\tildeepsilon-\frac{\nu_\tildes}{\nu} \frac{2\phi\tilde z}{\partial^{-1}_\tildez \phi}+\partial_\tildez ^{-1} \tildeepsilon \frac{2\phi}{\partial^{-1}_\tildez \phi}\\
        &\geq-\norm{\tildeepsilon}_{L^\infty[0,\frac{1}{\nu(\tildes)}]}  -2\norm{ \frac{\phi\tilde z}{\partial^{-1}_\tildez \phi}}_{L^\infty}(\abs{\frac{\nu_\tildes}{\nu} }+\norm{ \tildeepsilon}_{L^\infty[0,\nu^{-1}(\tildes)])})\\
        &\geq -C\tildes^{-\frac{2}{3}} -2\cdot 1\cdot(C \tildes^{-1}+C\tildes^{-\frac{2}{3}})\\
        (\partial_\tildez^{-1}\phi-\frac{\nu_\tildes}{\nu}\tildez+\partial_\tildez^{-1}\tildeepsilon)\frac{Y_\theta'}{Y_\theta}&=(\partial_\tildez^{-1}\phi-\frac{\nu_\tildes}{\nu}\tildez+\partial_\tildez^{-1}\tildeepsilon) \theta \chi(\frac{\tildez}{L_\theta})\\
         &\geq -\theta(\abs{\frac{\nu_\tildes}{\nu}}+\norm{\tildeepsilon}_{L^\infty([0,\frac{1}{\nu(\tildes)}])}) \norm{\tildez  \chi(\frac{\tildez}{L_\theta})}_{L^\infty(\R^{\geq 0})}\\
        &\geq -\theta (C\tildes^{-1}+C\tildes^{-\frac{2}{3}}) (2L_\theta),
    \end{split}
\end{equation*}
where we have used $\frac{\bar B'}{\bar B}=\frac{\phi}{\partial_\tildez^{-1}\phi}$ and all the other reasons are similar to the proof in {Lemma \ref{tilde epsilon transport new}}.
Consequently, for $0<q<1$, we have
\begin{equation}
\begin{split}
      (\partial_\tildes-\mathcal{J})(\bar X_\theta e^{-p\tildes})&\geq (1-q-C_\theta \tildes^{-\frac{2}{3}})(\bar X_\theta e^{-q\tildes})\\
      &\geq\frac{1-q}{2}\bar X_\theta e^{-q\tildes}\geq 0,\text{ for }\tildes\geq \tildes^*(q,\theta) \text{ large enough.}
\end{split}
\end{equation}
Therefore, under the same condition, following the same procedure as before (Lemma \ref{tilde epsilon transport new}), we see
\begin{equation}
    \norm{\frac{v_1}{\bar X_\theta}}_{L^\infty([0,\frac{1}{\nu(\tildes_1)}])} e^{-q\tildes} \bar X_\theta\pm e^{-q \tildes_1} \S_\mathcal{J}(\tildes_1,\tildes) v_1,
\end{equation}
is also a supersolution. {The desired result follows by applying maximum principle and setting $q=1/2$.}
\end{proof}

\subsubsection{Estimate in the weighed $L^\infty$ norm}
\begin{lemma}\label{tilde epsilon remainder in C1}
    There exists $\theta^{**}$ large enough such that for all $\theta\geq \theta^{**}$, an $\bar s_0^*(\theta)\gg 1$ exists, such that the following holds. For any fixed $\tilde s_0^*\geq \bar s_0^*(\theta)$, there exists a constant $C_{\tilde s_0^*,\theta}>0$ such that
     \begin{equation}
          \norm{\frac{\tilde v(\tilde s)}{X_\theta}}_{ [0,\nu^{-1}(\tildes)]},\norm{\frac{\tilde v_\tildez(\tilde s)}{\bar X_\theta}}_{ [0,\nu^{-1}(\tildes)]}\leq C_{\tilde s_0^*,\theta}\tildes^{-2} \text{ for }\tildes\geq \tilde s_0^*.
     \end{equation}
     In particular,
      \begin{equation}
          \norm{{\tilde v(\tilde s)}}_{C^1 [0,\nu^{-1}(\tildes)]}\lesssim_{\tilde s_0^*,\theta}\tildes^{-2} \text{ for }\tildes\geq \tilde s_0^*.
     \end{equation}
\end{lemma}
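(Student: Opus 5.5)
The first bound, on $\tilde v/X_\theta$, is already Corollary \ref{tilde epsilon remainder estimate new}. So the plan is to prove the derivative bound on $\tilde w=\tilde v_{\tilde z}$ in the $\bar X_\theta$-weighted norm. We use Duhamel's formula for \eqref{tilde epsilon source new derivative} together with the transport decay of Lemma \ref{tilde epsilon transport new derivative}. There is no nonlocal operator to absorb here, since the terms containing $\tilde v$ and $\partial_{\tilde z}^{-1}\tilde v$ were moved into the source $G$. Those terms are already controlled by the previous step.

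\textbf{Step 1: source estimate.} We show $\|G/\bar X_\theta\|_{L^\infty[0,\nu^{-1}(\tilde s)]}\lesssim_{\tilde s_0^*,\theta}\tilde s^{-2}$, term by term.
\begin{itemize}
\item For $(\nu\tilde U_{\tilde z}-\phi)\tilde v$: we have $|\tilde v|\le C\tilde s^{-2}X_\theta$, and $X_\theta/\bar X_\theta=B/\bar B=1-e^{-\tilde z}\le 1$. This gives $O(\tilde s^{-2})$.
\item For $(\phi+\nu\tilde U_{\tilde z\tilde z})\partial_{\tilde z}^{-1}\tilde v$: we have $|\partial_{\tilde z}^{-1}\tilde v|\le C\tilde s^{-2}\partial_{\tilde z}^{-1}X_\theta$. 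By \eqref{integral quotient weight new}, $\partial_{\tilde z}^{-1}X_\theta\lesssim \min(\theta^{-1},\tilde z)X_\theta$ near the origin and $\lesssim \tilde z X_\theta$ beyond $L_\theta$. The prefactor $\phi+\nu\tilde U_{\tilde z\tilde z}$ is $O((1+\tilde z)e^{-\tilde z})+O(\nu\tilde z e^{-\tilde z})$, which kills the factor $\tilde z$ at infinity. Combined with $X_\theta/\bar X_\theta\le1$, this again gives $O(\tilde s^{-2})$.
\item For the modulation term: once $\|\tilde v\|_{L^\infty}\lesssim\tilde s^{-2}$ is known, \eqref{tilde epsilon nu simplified estimate new} with $\alpha=2$ gives $|\nu_{\tilde s}/\nu+\nu|\lesssim \tilde s^{-2}$. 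The profiles $\tilde U_{\tilde z}$ and $\tilde U_{\tilde z}-\tilde z\tilde U_{\tilde z\tilde z}$ vanish to first order at $0$, since $\tilde U_{\tilde z}=-\tilde z e^{-\tilde z}$, and they are bounded. Hence they lie in the $\bar X_\theta$-weighted space, because $\bar X_\theta\approx\tilde z$ near $0$ and $\approx1$ away from it.
\item The $\nu^2$ profile term: it is likewise $O(\tilde z)$ at the origin and bounded, hence $O(\tilde s^{-2})$ in the weighted norm.
\end{itemize}

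\textbf{Step 2: Duhamel.} Take $\tilde s\ge\tilde s_1^*(\theta)$ from Lemma \ref{tilde epsilon transport new derivative}, enlarging $\bar s_0^*$ if needed. Then
$$\Big\|\frac{\tilde w(\tilde s)}{\bar X_\theta}\Big\|\le e^{-\frac12(\tilde s-\tilde s_0^*)}\Big\|\frac{\tilde w(\tilde s_0^*)}{\bar X_\theta}\Big\|+C\int_{\tilde s_0^*}^{\tilde s}e^{-\frac12(\tilde s-\tau)}\tau^{-2}d\tau\lesssim \tilde s^{-2}.$$
The initial weighted norm is finite because $\tilde v_{\tilde z}=O(\tilde z)$ at $0$ and $\tilde v\in C^2$ initially, so $\tilde v_{\tilde z}$ is bounded. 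The exponential term is absorbed using \eqref{tilde epsilon convenient inequality}.

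\textbf{Step 3: the $C^1$ bound.} Since $X_\theta,\bar X_\theta\lesssim_\theta 1$, the two weighted bounds give $\|\tilde v\|_{C^1}\lesssim \tilde s^{-2}$.

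\textbf{Main obstacle.} The delicate part is the nonlocal source $(\phi+\nu\tilde U_{\tilde z\tilde z})\partial_{\tilde z}^{-1}\tilde v$. In the far field the weight $\bar X_\theta$ does not decay, while $\partial_{\tilde z}^{-1}\tilde v$ may grow like $\tilde z\,\tilde s^{-2}$ up to $\tilde z\sim\tilde s$. The argument must use the exponential decay of the coefficient to beat this growth; the term $\nu\tilde z e^{-\tilde z}\cdot\tilde z$ is harmless. A second point to check is that the maximum principle in Lemma \ref{tilde epsilon transport new derivative} applies to $\tilde w$, which requires $\tilde w(\tilde s,0)=0$. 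This holds by the modulation condition $\tilde\epsilon_{\tilde z}(0)=0$ together with $\tilde U'(0)=0$.
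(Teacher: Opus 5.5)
Your proposal is correct and follows essentially the same route as the paper. The paper quotes Corollary~\ref{tilde epsilon remainder estimate new} for the $X_\theta$-weighted bound, estimates each piece of $G$ in the $\bar X_\theta$-weighted norm, and closes with Duhamel and the pure-transport decay of Lemma~\ref{tilde epsilon transport new derivative}; the source estimates use $X_\theta/\bar X_\theta=1-e^{-\tilde z}$, $\partial_{\tilde z}^{-1}X_\theta\le \tilde z X_\theta$ against the exponentially decaying prefactor, and $|\nu_{\tilde s}/\nu+\nu|\lesssim\tilde s^{-2}$. Your remark that $\tilde w(\tilde s,0)=0$ follows from $\tilde\epsilon_{\tilde z}(0)=\tilde U'(0)=0$ is a useful check that the paper leaves implicit.
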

\begin{proof}
Recall that $\tilde v_\tildez=\tilde w$. Fix $\theta\geq \theta^{**}$ as in Corollary \ref{tilde epsilon remainder estimate new}. Let $\theta\geq \theta^{**}$ and $$\bar s_0^{**}(\theta)=\max(\bar s_0^*(\theta),\tilde s_1^*(\theta),50,\hat s^{*})$$ where $\bar s_0^*(\theta)$ is given by Corollary \ref{tilde epsilon remainder estimate new}, $\tilde s_1^*(\theta)$ is given by Lemma \ref{tilde epsilon transport new derivative}, and $\frac{1}{2}\tildes ^{-1}\leq \nu\leq 2\tildes^{-1}$ whenever $\tildes\geq \hat s^*$, as well as \eqref{tilde epsilon nu simplified new} can be written. Fix $\tilde s_0^*\geq \bar s_0^{**}(\theta)$, then whenever $\tilde s\geq \tilde s_0^*$ holds, we have, by Corollary \ref{tilde epsilon remainder estimate new},
$$  \norm{\frac{\tilde v(\tilde s)}{X_\theta}}_{L^\infty [0,\nu^{-1}(\tildes)]}\leq C_{\tilde s_0^*,\theta}\tildes^{-2},$$
 and the linear evolution estimate Lemma \ref{tilde epsilon transport new derivative} holds. We begin by estimate each term appears in the source term \eqref{tilde epsilon source new derivative},
\begin{equation}
    \begin{split}
        \norm{\frac{\left(\nu \tilde U_{\tilde z}-\phi\right)\tilde v}{\bar X_\theta}}_{L^\infty[0,\frac{1}{\nu(\tildes)}]}&\leq (\abs{\nu} \norm{\tilde U_\tildez}_{L^\infty}+\norm{\phi}_{L^\infty})  \norm{\frac{\tilde v}{ X_\theta}\frac{X_\theta}{\bar X_\theta}}_{L^\infty[0,\frac{1}{\nu(\tildes)}]}\\
        &\leq (2C\tilde s^{-1}+1)C_{\tilde s_0^*,\theta}\tildes^{-2}\lesssim_{\tilde s_0^*,\theta} \tildes^{-2}\\
        \abs{\frac{\partial_\tildez^{-1}\tilde v}{\bar X_\theta}}=\abs{\frac{\partial_\tildez^{-1}(\frac{\tilde v}{X_\theta}{X_\theta})}{\bar X_\theta}}&\leq \norm{\frac{\tilde v}{X_\theta}}_{L^\infty[0,\frac{1}{\nu(\tildes)}]}\frac{\partial_\tildez^{-1} X_\theta}{\bar X_\theta}
        \leq \norm{\frac{\tilde v}{X_\theta}}_{L^\infty[0,\frac{1}{\nu(\tildes)}]} \frac{X_\theta}{\bar X_\theta}\tildez\leq\norm{\frac{\tilde v}{X_\theta}}_{L^\infty[0,\frac{1}{\nu(\tildes)}]} \tildez\\
        \norm{\frac{\left(\phi+\nu \tilde U_{\tilde z\tilde z}\right)\partial_{\tilde z}^{-1}\tilde v }{\bar X_\theta}}_{L^\infty[0,\frac{1}{\nu(\tildes)}]}&\leq (\norm{\tilde z\phi}_{L^\infty}+\abs{\nu}\norm{\tilde z\tilde U_{\tildez\tildez}}_{L^\infty})\norm{\frac{\tilde v}{X_\theta}}_{L^\infty[0,\frac{1}{\nu(\tildes)}]}\\
        &\leq (C+2C\tildes^{-1})C_{\tilde s_0^*,\theta} \tildes^{-2}\lesssim_{\tilde s_0^*,\theta} \tildes^{-2}\\
        \norm{\frac{\tilde U_{\tilde z}
+\nu(\tilde U_{\tilde z}-\tilde z\,\tilde U_{\tilde z\tilde z})}{\bar X_\theta}}_{L^\infty}&\leq C_\theta +2\tildes^{-1} C_\theta \lesssim_\theta 1,\quad
\norm{\frac{
\tilde U\tilde U_{\tilde z}
-(\partial_{\tilde z}^{-1}\tilde U)\tilde U_{\tilde z\tilde z}
+\tilde U_{\tilde z}
-\tilde z\,\tilde U_{\tilde z\tilde z}}{\bar X_\theta}}_{L^\infty}\lesssim_\theta 1
    \end{split}
\end{equation}
where we have used $X_\theta$ is increasing, $\frac{X_\theta}{\bar X_\theta}=\frac{B}{\bar B}=1-e^{-\tildez}$ and 
$\tilde U_\tildez, \tilde z \tilde U_{{\tilde z} {\tilde z}}$, $\tilde U\tilde U_{\tilde z}
,(\partial_{\tilde z}^{-1}\tilde U)\tilde U_{\tilde z\tilde z}$ are $O(\tildez)$ at $\tildez=0$ and $O(e^{-\tildez/2})$ at infinity. On the other hand, we have $\nu^2\leq 4\tildes^{-2}$, and by reusing the computation of \eqref{tilde epsilon convenient inequality}, we have
\begin{equation}
    \abs{\frac{\nu_\tildes}{\nu}+\nu}\lesssim_{\tilde s_0^*,\theta}\tildes^{-2}.
\end{equation}
Therefore, for $\tildes\geq \tildes_0^*$, the source term satisfies
\begin{equation}
    \norm{\frac{G}{\bar X_\theta}}_{L^\infty[0,\frac{1}{\nu(\tildes)}]}\lesssim_{s_0^*,\theta} \tildes^{-2}.
\end{equation}
Finally, combining the linear evolution estimate Lemma \ref{tilde epsilon transport new derivative} and Duhamel's principle, we have for $\tildes\geq \tildes_0^*$,
\begin{equation}
    \begin{split}
        \norm{\frac{\tilde w(s)}{\bar X_\theta}}_{L^\infty[0,\nu^{-1}(\tilde s)]}&\leq \norm{ \frac{\S_\mathcal{J}(\tildes_0^*,\tildes)\tilde w (\tilde s_0^*)}{\bar X_\theta}}_{L^\infty[0,\nu^{-1}(\tilde s)]}+\int_{\tildes_0^*}^{\tildes} \norm{\frac{ \S_\mathcal{J}(\tau,\tildes) G}{\bar X_\theta}}_{L^\infty[0,\nu^{-1}(\tilde s)]} d\tau\\
        &\leq e^{-\frac{1}{2}(\tildes-\tildes_0^*)}\norm{ \frac{\tilde w (\tildes_0^*)}{ \bar X_\theta}}_{L^\infty[0,\nu^{-1}(\tilde s_0^*)]}+\int_{\tildes_0^*}^{\tildes} e^{-\frac{1}{2}(\tildes-\tau)} \norm{\frac{ G}{\bar X_\theta}}_{L^\infty[0,\nu^{-1}(\tau)]} d\tau\\
        &\leq e^{\frac{1}{2} \tildes_0^*}\left( \norm{ \frac{\tilde \epsilon_\tildez (\tildes_0^*)}{\bar X_\theta}}_{L^\infty[0,\nu^{-1}(\tilde s_0^*)]}+\nu(\tilde s_0^*)\norm{ \frac{\tilde U_\tildez}{X_\theta}}_{L^\infty}\right)e^{-\frac{1}{2} \tildes} \\
        &\quad+C_{\tilde s_0^*,\theta}'' \int_{1}^{\tildes} e^{-\frac{1}{2}(\tildes-\tau)}  \tau^{-2} d\tau\lesssim_{\tilde s_0^*,\theta}\tildes^{-2},
    \end{split}
\end{equation}
where the last line follows the same idea, as in the proof of Lemma \ref{tilde epsilon iterative}.
\end{proof}

\subsection{The asymptotic expansion of modulation parameters in fixed self-similar coordinates}
{In the previous section, we have established a $C^1$ asymptotic expansion on $\tilde \eps$, using $\nu$ as a {spatial} scale. To convert this result for $\eps$ in fixed self-similar coordinates, one needs to first establish asymptotic expansions on the modulation parameters, which are used to establish a link between the physical and the dynamical self-similar coordinates. On the other hand, the fixed self-similar is explicitly defined using the physical ones. One shall see that the leading order of $\eps$: $\frac{\log s}{s}$ is intrinsically related to these modular asymptotic expansions, as the computation progresses.}
We recall the profile in the fixed self-similar variable setting:
\begin{equation}
    s=\abs{\log(T-t)},\, z=Zs.
\end{equation}
The following corollary gives the asymptotic expansion of modulation parameters in fixed self-similar coordinates.
\begin{corollary}\label{tilde epsilon modular expansion new}
    {Let $u_X$ be given by Theorem \ref{th:base-result-CIL}, then in the fixed self-similar coordinates $(s,z)$ given by \eqref{self-similar-coordinates}, {there exist $C_\tildes,C_\nu\in \R$ such that} the following asymptotic relations hold:}
    \begin{equation}
    \begin{split}
    \tilde s&=s+\log s+C_\tildes+O(\frac{\log s}{s})\\
        \frac{1}{\nu}&=s+2\log s-C_\nu+O(\frac{\log s}{s})\\
        \nu&=\frac{1}{s}-2\frac{\log s}{s^2}+\frac{C_\nu}{s^2}+O(\frac{(\log s)^2}{s^3})\\
        \tildez&=\frac{z}{\nu s}=z(1+\frac{2\log s}{s}-\frac{C_\nu}{s}+O(\frac{\log s}{s^2}))\\
        \lambda&=e^{-s}(1-\frac{1}{s}+2\frac{\log s}{s^2}+\frac{1-C_\nu}{s^2}+O(\frac{(\log s)^2}{s^3}))\\
        \frac{1}{\lambda}&=e^s(1+\frac{1}{s}-2\frac{\log s}{s^2}+\frac{C_\nu}{s^2}+O(\frac{(\log s)^2}{s^3})
    \end{split}
\end{equation}
\end{corollary}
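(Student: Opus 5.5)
The plan is to first derive a sharp ODE for $\nu$ in the dynamical time $\tilde s$, then integrate it to expand $\nu$ and $\lambda$ in $\tilde s$. I will then convert to the fixed time $s=-\log(T-t)$ through the identity $dt=\lambda\,d\tilde s$, and finally invert the relation between $s$ and $\tilde s$.

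\emph{Step 1: sharp modulation ODE.} Insert the bound $\|\tilde v\|_{L^\infty}\lesssim \tilde s^{-2}$ of Lemma \ref{tilde epsilon remainder in C1} into the exact law for $\nu$ in \eqref{equation of remainder new}.
\begin{itemize}
\item The term $4\nu\int\phi\tilde v$ is $O(\tilde s^{-3})$.
\item The term $4\nu^2\int_0^{\nu^{-1}}\tilde U\tilde v$ is $O(\nu^2\cdot\nu^{-1}\tilde s^{-2})=O(\tilde s^{-3})$.
\item The term $2\nu\int\tilde v^2$ is $O(\tilde s^{-4})$.
\item The terms $\frac{11}{2}\nu^3$ and the exponentially small terms are $O(\tilde s^{-3})$.
\end{itemize}
Hence
$$\frac{\nu_{\tilde s}}{\nu}+\nu=-\nu^2+O(\tilde s^{-3}),$$
that is,
$$\partial_{\tilde s}(\nu^{-1})=1+\nu+O(\tilde s^{-2}).$$
Integrating once gives $\nu^{-1}=\tilde s+O(\log\tilde s)$. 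Integrating a second time, using $\nu=\tilde s^{-1}+O(\log\tilde s/\tilde s^2)$, whose error is integrable, gives
$$\nu^{-1}=\tilde s+\log\tilde s+c_1+O\!\left(\frac{\log\tilde s}{\tilde s}\right).$$
From the second relation in \eqref{lambda and nu new} we also get
$$\frac{\lambda_{\tilde s}}{\lambda}=-1-\frac{\nu_{\tilde s}}{\nu}=-1+\nu+\nu^2+O(\tilde s^{-3}).$$
Integrating yields
$$\log\lambda=-\tilde s+\log\tilde s+c_2+O\!\left(\frac{\log \tilde s}{\tilde s}\right).$$

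\emph{Step 2: link to $T-t$.} Since $dt=\lambda\,d\tilde s$ and $t\uparrow T$ as $\tilde s\to\infty$, we have
$$\frac{T-t}{\lambda(\tilde s)}=\int_0^\infty \exp\Big(\int_0^\tau \frac{\lambda_{\tilde s}}{\lambda}(\tilde s+\sigma)\,d\sigma\Big)d\tau .$$
Insert the ODE for $\lambda$ and integrate by parts twice, using $\nu_{\tilde s}=-\nu^2+O(\tilde s^{-3})$. This gives
$$\frac{T-t}{\lambda}=1+\nu+\nu^2+O(\tilde s^{-3}).$$
Taking logarithms and using Step 1 gives
$$s=-\log(T-t)=\tilde s-\log\tilde s-c_2+O\!\left(\frac{\log\tilde s}{\tilde s}\right).$$
Inverting by fixed point, with $\log\tilde s=\log s+O(\log s/s)$, yields
$$\tilde s=s+\log s+C_{\tilde s}+O\!\left(\frac{\log s}{s}\right).$$
Substituting this into the expansion of $\nu^{-1}$ gives
$$\nu^{-1}=s+2\log s-C_\nu+O\!\left(\frac{\log s}{s}\right),$$
where $C_\nu:=-(C_{\tilde s}+c_1)$. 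Expanding $1/(s+2\log s-C_\nu+\dots)$ gives the stated expansion of $\nu$. The relation $\tilde z=Z/\nu=z/(\nu s)$ then follows directly.

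\emph{Step 3: $\lambda$ in fixed coordinates.} Write
$$\frac{1}{\lambda}=\frac{1}{T-t}\cdot\frac{T-t}{\lambda}=e^s\big(1+\nu+\nu^2+O(s^{-3})\big).$$
Then substitute the expansion of $\nu$ in $s$ from Step 2. The $\nu^2$ term contributes $s^{-2}$ at the constant level. This is where the exact value of the $s^{-2}$ coefficient must be checked carefully; I expect it to be absorbed by the precise definition of $C_\nu$, or otherwise tracked explicitly. The expansion of $\lambda$ is then obtained by inversion.

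\emph{Main obstacle.} The delicate point is the error bookkeeping. The nonlocal integrals in the $\nu$-equation run over an interval of length $\nu^{-1}\sim\tilde s$, so the $O(\tilde s^{-2})$ $L^\infty$ bound on $\tilde v$ only barely yields $O(\tilde s^{-3})$. The constants of integration also propagate through two integrations and one inversion, which produces $\log s$ terms at each stage, and these must be kept consistently to order $(\log s)^2/s^3$.
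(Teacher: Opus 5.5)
Your Steps 1--2 follow the paper's argument: two integrations of the $\nu$-law, $\lambda\nu=Ke^{-\tilde s}$, $T-t=\int_{\tilde s}^\infty\lambda$, then a logarithm and an inversion. They correctly give the expansions of $\tilde s$, $\nu^{-1}$, $\nu$ and $\tilde z$. Your evaluation of $T-t$, by integrating by parts against the exact identity $\lambda=Ke^{-\tilde s}\nu^{-1}$, is in fact more careful than the paper's.

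The gap is in Step 3, and it cannot be closed the way you expect. The $s^{-2}$ coefficient cannot be ``absorbed into $C_\nu$'', because the same $C_\nu$ is already fixed by the $\nu$-line (equivalently, by $\tilde z=z(1+\frac{2\log s}{s}-\frac{C_\nu}{s}+\dots)$). If you finish your own computation using $\nu^2=s^{-2}+O(\log s/s^3)$, you get
\begin{align*}
\frac1\lambda&=e^s\Bigl(1+\frac1s-\frac{2\log s}{s^2}+\frac{1+C_\nu}{s^2}+O\Bigl(\frac{(\log s)^2}{s^3}\Bigr)\Bigr),\\
\lambda&=(T-t)\bigl(1-\nu+O(s^{-3})\bigr)=e^{-s}\Bigl(1-\frac1s+\frac{2\log s}{s^2}-\frac{C_\nu}{s^2}+O\Bigl(\frac{(\log s)^2}{s^3}\Bigr)\Bigr).
\end{align*}
These differ from the last two stated relations by $\pm s^{-2}$. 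So your argument, carried out correctly, proves the first four relations and contradicts the last two as written.

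The error is in the paper's proof, not in your route.
\begin{itemize}
\item In \eqref{2 tilde new} the paper asserts $T-t=Ke^{-\tilde s}(\nu^{-1}+1+O(\log\tilde s/\tilde s^2))$. But multiplying the relative error of \eqref{T-t comp tilde epsilon new} by $\tilde s$ only gives an absolute error $O(\log\tilde s/\tilde s)$. That error contains exactly the term $+\nu$ you found.
\item With that term, $\lambda/(T-t)=(1+\nu+\nu^2+O(\nu^3))^{-1}=1-\nu+O(\nu^3)$. The $\nu^2$ the paper keeps in its expansion $1-\nu+\nu^2$ therefore cancels.
\end{itemize}

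You can confirm this independently. Evaluating \eqref{proudman-johnson-equation} at $Z=0$ in fixed variables gives
\begin{equation*}
\partial_s\epsilon_0=\epsilon_0+\epsilon_0^2-\frac2s\int_0^s(\phi+\epsilon)^2\,dz.
\end{equation*}
The profiles of Theorem~\ref{asymptotic expansion of epsilon} give
\begin{equation*}
\frac2s\int_0^s(\phi+\epsilon)^2\,dz=\frac1s-\frac{2\log s}{s^2}+\frac{3+C_\nu}{s^2}+O\Bigl(\frac{(\log s)^2}{s^3}\Bigr),
\end{equation*}
where the far-field value $\epsilon\approx-1/s$ contributes the extra $2/s^2$. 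The unique bounded solution is then
\begin{equation*}
\epsilon_0=\frac1s-\frac{2\log s}{s^2}+\frac{1+C_\nu}{s^2}+O\Bigl(\frac{(\log s)^2}{s^3}\Bigr),
\end{equation*}
which is exactly your $1/\lambda=e^s(1+\epsilon_0)$.

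You should therefore state and prove the corrected coefficients, $1+C_\nu$ for $1/\lambda$ and $-C_\nu$ for $\lambda$, rather than try to reach the given ones. The change only affects the $s^{-2}$ coefficient of $\epsilon_0$ used in \eqref{definitions of functions in source new}--\eqref{rho and xi expansion}. There $\rho$ remains $O(\log s/s^2)$, hence integrable, and that is all that is used later.
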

\begin{proof}

 If $\norm{\tilde v}_{L^\infty[0,\nu^{-1}(\tildes)]}\lesssim_{\tildes_0^*,\theta} \tildes^{-2}$ is satisfied, then for sufficiently large $\tildes$, we reuse the computation of \eqref{tilde epsilon nu simplified estimate new}, which shows
\begin{equation*}
    4\nu\int_0^{\nu^{-1}}\phi\,\abs{\tilde v}\,d\tilde z ,\,4\nu^2\int_0^{\nu^{-1}}\abs{\tilde U\,\tilde v}\,d\tilde z=O(\tildes^{-3}),\,2\nu\int_0^{\nu^{-1}}\tilde v^2\,d\tilde z=O(\tildes^{-4}), 
\end{equation*}
combining with the definition of source term \eqref{tilde epsilon nu simplified new}, \eqref{a priori bound for tilde epsilon new} and $\nu\simeq \tildes^{-1}$ we see
\begin{equation}\label{1 over nu new}
    \begin{split}
        (\frac{1}{\nu})_\tildes&=1+\nu+O(\nu^2)=1+O(\tildes^{-1})\\
    \end{split}
\end{equation}
Integrating both hand sides and using geometric series gives
\begin{equation}
    \nu-\tildes^{-1}=\tildes^{-1}+O(\frac{\log\tildes}{\tildes^2})-\tildes^{-1}=O(\frac{\log\tildes}{\tildes^2}),
\end{equation}
and plug this back to the first equation of $1/\nu$ \eqref{1 over nu new}, using $\nu\simeq \tildes^{-1}$, we see
\begin{equation}
    (\frac{1}{\nu})_\tildes=1+\tildes^{-1}+O(\frac{\log\tildes}{\tildes^2}),
\end{equation}
integrating once again and using geometric series gives
\begin{equation}\label{tilde epsilon 1 over nu and nu in tilde s new}
\begin{split}
    \nu
    &=\frac{1}{\tildes}-\frac{\log\tildes}{\tildes^2}-\frac{\tilde C}{\tildes^2}+O(\frac{(\log \tildes)^2}{\tildes^3}).
\end{split}
\end{equation}
Since $-1=\frac{\lambda_\tildes}{\lambda}+\frac{\nu_\tildes}{\nu}=\frac{d}{d\tildes}(\log \lambda+\log \nu)=\frac{d}{d\tildes}(\log (\lambda\nu))$, we find that $\lambda\nu=K e^{-\tilde s} $, and thus
\begin{equation}
    \lambda=Ke^{-\tildes}\tildes (1+\frac{\log\tildes
    }\tildes+{\tilde C}{\tildes}^{-1}+O(\frac{\log\tildes}{\tildes^2})).
\end{equation}
As in \cite{Collot2023}, one introduces $t=\int_{\tildes_0}^\tildes \lambda d\bar s$ and set $T=\int_{\tildes_0}^\infty \lambda d\bar s$, then
\begin{equation}\label{T-t comp tilde epsilon new}
\begin{split}
        T-t&= K \int_\tildes^\infty e^{-\bar s} \bar s+e^{-\bar s}\log \bar s+\tilde Ce^{-\bar s}+O(e^{-\bar s} \frac{\log \tildes}{\tildes}) d\bar s\\
        &=K e^{-\bar s}\tildes (1+\frac{\log\tildes}{\tildes}+(\tilde C+1)\tildes^{-1}+O(\frac{\log \tildes}{\tildes^2})).
\end{split}
\end{equation}
Taking $\log$ on both sides of \eqref{T-t comp tilde epsilon new} shows
\begin{equation}\label{tilde epsilon log'd new}
\begin{split}
     \log(T-t)
     &=\log K-\tildes+\log(\tildes)+O(\frac{\log \tildes}{\tildes})
\end{split}
\end{equation}
and this shows $\tildes \simeq \abs{\log(T-t)}$ and thus using the previous equation and taking log gives
\begin{equation}
    \log \tildes=\log\abs{\log(T-t)}+O(\frac{\log\abs{\log(T-t)}}{\abs{\log(T-t)}}),
\end{equation}
which upon using \eqref{tilde epsilon log'd new}, one obtains
\begin{equation}
    \tildes=\abs{\log(T-t)}+\log\abs{\log(T-t)}+\log K+O(\frac{\log\abs{\log(T-t)}}{\abs{\log(T-t)}}).
\end{equation}
Combining this with \eqref{tilde epsilon 1 over nu and nu in tilde s new}, we see
\begin{equation}\label{tilde epsilon 1/nu and nu in T-t new}
\begin{split}
     \frac{1}{\nu}
     &=\abs{\log(T-t)}+2\log\abs{\log(T-t)}+\log K+\tilde C+O(\frac{\log\abs{\log(T-t)}}{\abs{\log(T-t)}})\\
     \nu&=\frac{1}{\abs{\log(T-t)}}-\frac{2\log \abs{\log(T-t)}}{\abs{\log(T-t)}^2}-\frac{\log K+\tilde C}{\abs{\log(T-t)}^2}+O(\frac{(\log\abs{\log(T-t)})^2}{\abs{\log(T-t)}^3})
\end{split}
\end{equation}
To compute the asymptotic expansion of $\lambda$ with respect to $T-t$, using \eqref{T-t comp tilde epsilon new} and combining with the equation of $\nu^{-1}$ \eqref{tilde epsilon 1 over nu and nu in tilde s new}, we find that
\begin{equation}\label{2 tilde new}
    T-t=Ke^{-\tildes}(\nu^{-1}+1+O(\frac{\log \tildes}{\tildes^2})).
\end{equation}
Next, combining $\lambda=Ke^{-\tildes}\nu^{-1}$ with \eqref{2 tilde new} one gets
\begin{equation}
\begin{split}
    \lambda&=(T-t)\frac{1}{(1+\nu+O(\frac{\log \tildes}{\tildes^3}))}\\
    &=(T-t)(1-\nu+(-\nu)^2+O(\nu^3)+O(\frac{\log \tildes}{\tildes^3}))\\
    &=(T-t)\left(1-\frac{1}{\abs{\log(T-t)}}+\frac{2\log \abs{\log(T-t)}}{\abs{\log(T-t)}^2}+\frac{\log K+\tilde C+1}{\abs{\log(T-t)}^2}+O\left(\frac{(\log\abs{\log(T-t)})^2}{\abs{\log(T-t)}^3}\right)\right),
\end{split}
\end{equation}
where we have first used $\nu \simeq \frac{1}{\abs{\log(T-t)}}\simeq \tildes^{-1}$ and then \eqref{tilde epsilon 1/nu and nu in T-t new} in the last equation.
This corollary is proved upon renaming constants $C_\tildes=\log K$ and $C_\nu=-\log K-\tilde C$, and using the definition of fixed self-similar variable.
\end{proof}

\subsection{Computation of asymptotic expansion}

We can now end the proof of Theorem \ref{asymptotic expansion of epsilon}.

\begin{proof}[Proof of Theorem \ref{asymptotic expansion of epsilon}]

We recall that in \cite{Collot2023} $\epsilon$ and $\tildeepsilon$ are resulted by different decompositions of the same $a=-u_X(X=0)$: {$\frac{1}{\lambda}(\phi(\tilde z)+\tilde \epsilon(\tilde s,\tilde z))$ and $\frac{1}{T-t}(\phi(z)+\eps(s,z))$}, which gives the formula
\begin{equation}\label{tilde epsilon---epsilon relation new}
\begin{split}
     \epsilon(s,z)&= \phi(\tilde z)-\phi(z)+\bar\lambda \phi(\tilde z)-\bar\lambda \phi(z)+\bar\lambda \phi(z)+(1+\bar\lambda) \tilde\epsilon(\tilde s,\tilde z)\\
     &=(1+\bar \lambda) (\phi(\tilde z)-\phi(z))+\bar\lambda \phi(z)+(1+\bar\lambda) \tilde\epsilon(\tilde s,\tilde z)\\
     &=(1+\bar \lambda) (\phi(\tilde z)-\phi(z))+\bar\lambda \phi(z)+(1+\bar\lambda) (\nu \tilde U(\tilde z)+\tilde v(\tildes,\tildez))
\end{split}
\end{equation}
where $\lambda^{-1}=e^s(1+\bar\lambda)$ or, by the previous Corollary,
\begin{equation}\label{bar lambda refined new}
    \epsilon(s,z=0)=\bar\lambda=\frac{1}{s}-2\frac{\log s}{s^2}+\frac{C_\nu}{s^2}+O(\frac{(\log s)^2}{s^3}).
\end{equation}
Similarly, by differentiating $a$ and seting $Z=0$, if one define $\frac{1}{\nu}=s(1+\bar \nu)$, then
\begin{equation}
    \begin{split}
        \epsilon_z\rvert_{z=0}&=-(1+\bar\nu)(1+\bar\lambda)+1\\
        &=-2\frac{\log s}{s}+\frac{C_\nu-1}{s}+O(\frac{\log s}{s^2})
    \end{split}
\end{equation}
By definition of $\phi$ and Corollary \ref{tilde epsilon modular expansion new}, we have
\begin{equation}\label{phi difference new}
    \begin{split}
        \phi(\tilde z)-\phi(z)&=e^{-\frac{z}{s\nu}}-e^{-z}\\
        &=e^{-z(1+2\frac{\log(s)}{s}-C_\nu s^{-1}+O(\frac{\log s}{s^2}))}-e^{-z}\\
        &=e^{-z}\left(\exp\left({-z\left(2\frac{\log(s)}{s}-C_\nu s^{-1}+O(\frac{\log s}{s^2})\right)}\right)-1\right)\\
    \end{split}
\end{equation}
Using Taylor series with Lagrangian remainder, there exists $0<\alpha<1$ (it may depend on $z,s$, but it is uniformly bounded) such that whenever $f(s)=o(1)$,
\begin{equation}\label{taylor new}
\begin{split}
     e^{-zf(s)}&=1-zf(s)+\frac{\left(zf(s)\right)^2}{2}\exp\left(-\alpha zf(s)\right),\\
\end{split}
\end{equation}
and thus one computes
\begin{equation}
    \begin{split}
        \abs{e^{-z}(e^{-zf(s)}-1)+ze^{-z}f(s)}&=\abs{\frac{z^2 e^{(-1-\alpha f(s))z}}{2}f(s)^2}\\
     &\leq \frac{1}{2}f(s)^2 z^2e^{-\frac{1}{2}z} \text{ for }s\geq s_1 \text{ large enough}\\
     &\lesssim f(s)^2 \text{ for }s\geq s_1 \text{ large enough, uniformly in }z.
    \end{split}
\end{equation}
where we have used $\alpha$ is uniformly bounded.
Letting $f(s)=2\frac{\log(s)}{s}-C_\nu s^{-1}+O(\frac{\log s}{s^2})$, we see that
\begin{equation}\label{tilde epsilon phi diff new}
\begin{split}
    \phi(\tildez)-\phi(z)
    &=-z e^{-z}\left(2\frac{\log(s)}{s}-C_\nu s^{-1}\right)+O\left((\frac{\log s}{s})^2\right),
\end{split}
\end{equation}
where the remainder is uniform in spatial variables.

Using Corollary \ref{tilde epsilon modular expansion new}, we see for sufficiently large $s\geq s_1$, $\tildez\geq z$ and $\tildez\simeq z$ and by Taylor series with Lagrangian remainder, there exists $\xi$ such that $z<\xi< \tildez\leq 2z$
\begin{equation}
    \begin{split}
         &\tilde U(\tildez)=\tilde U(z)+\tilde U'(z)(\tilde z-z)+\frac{1}{2}(\tilde z-z)^2 \tilde U''(\xi)\\
     &\abs{\tilde U''(\xi)}=\abs{(\xi-1)e^{-\xi}}\leq(1+\xi)e^{-\xi}\leq(1+2z)e^{-z}\\
     &\abs{\frac{1}{2}(\tilde z-z)^2 \tilde U''(\xi)}\lesssim z^2(1+2z)e^{-z}(\frac{2\log s}{s}-\frac{C_\nu}{s}+O(\frac{\log s}{s^2}))^2\lesssim (\frac{\log s}{s})^2\\
     &\tilde U(\tildez)=\tilde U(z)+\tilde U'(z)z(\frac{2\log s}{s}-\frac{C_\nu}{s}+O(\frac{\log s}{s^2}))+O((\frac{\log s}{s})^2)\\
     &\qquad\,\,=\tilde U(z)+z\tilde U'(z)(\frac{2\log s}{s}-\frac{C_\nu}{s})+O((\frac{\log s}{s})^2),
    \end{split}
\end{equation}
where each $O()$ is uniform in $z$ and we have used the definition of $U$ to conclude $zU'$ is uniformly bounded, as well as Lemma \ref{tilde epsilon modular expansion new} in the third and last equation to rewrite $(\tilde z-z)^2,(\tilde z-z)$. 

Consequently, assuming the assumption of Lemma \ref{tilde epsilon remainder estimate new} holds, and using the definition of $\tilde\epsilon $,
\begin{equation}\label{tilde epsilon in fixed variable new}
    \begin{split}
        \tildeepsilon(\tildes,\tilde z)&=\nu \tilde U(\tildez)+\tilde v(\tildes,\tildez)\\
        \nu \tilde U(\tildez)&=\frac{1}{s}\tilde U(z)+\frac{2\log s-C_\nu}{s^2}(z\tilde U'(z)-\tilde U(z))+O(\frac{(\log s)^2}{s^3})\\
        \tilde\epsilon(\tildes,\tildez)&=\frac{1}{s}\tilde U(z)+O(\frac{\log s}{s^2})
    \end{split}
\end{equation}
where the remainder is uniform in spatial variables and we have used Lemma \ref{tilde epsilon remainder in C1}, Corollary \ref{tilde epsilon modular expansion new} and the fact that $\tilde U, z\tilde U'$ is uniformly bounded. 

Finally, using \eqref{tilde epsilon---epsilon relation new}, thanks to \eqref{tilde epsilon in fixed variable new}, \eqref{tilde epsilon phi diff new} and \eqref{bar lambda refined new}, we compute
\begin{equation}
    \begin{split}
        (1+\bar\lambda)(\phi(\tildez)-\phi(z))&=-z e^{-z}\left(2\frac{\log(s)}{s}-C_\nu s^{-1}\right)+O\left((\frac{\log s}{s})^2\right)\\
        \bar\lambda\phi(z)&=\frac{1}{s}\phi(z)+O(\frac{\log s}{s^2})\\
        (1+\bar\lambda)\tildeepsilon(\tildes,\tildez)&=\frac{1}{s}U(z)+O(\frac{\log s}{s^2})\\
        \epsilon(s,z)&=-2ze^{-z}\frac{\log s}{s}+\frac{1}{s}(\tilde U(z)+\phi(z)+C_\nu z e^{-z})+O\left((\frac{\log s}{s})^2\right).
    \end{split}
\end{equation}
Proceeding in a similar manner, in light of Lemma \ref{tilde epsilon remainder in C1}, it is not difficult to see that
\begin{equation}
    \epsilon_z(s,z)=2(z-1)e^{-z}\frac{\log s}{s}+\frac{1}{s}\partial_z(\tilde U(z)+\phi(z)+C_\nu z e^{-z})+O\left((\frac{\log s}{s})^2\right).
\end{equation}
where the $O(\cdot)$ is uniform in $z$. For the second part, by derivative compatibility condition $\partial_z F_{i,k}=F_{i,k+1}$, we have 
\begin{equation}
\label{eq:F0k-refined-decomposition2}
\begin{split}
    & F_{0,1}(s,z)
    =
    (-1)^{1+1}(1+\epsilon_0(s))\phi(z)
    +
    \frac{\log s}{s}\Phi_{0,1}(z)
    +
    \frac1s\Psi_{0,1}(z)
    +
    r_{0,1}(s,z),  \\
 &\Phi_{0,1}:=\partial_z^1\Phi_{0,0}
    =
    -\partial_z^1U_{1,\log},\\    
&\Psi_{0,1}:=\partial_z^1\Psi_{0,0}
    =
    (-1)^1\phi-\partial_z^1U_1,\\
    &r_{0,1}:=\partial_z^1 r_{0,0}
    =
    -\partial_z^1\eta+(-1)^1\eta_0\phi,
\end{split}
\end{equation}
where $\eps=\eps\rvert_{z=0}$ and $\eta$ is the remainder in the asymptotic expansion of $\eps$.
\end{proof}

\section{The third order renormalized tangential derivative $F_{1,0}$}\label{sec:first order mode}
As mentioned earlier, our goal is to show that $F_{1,0}$ has an asymptotic expansion in the self-similar variable $(s,z)$ as $s\rightarrow \infty$. As the first order Taylor corrections, the dynamics of the original hierarchy $F_{i,k}$ is overwhelmingly determined by these modes. Being able to precisely understand the long term asymptotic behaviors of these first-order modes $F_{1,0}$ and $F_{0,1}$ is thus of great importance. In particular, they serve as base cases for the general asymptotic expansion for all the modes (Remark \ref{general shell mode expansions}) and we shall find substantial methodological resemblance with the next section. This section itself also shares some similarity with the analysis of $\eps$ (Section \ref{sec: asymptotic expansion of eps}), except that we do not proceed in the dynamical self-similar coordinates, as the equation of $F_{1,0}$ depend on $\eps,\eps_z$ and we already have obtained information on $\eps$ (and hence $F_{0,1}$) via Theorem \ref{asymptotic expansion of epsilon}.

{Notice that in this section, we only establish the desired behavior for solutions of the one-dimensional equation associated with the third horizontal derivative along the symmetry axis. That is, in original physical coordinate system, the 1D equation of the third order horizontal derivative restricted to the symmetry axis:
\begin{equation*}
\begin{split}
        &b_t-4ab+\left(\int_0^Z a(\tilde Z)d\tilde Z\right)\partial_Z b+3 \partial_Z a \left(\int_0^Z b(\tilde Z)d\tilde Z\right)+8 \int_0^1 a(\tilde Z)b(\tilde Z)d\tilde Z=0,\\
        &a(Z,t):=-u_X(X=0,Z,t),\quad b(Z,t):= u_{XXX}(X=0,Z,t)
\end{split}
\end{equation*}
which is the equation of \eqref{id:equation-uXXX}. Whenever a two-dimensional solution exists, its third horizontal derivative along the symmetry axis satisfies the same equation, and thus, with matching initial data, uniqueness then implies that this derivative coincides with the one-dimensional solution for as long as the full two-dimensional solution exists.}

Using the equation of hierarchy \eqref{eq:raw-hierarchy}, the renormalized third order tangential derivative, {restricted to the symmetry axis}, $F_{1,0}$ (defined via \eqref{setting-renormalization}) takes the form of
\begin{align}\label{F_1,0 equation}
& \partial_s F_{1,0}+4(1-\phi-\epsilon)F_{1,0}+\left( \partial_z^{-1}\phi+\partial_{z}^{-1}\epsilon+\frac{z}{s} \right) \partial_z F_{1,0}+3(\partial_z \phi+\partial_z \epsilon) \partial_z^{-1}F_{1,0}\\
&+\frac{8}{s}\int_0^s (\phi(\hat z)+\epsilon(s,\hat z))F_{1,0}(\hat z) d\hat z=0\notag.
\end{align}
{The main result of this section is the following refined expansion in fixed self-similar coordinates \eqref{self-similar-coordinates}, providing a detailed expansion of $F_{1,0}$ of the hierarchy $(F_{i,k})_{i,k\geq 0}$, corresponding to the renormalized third order tangential derivative of $u_{XXX}$, restricted to the symmetry axis.
\begin{theorem}\label{asymptotic expansion of F_1,0 recall}
   {Let $F_{0,0}(s,z)=(T-t)u_X(t,X=0,Z)$ and $F_{1,0}(s,z)=(T-t)^4u_{XXX}(t,X=0,Z)$ with $(s,z)$ given by \eqref{self-similar-coordinates}. Recall by \eqref{setting-renormalization} the decomposition $F_{0,0}(s,z)=-\phi(z)-\epsilon(s,z)$. Assuming $u_X$ satisfy the conclusion of Theorem \ref{th:base-result-CIL} and $\eps$ satisfies the conclusion of Theorem \ref{asymptotic expansion of epsilon}. Then $F_{1,0}$ satisfies}
    \begin{equation}
        F_{1,0}(s,z)=F_{1,0}\rvert_{z=0}(s)\phi(z)+\frac{\log s }{s}V_{1,\log}(z)+\frac{1}{s}V_1(z)+\tilde r(s,z),
    \end{equation}
    and there exists $f_\infty$ such that for all $s\geq s_1$
    \begin{equation}
    \begin{split}
         &\abs{F_{1,0}\rvert_{z=0}(s)-f_\infty}=O(\frac{\log s}{s}),\quad\norm{\frac{\tilde r(s,\cdot)}{\partial_z^{-1}\phi}}_{L^\infty[0,s]}=O( \frac{(\log s)^3}{s^2}),
    \end{split}
    \end{equation}
    where $V_{1,\log}$ and $V_1$ are defined as (with $C_\nu$ given in Theorem \ref{asymptotic expansion of epsilon})
    \begin{equation}
             V_{1,\log}(z)=-2 f_\infty z e^{-z}, \quad V_1(z) =f_\infty(-1+C_\nu ze^{-z}-\frac{1}{2} e^{-z}+e^{-2z}+\frac{1}{2}e^{-3z}).
    \end{equation}
\end{theorem}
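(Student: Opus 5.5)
We follow the two-step strategy announced in the introduction, working throughout in the fixed self-similar coordinates $(s,z)$. We use \eqref{F_1,0 equation} with $\eps$ given by Theorem \ref{asymptotic expansion of epsilon}.

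\emph{Step 1: coarse decomposition.} We write $F_{1,0}(s,z)=f(s)\phi(z)+r(s,z)$ with $f(s):=F_{1,0}(s,0)$, so that $r(s,0)=0$. Evaluating \eqref{F_1,0 equation} at $z=0$ kills the transport and $\partial_z^{-1}$ terms, since $\partial_z^{-1}\phi(0)=\partial_z^{-1}\eps(0)=0$ and $\partial_z^{-1}F_{1,0}(0)=0$. This gives the ODE
\[
f'(s)=4\eps_0(s)f(s)-\frac 8s\int_0^s(\phi+\eps)(f\phi+r)\,dz .
\]
Here $\eps_0=\frac1s-2\frac{\log s}{s^2}+O(s^{-2})$, and $\frac8s\int_0^s\phi^2=\frac4s+O(e^{-2s}/s)$. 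The $s^{-1}$ coefficient of $f$ is therefore $\frac4s-\frac4s=0$; this is the announced cancellation. The ODE then reads
\[
f'=O\!\left(\frac{\log s}{s^2}\right)f-\frac8s\int_0^s\phi r\,dz+O\!\left(\frac{s^{-\delta}}{s}\right)\|r\|_{L^\infty}.
\]
One checks that the mass term $\frac8s\int\eps\phi f$ is $O(\log s/s^2)f$ by Theorem \ref{asymptotic expansion of epsilon}.

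For $r$, subtracting $f'\phi$ gives a PDE $\partial_s r=\mathcal T_{1,0}r+f\,\mathcal S(s,z)-\phi\,\cdot(\text{nonlocal})$. The leading operator is
\[
-4(1-\phi)-\partial_z^{-1}\phi\,\partial_z-3\phi'\partial_z^{-1}.
\]
The source $f\mathcal S$ collects the terms obtained by applying the linearized operator to $\phi$, namely $4(\phi-1+\phi)\phi+\partial_z^{-1}\phi\,\phi-3\phi'\partial_z^{-1}\phi$ plus $\eps$-corrections, together with $f'\phi$. One verifies that the $O(1)$ part of $\mathcal S$ vanishes identically: $\phi$ is the kernel direction, the analogue of $\tilde U$ at leading order. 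Hence $\mathcal S=O(\log s/s)$ and vanishes at $z=0$.

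The next step is to prove linear decay for $r$ in the weighted norm $\|r/(BY_\theta)\|_{L^\infty}$. We take $B=\partial_z^{-1}\phi=1-e^{-z}$, because $r$ only vanishes to first order. This is analogous to Lemmas \ref{tilde epsilon transport new}--\ref{tilde epsilon nonlocal new} and Lemma \ref{tilde epsilon transport new derivative}. The supersolution computation gives damping rate
\[
4-4\phi+\partial_z^{-1}\phi\,\frac{B'}{B}=4-3\phi\ge 1 .
\]
The nonlocal $3\phi'\partial_z^{-1}$ term is made $O(\theta^{-1})$ by the $Y_\theta$ weight. At the boundary $z=s$ the transport speed $b(s,s)=1$ matches the moving boundary, exactly as in \eqref{eq:transport-tangent-boundary}.

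We then close these estimates jointly through a functional
\[
H(s)=|f(s)|+K\|r/X_\theta\|_{L^\infty}.
\]
Duhamel combined with Gronwall yields $H$ bounded and $\|r/X_\theta\|\lesssim \log s/s$. Feeding this back into the ODE gives $f'=O(\log s/s^2)$. Hence $f\to f_\infty$ with $|f-f_\infty|=O(\log s/s)$.

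\emph{Step 2: peeling the profiles.} We substitute the expansion of Theorem \ref{asymptotic expansion of epsilon} into $\mathcal S$ to read off its $\frac{\log s}{s}$ and $\frac1s$ components, with $f$ replaced by $f_\infty$. We then solve the corresponding stationary linear ODEs for $V_{1,\log}$ and $V_1$, with the normalization $V(0)=0$ and the mean-free constraint $\int_0^s F_{1,0}=0$. The latter fixes the constant $-f_\infty$ in $V_1$, matching the $-1$ plateau in $U_1$. This should reproduce the stated profiles. The factors $e^{-2z}$ and $e^{-3z}$ in $V_1$ come from solving $(4-4\phi)V+\partial_z^{-1}\phi\,V'+\dots=\text{source}$ with an integrating factor in $1-e^{-z}$, so it is worth double-checking them.

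Setting $\tilde r=r-\frac{\log s}{s}V_{1,\log}-\frac1sV_1$, we obtain a single uncoupled equation. Its source is of size $(\log s)^2/s^2$, coming from the $\eps$ remainder and from $f-f_\infty$. An additional loss of $\log s$ arises from the $\partial_s(\log s/s)$ terms and from the $z/s$ transport acting on $V_{1,\log}$ in the far field $z\sim s$. The linear decay estimate, with rate $e^{-(s-\tau)/2}$ integrated against this source, then gives $\|\tilde r/\partial_z^{-1}\phi\|\lesssim(\log s)^3/s^2$.

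\emph{Main obstacle.} The main obstacle is Step 1: verifying the exact cancellation of the $s^{-1}$ coefficient and controlling the coupling term $\frac8s\int\phi r$ well enough that $f'$ becomes integrable. A crude bound on $r$ in the far field would only give an $O(1/s)$ contribution to $f'$ and hence a logarithmic drift. We will therefore exploit the fact that the weight makes $r$ small near $z=0$, where $\phi$ has its mass, and carry out the bootstrap in two passes, as in the iteration of Lemma \ref{tilde epsilon iterative}.
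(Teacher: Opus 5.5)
Your proposal is correct and follows the paper's proof essentially step for step: the split $F_{1,0}=f\phi+\tilde v$ with $f=F_{1,0}(s,0)$, the cancellation $4\eps_0-\frac8s\int_0^s\phi^2=O(\log s/s^2)$ in the ODE for $f$, the weight $\bar X_\theta=(1-e^{-z})Y_\theta$ with damping $4-3\phi\ge1$, and a bootstrap giving $f\to f_\infty$ and $\|\tilde v/\bar X_\theta\|\lesssim\log s/s$, followed by peeling off $V_{1,\log},V_1$ and a Duhamel--Gronwall bound on $\tilde r$. A few slips need fixing, none of which breaks the argument:
\begin{itemize}
\item The nonlocal term in the equation for the remainder carries the coefficient $\partial_z^{-1}\phi=1-\phi$, not $\phi$. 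This is what keeps it in $\bar X_\theta L^\infty$.
\item The coupling $\frac8s\int_0^s\eps\, r$ is $O(s^{-1})\|r\|_{L^\infty}$, not $O(s^{-1-\delta})\|r\|_{L^\infty}$, because $\eps\approx-1/s$ on all of $[0,s]$. So what makes $f'$ integrable is the $\log s/s$ decay of $\|r/\bar X_\theta\|_{L^\infty[0,s]}$ supplied by the bootstrap, not any localization near $z=0$.
\item The plateau $-f_\infty$ of $V_1$ is forced by the pressure source $-\frac4s f_\infty\partial_z^{-1}\phi$, with $V_1(0)=0$ removing the kernel direction $\phi$. It is not imposed through the mean-free condition, which is merely consistent with it.
\end{itemize}
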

}
\subsection{Modular decomposition}
{We begin by noticing that $\phi$ itself is inside the kernel of the leading order linearized operator of \eqref{F_1,0 equation}. The structure of the leading-order linearized operator thus motivates the
decomposition
\begin{equation}\label{eq: third tangential crude decomp}
    F_{1,0}(s,z)=f(s)\phi(z)+\tilde v(s,z).
\end{equation}
Indeed, $\phi$ spans the kernel of this leading order linear operator, so the leading-order
linear dynamics leaves the amplitude of this mode unchanged. We choose
$\tilde v$ in a complementary space on which the associated
linearized dynamics decays exponentially. The parameter $f(s)$ then
tracks the projection of $F_{1,0}$ onto the kernel subspace
$\operatorname{Span}\{\phi\}$. Since the leading-order operator vanishes
on this component, the lower-order terms cannot be neglected in
determining its evolution. This motivates introducing $f(s)$ as a
modulation parameter, while using the decay of the linearized dynamics
to control the remainder $\tilde v$.
}

Upon setting
\begin{equation}
    \tilde v(s,z=0)\equiv 0\implies f(s)=F_{1,0}(s,z=0),
\end{equation}
and combining with \eqref{eq:raw-hierarchy}, we arrive at the equations for $f$ and $\tilde v$  as
\begin{equation}\label{modular decomp third order}
    \begin{split}
        &{f_s}= 4f\rho-8s^{-1}\int_0^s (\phi+\epsilon)\tilde v dz\\
         \tilde v_s&=\mathcal{J} \tilde v+f F-8 s^{-1}\partial_z^{-1}\phi\left(f\xi+\int_0^s (\phi+\epsilon)\tilde v dz\right),
    \end{split}
\end{equation}
where each term is defined as ({Below, $\eps_0=\eps\rvert_{z=0}$, as defined in Theorem \ref{asymptotic expansion of epsilon}.})
\begin{equation}\label{definitions of functions in source new}
\begin{split}
    \epsilon_0(s)-s^{-1}&=-2\frac{\log s}{s^2}+\frac{C_\nu}{s^2}+O(\frac{(\log s)^2}{s^3})\\
  \rho(s)&=\left(\epsilon_0(s)-s^{-1}+s^{-1}e^{-2s}-2s^{-1}\int_0^s \epsilon \phi dz\right)\\
    \xi(s)&=\frac{1}{2}(1-e^{-2s})+\int_0^s \epsilon\phi dz\\
    F(s,z)&=4(\epsilon-\epsilon_0) \phi +(\partial_z^{-1}\epsilon+s^{-1}z)\phi-3\epsilon_z\partial_z^{-1}\phi,\\
\end{split}
\end{equation}
and the operator $\mathcal{J}$ is defined as 
\begin{equation}\label{definitions of operators recall}
    \begin{split}
        &\J v=Lv+3Kv\\
        &Lv=(-4\partial_z^{-1}\phi+4\epsilon) v+(-\partial_z^{-1} \phi-\partial_z^{-1}\epsilon-s^{-1}z) \partial_z v\\
        &Kv=(\phi-\epsilon_z)\partial_z^{-1}v,
    \end{split}
\end{equation}
where we have used \eqref{bar lambda refined new}.
By Duhamel's principle, the solution of $f,\tilde v$ can be written as
\begin{equation}\label{the equations of d and tilde v after gronwall new}
    \begin{split}
        f(s)&=f(s_1)\mathcal{R}(s)-8\mathcal R(s)\int_{s_1}^s \left(\tau^{-1} \int_0^\tau (\phi+\epsilon(\tau,\cdot)) \tilde v(\tau,\cdot)dz\right)\mathcal R(\tau)^{-1}d\tau\\
        \tilde v(s,\cdot)&= \S_\mathcal{J}(s_1,s)\tilde v(s_1,\cdot)+\int_{s_1}^s f \S_\mathcal{J}(\tau,s)F(\tau,\cdot) d\tau\\
        &-8\int_{s_1}^s \tau^{-1}\left(f\xi+\int_0^\tau (\phi+\epsilon(\tau,\cdot))\tilde v(\tau,\cdot)dz\right)\S_\mathcal{J}(\tau,s) \partial_z^{-1}\phi d\tau
    \end{split}
\end{equation}
where $ \mathcal R(s)$ is the inverse of integrating factor 
\begin{equation}\label{definition of integrating factor new}
    \mathcal R(s)=\exp\left(4\int_{s_1}^s \rho d\tilde s\right)
\end{equation}
{It can be directly computed (in \ref{subsec:the first iteration}) that the linear coefficient of the ODE of modulation parameter $f$ has leading order $O(\frac{\log s}{s^2})$, thanks to the fact that the leading order of  $\eps_0=\eps\rvert_{z=0}$ cancels with $s^{-1}$. Therefore, the first possible contribution to the rate of change of modulation parameter from the kernel component
occurs only at order $O(s^{-2}\log s)$. Since this rate is integrable
in $s$, such higher-order terms produce only a small accumulated
change when starting at sufficiently large $s$.}
\subsection{Linear evolution estimate}
Recall the weight $\bar X_\theta$ defined by \eqref{weights for tilde epsilon new derivative}.

\begin{lemma}
For each $\theta>1$, there exists a time $s^*(\theta)$ such that whenever $s\geq s_1\geq s^*(\theta)$
    \begin{equation}\label{transport linear semigroup for third order new}
    \norm{\frac{ \S_L(s_1,s) v_1}{\bar X_\theta}}_{L^\infty([0,s]) }\leq \norm{\frac{v_1}{\bar X_\theta}}_{L^\infty ([0,s_1])} e^{-\frac{3}{4}(s-s_1)},
\end{equation}
\end{lemma}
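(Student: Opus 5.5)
I will follow the supersolution/maximum principle argument of Lemma \ref{tilde epsilon transport new derivative}, now in the fixed self-similar coordinates $(s,z)$ with the operator
$$
Lv=(-4\partial_z^{-1}\phi+4\epsilon) v-b\,\partial_z v,\qquad b=\partial_z^{-1}\phi+\partial_z^{-1}\epsilon+s^{-1}z .
$$
The plan is to show that $\bar X_\theta e^{-\frac34 s}$ is a supersolution, i.e. $(\partial_s-L)(\bar X_\theta e^{-\frac34 s})\ge 0$ for $s$ large. Then $\|v_1/\bar X_\theta\|_{L^\infty([0,s_1])}e^{-\frac34(s-s_1)}\bar X_\theta\pm\S_L(s_1,s)v_1$ are nonnegative supersolutions, and the maximum principle gives the claim.

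\textbf{Step 1 (interior computation).} Compute
$$
(\partial_s-L)(\bar X_\theta e^{-qs})=\bar X_\theta e^{-qs}\Bigl(-q+4\partial_z^{-1}\phi-4\epsilon+b\tfrac{\bar B'}{\bar B}+b\tfrac{Y_\theta'}{Y_\theta}\Bigr).
$$
Since $\bar B=1-e^{-z}=\partial_z^{-1}\phi$, we have $\frac{\bar B'}{\bar B}=\frac{\phi}{\partial_z^{-1}\phi}$, and hence $\partial_z^{-1}\phi\,\frac{\bar B'}{\bar B}=\phi$. The leading part is therefore
$$
-q+4(1-\phi)+\phi=4-q-3\phi\ge 1-q,
$$
which is $\ge \tfrac14$ for $q=\tfrac34$.

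\textbf{Step 2 (error terms).} The remaining terms are bounded as follows.
\begin{itemize}
\item $-4\epsilon=O(s^{-1})$ by Theorem \ref{asymptotic expansion of epsilon}.
\item $(\partial_z^{-1}\epsilon+s^{-1}z)\frac{\phi}{\partial_z^{-1}\phi}$ is handled using $\|\epsilon\|_{L^\infty}\lesssim s^{-1}$ (the $\frac{\log s}{s}$ profiles are bounded) and $z\phi/\partial_z^{-1}\phi\le 1$. This gives $O(\tfrac{\log s}{s})$.
\item $b\,\frac{Y'_\theta}{Y_\theta}$ has its main part $\partial_z^{-1}\phi\,\theta\chi\ge 0$. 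The rest is $\ge -\theta\, 2L_\theta\,(\|\epsilon\|_\infty+s^{-1})$.
\end{itemize}
Hence the bracket is $\ge \tfrac14-C_\theta\frac{\log s}{s}\ge 0$ for $s\ge s^*(\theta)$.

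\textbf{Step 3 (boundary and maximum principle).} At $z=0$ we have $b(s,0)=0$ by \eqref{eq:transport-tangent-boundary}. The characteristics therefore do not enter through $z=0$, and $\bar X_\theta(0)=0$ is compatible with $v$ vanishing there. Solutions in the class $v/\bar X_\theta\in L^\infty$ stay in this class, since $v=O(z)$ is propagated.

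At $z=s$ we have $b(s,s)=1=\frac{d}{ds}s$ by \eqref{eq:transport-tangent-boundary} and the mean-free condition. The moving boundary thus travels exactly with the characteristic, so no boundary data are needed there, exactly as in Lemma \ref{tilde epsilon transport new}.

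Along characteristics the comparison is then a scalar ODE inequality. This yields $|\S_L v_1|\le \|v_1/\bar X_\theta\| e^{-\frac34(s-s_1)}\bar X_\theta$.

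The only delicate point I expect is the boundary behaviour at $z=0$, where $\bar X_\theta$ vanishes. I would handle it by working with $w=v/\bar X_\theta$ directly. The transport equation for $w$ has a zeroth-order coefficient equal to minus the bracket above, and the transport velocity vanishes at $z=0$. The bound then follows by integrating along characteristics, which avoids any issue at the degenerate boundary.
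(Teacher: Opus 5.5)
Your proposal is correct and follows the paper's proof essentially step for step. The paper also uses the supersolution $e^{-qs}\bar X_\theta$, the identity $\partial_z^{-1}\phi\,\bar B'/\bar B=\phi$ to get the leading part $4-q-3\phi\ge 1-q$, absorbs the $\epsilon$, $s^{-1}z$ and $Y_\theta'/Y_\theta$ errors for $s\ge s^*(\theta)$, and handles the boundaries through $b(s,0)=0$ and $b(s,s)=1$.

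One small slip: the expansion of $\epsilon$ gives $\|\epsilon\|_{L^\infty}\lesssim \frac{\log s}{s}$, not $s^{-1}$, because the $\frac{\log s}{s}U_{1,\log}$ term is of that size. This only affects the choice of $s^*(\theta)$ and matches the $C_\theta\frac{\log s}{s}$ error you use at the end.
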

\begin{proof}
We compute that, based on the definition of transport operator \eqref{definitions of operators recall}
    \begin{equation}
   \begin{split}
       (\partial_s-L)(e^{-ps}\bar X_\theta)&=e^{-ps}\bar X_\theta\left(-p+4-4\phi+\frac{\bar B'}{\bar B}\partial_z^{-1}\phi -4\epsilon+\frac{\bar B'}{\bar B}\left(\partial_z^{-1}\epsilon+s^{-1}z\right)\right.\\
       &\quad \qquad \qquad \qquad \left.+\frac{Y_\theta'}{Y_\theta}(s^{-1}z+\partial_z^{-1}\epsilon+\partial_z^{-1}\phi)\right)\\
   \end{split}
\end{equation}
Using Theorem \ref{asymptotic expansion of epsilon} and the properties of weights, we estimate similarly
\begin{equation*}
    \begin{split}
        -p+4-4\phi+\frac{\bar B'}{\bar B}\partial_z^{-1}\phi&=-p+4-4\phi+\partial^{-1}_z \phi\frac{\phi}{\partial^{-1}_\tildez \phi} =-p+4-3\phi\geq1-p\\
       -4\epsilon+\frac{\bar B'}{\bar B}\left(\partial_z^{-1}\epsilon+s^{-1}z\right)
        &\geq-4\norm{\epsilon}_{L^\infty[0,s]}  -\norm{ \frac{\phi z}{\partial^{-1}_z \phi}}_{L^\infty}(s^{-1}+\norm{ \epsilon}_{L^\infty[0,s])}) \geq -C\frac{\log s}{s} \\
        \frac{Y_\theta'}{Y_\theta}(s^{-1}z+\partial_z^{-1}\epsilon+\partial_z^{-1}\phi)&=(\partial_z^{-1}\phi+s^{-1}z+\partial_z^{-1}\epsilon) \theta \chi(\frac{z}{L_\theta})\\
        &\geq -\norm{\epsilon}_{L^\infty([0,s])}\theta \norm{z\chi(\frac{z}{L_\theta})}_{L^\infty}\geq -C\theta \frac{\log s}{s} (2L_\theta),
    \end{split}
\end{equation*}
for sufficiently large $s$, where we have used $\frac{\bar B'}{\bar B}=\frac{\phi}{\partial_z^{-1}\phi}$ and the other reasons are similar to the proof in the previous section. Consequently, whenever $p<1$, for large $s$, we have 
\begin{equation}
      (\partial_s-L)(e^{-ps}\bar X_\theta)\geq e^{-ps}\bar X_\theta (1-p-C_\theta \frac{\log s}{s})\geq 0.
\end{equation}
We arrive at the desired result by observing

\begin{equation}
    \norm{\frac{v_1}{\bar X_\theta}}_{L^\infty([0,s_1])} e^{-ps}\bar X_\theta\pm e^{-p s_1} \S_L(s_1,s) v_1,
\end{equation}
is also a supersolution inside the domain, at the boundary (thanks to the compatibility condition $\int_0^s \phi+\epsilon dz\equiv 0 $), and at initial time $s_1$.
\end{proof}

\begin{lemma}\label{nonlocal bound}
    For each $\theta>1$, there exists $s^{**}(\theta)$, such that whenever $s\geq s_1\geq s^{**}(\theta)$,
    \begin{equation}
        \norm{\frac{K v}{\bar X_\theta}}_{L^\infty[0,s]}\leq 2\theta^{-1}  \norm{\frac{v}{\bar X_\theta}}_{L^\infty[0,s]} 
    \end{equation}
\end{lemma}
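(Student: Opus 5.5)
The plan is to reduce the statement to a pointwise bound on the weight quotient, in the same way as Lemma \ref{tilde epsilon nonlocal new}. Take $v$ with $v/\bar X_\theta\in L^\infty$. Since $\bar X_\theta\ge 0$, writing $v=(v/\bar X_\theta)\bar X_\theta$ gives $|\partial_z^{-1}v|\le \norm{v/\bar X_\theta}_{L^\infty[0,s]}\,\partial_z^{-1}\bar X_\theta$. Hence
\begin{equation*}
\abs{\frac{Kv}{\bar X_\theta}}\le \abs{\phi-\epsilon_z}\,\frac{\partial_z^{-1}\bar X_\theta}{\bar X_\theta}\,\norm{\frac{v}{\bar X_\theta}}_{L^\infty[0,s]},
\end{equation*}
and it suffices to show that $\abs{\phi-\epsilon_z}\,\partial_z^{-1}\bar X_\theta/\bar X_\theta\le 2\theta^{-1}$ on $[0,s]$ for $s$ large.

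Next I bound the quotient exactly as in \eqref{integral quotient weight new}, using that $\bar B=1-e^{-z}$ and $Y_\theta$ are increasing. For $z\le L_\theta$ we have $Y_\theta=e^{\theta z}$, so $\partial_z^{-1}\bar X_\theta\le \bar B(z)\int_0^z e^{\theta y}\,dy$. This gives the quotient bound $\theta^{-1}(1-e^{-\theta z})\le\theta^{-1}$. For $z>L_\theta$, monotonicity gives the quotient bound $z$.

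I then split $\phi-\epsilon_z$ into the explicit part $\phi$ and the perturbation $\epsilon_z$, and treat the two regions separately.

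In the inner region $z\le L_\theta$, the $\phi$ term contributes at most $\theta^{-1}$. The perturbation contributes $\theta^{-1}\norm{\epsilon_z}_{L^\infty}$. By Theorem \ref{asymptotic expansion of epsilon}, $\norm{\epsilon_z}_{L^\infty[0,s]}\lesssim \frac{\log s}{s}$, so this piece is at most $\theta^{-1}$ once $s\ge s^{**}(\theta)$.

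In the outer region $z>L_\theta$, the explicit term gives $ze^{-z}\le L_\theta e^{-L_\theta}\le\theta^{-1}$. This uses $L_\theta=2\log\theta$ and the monotonicity of $ze^{-z}$ for $z\ge1$, assuming $\theta$ is large enough that $L_\theta\ge 20$. For smaller $\theta>1$, one can use directly that $ze^{-z}\le e^{-1}$ and adjust; the statement for all $\theta>1$ may need $\theta$ large, as in Lemma \ref{tilde epsilon nonlocal new}.

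The main obstacle is the perturbative term in the outer region. There the quotient grows like $z$, up to $z\sim s$, so the crude bound $z\norm{\epsilon_z}_{L^\infty}\lesssim \log s$ is useless. Here I will use the refined structure from Theorem \ref{asymptotic expansion of epsilon}: $\epsilon_z=\frac{\log s}{s}\partial_zU_{1,\log}+\frac1s\partial_zU_1+O((\log s/s)^2)$. The profiles $\partial_zU_{1,\log}$ and $\partial_zU_1$ decay like $(1+z)e^{-z}$, so $z$ times them is bounded and contributes $O(\log s/s)$.

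The remainder term is the delicate point. A remainder of size $O((\log s)^2/s^2)$ times $z\le s$ gives $O((\log s)^2/s)$, which tends to $0$ as $s\to\infty$. Thus $z\abs{\epsilon_z}\le C\frac{(\log s)^2}{s}\le\theta^{-1}$ for $s\ge s^{**}(\theta)$. Combining the two regions yields the bound $2\theta^{-1}$.
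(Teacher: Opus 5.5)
Your argument is correct and follows the route the paper intends (its proof only says it is similar to Lemma \ref{tilde epsilon nonlocal new}), and you correctly identify and handle the one new point: in the outer region the remainder in $\epsilon_z$ does not decay in $z$, so it must be controlled by combining $z\le s$ with the $C^1$ expansion of Theorem \ref{asymptotic expansion of epsilon}, which gives $z|\epsilon_z|\lesssim (\log s)^2/s$. Your hedge on small $\theta$ is unnecessary, and the bound holds for every $\theta>1$ as stated: if $L_\theta\ge 1$, then $\sup_{z>L_\theta}ze^{-z}\le L_\theta e^{-L_\theta}=2\theta^{-2}\log\theta\le\theta^{-1}$ because $2\log\theta\le\theta$, and if $L_\theta<1$, then $ze^{-z}\le e^{-1}<\theta^{-1}$.
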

\begin{proof}
{This proof is similar to the proof of Lemma \ref{tilde epsilon nonlocal new}.}
\end{proof}

\begin{corollary}\label{semigroup estimate third order}
    There exists a universal $\theta^*$, such that whenever $\theta\geq \theta^*$, the following holds. There exists $s_0^*(\theta)>0$, and whenever  $s\geq s_1\geq s_0^*(\theta) $,
    \begin{equation}
          \norm{\frac{e^{(s-s_1)\mathcal J} v_1}{\bar X_\theta}}_{L^\infty([0,s]) }\leq \norm{\frac{v_1}{\bar X_\theta}}_{L^\infty ([0,s_1])} e^{-\frac{1}{2}(s-s_1)}.
    \end{equation}
\end{corollary}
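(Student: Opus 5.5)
We prove Corollary \ref{semigroup estimate third order} by a Duhamel--Gronwall perturbation argument. The transport part $L$ is controlled by the preceding lemma (estimate \eqref{transport linear semigroup for third order new}), and the nonlocal part $3K$ is controlled by Lemma \ref{nonlocal bound}. This is the same scheme as in Corollary \ref{tilde epsilon combined new}.

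Fix $\theta\geq\theta^*$, where $\theta^*$ will be chosen universally below. Set $s_0^*(\theta)=\max(s^*(\theta),s^{**}(\theta))$, so that both lemmas hold for $s\geq s_1\geq s_0^*(\theta)$. Let $v(s)=\S_{\mathcal J}(s_1,s)v_1$. Since $\mathcal J=L+3K$, Duhamel's formula with respect to $L$ gives
\begin{equation*}
v(s)=\S_L(s_1,s)v_1+3\int_{s_1}^s \S_L(\tau,s)\,Kv(\tau)\,d\tau .
\end{equation*}
Define $N(s)=\norm{v(s)/\bar X_\theta}_{L^\infty([0,s])}$. Applying \eqref{transport linear semigroup for third order new} to each term, and then Lemma \ref{nonlocal bound} to $Kv(\tau)$, we get
\begin{equation*}
N(s)\leq e^{-\frac34(s-s_1)}N(s_1)+6\theta^{-1}\int_{s_1}^s e^{-\frac34(s-\tau)}N(\tau)\,d\tau .
\end{equation*}
Multiplying by $e^{\frac34(s-s_1)}$ and applying Gronwall's lemma to $e^{\frac34(s-s_1)}N(s)$ yields
\begin{equation*}
N(s)\leq N(s_1)\,e^{-(\frac34-6\theta^{-1})(s-s_1)} .
\end{equation*}
If $\theta^*\geq 24$, then $\frac34-6\theta^{-1}\geq\frac12$, and the claimed decay rate $e^{-\frac12(s-s_1)}$ follows.

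Two technical points need checking. First, $L$ must be applied to $Kv(\tau)$ as initial data at time $\tau$. This requires $Kv(\tau)/\bar X_\theta\in L^\infty$, i.e.\ $Kv$ must vanish at $z=0$ at least at the rate of $\bar B\sim z$. This holds because $\partial_z^{-1}v=O(z)$ automatically. It is the same structural fact that makes Lemma \ref{nonlocal bound} meaningful: the factor $\partial_z^{-1}\bar X_\theta/\bar X_\theta$ is bounded by $\theta^{-1}$ near the origin and by $z\,\phi$-decay far away.

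Second, the domain $[0,s]$ grows with time. The Duhamel identity must therefore be justified on the moving domain. The boundary compatibility $\int_0^s(\phi+\eps)\,dz=0$ makes the transport speed at $z=s$ equal to $1=\frac{d}{ds}s$, so the characteristics of $L$ exit exactly along the moving boundary and no boundary data are needed. The same reasoning applies to the $\mathcal J$ evolution, since $K$ is a zero-order nonlocal term.

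The only genuine obstacle is ensuring that the constant $2\theta^{-1}$ from Lemma \ref{nonlocal bound} is uniform in $s$ once $s\geq s^{**}(\theta)$. Given that lemma as stated, the threshold $\theta^*$ can be taken universal.
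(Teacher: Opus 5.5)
Your proposal is correct and is the paper's own argument: the paper's proof is the one line ``apply Duhamel principle and Gronwall's inequality,'' combining the $e^{-\frac34(s-s_1)}$ transport bound \eqref{transport linear semigroup for third order new} with the $2\theta^{-1}$ nonlocal bound of Lemma \ref{nonlocal bound}. You have made the bookkeeping explicit: Duhamel with respect to $L$, Gronwall applied to $e^{\frac34(s-s_1)}N(s)$, and the rate $\frac34-6\theta^{-1}\geq\frac12$ once $\theta\geq 24$.
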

\begin{proof}
    Apply Duhamel principle and Gronwall's inequality.
\end{proof}

\subsection{The first iteration}\label{subsec:the first iteration}
The asymptotic expansion is divided into two parts. In this subsection, we first show that $F_{1,0}$ converges to some {constant of} profile, while in the next subsection, we illustrate that one can obtain a more precise expansion, by introducing more corrections in $s$.
\subsubsection{Preliminary estimates}
Based on the definitions \eqref{definitions of functions in source new} and \eqref{definition of integrating factor new}, for sufficiently large $s_1$, if one plugs in the asymptotic expansion of $\epsilon$, one obtains
\begin{equation}\label{rho and xi expansion}
    \begin{split}
        \int_0^s \epsilon \phi dz&=-\frac{1}{2}\frac{\log s}{s}+\frac{1+C_\nu}{4}\frac{1}{s}+O((\frac{\log s}{s})^2)\\
        \rho(s)&=-\frac{\log s}{s^2}+\frac{C_\nu-1}{2}\frac{1}{s^2}+O(\frac{(\log s)^2}{s^3})\\
        \xi(s)&=\frac{1}{2}-\frac{1}{2}\frac{\log s}{s}+\frac{1+C_\nu}{4}\frac{1}{s}+O((\frac{\log s}{s})^2),
    \end{split}
\end{equation}
and thus we see
\begin{equation}\label{asy of integrating factor}
    \begin{split}
        \mathcal R_\infty(s_1):=\lim_{s\rightarrow \infty} \mathcal R(s)&=\exp(4\int_{s_1}^\infty \rho(\tau)d\tau)\text{ exists},\\
        \int_s^\infty\rho d\tau&=-\frac{\log s}{s}+\frac{C_\nu-3}{2}\frac{1}{s}+O((\frac{\log s}{s})^2)\\
        \mathcal R(s)&=\mathcal R_\infty(s_1)\exp(-4\int_{s}^\infty \rho(\tau)d\tau)\\
        &=\mathcal R_\infty(s_1)\left(1+4\frac{\log s}{s}+(6-2C_\nu)\frac{1}{s}+O((\frac{\log s}{s})^2)\right)
    \end{split}
\end{equation}
where we have used that $\phi\in L^1[0,\infty)$ and uniform bound in Theorem \ref{asymptotic expansion of epsilon}. As a result,  we see that as long as $s_1$ is large enough,
\begin{equation}\label{eq:bound on R}
     \frac{1}{2}\mathcal R_\infty(s_1)\leq \mathcal R(s)\leq 2\mathcal R_\infty(s_1) \text{ for all }s\geq s_1.
\end{equation}
Using the same asymptotes, we have
\begin{equation}\label{eq: asymptote of R_inf}
    \mathcal R_\infty(s_1)=\exp\left(-4\frac{\log s_1}{s_1}-(6-2C_\nu)\frac{1}{s_1}+O((\frac{\log s_1}{s_1})^2)\right),
\end{equation}
and therefore $\lim_{s_1\rightarrow \infty} \mathcal R_\infty(s_1)=1$ and
\begin{equation}\label{eq:bound on R_inf}
    \frac{1}{2}\leq \mathcal R_\infty(s_1)\leq 2 \text{ for any }s_1 \text{ large enough}.
\end{equation}
Now we estimate terms that may depend on $\tilde v$. Since $\epsilon$ can be written as
\begin{equation}
    \epsilon(s,z)=-\frac{1}{s}+\frac{1}{s}e^{-z}(2+(1+C_\nu)z-2z \log (s))+r(s,z),\, \norm{r(s,\cdot)}_{L^\infty[0,s]}=O((\frac{\log s}{s})^2),
\end{equation}
using triangular inequality $\abs{\abs{a+b}-\abs{a}}\leq \abs{b}$, we estimate
\begin{equation}\label{the estimate of int phi+epsilon tilde v}
\begin{split}
    \abs{\int_0^s \abs{\epsilon(s,z)}dz-\frac{1}{s}\int_0^s dz}  &\leq \int_0^s \left|\abs{\epsilon(s,z)}-\abs{-\frac{1}{s}}\right|dz\\
    &\leq\frac{1}{s}\int_0^s e^{-z}|2+(1+C_\nu)z-2z \log (s)| dz+C(\frac{\log s}{s})^2\int_0^s dz \\
    &\lesssim \frac{(\log s)^2}{s} \\
    \int_0^s \abs{\epsilon(s,z)}dz&=1+O(\frac{(\log s)^2}{s}),\\
\end{split}
\end{equation}
and therefore, we can estimate
\begin{equation}\label{integral of phi plus epsilon times tilde v}
    \begin{split}
        \abs{\int_0^s (\phi+\epsilon(s,\cdot)) \tilde v(s,\cdot)dz}&\leq \norm{\tilde v(s,\cdot)}_{L^\infty([0,s])}(1-e^{-s}+1+C \frac{(\log s)^2}{s} )\\
     & \lesssim_\theta (2+O(\frac{(\log s)^2}{s}))\norm{\frac{\tilde v(s,\cdot)}{\bar X_{\theta}}}_{L^\infty([0,s])}\lesssim_{\theta} \norm{\frac{\tilde v(s,\cdot)}{\bar X_{\theta}}}_{L^\infty([0,s])} 
    \end{split}
\end{equation}
For all the other terms, we need to obtain a weighted $L^\infty$ estimate
\begin{equation}
\begin{split}
     \abs{\frac{F(s,z)}{\bar X_{\theta}}}&\leq \bar X_{\theta}^{-1}\left( 4\abs{(\epsilon-\epsilon_0) \phi} +\abs{\partial_z^{-1}\epsilon+s^{-1}z}\phi+3\abs{\epsilon_z}\partial_z^{-1}\phi\right)\\
     &\leq (4\norm{\epsilon_z}_{L^\infty([0,s])} +\norm{\epsilon}_{L^\infty([0,s])}+s^{-1})\norm{\frac{z\phi}{\bar X_{\theta}}}_{L^\infty}+3\norm{\epsilon_z}_{L^\infty}\norm{\frac{\partial_z^{-1}\phi}{\bar X_{\theta}}}_{L^\infty}\\
     &\lesssim_\theta \frac{\log s}{s},
\end{split}
\end{equation}
where we have used Theorem \ref{asymptotic expansion of epsilon}.
Using the linear evolution estimate on $\J$ Lemma \ref{semigroup estimate third order}, we estimate
\begin{equation}\label{multiple estimates new}
    \begin{split}
        \norm{\frac{\S_\mathcal{J}(s_1,s)\tilde v(s_1,\cdot)}{\bar X_\theta}}_{L^\infty([0,s])}&\leq \norm{\frac{\tilde v(s_1,\cdot)}{\bar X_\theta}}_{L^\infty([0,s_1])}\exp\left({-\frac{1}{2}(s-s_1)}\right)\\
         \norm{\frac{\S_\mathcal{J}(\tau,s)F(\tau,\cdot)}{\bar X_\theta}}_{L^\infty([0,s])}&\leq \norm{\frac{F(\tau,\cdot)}{\bar X_\theta}}_{L^\infty([0,\tau])}\exp\left({-\frac{1}{2}(s-\tau)}\right)\\
         &\lesssim_\theta  \frac{\log \tau}{\tau}\exp\left({-\frac{1}{2}(s-\tau)}\right)\\
          \norm{\frac{\S_\mathcal{J}(\tau,s)\partial_z^{-1}\phi}{\bar X_\theta}}_{L^\infty([0,s])}&\leq \norm{\frac{\partial_z^{-1}\phi}{\bar X_\theta}}_{L^\infty}\exp\left({-\frac{1}{2}(s-\tau)}\right)\\
          &\lesssim_\theta \exp\left({-\frac{1}{2}(s-\tau)}\right)
    \end{split}
\end{equation}
Finally, plug in all the above estimates into the solution formula \eqref{the equations of d and tilde v after gronwall new}, we obtain that for all  $s_1\leq t$ and $s_1$ sufficiently large (may depend on $\theta$),
\begin{equation}\label{preliminary estimate new}
    \begin{split}
        \abs{f(t)}&\leq 2f(s_1)\mathcal{R_\infty}(s_1)+C_\theta \mathcal{R_\infty}(s_1)^2\int_{s_1}^t\tau^{-1} \norm{\frac{\tilde v(\tau,\cdot)}{\bar X_{\theta}}}_{L^\infty([0,\tau])} d\tau \\
        \norm{\frac{\tilde v(t,\cdot)}{\bar X_\theta}}_{L^\infty[0,t]}&\leq \norm{\frac{\tilde v(s_1,\cdot)}{\bar X_{\theta}}}_{L^\infty([0,s_1])} e^{-\frac{1}{2}(t-s_1)}+C_\theta\sup_{s_1\leq \sigma\leq t}\abs{f(\sigma)}\int_{s_1}^t (\frac{\log\tau}{\tau}+\frac{1}{\tau}) e^{-\frac{1}{2}(t-\tau)}d\tau\\
        &\quad+ C_\theta \int_{s_1}^t \tau^{-1}e^{-\frac{1}{2}(t-\tau)} \norm{\frac{\tilde v(\tau,\cdot)}{\bar X_\theta}}_{L^\infty[0,\tau]}d\tau
    \end{split}
\end{equation}

\subsubsection{Boundedness of energy functional}
For any fixed $s\geq s_1$ sufficiently large, we define
\begin{equation}\label{eq:definition of energy kernel}
    H(s)=\sup_{s_1\leq t\leq s}\abs{f(t)}+\sup_{s_1\leq t\leq s} \frac{t}{\log t} \norm{\frac{\tilde v(t,\cdot)}{\bar X_\theta}}_{L^\infty[0,t]}.
\end{equation}
Our goal is to show, as long as $s_1$ is chosen large, then $H(s)$ can be bounded uniformly by initial conditions and initial time $s_1$.
\begin{lemma}\label{continuity argument new}
    For any fixed $\theta\geq\theta^*$ (defined in Corollary \ref{semigroup estimate third order}), there exists $\bar s^*(\theta)$ such that whenever $s\geq s_1\geq \bar s^*(\theta)$, the following holds. If initially,
    \begin{equation}\label{initial criterion first iteration}
        \abs{f(s_1)},\norm{\frac{\tilde v(s_1,\cdot)}{\bar X_{\theta}}}_{L^\infty([0,s_1])}<\infty,
    \end{equation}
    then for any $s\geq s_1$,
    \begin{equation}\label{eq:intermediate energy estimate}
         H(s)\lesssim_\theta \abs{f(s_1)}\mathcal R_\infty(s_1)+\frac{s_1}{\log s_1}\norm{\frac{\tilde v(s_1,\cdot)}{\bar X_{\theta}}}_{L^\infty([0,s_1])}.
    \end{equation}
    In particular, for any $s\geq s_1$,
    \begin{equation}
        \abs{f(s)}\lesssim_{s_1,\theta}1,\,\norm{\frac{\tilde v(s,\cdot)}{\bar X_{\theta}}}_{L^\infty([0,s])}\lesssim_{s_1,\theta} \frac{\log s}{s}.
    \end{equation}
\end{lemma}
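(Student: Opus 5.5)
The plan is to feed the two integral inequalities \eqref{preliminary estimate new} into the definition \eqref{eq:definition of energy kernel} of $H(s)$. This should give a closed inequality of the form $H(s)\le A + \delta(s_1) H(s)$ with $\delta(s_1)\to 0$ as $s_1\to\infty$; absorbing the last term then yields \eqref{eq:intermediate energy estimate}. Since $f$ and $\tilde v$ are continuous in time, $H$ is finite and continuous on $[s_1,s]$, so no bootstrap is needed beyond this absorption. The time $\bar s^*(\theta)$ is taken larger than $s_0^*(\theta)$ from Corollary \ref{semigroup estimate third order}, larger than the time beyond which \eqref{eq:bound on R} and \eqref{eq:bound on R_inf} hold, and large enough for the smallness conditions below.

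\textbf{Bound on $f$.} For $s_1\le t\le s$, insert $\norm{\tilde v(\tau)/\bar X_\theta}\le \frac{\log\tau}{\tau}H(s)$ into the first line of \eqref{preliminary estimate new}. Since $\mathcal R_\infty(s_1)\le 2$, this gives
$$|f(t)|\le 2|f(s_1)|\mathcal R_\infty(s_1)+C_\theta H(s)\int_{s_1}^\infty \frac{\log\tau}{\tau^2}\,d\tau \le 2|f(s_1)|\mathcal R_\infty(s_1)+C_\theta\frac{\log s_1}{s_1}H(s).$$
The coefficient of $H(s)$ is small precisely because of the cancellation built into $\rho$: the integrating factor $\mathcal R$ is bounded, and the coupling weight $\tau^{-1}\cdot\frac{\log\tau}{\tau}$ is integrable.

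\textbf{Bound on $\tilde v$.} In the second line of \eqref{preliminary estimate new}, bound $\sup|f|\le H(s)$ and $\norm{\tilde v(\tau)/\bar X_\theta}\le\frac{\log\tau}{\tau}H(s)$. The needed convolution estimate is
$$\int_{s_1}^t g(\tau)e^{-\frac12(t-\tau)}\,d\tau\lesssim g(t)$$
for slowly varying $g$; split the integral at $\tau=t/2$, using exponential smallness on $[s_1,t/2]$ and comparability $g(\tau)\approx g(t)$ on $[t/2,t]$. This gives
$$\frac{t}{\log t}\norm{\frac{\tilde v(t)}{\bar X_\theta}}\le \frac{t}{\log t}e^{-\frac12(t-s_1)}\norm{\frac{\tilde v(s_1)}{\bar X_\theta}}+C_\theta\sup_{\sigma}|f(\sigma)|+C_\theta\frac{1}{t}H(s).$$
For the first term, $\frac{t}{\log t}e^{-\frac12(t-s_1)}\le C\frac{s_1}{\log s_1}$ for $t\ge s_1\ge e$, since $x\mapsto \frac{x}{\log x}e^{-x/2}$ is decreasing for large $x$.

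\textbf{Main obstacle.} The term $C_\theta\sup|f|$ in the $\tilde v$-estimate has an $O(1)$ coefficient and cannot be absorbed into $H$ directly. Because the forcing $fF$ is only of size $\frac{\log s}{s}$, the ratio $\frac{t}{\log t}\norm{\tilde v/\bar X_\theta}$ is genuinely of size $|f|$. The fix is to treat the two components of $H$ in order rather than summing them naively. Substituting the $f$-bound into the $\tilde v$-bound gives
$$\sup_t\frac{t}{\log t}\norm{\frac{\tilde v(t)}{\bar X_\theta}}\le C\frac{s_1}{\log s_1}\norm{\frac{\tilde v(s_1)}{\bar X_\theta}}+C_\theta|f(s_1)|\mathcal R_\infty(s_1)+C_\theta\Big(\frac{\log s_1}{s_1}+\frac1{s_1}\Big)H(s).$$
Adding the $f$-bound, every coefficient of $H(s)$ on the right is $O_\theta(\log s_1/s_1)$. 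Choosing $\bar s^*(\theta)$ so that these coefficients are at most $1/2$, we absorb and obtain \eqref{eq:intermediate energy estimate}. Since the right-hand side does not depend on $s$, the two stated consequences follow: $|f(s)|\lesssim_{s_1,\theta}1$ and $\norm{\tilde v(s)/\bar X_\theta}\lesssim_{s_1,\theta}\log s/s$. Finiteness of the initial norms \eqref{initial criterion first iteration} is only used to make the right-hand side finite.
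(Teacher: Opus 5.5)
Your proposal is correct and follows essentially the same route as the paper. You insert $H(s)$ into both lines of \eqref{preliminary estimate new}, bound the $f$-component first with an $O_\theta(\log s_1/s_1)$ coefficient (using $\mathcal R_\infty(s_1)\le 2$ and the integrability of $\log\tau/\tau^2$), substitute that bound into the $\tilde v$-estimate to deal with the $O(1)$ coefficient in front of $\sup|f|$, and absorb; your observation that $H(s)$ must be finite for this absorption is a point the paper leaves implicit.
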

\begin{proof}
By definition of $H(s)$, we have
\begin{equation}
    \abs{f(t)}\leq H(s), \norm{\frac{\tilde v(t,\cdot)}{\bar X_\theta}}_{L^\infty[0,t]}\leq H(s)\frac{\log t}{t},\text{ for all } s_1\leq t\leq s.
\end{equation}
Using \eqref{preliminary estimate new}, since the $\tau\in[s_1,t]\subseteq[s_1,s]$ we can further bound
\begin{equation}\label{f ineq}
    \begin{split}
         \abs{f(t)}&\leq 2\abs{f(s_1)}\mathcal{R_\infty}(s_1)+C_\theta H(s)\mathcal{R_\infty}(s_1)^2\int_{s_1}^t\tau^{-1} \frac{\log \tau}{\tau} d\tau \\
         &\leq  2\abs{f(s_1)}\mathcal{R_\infty}(s_1)+C_\theta H(s)\mathcal{R_\infty}(s_1)^2 (\frac{\log(s_1)}{s_1}+\frac{1}{s_1}) \\
    \end{split}
\end{equation}
Similarly, we have
\begin{equation}
    \begin{split}
         \frac{t}{\log t}\norm{\frac{\tilde v(t,\cdot)}{\bar X_\theta}}_{L^\infty[0,t]}
        &\leq \norm{\frac{\tilde v(s_1,\cdot)}{\bar X_{\theta}}}_{L^\infty([0,s_1])} e^{-\frac{1}{2}(t-s_1)}\frac{t}{\log t}\\
         &+\quad 2C_\theta \frac{t}{\log t}\sup_{s_1\leq \sigma\leq s}\abs{f(\sigma)}\int_{s_1}^t \frac{\log\tau}{\tau} e^{-\frac{1}{2}(t-\tau)}d\tau\\
        &\quad+ C_\theta H(s)\frac{t}{\log t} \int_{s_1}^t \tau^{-1}e^{-\frac{1}{2}(t-\tau)} \frac{\log\tau}{\tau}d\tau.\\
    \end{split}
\end{equation}
Using the algebraic inequalities for sufficiently large $s_1$ ($\bar C$ below is an absolute constant),
\begin{equation*}
    \begin{split}
        \int_{s_1}^t \frac{\log\tau}{\tau} e^{-\frac{1}{2}(t-\tau)}d\tau& \leq \bar C \frac{\log t}{t}\\
        \int_{s_1}^t \tau^{-1}e^{-\frac{1}{2}(t-\tau)} \frac{\log\tau}{\tau}d\tau&\leq \bar C s_1^{-1} \frac{\log t}{t}\\
        \frac{t}{\log (t)} e^{-\frac{1}{2}t}&\leq  \frac{s_1}{\log (s_1)} e^{-\frac{1}{2}s_1} \text{ for }t\geq s_1\geq 3,
    \end{split}
\end{equation*}
the weighted norm is further bounded by
\begin{equation}\label{tilde v ineq}
\begin{split}
       \frac{t}{\log t}\norm{\frac{\tilde v(t,\cdot)}{\bar X_\theta}}_{L^\infty[0,t]}&\leq \frac{s_1}{\log s_1}\norm{\frac{\tilde v(s_1,\cdot)}{\bar X_{\theta}}}_{L^\infty([0,s_1])} +10C_\theta\sup_{s_1\leq \sigma\leq s}\abs{f(\sigma)}+\bar C C_\theta H(s)s_1^{-1}.
\end{split}
\end{equation}
Taking $\displaystyle\sup_{s_1\leq t\leq s}$ on \eqref{f ineq} then plug into the equation of \eqref{tilde v ineq}, we conclude 
\begin{equation*}
    \begin{split}
        H(s)&\leq 2(1+10 C_\theta)\abs{f(s_1)}\mathcal{R_\infty}(s_1)+\frac{s_1}{\log s_1}\norm{\frac{\tilde v(s_1,\cdot)}{\bar X_{\theta}}}_{L^\infty([0,s_1])} \\
        &\quad+ H(s)C_\theta\left(\mathcal R_\infty(s_1)^2 (\frac{\log(s_1)}{s_1}+\frac{1}{s_1})(1+10 C_\theta)+\bar C s_1^{-1}\right)
    \end{split}
\end{equation*}
Since for sufficiently large $s_1$, $\mathcal R_\infty (s_1)^2\leq 4$ and thus 
\begin{equation}
    C_\theta\left(\mathcal R_\infty(s_1)^2 (\frac{\log(s_1)}{s_1}+\frac{1}{s_1})(1+10 C_\theta)+\bar C s_1^{-1}\right)\leq \frac{1}{2},
\end{equation}
which implies the desired result.
\end{proof}
\subsubsection{Convergence of modulation parameter}
\begin{corollary}\label{second iteration convergence of modulation parameter}
    Under the assumption of Lemma \ref{continuity argument new}, there exists an $f_\infty(s_1)$ such that $\lim_{s\rightarrow \infty} f(s)=f_\infty(s_1)$ and the convergence speed satisfies
    \begin{equation}
        \abs{f(s)-f_\infty(s_1)}\lesssim_{s_1,\theta} \frac{\log s}{s},
    \end{equation}
    for $s\geq s_1\geq  \bar s^*(\theta)$.
\end{corollary}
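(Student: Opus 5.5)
The plan is to read the convergence directly off the Duhamel formula for $f$ in \eqref{the equations of d and tilde v after gronwall new}, using the bounds of Lemma \ref{continuity argument new}. Write
\[
f(s)=\mathcal R(s)\Big(f(s_1)-8\int_{s_1}^s g(\tau)\,\mathcal R(\tau)^{-1}\,d\tau\Big),\qquad g(\tau):=\tau^{-1}\int_0^\tau(\phi+\epsilon(\tau,\cdot))\,\tilde v(\tau,\cdot)\,dz .
\]
The two factors will be treated separately: first show that each converges, then quantify the rate at which it does so.

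For the integral term, combine \eqref{integral of phi plus epsilon times tilde v} with Lemma \ref{continuity argument new}. This gives
\[
|g(\tau)|\lesssim_\theta \tau^{-1}\Big\|\frac{\tilde v(\tau,\cdot)}{\bar X_\theta}\Big\|_{L^\infty[0,\tau]}\lesssim_{s_1,\theta}\frac{\log\tau}{\tau^2}.
\]
By \eqref{eq:bound on R} and \eqref{eq:bound on R_inf}, $\mathcal R(\tau)^{-1}\le 4$ uniformly for $\tau\ge s_1$. Hence $g\mathcal R^{-1}\in L^1([s_1,\infty))$, and its tail satisfies
\[
\int_s^\infty |g|\,\mathcal R^{-1}\,d\tau\lesssim \int_s^\infty \frac{\log\tau}{\tau^2}\,d\tau\lesssim \frac{\log s}{s}.
\]
Define $I_\infty:=f(s_1)-8\int_{s_1}^\infty g\mathcal R^{-1}\,d\tau$. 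The bracket above then differs from $I_\infty$ by $O_{s_1,\theta}(\log s/s)$.

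For the prefactor, \eqref{asy of integrating factor} gives $\mathcal R(s)\to\mathcal R_\infty(s_1)$ with
\[
|\mathcal R(s)-\mathcal R_\infty(s_1)|=\mathcal R_\infty(s_1)\,\Big|1-\exp\Big(-4\int_s^\infty\rho\Big)\Big|\lesssim \frac{\log s}{s},
\]
because $\int_s^\infty\rho=O(\log s/s)$. Set $f_\infty(s_1):=\mathcal R_\infty(s_1)\,I_\infty$ and split
\[
f(s)-f_\infty=(\mathcal R(s)-\mathcal R_\infty)\,(\text{bracket})+\mathcal R_\infty\,(\text{bracket}-I_\infty).
\]
The bracket is bounded, since $|f(s_1)|<\infty$ and the integral converges. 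Both pieces are therefore $O(\log s/s)$, which proves the estimate.

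I do not expect a real obstacle here. The only delicate point is to check that the rate $\log\tau/\tau^2$ on $g$ is integrable with the stated tail. This relies on the weighted bound $\log s/s$ on $\tilde v$ from the energy functional $H$, and that bound is already supplied by Lemma \ref{continuity argument new}.
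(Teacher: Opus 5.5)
Your proposal is correct and follows the paper's argument: the paper identifies the same limit $f_\infty(s_1)=\mathcal R_\infty(s_1)\bigl(f(s_1)-8\int_{s_1}^\infty g\,\mathcal R^{-1}\,d\tau\bigr)$ from the Duhamel formula, using the same input $|g(\tau)|\lesssim_{s_1,\theta}\log\tau/\tau^2$ from \eqref{integral of phi plus epsilon times tilde v} and Lemma \ref{continuity argument new}. The only cosmetic difference is how the rate is obtained. The paper bounds $|f_s|\le 4|\rho||f|+8|g|\lesssim \log s/s^2$ (using $|f|\lesssim 1$ from Lemma \ref{continuity argument new}) and integrates in time, whereas you split the Duhamel formula and use $\mathcal R(s)-\mathcal R_\infty(s_1)=O(\log s/s)$; the two routes are equivalent.
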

\begin{proof}
{
Using the computation of \eqref{integral of phi plus epsilon times tilde v} and Lemma \ref{continuity argument new}, we find that $
        \abs{\int_0^\tau (\phi+\epsilon(\tau,\cdot)) \tilde v(\tau,\cdot)dz}
     \lesssim_{\theta,s_1} \frac{\log\tau}{\tau},$
and by observing the solution formula \eqref{the equations of d and tilde v after gronwall new}, we claim that
\begin{equation}\label{eq:formula of f_infty}
    f_\infty(s_1)=\mathcal R_\infty(s_1)\left(f(s_1)-8\int_{s_1}^\infty \left(\tau^{-1} \int_0^\tau (\phi+\epsilon(\tau,\cdot)) \tilde v(\tau,\cdot)dz\right)\mathcal R(\tau)^{-1}d\tau\right)
\end{equation}
is the limit for $f(s)$. Indeed, by applying \eqref{continuity argument new}, the ODE of $f$ satisfies the form $f_s\lesssim |\log s|/s^2$ and the desired result follows by time integration.}
\end{proof}

\subsection{The second iteration}
In this subsection, we provide a more precise asymptotic expansion of $F_{1,0}$, by using the previously derived result in the last subsection.
\subsubsection{The decomposition and profile}
From the previous subsection, we know that $f(s)=f_\infty(s_1)+\tilde f$, $\abs{\tilde f}\lesssim_{s_1,\theta} \frac{\log s}{s}$ and $\norm{\frac{\tilde v}{\bar X_\theta}}\lesssim_{s_1,\theta} \frac{\log s}{s}$, where $s_1$ is the initial time.
In this subsection, we attempt to find an asymptotic expansion of $\tilde v$. We begin by writing
\begin{equation}\label{eq;third tangential decomp on remainder}
    \tilde v(s,z)=\frac{\log s}{s}V_{1,\log}(z)+\frac{1}{s}V_1(z)+\tilde r(s,z),
\end{equation}
where the profiles are defined as
\begin{equation}
    \begin{split}
        V_{1,\log}(z)&=-2 f_\infty z e^{-z}\\
        V_1(z)&=f_\infty(-1+C_\nu ze^{-z}-\frac{1}{2} e^{-z}+e^{-2z}+\frac{1}{2}e^{-3z}),
    \end{split}
\end{equation}
and they satisfy
\begin{equation}\label{ODE of profiles for third order}
    \begin{cases}
        &\mathcal{J}_0 V_{1,\log}+f_\infty(4 U_{1,\log }\phi+(\partial_z^{-1} U_{1,\log})\phi-3U'_{1,\log}\partial_z^{-1}\phi)=0\\
        &\mathcal{J}_0 V_{1}+f_\infty(4(U_1-1)\phi+(\partial_z^{-1}U_1+z)\phi-3U_1'\partial_z^{-1}\phi-4\partial_z^{-1}\phi)=0\\
        &V_{1,\log}(0)=V_{1}(0)=0,
    \end{cases}
\end{equation}
where $U_{1,\log}$ and $U_1$ are defined to be the profiles of $\epsilon$, as in Theorem \ref{asymptotic expansion of epsilon}: 
\begin{equation}
        \begin{split}
            \epsilon(s,z)&=\frac{\log s}{s}U_{1,\log}(z)+\frac{1}{s}U_1(z)+\eta(s,z),\, \norm{\eta}_{C^1[0,s]}\lesssim (\frac{\log s}{s})^2 \\
            U_{1,\log}(z)&=-2z e^{-z}\\
            U_1(z)&=(2+(1+C_\nu)z) e^{-z}-1.
        \end{split}
    \end{equation}
Using the ODEs of \eqref{ODE of profiles for third order} as well as \eqref{modular decomp third order}, we see $\tilde r$ satisfies
\begin{equation}
\begin{split}
\tilde r_s
={}&\,\mathcal J\tilde r
-8s^{-1}\partial_z^{-1}\phi\int_0^s(\phi+\epsilon)\tilde r\,dz + \tilde F
+(f-f_\infty)\left(\frac{\log s}{s}\mathcal F_1+\frac1s\mathcal F_2\right) \\
&\,+f F_\eta-8s^{-1}g\partial_z^{-1}\phi +\frac{\log s-1}{s^2}V_{1,\log}+\frac1{s^2}V_1,
\end{split}
\end{equation}
where we have defined

\begin{equation}\label{source definition second iteration}
    \begin{split}
        F_\eta(s,z)&=\Bigl(
4\bigl(\eta-\eta\rvert_{z=0}\bigr)\phi
+\bigl(\partial_z^{-1}\eta\bigr)\,\phi
-3\eta_z\,\partial_z^{-1}\phi
\Bigr) \\
\tilde F(s,z)&=(\mathcal J-\mathcal J_0)\left(\frac{\log s}{s}V_{1,\log}+\frac1sV_1\right)\\
g(s)&=
f\xi-\frac{f_\infty}{2}
+\int_0^s\phi\left(\frac{\log s}{s}V_{1,\log}+\frac1sV_1\right)\,dz 
+\int_0^s\epsilon\left(\frac{\log s}{s}V_{1,\log}+\frac1sV_1\right)\,dz\\
 \xi(s)&=\frac{1}{2}(1-e^{-2s})+\int_0^s \epsilon\phi dz\\
\mathcal F_1(z)
&=\,4U_{1,\log}\phi
+\bigl(\partial_z^{-1}U_{1,\log}\bigr)\,\phi-3U_{1,\log}'\,\partial_z^{-1}\phi\\
\mathcal F_2(z)
&=\,4\bigl(U_1-1\bigr)\phi
+\bigl((\partial_z^{-1}U_1)+z\bigr)\phi-3U_1'\,\partial_z^{-1}\phi.
    \end{split}
\end{equation}
and $\mathcal J_0$ is the leading order parts of $\mathcal J$:
\begin{equation}
    \begin{split}
    &\mathcal J_0=-4\partial_z^{-1}\phi \mathds 1-\partial_z^{-1}\phi \partial_z +3\phi \partial_z^{-1}\\
    &\mathcal J-\mathcal J_0=4\epsilon \mathds 1-(\partial_z^{-1}\epsilon+s^{-1}z)\partial_z-3\epsilon_z \partial_z^{-1}\\
    \end{split}
\end{equation}
Notice that we can further write
\begin{equation}\label{tilde F}
    \begin{split}
         \tilde F(s,z)&=(\mathcal J-\mathcal J_0)\left(\frac{\log s}{s}V_{1,\log}+\frac1sV_1\right) \\
&=\,4\epsilon\left(\frac{\log s}{s}V_{1,\log}+\frac1sV_1\right) -\left(\partial_z^{-1}\epsilon+s^{-1}z\right)
\left(\frac{\log s}{s}\partial_zV_{1,\log}+\frac1s\partial_zV_1\right) \\
&\quad-3(\frac{\log s}{s}U'_{1,\log}+\frac{1}{s} U'_{1})
\left(\frac{\log s}{s}\partial_z^{-1}V_{1,\log}+\frac1s\partial_z^{-1}V_1\right)\\
&\quad-3 \eta_z\left(\frac{\log s}{s}\partial_z^{-1}V_{1,\log}+\frac1s\partial_z^{-1}V_1\right)
    \end{split}
\end{equation}
and that because of \eqref{rho and xi expansion} and the convergence of $f$ Corollary \ref{second iteration convergence of modulation parameter}:
\begin{equation}\label{convergence of s dependent}
\begin{split}
    \abs{f-f_\infty}&\lesssim_{s_1,\theta} \frac{\log s}{s}\\
     \abs{f\xi-\frac{f_\infty}{2}}&\leq \abs{f-f_\infty}\abs{\xi}+\abs{f_\infty}\abs{\xi-\frac{1}{2}}\lesssim_{s_1,\theta} \frac{\log s}{s}.
\end{split}
\end{equation}

\subsubsection{Preliminary estimate}
By Duhamel's principle, we write
\begin{equation}
    \begin{split}
        \tilde r&=\S_\mathcal{J}(s_1,s)\tilde r(s_1,\cdot)-8\int_{s_1}^s \tau^{-1}\left(\int_0^\tau (\phi+\epsilon(\tau,\cdot))\tilde r(\tau,\cdot)dz\right)\S_\mathcal{J}(\tau,s)\partial_z^{-1}\phi d\tau\\
        &\quad+\int_{s_1}^s \S_\mathcal{J}(\tau,s) \tilde F(\tau,\cdot) d\tau+\int_{s_1}^s (f(\tau)-f_\infty)\left(\frac{\log \tau}{\tau} \S_\mathcal{J}(\tau,s) \mathcal F_1+\frac{1}{\tau}\S_\mathcal{J}(\tau,s)\mathcal F_2\right)d\tau\\
        &\quad +\int_{s_1}^s f(\tau) \S_\mathcal{J}(\tau,s) F_\eta(\tau,\cdot)d\tau-8\int_{s_1}^s \tau^{-1} g(\tau) \S_\mathcal{J}(\tau,s) \partial_z^{-1}\phi d\tau\\
        &\quad+ \int_{s_1}^s \frac{\log\tau-1}{\tau^2}\S_\mathcal{J}(\tau,s) V_{1,\log} d\tau+\int_{s_1}^s \frac{1}{\tau^2}\S_\mathcal{J}(\tau,s) V_1 d\tau.
    \end{split}
\end{equation}
Combining with the linear evolution estimate Lemma \ref{semigroup estimate third order}, we estimate term by term the source terms:
\begin{equation}
    \begin{split}
        \norm{\frac{e^{\J(s-s_1)} \tilde r(s_1,\cdot)}{\bar X_\theta}}_{L^\infty([0,s])}&\leq \norm{\frac{\tilde r(s_1,\cdot)}{\bar X_\theta}}_{L^\infty[0,s_1]} \exp\left({-\frac{1}{2}(s-s_1)}\right)\\
        \abs{\int_0^\tau (\phi+\epsilon(\tau,\cdot)) \tilde r(\tau,\cdot)dz}&\lesssim_{\theta} \norm{\frac{\tilde r(\tau,\cdot)}{\bar X_{\theta}}}_{L^\infty([0,\tau])} \\
        \norm{\frac{e^{\J(s-\tau)} \partial_z^{-1}\phi}{\bar X_\theta}}_{L^\infty([0,s])}&\leq \norm{\frac{\partial_z^{-1}\phi}{\bar X_\theta}}_{L^\infty}\exp\left({-\frac{1}{2}(s-\tau)}\right) \lesssim_\theta \exp\left({-\frac{1}{2}(s-\tau)}\right).
    \end{split}
\end{equation}
where we have used \eqref{integral of phi plus epsilon times tilde v}. We now estimate each term in $\tilde F$. Using the asymptotic expansion on $\epsilon$ and that $V_{1,\log},V_1$ is $O(z)$ at $z=0$ and uniformly bounded, we have
\begin{equation}
    \begin{split}
      &\abs{4\epsilon(\tau,\cdot)\left(\frac{\log \tau}{\tau}V_{1,\log}+\frac1\tau V_1\right)\bar X_\theta^{-1}}\leq C \frac{\log \tau}{\tau} (C_{s_1,\theta} \frac{\log \tau}{\tau}+C_{s_1,\theta} \frac{1}{\tau})\lesssim_{s_1,\theta} \frac{(\log \tau)^2}{\tau^2}.\\
    \end{split}
\end{equation}
Using the asymptotic expansion on $\epsilon$ and that $z V_{1,\log}', zV_1'$ is $O(z)$ at 0 and decays exponentially at infinity, we have
\begin{equation}
    \begin{split}
&\abs{\left(\partial_z^{-1}\epsilon(\tau,\cdot)+\tau^{-1}z\right)
\left(\frac{\log \tau}{\tau}\partial_zV_{1,\log}+\frac1\tau\partial_zV_1\right) \bar X_\theta^{-1}}\\&\quad\leq (\norm{\epsilon(\tau,\cdot)}_{L^\infty[0,\tau]}+\tau^{-1})\left(\frac{\log \tau}{\tau}\norm{\frac{z\partial_zV_{1,\log}}{\bar X_\theta}}_{L^\infty}+\frac1\tau\norm{\frac{z\partial_zV_1}{\bar X_\theta}}_{L^\infty}\right) \\
&\quad \leq (C\frac{\log \tau}{\tau}+\frac{1}{\tau})(C_{s_1,\theta}\frac{\log \tau}{\tau}+C_{s_1,\theta}\frac{1}{\tau})\lesssim_{s_1,\theta} \frac{(\log \tau)^2}{\tau^2}
    \end{split}
\end{equation}
Using $z U_{1,\log}', zU_1'$ is $O(z)$ at 0 and decays exponentially at infinity, as well as $V_{1,\log}$ and $V_1$ are uniformly bounded, we have
\begin{equation}
    \begin{split}
        &\abs{3(\frac{\log \tau}{\tau}U'_{1,\log}+\frac{1}{\tau} U'_{1})
\left(\frac{\log \tau}{\tau}\partial_z^{-1}V_{1,\log}+\frac1\tau\partial_z^{-1}V_1\right) \bar X_\theta^{-1}}\\
&\quad\leq \abs{3(\frac{\log \tau}{\tau}\norm{\frac{zU'_{1,\log}}{\bar X_\theta}}_{L^\infty}+\frac{1}{\tau} \norm{\frac{zU'_{1}}{\bar X_\theta}}_{L^\infty})
\left(\frac{\log \tau}{\tau}\norm{V_{1,\log}}_{L^\infty}+\frac1\tau\norm{V_1}_{L^\infty}\right) }\\
&\quad \leq 3(C_\theta \frac{\log \tau}{\tau}+C_\theta \frac{1}{\tau})(C_{s_1,\theta} \frac{\log \tau}{\tau}+C_{s_1,\theta} \frac{1}{\tau})\lesssim_{s_1,\theta} \frac{(\log \tau)^2}{\tau^2}
    \end{split}
\end{equation}
Using the bound on $\eta$, we have
\begin{equation}
    \begin{split}
        &\abs{3 \eta_z\left(\frac{\log \tau}{\tau}\partial_z^{-1}V_{1,\log}+\frac1\tau\partial_z^{-1}V_1\right)\bar X_\theta^{-1}}\\
        &\quad\leq 3\norm{\eta_z}_{L^\infty[0,\tau]}(\frac{\log \tau}{\tau}\norm{V_{1,\log}}_{L^\infty} +\frac{1}{\tau}\norm{V_1}_{L^\infty})\norm{\frac{\partial_z^{-1}\phi}{\bar X_\theta}}_{L^\infty}\norm{\frac{z}{\partial_z^{-1}\phi}}_{L^\infty[0,\tau]}\\
        &\quad\leq 3C_{s_1,\theta} \frac{(\log \tau)^2}{\tau^2}(\frac{\log\tau}{\tau}+\frac{1}{\tau})C_\theta 2\tau\lesssim_{s_1,\theta} \frac{(\log \tau)^3}{\tau^2}.
    \end{split}
\end{equation}
Thus, we have
\begin{equation}
    \begin{split}
        \norm{\frac{\tilde F(\tau,\cdot)}{\bar X_\theta}}_{L^\infty[0,\tau]}\lesssim_{s_1,\theta} \frac{(\log \tau)^3}{\tau^2}, \norm{\frac{\S_\mathcal{J}(\tau,s)\tilde F(\tau,\cdot)}{\bar X_\theta}}_{L^\infty[0,s]}\lesssim_{s_1,\theta}\frac{(\log \tau)^3}{\tau^2} e^{-\frac{1}{2}(s-\tau)}
    \end{split}
\end{equation}
Using \eqref{convergence of s dependent} and \eqref{convergence of s dependent}, we have 
\begin{equation}
    \begin{split}
        \abs{f(\tau)-f_\infty}&\lesssim_{s_1,\theta} \frac{\log \tau}{\tau}\\
        \norm{\frac{\S_\mathcal{J}(\tau,s)\mathcal F_1}{\bar X_\theta}}_{L^\infty[0,s]}&\leq \norm{\frac{\mathcal F_1}{\bar X_\theta}}_{L^\infty[0,\tau]} e^{-\frac{1}{2}(s-\tau)}\lesssim_\theta e^{-\frac{1}{2}(s-\tau)}\\
       \norm{\frac{\S_\mathcal{J}(\tau,s) \mathcal F_2}{\bar X_\theta}}_{L^\infty[0,s]}&\leq \norm{\frac{\mathcal F_2}{\bar X_\theta}}_{L^\infty[0,\tau]} e^{-\frac{1}{2}(s-\tau)}\lesssim_\theta e^{-\frac{1}{2}(s-\tau)}\\
        \norm{\frac{\frac{\log \tau }{\tau}\S_\mathcal{J}(\tau,s) \mathcal F_1+\frac{1}{\tau}\S_\mathcal{J}(\tau,s) \mathcal F_2}{\bar X_\theta}}_{L^\infty[0,s]}&\lesssim_\theta \frac{\log\tau}{\tau}e^{-\frac{1}{2}(s-\tau)},
    \end{split}
\end{equation}
where we have used the definition of $\mathcal F_1,\mathcal F_2$ and that they are $O(z)$ at $z=0$ and decays exponentially at infinity. Next, using the estimate on $\eta$, we have
\begin{equation}
    \begin{split}
    \abs{f(\tau)}&=\abs{f(\tau)-f_\infty+f_\infty} \leq  \abs{f_\infty}+C_{s_1,\theta} \frac{\log \tau}{\tau} \lesssim_{s_1,\theta} 1\\
        \abs{\frac{F_\eta(\tau,\cdot)}{\bar X_{\theta}}}&\leq \bar X_{\theta}^{-1}\left( 4\abs{(\eta-\eta\rvert_{z=0}) \phi} +\abs{\partial_z^{-1}\eta}\phi+3\abs{\eta_z}\partial_z^{-1}\phi\right)\\
     &\leq (4\norm{\eta_z}_{L^\infty([0,\tau])} +\norm{\eta}_{L^\infty([0,\tau])})\norm{\frac{z\phi}{\bar X_{\theta}}}_{L^\infty}+3\norm{\eta_z}_{L^\infty}\norm{\frac{\partial_z^{-1}\phi}{\bar X_{\theta}}}_{L^\infty}\\
     &\lesssim_\theta (\frac{\log \tau}{\tau})^2\\
     \norm{\frac{\S_\mathcal{J}(\tau,s) F_\eta(,\cdot)}{\bar X_\theta}}_{L^\infty[0,s]}&\lesssim_\theta e^{-\frac{1}{2}(s-\tau)} (\frac{\log \tau}{\tau})^2.
    \end{split}
\end{equation}
Using the integrability of $\phi$ and $\epsilon$ \eqref{the estimate of int phi+epsilon tilde v}, as well as the uniform boundedness of two profiles, we estimate
\begin{equation}
    \begin{split}
        \abs{g(\tau)}&\leq \abs{f(\tau)\xi(\tau)-\frac{f_\infty}{2}}+ (\frac{\log \tau}{\tau}\norm{V_{1,\log}}_{L^\infty}+\frac{1}{\tau} \norm{V_1}_{L^\infty})(\int_0^\tau \phi dz+\int_0^\tau \abs{\epsilon(\tau,\cdot)}dz)\\
        &\leq C_{s_1,\theta} \frac{\log \tau}{\tau}+C_{s_1,\theta}(\frac{\log \tau}{\tau}+\frac{1}{\tau})(2+C\frac{(\log \tau)^2}{\tau})\lesssim_{s_1,\theta} \frac{\log \tau}{\tau}\\
        &\norm{\frac{\S_\mathcal{J}(\tau,s) \partial_z^{-1}\phi}{\bar X_\theta}}_{L^\infty([0,s])}\lesssim_\theta \exp\left({-\frac{1}{2}(s-\tau)}\right).
    \end{split}
\end{equation}
Using $V_{1,\log},V_1$ is $O(z)$ at $z=0$ and uniformly bounded, we have
\begin{equation}
    \norm{\frac{\S_\mathcal{J}(\tau,s) V_{1,\log}}{\bar X_\theta}}_{L^\infty([0,s])}, \norm{\frac{\S_\mathcal{J}(\tau,s) V_{1}}{\bar X_\theta}}_{L^\infty([0,s])}\lesssim_{s_1,\theta}\exp\left({-\frac{1}{2}(s-\tau)}\right)
\end{equation}
Finally, the equation of $\tilde r$ satisfies for $s\geq s_1$ and $s_1$ sufficiently large (may depend on $\theta$)
\begin{equation}\label{second iteration preliminary estimate}
\begin{split}
     \norm{\frac{\tilde r(s,\cdot)}{\bar X_\theta}}_{L^\infty[0,s]}&\leq \norm{\frac{\tilde r(s_1,\cdot)}{\bar X_\theta}}_{L^\infty[0,s_1]} e^{-\frac{1}{2}(s-s_1)}+ C_\theta \int_{s_1}^s \norm{\frac{\tilde r(\tau,\cdot)}{\bar X_{\theta}}}_{L^\infty([0,\tau])}  \tau^{-1} e^{-\frac{1}{2}(s-\tau)}d\tau\\
     &\quad+C_{s_1,\theta}\int_{s_1}^s \frac{(\log \tau)^3}{\tau^2} e^{-\frac{1}{2}(s-\tau)}d\tau\\
     &\leq\norm{\frac{\tilde r(s_1,\cdot)}{\bar X_\theta}}_{L^\infty[0,s_1]} e^{-\frac{1}{2}(s-s_1)}+ \frac{1}{4}\int_{s_1}^s \norm{\frac{\tilde r(\tau,\cdot)}{\bar X_{\theta}}}_{L^\infty([0,\tau])}   e^{-\frac{1}{2}(s-\tau)}d\tau\\
     &\quad+C_{s_1,\theta}\int_{s_1}^s \frac{(\log \tau)^3}{\tau^2} e^{-\frac{1}{2}(s-\tau)}d\tau
\end{split}
\end{equation}

\subsubsection{The decay of remainder}
\begin{corollary}\label{cor:third tangential refined estimate}
 Under the assumption of Lemma \ref{continuity argument new}, for $s_1$ sufficiently large,
 \begin{equation}
      \norm{\frac{\tilde r(s,\cdot)}{\bar X_\theta}}_{L^\infty[0,s]}\lesssim_{s_1,\theta}\frac{(\log s)^3}{s^2} \text{ for all }s\geq s_1.
 \end{equation}
\end{corollary}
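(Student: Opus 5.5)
The plan is a Gronwall-type argument applied directly to the integral inequality \eqref{second iteration preliminary estimate}, where the decay rate $(\log s)^3/s^2$ of the forcing is passed to $\tilde r$ because the linear evolution damps at rate $1/2$ and the feedback term has small coefficient. Set
\[
A(s):=\norm{\frac{\tilde r(s,\cdot)}{\bar X_\theta}}_{L^\infty[0,s]}.
\]
This is finite at $s_1$. Indeed, $\tilde r(s_1)=\tilde v(s_1)-\frac{\log s_1}{s_1}V_{1,\log}-\frac1{s_1}V_1$. Lemma \ref{continuity argument new} gives $\tilde v/\bar X_\theta$ bounded, and $V_{1,\log},V_1$ vanish at $z=0$ to first order and are bounded, so $V/\bar X_\theta\in L^\infty$.

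We introduce the energy
\[
E(s):=\sup_{s_1\le t\le s}\frac{t^2}{(\log t)^3}A(t).
\]
We aim to show that $E$ is bounded uniformly in $s$. For $s_1\le t\le s$, we insert $A(\tau)\le E(s)(\log\tau)^3\tau^{-2}$ into \eqref{second iteration preliminary estimate}. The result rests on three elementary integral bounds.

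The first concerns the initial data term. For $t\ge s_1\ge 3$,
\[
e^{-\frac12(t-s_1)}\le C_{s_1}\frac{(\log t)^3}{t^2}.
\]

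The second concerns the forcing term. We use
\[
\int_{s_1}^t\frac{(\log\tau)^3}{\tau^2}e^{-\frac12(t-\tau)}\,d\tau\le \bar C\,\frac{(\log t)^3}{t^2},
\]
which holds because $(\log\tau)^3\tau^{-2}$ is slowly varying. To verify it, we split the integral at $\tau=t/2$. On $[t/2,t]$ we have $(\log\tau)^3\tau^{-2}\lesssim(\log t)^3t^{-2}$ and $\int e^{-\frac12(t-\tau)}d\tau\le 2$. On $[s_1,t/2]$ the exponential factor is at most $e^{-t/4}$, which is far smaller.

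The third concerns the feedback term, and it is the key point. Before the $1/4$ simplification, this term was $C_\theta\int_{s_1}^t\tau^{-1}A(\tau)e^{-\frac12(t-\tau)}d\tau$. By the same splitting it is bounded by
\[
C_\theta E(s)\,s_1^{-1}\,\bar C\,\frac{(\log t)^3}{t^2}.
\]
Keeping the factor $\tau^{-1}$ rather than replacing it by $1/4$ is essential. With the crude coefficient $\frac14$, the feedback term would give only $\frac14\cdot 2=\frac12$ times $E$, which is still a contraction and also closes, but retaining $\tau^{-1}\le s_1^{-1}$ makes the smallness transparent.

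Multiplying through by $t^2/(\log t)^3$ and taking the supremum over $t\in[s_1,s]$ then yields
\[
E(s)\le C_{s_1}A(s_1)+C_{s_1,\theta}+C_\theta\bar C s_1^{-1}E(s).
\]
For $s_1$ large, with $\theta$ fixed, the coefficient of $E(s)$ on the right is at most $\tfrac12$. We absorb it into the left side to get $E(s)\lesssim_{s_1,\theta}1$ uniformly in $s$, which is exactly the claim.

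Absorbing $E(s)$ requires $E(s)<\infty$ for each finite $s$. This follows from continuity of $s\mapsto A(s)$, or from the boundedness of $\tilde r/\bar X_\theta$ on compact time intervals, which is already implied by Lemma \ref{continuity argument new} since $A\lesssim \log s/s$.

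We expect the main obstacle to be confirming that every source term in the Duhamel formula for $\tilde r$ indeed decays like $(\log\tau)^3/\tau^2$. This was done term by term above. The most delicate pieces are $(f-f_\infty)\frac{\log\tau}{\tau}\mathcal F_1$ and the $g$-term, because they use $|f-f_\infty|\lesssim\log\tau/\tau$ from Corollary \ref{second iteration convergence of modulation parameter} and $|g|\lesssim\log\tau/\tau$. The former is multiplied by $\frac{\log\tau}{\tau}$ and so gives $(\log\tau)^2/\tau^2$. The $g$-term, however, appears only with the prefactor $\tau^{-1}$, giving $(\log\tau)/\tau^2$. Both are admissible. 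If the $g$ bound were only $\log\tau/\tau$ without the extra $\tau^{-1}$, the scheme would fail, so the key check is that the $8\tau^{-1}g\,\partial_z^{-1}\phi$ structure carries the explicit $\tau^{-1}$.
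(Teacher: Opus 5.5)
Your proof is correct, but it closes the integral inequality \eqref{second iteration preliminary estimate} differently from the paper.

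\textbf{The paper's route.} The paper keeps the crude feedback coefficient $\frac14$ and applies Gronwall's lemma (effectively to $e^{s/2}\|\tilde r/\bar X_\theta\|$). This turns the kernel $e^{-\frac12(s-\tau)}$ into $e^{-\frac14(s-\tau)}$ and gives directly
$\|\tilde r(s_1)/\bar X_\theta\|e^{-\frac14(s-s_1)}+C_{s_1,\theta}\int_{s_1}^s(\log\tau)^3\tau^{-2}e^{-\frac14(s-\tau)}\,d\tau\lesssim(\log s)^3/s^2$.
It needs neither an a priori finite weighted supremum nor any sharp constant. The only cost is halving the exponential rate, which is harmless here.

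\textbf{Your route.} You run the weighted-supremum absorption of Lemma \ref{continuity argument new} with $E(s)=\sup_{s_1\le t\le s}t^2(\log t)^{-3}A(t)$. This keeps the full damping rate, and the smallness comes transparently from $\tau^{-1}\le s_1^{-1}$. In exchange you need two facts, both of which hold. First, $E(s)<\infty$ for finite $s$, which you correctly get from $A(s)\lesssim\log s/s$. Second, the feedback constant $C_\theta$ must not depend on $s_1$, which is the case.

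\textbf{A caveat on your aside.} You claim the crude coefficient $\frac14$ would also close. Your splitting at $t/2$ only gives $\bar C$ of order $8$: on $[t/2,t]$ you have $\tau^{-2}\le 4t^{-2}$, and $\int e^{-\frac12(t-\tau)}\,d\tau\le 2$. With that constant, the coefficient $\frac14$ puts $2E(s)$ on the right-hand side, which does not absorb. The aside is true only with the sharp constant $2+O(s_1^{-1})$, obtained by expanding $(t/\tau)^2$ near $\tau=t$. This is exactly the issue Gronwall sidesteps. Since your actual argument keeps the factor $\tau^{-1}$, it is unaffected.
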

\begin{proof}
    Applying Gronwall's lemma to \eqref{second iteration preliminary estimate}, we find that
    \begin{equation}
       \begin{split}
           \norm{\frac{\tilde r(s,\cdot)}{\bar X_\theta}}_{L^\infty[0,s]}&\leq \norm{\frac{\tilde r(s_1,\cdot)}{\bar X_\theta}}_{L^\infty[0,s_1]} e^{-\frac{1}{4}(s-s_1)}+C_{s_1,\theta}\int_{s_1}^s \frac{(\log \tau)^3}{\tau^2} e^{-\frac{1}{4}(s-\tau)}d\tau \lesssim_{\theta,s_1} \frac{(\log s)^3}{s^2}
       \end{split}
    \end{equation}
    where for the first term we used that $\tilde r(s_1,\cdot)=\tilde v(s_1,\cdot)-\frac{\log s_1}{s_1}V_{1,\log}-\frac{1}{s_1}V_1$ with $\tilde v(s_1),V_{1,\log},V_1\in \bar X_\theta L^\infty([0,s_1])$.
    \end{proof}

\begin{remark}\label{rmk:third tangential independence of limit}
    The value of \(f_\infty\) is in fact independent of
\(s_1\). Indeed, changing \(s_1\) only splits the same Duhamel integral
at a different time, and the integrating factor identity gives the same limit.
Thus \(f_\infty\) is fixed by the solution, equivalently by the initial
data. We suppress the dependence of $f_\infty$ from now on.
\end{remark}
{\begin{remark}\label{rmk:nonzero limit}
    For any fixed initial time $s_1$ sufficiently large, there exists an mean-free initial condition $G^*(\cdot)=\phi(\cdot)-\frac{1-e^{-s_1}}{s_1}$ at $s_1$ such that $f_\infty[G^*]>0$, where $f_\infty[u_0]=\lim_{s\rightarrow \infty} F_{1,0}(s,z=0)$ with $F_{1,0}(s_1,\cdot)=u_0$. Furthermore, letting $\mathcal A_{s_1}$ be the vector space of
mean-free admissible initial profiles $G$ on $[0,s_1]$ for which the
norm $\|G\|_{\mathcal A_{s_1}}
 :=|G\rvert_{z=0}|+
 \left\|({G-G\rvert_{z=0}\phi}){\bar X_\theta^{-1}}
 \right\|_{L^\infty((0,s_1])}<\infty,$
then the kernel set $\ker (f_\infty)=\{G\in\mathcal A_{s_1}:\;f_\infty[G]=0\}$ is a closed codimension-one subspace of $\mathcal A_{s_1}$, and its complement is open and dense.
\end{remark}
\begin{proof}
   In view of the decomposition \eqref{eq: third tangential crude decomp}, at $s=s_1$, for this mean-free initial condition
   \begin{align*}
   F_{1,0}(s_1,z)&=e^{-z}-\frac{1-e^{-s_1}}{s_1},\quad
 f(s_1)=1-\frac{1-e^{-s_1}}{s_1},\quad
 \tilde v(s_1,z)
 =-\frac{1-e^{-s_1}}{s_1}(1-e^{-z}).
\end{align*}
In light of the energy estimate of $H(s)$ \eqref{eq:intermediate energy estimate}, the bound on $\mathcal R_\infty(s_1)$ \eqref{eq:bound on R_inf} and the definition of $\bar X_\theta$ \eqref{weights for tilde epsilon new derivative}, combining with the definition of energy \eqref{eq:definition of energy kernel}, for this initial condition, we have for all $s\geq s_1$
\begin{equation}\label{equation:modulationandv}
 |f(s)|\lesssim_\theta1,
 \qquad
 \|\tilde v(s,\cdot)\bar X_\theta^{-1}\|_{L^\infty([0,s])}
 \lesssim_\theta\frac{\log s}{s}.
\end{equation}
In view of the formula for $f_\infty$ \eqref{eq:formula of f_infty}, 
\begin{equation}
\begin{split}
      f_\infty[G^*]&=R_\infty(s_1)f(s_1)+O_\theta(\int_{s_1}^\infty\frac{\log\tau}{\tau^2}\,d\tau)=1+O_\theta\left(\frac{\log s_1}{s_1}\right).
\end{split}
\end{equation}
where we have used \eqref{integral of phi plus epsilon times tilde v} and simplifies \eqref{eq: asymptote of R_inf} to $ \mathcal R_\infty(s_1)=1+O\left(\frac{\log s_1}{s_1}\right)$. Thanks to the linearity of the operator $f_\infty: u_0\mapsto f_\infty[u_0]$, using the same energy estimates \eqref{eq:intermediate energy estimate}, we see for any $G\in\mathcal A_{s_1}$, its corresponding modulation parameters satisfy $\abs{f(s)}\lesssim_{s_1,\theta} \norm{G}_{A_{s_1}}, \norm{\frac{\tilde v(s,\cdot)}{\bar X_\theta}}_{L^\infty[0,t]}\lesssim_{s_1,\theta}\frac{\log s}{s}  \norm{G}_{A_{s_1}}\,\text{for }s_1\leq s.$ In light of the formula of $f_\infty$ \eqref{eq:formula of f_infty}, $|f_\infty[G]|\leq C_{s_1,\theta}\|G\|_{\mathcal A_{s_1}}$ and thus $f_\infty$ is a continuous operator. The desired result follows as we have already shown that $f_\infty[G^*]\neq 0$.
\end{proof}
}

Finally, in view of Corollary \ref{second iteration convergence of modulation parameter}, Corollary \eqref{cor:third tangential refined estimate}, Remark \ref{rmk:third tangential independence of limit}, as well as the decompositions \eqref{eq;third tangential decomp on remainder} and \eqref{eq: third tangential crude decomp}, Theorem \ref{asymptotic expansion of F_1,0 recall} is proven.

\section{Analytical Boundedness and Extension}\label{sec: analytical extension}
This section corresponds to the third objective mentioned in Section \ref{sec:equations and goal}. Recall that in the last two sections, we have extracted precise quantitative information on the asymptotic behavior of each mode for $F_{i,k}$'s. However, this fixed-order result does not control the entire infinite hierarchy. For example, at a given sufficiently large $s$, independent of $i,k$, the higher order derivatives can be uncontrollably large. This section answers whether this situation is possible or not, by providing a qualitative result on the boundedness of weighted analytic norm.\par

{In addition, we show that the radius of convergence of the Taylor
series reconstructed from $(F_{i,k})$ remains bounded below by a
positive constant in the renormalized variables. Using the change
of variables \eqref{setting-renormalization}, we reconstruct, relying on the information on the axis,
encoded by the hierarchy $(F_{i,k})$, a local solution
of \eqref{2d primitive intro} that is analytic in a spatial
neighborhood of the symmetry axis for all times before $T$.
The horizontal analytic radius of this local solution is bounded below by a constant
multiple of $(T-t)^{3/2}$.}\par
{Any other local analytic solution with the same initial data near
the axis agrees with the reconstructed solution wherever both
solutions are defined. In particular, if the initial data
agree near the axis, the reconstructed solution coincides near
the axis with the full solution constructed in
\cite{Collot2023} for as long as the full solution exists. However, the full solution may still
become singular away from the axis before time $T$.}


{Our approach to spatial analytic extension is based on a priori estimates for the hierarchy of derivatives along the symmetry axis. Using the established asymptotics of the background and first-shell modes as inputs, we derive weighted bounds for the higher modes under a smallness condition on their initial data and their source. We then show that the required initial-data conditions are satisfied by a nonempty class of analytic data.}

Except for the order of induction, this section is methodologically fundamentally different than the earlier sections. In particular, we shall treat the information of $F_{1,0}$ and $F_{0,1}$ as background information, and the induction starts at the second shell $i+k=2$. The other difference is that as the expected result needs to be uniform in $i,k$, we shall therefore also be proving various combinatorial estimates. 

Under the assumption of the smallness of initial analytic norm at $s_0$, we shall first prove a form of comparison principles that argues the solution size is controlled whenever the source $N_{i,k}$ is. In the next step, we shall prove that the source can indeed be controlled whenever the induction hypothesis is satisfied.

\subsection{Known information and objective}
\subsubsection{Asymptotic information on first order derivatives}
We begin by summarizing what we have shown so far. From the preceding sections, we know that there exists $s^{**}\geq s^*$ sufficiently large such that the following asymptotic relations hold.
\begin{enumerate}
    \item For $s\geq s^*$, the correctional term $\epsilon$
\begin{equation}\label{eq:eps-expansion}
\eps(s,z)
=
\frac{\log s}{s}\eps_{1,\log}(z)+\frac{1}s\eps_1(z)+R_\eps(s,z),
\qquad
\norm{R_{\eps}(s,\cdot)}_{C^1(0,s)}\lesssim_{s^*} \frac{\log^2s}{s^2},
\end{equation}
where
\begin{equation}\label{eq:eps-profiles}
  \eps_{1,\log}(z)=-2ze^{-z},
  \qquad
  \eps_1(z)=\bigl(2+(1+C_\nu)z\bigr)e^{-z}-1.
\end{equation}
For simplicity, we define
\begin{equation}\label{eq:epsrem-def}
  \eps_{\rm rem}(s,z):=\eps(s,z)+\frac{1}s,
\end{equation}
so that $\eps_{\rm rem}$ vanishes at infinity and
\begin{equation}\label{eq:bg-model}
  \phi+\eps=e^{-z}-s^{-1}+\eps_{\rm rem}.
\end{equation}
\item The $(1,0)$-mode has the expansion
\begin{equation}\label{eq:F10-asymptotic-input}
\begin{aligned}
F_{1,0}(s,z)
={}&F_{1,0}|_{z=0}(s)\phi(z)
+\frac{\log s}{s}V_{1,\log}(z)
+\frac{1}sV_1(z)+\tilde r(s,z),\\
V_{1,\log}(z)={}&-2f_\infty z e^{-z},\\
V_1(z)={}&f_\infty\left(-1+C_\nu ze^{-z}-\frac{1}{2}e^{-z}+e^{-2z}+\frac{1}{2}e^{-3z}\right)
\end{aligned}
\end{equation}
for all \(s\ge s_1\geq s^{**}\). Furthermore,
\begin{equation}\label{eq:F10-asymptotic-remainder}
  |F_{1,0}|_{z=0}(s)-f_\infty|\le C(s_1)\frac{\log s}{s},
  \qquad
  \left\|\frac{\tilde r(s,\cdot)}{\partial_z^{-1}\phi}\right\|_{L^\infty[0,s]}
  \le C(s_1)\frac{(\log s)^3}{s^2}.
\end{equation}
 Equivalently, the bound used below is the pointwise estimate
\begin{equation}\label{eq:F10-remainder-pointwise}
  |\tilde r(s,z)|
  \le C(s_1)\frac{(\log s)^3}{s^2}(1-e^{-z}),
  \qquad 0\le z\le s.
\end{equation}
\item The $(0,1)$-mode has the expansion
\begin{equation}\label{eq:F01-asymptotic-input}
\begin{aligned}
F_{0,1}(s,z)
={}&(1+\epsilon_0(s))\phi(z)
+\frac{\log s}{s}\Phi_{0,1}(z)
+\frac{1}s\Psi_{0,1}(z)+r_{0,1}(s,z),\\
\Phi_{0,1}(z)
={}&-\partial_zU_{1,\log}(z)=2e^{-z}-2ze^{-z},\\
\Psi_{0,1}(z)
={}&-\phi(z)-\partial_zU_1(z)=\bigl((1+C_\nu)z-C_\nu\bigr)e^{-z},
\end{aligned}
\end{equation}
for $s\geq s^*$, where $\epsilon_0=\eps(s,z=0)$ satisfying
\begin{equation}\label{eq:epsilon0-input}
  |\epsilon_0(s)|\simeq \frac{1}{s}\lesssim_{s^*}\frac{\log s}{s},
\end{equation}
Here the constant \(-1\) in \(U_1\) disappears after one \(z\)-derivative.  The
remainder satisfies
\begin{equation}\label{eq:F01-remainder-input}
  |r_{0,1}(s,z)|\lesssim_{s^*} \frac{(\log s)^2}{s^2},
  \qquad 0\le z\le s
\end{equation}
This is an immediate consequence of the \(C_z^1\) estimate in
\eqref{eq:eps-expansion}.
\end{enumerate}
In the following sections, we treat $s^*,s^{**}$ as universal constants.

\subsubsection{Tentative weights, analytic Radius and objective}
 In general, we expect that, based on Theorem \ref{refined C1 expansion} and Theorem \ref{first shell mode expansions}, the higher order profiles behave similarly to the first-shell modes. In particular, at the scale of $z\approx 1$, we expect that for all $i,k$'s, $F_{i,k}\approx \pm\phi$, while at the scale $z\approx s$, the decay for $F_{i,k}$ ($k\geq 1$) should be strictly faster than $\frac{1}{s}$, while the decay for $F_{i,0}$ ($k=0$) should be of order $\frac{1}{s}$. This motivates the introduction of the weight 
\begin{equation}\label{eq:hk-wk}
  \hh_k:=
  \begin{cases}
  \hh_0, & k=0,\\
  \hh_+, & k\ge1,
  \end{cases}
  \qquad
  \wh_k:=\hh_k^{-1},
\end{equation}
\begin{equation}\label{eq:mixed-h}
  \hh_0(s,z):=e^{-z}+s^{-1},
  \qquad
  \hh_+(s,z):=e^{-z}+\Lambda^2(\log s)^2/s^2.
\end{equation}
Here, we expect the positive modes $F_{i,k}$ ($k\geq 1$) to be of order $(\log s)^2/s^2$ at spatial scale $z\approx s$. This weight is accompanied by an analytical radius
\begin{equation}\label{eq:rho}
  -\frac{\rho'}{\rho}=C_0\Lambda^2(\log s)^2/s^2,
  \qquad \rho(s_0)=\rho_0,
\end{equation}
{where $C_0,\Lambda,\rho_0>0$ are parameters we shall choose as the analysis progresses.}
Equivalently,
\begin{equation}\label{eq:rho-explicit}
  \rho(s)=\rho_0\exp\left(-C_0\Lambda^2\int_{s_0}^s
  \frac{(\log \sigma)^2}{\sigma^2}\dd \sigma\right).
\end{equation}

This radius also comes with additional parameters. In particular, at $s_0 \gg 1$, the conditions on $F_{i,k}$'s are imposed and we shall show that for $s\geq s_0$, these bounds remain satisfied. However, we shall consider the equations of a normalized version of $F_{i,k}$'s. In that setting, the parameter $\Lambda, C_0\gg 1$ serves as a margin for positive modes: it ensures that the influence from the detrended primitive of $\eps$: $ |\pzi\eps_{\rm rem}(s,z)|$ term, measured relative to \(h_+\), is absorbed respectively by the original damping terms and the damping terms resulting from the radius, so that the damping coefficient is uniformly bounded below by some preselected scale. 

Notice that although the radius is specifically chosen to decay, when one considers a normalized version of $F_{i,k}$, this decay introduces terms favoring damping. On the other hand, since the integrand \eqref{eq:rho-explicit} is integrable, $\rho(s)$ therefore remains of constant order. The parameter $0<\rho_0\ll 1$ therefore serves as a scale for analytic radius. Indeed, if we choose sufficiently large $s_0$, then by explicit computation on \eqref{eq:rho-explicit}
\begin{equation}\label{eq:bound on analytical radius}
 s_0\geq s_{\text{rad}}:=\max\left(1,\left(\frac{4C_0 \Lambda^2}{\log 2}\right)^2\right)\implies  C_0\Lambda^2\frac{(\log s_0)^2+2\log s_0+2}{s_0}\le \log2 \implies \frac{\rho_0}{2}\leq \rho(s) \leq \rho_0.
\end{equation} 
This parameter also allows us to absorb the influence of $F_{1,0}$ and $F_{0,1}$ resulting from the non-linear source term. We shall also introduce a quantity 
\begin{equation*}
     B_1(s_1,\Lambda):=
  \sup_{s\ge s_1}
  \left(
  \norm{\wh_0F_{1,0}(s)}_{L^\infty_z}
  +\norm{\wh_+F_{0,1}(s)}_{L^\infty_z}
  \right)
\end{equation*}
measuring the influence of shell-one background modes. Here, after $\Lambda$ is fixed, $s_1\leq s_0$ is then fixed as an auxiliary entrance time where the background asymptotes and the comparison between two weights ($h_0,h_+$) are simultaneously available. The comparability of the two weights allows us to better control the non-linear source.

Our goal is to show that there exists a family of parameters, as well as $s_0$ independent of $i,k$, such that when $i+k\geq 2$, as long as
\begin{equation}
    |F_{i,k}(s_0,z)|\le 
  M\frac{(2i+k+1)!}{(1+i+k)^{a+1}\rho_0^{2i+k-1}}\,\hh_k(s_0,z),\qquad s=s_0
\end{equation}
is satisfied, then
\begin{equation}\label{eq:raw-step2-target}
  |F_{i,k}(s,z)|
  \le
  M\frac{(2i+k+1)!}{(1+i+k)^{a+1}\rho(s)^{2i+k-1}}\,\hh_k(s,z),
  \qquad s\ge s_0,
\end{equation}
where \(\hh_k\) is the zero/positive tail scale defined in \eqref{eq:hk-wk}. Here, $a\geq 1$ serves as a Sobolev corrector term, allowing us to estimate decay of certain combinatorial identities, which helps control the nonlinear source, ensuring the induction can be closed. {It will soon be fixed it to $3$, after some combinatorial inequalities are derived.} The parameter $M$ serves as a scale for solution size. Effectively, $\eps,F_{1,0}$ and $F_{0,1}$ are treated as background information for $i+k=1$, and the induction is performed on all the active shells, that is, when $i+k\geq 2$.

\subsubsection{Normalized equations}
For active modes \(i+k\ge2\), define
\begin{equation}\label{eq:Aik-def}
  A_{i,k}(s):=
  \frac{(1+i+k)^{a+1}\rho(s)^{2i+k-1}}{(2i+k+1)!},
  \qquad
  U_{i,k}:=A_{i,k}\wh_kF_{i,k}.
\end{equation}
Then the goal \eqref{eq:raw-step2-target} is exactly equivalent to the
normalized target
\begin{equation}\label{eq:active-target}
  \norm{U_{i,k}(s)}_{L^\infty_z}\le M,
  \qquad i+k\ge2.
\end{equation}
This is the same as the raw pointwise form
\begin{equation}\label{eq:active-raw-bound-form}
  |F_{i,k}(s,z)|\le M A_{i,k}(s)^{-1}\hh_k(s,z)\qquad i+k\ge2.
  \quad s\ge s_0,
  \quad 0\le z\le s,
\end{equation}
or equivalently
\begin{equation}\label{eq:active-A-bound-form}
  A_{i,k}(s)\wh_k(s,z)|F_{i,k}(s,z)|\le M,
  \qquad i+k\ge2,
  \quad s\ge s_0,
  \quad 0\le z\le s.
\end{equation}

For every active pair \(i+k\ge2\), the normalized unknown
\(U_{i,k}=A_{i,k}\wh_kF_{i,k}\) satisfies
\begin{equation}\label{eq:normalized-pde}
\begin{aligned}
\partial_sU_{i,k}
&+\left(\pzi(\phi+\eps)+\frac{z}s\right)\partial_zU_{i,k}
+G_{i,k}U_{i,k} \\
&=\one_{k=0}\mathcal K_i[U_{i,0}]+S_{i,k}.
\end{aligned}
\end{equation}
The terms in \eqref{eq:normalized-pde} are defined as follows.  The new source is the
normalized nonlinear source of the original hierarchy
\begin{equation}\label{eq:normalized-source-def}
  S_{i,k}:=A_{i,k}\wh_kN_{i,k},
\end{equation}
where \(N_{i,k}\) is defined in \eqref{eq:Nik}.  The effective
zeroth-order damping coefficient is
\begin{equation}\label{eq:Geff-def}
\begin{aligned}
G_{i,k}:={}&
\frac{k}{s}+3i+1-(2i+2-k)(\phi+\eps)
-(2i+k-1)\frac{\rho'}{\rho}
+\frac{\partial_s\hh_k}{\hh_k} \\
&-\left(\pzi(\phi+\eps)+\frac{z}s\right)\frac{e^{-z}}{\hh_k}.
\end{aligned}
 \end{equation}
Here \(\rho\), \(A_{i,k}\), \(\hh_k\), and \(\wh_k=\hh_k^{-1}\) are the quantities
introduced in the last subsection.
For zero modes \(k=0\), \(i\ge2\), the normalized linear nonlocal operator is
\begin{equation}\label{eq:normalized-zero-operator}
  \begin{aligned}
\mathcal K_i[V](s,z)
={}&(2i+1)\wh_0(s,z)(\phi-\eps_z)(s,z)
      \int_0^z \hh_0(s,y)V(s,y)\dd y  \\
&-4(i+1)\wh_0(s,z)s^{-1}
      \int_0^s (\phi+\eps)(s,y)\hh_0(s,y)V(s,y)\dd y.
\end{aligned} 
\end{equation}
The operator \(\mathcal K_i\) appears only when \(k=0\).

\subsubsection{Nested induction}\label{subsec: nested induction}
For notational simplicity, we set
\[
  \mathcal S_m:=\{(j,\ell):j+\ell=m\}.
\]
As the induction starts at the second shell, for \(m\ge2\), define
\begin{equation}\label{eq:Pm}
  \mathsf P_m:
  \qquad
  \norm{U_{j,\ell}(s)}_{L^\infty_z}\le M
  \quad\text{for all }(j,\ell)\in\mathcal S_m,
  \quad s\ge s_0.
\end{equation}
The outer induction is performed on the shell \(m=j+\ell\), and inside each fixed shell we introduce another level of induction, attempting to prove the target bound in the order of
\begin{equation}\label{eq:shell-order}
  (0,m),\ (1,m-1),\ (2,m-2),\ldots,\ (m,0).
\end{equation}
Since the $F_{0,m}$ mode does not contain same-shell term, when the current pair is \((i,k)\in\mathcal S_m\), all active shells
already satisfy the induction hypothesis, and all the same-shell modes \((j,m-j)\) with \(j<i\) have already been proven.
\begin{center}
\begin{minipage}{\linewidth}
\centering
\begin{tikzpicture}[
    scale=0.78,
    every node/.style={font=\scriptsize},
    knownlower/.style={circle, fill=blue!35, draw=blue!70!black, inner sep=2pt},
    same/.style={circle, fill=green!40, draw=green!50!black, inner sep=2pt},
    current/.style={diamond, aspect=1, fill=white, draw=red!80!black, line width=1.1pt, inner sep=2.5pt},
    future/.style={rectangle, minimum size=6pt, fill=white, draw=gray!65, line width=0.7pt, inner sep=0pt},
    base/.style={regular polygon, regular polygon sides=3, minimum size=7pt, inner sep=0pt, fill=violet!50, draw=violet!80!black},
    basecase/.style={circle, fill=orange!45, draw=orange!80!black, inner sep=2pt}
]

\begin{scope}
\node[font=\small] at (7.0,7.85) {Mode availability when estimating $F_{i,k}$ ($m=i+k$)};

\draw[->] (-0.45,0) -- (7.35,0) node[right] {$i$};
\draw[->] (0,-0.45) -- (0,7.05) node[above] {$k$};

\foreach \m in {2,3,5}{
  \draw[dashed, gray!25] (-0.08,\m+0.08) -- (\m+0.08,-0.08);
}
\node[gray!55, rotate=-35] at (2.0,2.0) {$\cdots$};
\node[gray!55, rotate=-35] at (2.55,2.55) {$\cdots$};

\foreach \m in {2,3}{
  \foreach \ii in {0,...,\m}{
    \pgfmathtruncatemacro{\kk}{\m-\ii}
    \node[knownlower] at (\ii,\kk) {};
  }
}
\foreach \ii/\kk in {0/5,1/4,4/1,5/0}{
  \node[knownlower] at (\ii,\kk) {};
}

\foreach \ii in {0,1,2}{
  \pgfmathtruncatemacro{\kk}{2-\ii}
  \node[basecase] at (\ii,\kk) {};
}

\node[base,label=below:{$F_{1,0}$}] at (1,0) {};
\node[base,label=left:{$F_{0,1}$}] at (0,1) {};

\draw[dashed, red!65] (-0.1,6.1) -- (6.1,-0.1);
\node[same] at (0,6) {};
\node[same] at (1,5) {};
\foreach \ii/\kk in {1.82/4.18,2/4,2.18/3.82}{
  \node[same, inner sep=1pt] at (\ii,\kk) {};
}
\node[same,label=above left:{$F_{i-1,k+1}$}] at (3,3) {};
\node[current,label=above right:{$F_{i,k}$}] (curL) at (4,2) {};
\node[future] at (5,1) {};
\foreach \ii/\kk in {5.52/0.48,5.7/0.3,5.88/0.12}{
  \node[future, minimum size=3pt] at (\ii,\kk) {};
}

\node at (6.65,0.15) {$\cdots$};
\node at (0.15,6.68) {$\vdots$};
\end{scope}

\begin{scope}[xshift=9.0cm]
\node[anchor=west, font=\scriptsize\bfseries] at (-0.15,6.4) {Filled markers: estimates available};
\node[knownlower] at (0,5.85) {};
\node[right] at (0.25,5.85) {modes in shells \(\le m-1\)};

\node[basecase] at (0,5.35) {};
\node[right] at (0.25,5.35) {second shell \(i+k=2\), base case within lower shells};

\node[same] at (0,4.85) {};
\node[right] at (0.25,4.85) {earlier modes in the \(m\)-th shell};

\node[base] at (0,4.35) {};
\node[right] at (0.25,4.35) {shell-one background information};

\node[anchor=west, font=\scriptsize\bfseries] at (-0.15,3.7) {Outlined diamond: estimate in progress};
\node[current] at (0,3.15) {};
\node[right] at (0.25,3.15) {current mode $F_{i,k}$};

\node[anchor=west, font=\scriptsize\bfseries] at (-0.15,2.55) {Outlined squares: estimates not yet available};
\node[future] at (0,2.05) {};
\node[right] at (0.25,2.05) {later modes in the \(m\)-th shell};

\draw[dashed, red!65] (-0.15,1.35) -- (0.45,1.35);
\node[align=left, anchor=west] at (0.65,1.35)
{current shell \(i+k=m\)};

\draw[dashed, gray!60] (-0.15,0.8) -- (0.45,0.8);
\node[align=left, anchor=west] at (0.65,0.8)
{lower shell \(i+k=n,\ n\le m-1\)};
\end{scope}

\end{tikzpicture}
\captionof{figure}{Mode availability when estimating \(F_{i,k}\)
in the nested induction, where \(m=i+k\).}
\label{fig:mode-availability}
\end{minipage}
\end{center}
In other words, thanks to the lower-triangular dependence, if one attempts to show $F_{i,k}$ with $i+k=m\geq 3$ satisfies this bound, then in this nested order, by \eqref{eq:active-raw-bound-form}, the factor already satisfies
\begin{equation}\label{eq:available-active-pointwise}
\begin{split}
     |F_{j,\ell}(s,z)|\le M A_{j,\ell}(s)^{-1}\hh_\ell(s,z) \text{ for }(2\leq j+l\leq m-1) \text{ or }(j+l=m, 0\leq j\leq i-1)
\end{split}
\end{equation}
The base shell then consists of
\[
  \mathcal S_2=\{(0,2),(1,1),(2,0)\},
\]
and it is also proven in the order
\[
  (0,2)\longrightarrow(1,1)\longrightarrow(2,0).
\]
We will soon define $B_1(\Lambda,s_1)$ and show that the shell-one input factors satisfy
\begin{equation}\label{eq:available-base-pointwise}
  |F_{1,0}(s,z)|\le B_1\hh_0(s,z),
  \qquad
  |F_{0,1}(s,z)|\le B_1\hh_+(s,z).
\end{equation}
{For simplicity, we refer the modes $F_{j,l}$ with $j+l\geq 2$ as ``active'', whereas ``positive'' or ``zero'' modes stand for the mode $F_{j,l}$ with $l\geq 1$ or $l=0$, respectively.}

\subsection{Immediate consequences from background information}
For fixed $\Lambda\geq 1$, whose range will be given later, we start by fixing $s_1\geq s_{\text{ent}}(\Lambda)$ such that
\begin{equation}\label{eq:s1-requirements}
\begin{aligned}
s_{\text{ent}}(\Lambda)=\max(s^*,s^{**},e,4\Lambda^2 (\log(4\Lambda^2))^2),
\end{aligned}
\end{equation}
then $\Lambda^2\frac{(\log s)^2}{s^2}\le \frac1s$ for all $s\geq s_1$, and the asymptotic expansions and remainder estimates on $\eps$ \eqref{eq:eps-expansion}, on $F_{1,0}$ \eqref{eq:F10-asymptotic-input},\ \eqref{eq:F10-asymptotic-remainder}, and on $F_{0,1}$ \eqref{eq:F01-asymptotic-input},\ \eqref{eq:F01-remainder-input} 
simultaneously hold for $s\geq s_1$. The following lemma translates the asymptotic expansions into spatially uniform or pointwise estimates that are inserted later to obtain a lower bound on the damping coefficient.

\subsubsection{Background information related to the weights}
\begin{corollary}[Estimates for the detrended $\eps$: $\eps_{\rm rem}$]\label{lem:epsrem-estimates}
{ For every sufficiently large lower threshold \(S\), the remainder
\(\eps_{\rm rem}\) from \eqref{eq:epsrem-def} satisfies, uniformly for
\(s\ge S\geq {S^*(\Lambda)}\),}
\begin{equation}\label{eq:epsrem-pointwise-split}
  |\eps_{\rm rem}(s,z)|
  \le o_S(1)e^{-z}+C\frac{(\log s)^2}{s^2}
  \le o_S(1)e^{-z}+C\Lambda^2\frac{(\log s)^2}{s^2},
  \qquad 0\le z\le s.
\end{equation}
Here, $C$ is universal constant, and \(o_S(1)\to0\) denotes a quantity depending only on the lower threshold $S$, that can be made arbitrarily small by
increasing the temporary threshold \(S\). 
Moreover, globally its primitive satisfies
\begin{equation}\label{eq:epsrem-primitive-small}
  \sup_{0\le z\le s}|\pzi\eps_{\rm rem}(s,z)|
  \le C\frac{\log^2s}{s}=o_S(1),
\end{equation}
and point-wise it can be bounded:
\begin{equation}\label{eq:epsrem-localized-small}
  |\pzi\eps_{\rm rem}(s,z)|\frac{e^{-z}}{e^{-z}+\Lambda^2(\log s)^2/s^2}
  \le
  \frac{C}{\Lambda}\left(e^{-z}+\frac{\Lambda^2(\log s)^2/s^2}{e^{-z}+\Lambda^2(\log s)^2/s^2}\right),
  \qquad 0\le z\le s.
\end{equation}
Finally, the derivative satisfies
\begin{equation}\label{eq:Ez-weighted-bound}
  \sup_{0\le z\le s}
  \frac{|\eps_z(s,z)|}{e^{-z}+s^{-1}}
  \le C\frac{\log^2s}{s}.
\end{equation}
\end{corollary}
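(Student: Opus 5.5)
The plan is to read every bound off the expansion \eqref{eq:eps-expansion} by writing
\[
\eps_{\rm rem}=\eps+\tfrac1s=\tfrac{\log s}{s}\eps_{1,\log}+\tfrac1s(\eps_1+1)+R_\eps .
\]
The constant $-1$ in $\eps_1$ cancels exactly, so $\eps_1+1=(2+(1+C_\nu)z)e^{-z}$ and $\eps_{1,\log}=-2ze^{-z}$. Both profiles are therefore of the form (polynomial of degree one)$\times e^{-z}$, while $|R_\eps|\lesssim(\log s)^2/s^2$ uniformly on $[0,s]$.

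For the pointwise split \eqref{eq:epsrem-pointwise-split}, I bound $|\tfrac{\log s}{s}\eps_{1,\log}+\tfrac1s(\eps_1+1)|\le C\tfrac{\log s}{s}(1+z)e^{-z}$. I then split at $z=\sqrt{s}$, say, or more simply at $z= 3\log s$:
\begin{itemize}
\item For $z\le 3\log s$, the factor $(1+z)\tfrac{\log s}{s}\le C\tfrac{(\log s)^2}{s}=o_S(1)$, which yields the term $o_S(1)e^{-z}$.
\item For $z\ge 3\log s$, the bound is $\le C\tfrac{\log s}{s}(1+z)e^{-z}\le Cs^{-2}$, which is absorbed into $C(\log s)^2/s^2$.
\end{itemize}
Adding $R_\eps$ gives the first inequality, and $\Lambda\ge1$ gives the second.

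For the primitive \eqref{eq:epsrem-primitive-small}, I integrate termwise. The profile parts have primitives bounded by $C\tfrac{\log s}{s}\int_0^\infty(1+z)e^{-z}dz$. The remainder contributes $\int_0^z|R_\eps|\le s\cdot C(\log s)^2/s^2=C(\log s)^2/s$. This sum is $o_S(1)$.

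The localized bound \eqref{eq:epsrem-localized-small} is the only delicate point, because the crude primitive bound $(\log s)^2/s$ is not small relative to $\Lambda^{-1}$ times the weight ratio unless localized. I write $\delta:=\Lambda^2(\log s)^2/s^2$ and $q:=e^{-z}/(e^{-z}+\delta)\le1$, and I use the sharper primitive estimate $|\pzi\eps_{\rm rem}(s,z)|\le C\tfrac{\log s}{s}+Cz\tfrac{(\log s)^2}{s^2}$. The argument splits into two regions.
\begin{itemize}
\item Where $e^{-z}\ge\delta$, we have $z\le\log(1/\delta)\le 2\log s$. Then $|\pzi\eps_{\rm rem}|\le C(\log s)^2/s$, which is at most $\tfrac{C}{\Lambda}e^{-z}$ provided $e^{-z}\gtrsim \Lambda(\log s)^2/s$. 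In the intermediate window $\Lambda(\log s)^2/s\gtrsim e^{-z}\ge\delta$, I instead use $q\le1$ and $\tfrac{\delta}{e^{-z}+\delta}\ge\tfrac12\tfrac{\delta}{e^{-z}}$. This gives $q\,|\pzi\eps_{\rm rem}|\le C(\log s)^2/s$, which should be compared with $\tfrac{C}{\Lambda}\delta e^{z}\ge \tfrac{C}{\Lambda}\tfrac{\Lambda^2(\log s)^2/s^2}{\Lambda(\log s)^2/s}=C/s$.
\item Where $e^{-z}\le\delta$, I use $q\le e^{-z}/\delta$ and $\tfrac{\delta}{e^{-z}+\delta}\ge\tfrac12$. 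It suffices that $|\pzi\eps_{\rm rem}|\lesssim\Lambda^{-1}$, and this follows from $(\log s)^2/s\le \Lambda^{-1}$ once $s\ge S^*(\Lambda)$.
\end{itemize}
The comparison in the intermediate window of the first region falls short by a $\log$ factor. I therefore expect the main obstacle to be exactly this window, and I would fix it by enlarging $S^*(\Lambda)$ so that $\Lambda(\log s)^2/s\le\Lambda^{-1}$ and $e^{-z}\ge\delta$ forces $q\ge\frac12$. In that case $q|\pzi\eps_{\rm rem}|\le (\log s)^2/s\le \Lambda^{-2}\le\tfrac{C}{\Lambda}\cdot\tfrac{\delta}{e^{-z}+\delta}\cdot\ldots$ fails only when $\delta/(e^{-z}+\delta)$ is tiny, i.e.\ when $e^{-z}\gg\delta$. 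There, however, $\tfrac{C}{\Lambda}e^{-z}\ge \tfrac{C}{\Lambda}\delta\cdot(\text{large})$, and the bound is recovered by taking $S^*(\Lambda)$ large and letting the threshold separate the two regions at $e^{-z}=\Lambda^{-1}(\log s)^2/s$. The universal constant $C$ is permitted to absorb the comparisons, and the dependence on $\Lambda$ enters only through $S^*(\Lambda)$.

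Finally, for \eqref{eq:Ez-weighted-bound}, I differentiate the expansion. The quantity $\eps_z$ equals $\tfrac{\log s}{s}\eps_{1,\log}'+\tfrac1s\eps_1'+\partial_zR_\eps$. The profile derivatives are $O((1+z)e^{-z})$, giving $\le C\tfrac{\log s}{s}(1+z)e^{-z}$. The remainder derivative is $O((\log s)^2/s^2)$ by the $C^1$ bound in \eqref{eq:eps-expansion}. Dividing by $e^{-z}+s^{-1}$, the first term is $\le C\tfrac{\log s}{s}(1+z)\le C\tfrac{\log^2 s}{s}$ for $z\le 2\log s$. Beyond that point it is $\le C s\tfrac{\log s}{s}(1+z)e^{-z}\le Cs^{-1}$. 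The remainder term is $\le s\cdot(\log s)^2/s^2$. Together these give the stated bound.
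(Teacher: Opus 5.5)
Your arguments for \eqref{eq:epsrem-pointwise-split}, \eqref{eq:epsrem-primitive-small} and \eqref{eq:Ez-weighted-bound} are correct and essentially the paper's. The paper splits at $z=\sqrt s/\log s$ in the first bound; your split at $z=3\log s$ works equally well. The gap is in the localized bound \eqref{eq:epsrem-localized-small}, and the repair you sketch does not close it.

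Write $\delta=\Lambda^2(\log s)^2/s^2$ and $q=e^{-z}/(e^{-z}+\delta)$. The bracket $e^{-z}+\delta/(e^{-z}+\delta)$ is smallest near $e^{-z}=\sqrt\delta=\Lambda\log s/s$, i.e.\ $z\approx\log s$, where it is $\approx 2\sqrt\delta$. So the right-hand side there is only $\approx 2C\log s/s$. At that point $q\approx 1$, and the primitive of $\frac{\log s}{s}\eps_{1,\log}$ has already saturated, so $|\pzi\eps_{\rm rem}|\approx 2\log s/s$. The inequality is therefore sharp up to the universal constant.

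This rules out each of your steps:
\begin{itemize}
\item On $\{e^{-z}\ge\delta\}$ you use $q|\pzi\eps_{\rm rem}|\le C(\log s)^2/s$, even though you announced the sharper primitive estimate and $z\le 2\log s$ there. This loses a factor $\log s$ that grows with $s$. It cannot be absorbed by enlarging $S^*(\Lambda)$, nor by $C$, which must be independent of $\Lambda$ for \eqref{eq:epsrem-localized-small} to be usable in Lemma~\ref{lem:positive-gap}.
\item Bounding the left side by $\Lambda^{-2}$ is useless, since the right side can be as small as $C\log s/s$.
\item The threshold $e^{-z}=\Lambda^{-1}(\log s)^2/s$ fails on both sides. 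Just below it, $q\approx1$ and $|\pzi\eps_{\rm rem}|\approx 2\log s/s$, while $\frac{C}{\Lambda}\frac{\delta}{e^{-z}+\delta}\approx C\Lambda^2/s$. Just above it, with your crude bound, you would need $C\gtrsim\Lambda^2$.
\end{itemize}

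What is missing are two uniform facts.
\begin{itemize}
\item First, $\sup_{0\le z\le s}(1+z)q\le C\log s$. This is trivial for $z\le 3\log s$. For $z>3\log s$ one has $zq\le ze^{-z}/\delta\le s^{-2}/\delta=\Lambda^{-2}(\log s)^{-2}$. Combined with $|\pzi\eps_{\rm rem}|\le C\frac{\log s}{s}+Cz\frac{(\log s)^2}{s^2}$, this gives $q|\pzi\eps_{\rm rem}|\le C\frac{\log s}{s}+C\frac{(\log s)^3}{s^2}\le C\frac{\log s}{s}$ on all of $[0,s]$.
\item Second, set $x=e^{-z}+\delta$. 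Then $e^{-z}+\frac{\delta}{e^{-z}+\delta}=x+\frac{\delta}{x}-\delta\ge 2\sqrt\delta-\delta\ge\sqrt\delta=\Lambda\frac{\log s}{s}$ for $\delta\le 1$. Hence $\frac{C}{\Lambda}\bigl(e^{-z}+\frac{\delta}{e^{-z}+\delta}\bigr)\ge C\frac{\log s}{s}$ uniformly in $z$.
\end{itemize}
These two facts together are the paper's proof. If you want to keep a region split, the cut must be at $e^{-z}=\sqrt\delta$. Above it, use $\frac{C}{\Lambda}e^{-z}\ge C\frac{\log s}{s}$; below it, use $\frac{\delta}{e^{-z}+\delta}\ge\frac{\sqrt\delta}{2}$.
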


\begin{proof}
Using the formula for \(\eps_{\rm rem}\) above and the remainder bound
in \eqref{eq:eps-expansion}, we see that for \(s\ge \max(e,s^*)\),
\[
\begin{aligned}
  |\eps_{\rm rem}(s,z)|
  &\le
  \frac{e^{-z}}s\Bigl(2+|1+C_\nu|z+2z\log s\Bigr)
  +C\frac{(\log s)^2}{s^2} \\
  &\le
  C\frac{(1+z\log s)e^{-z}}s
  +C\frac{(\log s)^2}{s^2}.
\end{aligned}
\]
We claim that \((z\log s/s)e^{-z}\leq o_S(1)e^{-z}+C\frac{(\log s)^2}{s^2}\). Indeed, splitting the interval \(0\le z\le s\) into $\Omega_1:=\left\{0\le z\le \frac{\sqrt{s}}{\log s}\right\}$ and $\Omega_2:=\left\{\frac{\sqrt{s}}{\log s}\le z\le s\right\}$, we see that
\begin{equation}(z\log s/s)e^{-z}\leq
    \begin{cases}
         \frac{z\log s}{s}e^{-z}\le s^{-1/2}e^{-z}
  \le S^{-1/2}e^{-z} &\text{ on }\Omega_1\\
  \frac{z\log s}{s}e^{-z}
  \le \log s\,e^{-\sqrt{s}/\log s}\leq Cs^{-2} &\text{ on }\Omega_2,
    \end{cases}
\end{equation}
which proves \eqref{eq:epsrem-pointwise-split}.
For the primitive estimate, integrate the formula for \(\eps_{\rm rem}\) and use the bound on \(R_\epsilon\), we see that for every \(0\le z\le s\),
\begin{equation}\label{eq:epsrem-primitive-decomp}
\begin{aligned}
  |\pzi\eps_{\rm rem}(s,z)|
  &\le
  \frac{C}{s}\int_0^z (1+y\log s)e^{-y}\,dy
  +C\frac{(\log s)^2}{s^2}z  \\
  &\leq C\frac{\log s}{s}+C\frac{(\log s)^2}{s}\lesssim \frac{(\log s)^2}{s},
\end{aligned}
\end{equation}
which proves \eqref{eq:epsrem-primitive-small}. It remains to prove the localized bound for the primitive. For simplicity, let us introduce $A(s):=\Lambda^2\frac{(\log s)^2}{s^2}$. Multiplying the second last inequality in decomposition \eqref{eq:epsrem-primitive-decomp} by
\(e^{-z}/(e^{-z}+A(s))\) gives
\begin{equation}\label{eq:epsrem-localized-decomp}
  \begin{aligned}
  |\pzi\eps_{\rm rem}(s,z)|\frac{e^{-z}}{e^{-z}+A(s)}
  &\le C\frac{\log s}{s}
  +C\frac{(\log s)^2}{s^2}
    z\frac{e^{-z}}{e^{-z}+A(s)}.
\end{aligned} 
\end{equation}
We claim that:
\begin{equation}\label{eq:z-exp-over-floor}
  \sup_{0\le z\le s}
  (1+z)\frac{e^{-z}}{e^{-z}+A(s)}
  \le C\log s.
\end{equation}
Indeed, this can be derived by noticing $\Lambda\geq 1$ and
\begin{equation*}
z\frac{e^{-z}}{e^{-z}+A(s)}\le
    \begin{cases}
         z\le  3\log s &\text{ if }z\le3\log s\\
         z\frac{e^{-z}}{A(s)}
  \le s\frac{s^{-3}}{\Lambda^2(\log s)^2/s^2}
  =\frac{1}{\Lambda^2(\log s)^2} &\text{ if }3\log s<z\le s,
    \end{cases}
\end{equation*}
Substituting
\eqref{eq:z-exp-over-floor} into \eqref{eq:epsrem-localized-decomp} gives
\begin{equation}\label{eq:epsrem-localized-left-bound}
  |\pzi\eps_{\rm rem}(s,z)|\frac{e^{-z}}{e^{-z}+A(s)}
  \le C\frac{\log s}{s}+C\frac{(\log s)^3}{s^2}
  \le C\frac{\log s}{s},
\end{equation}
upon increasing \(S\).
Next, observing that by increasing \(S\) so that \(0<A(s)\le1\), one obtains
\begin{equation}\label{eq:localized-weight-lower}
\begin{split}
     e^{-z}+\frac{A(s)}{e^{-z}+A(s)}&= e^{-z}+A(s)+\frac{A(s)}{e^{-z}+A(s)}-A(s)\\
  &\ge 2\sqrt{A(s)}-A(s)
  \ge \sqrt{A(s)}
  =\Lambda\frac{\log s}{s}.
\end{split}
\end{equation}
Combining \eqref{eq:epsrem-localized-left-bound} with
\eqref{eq:localized-weight-lower} proves \eqref{eq:epsrem-localized-small},
after enlarging the uniform constant.
Finally, for the derivative bound, using the explicit profiles in \eqref{eq:eps-expansion} and the estimate on the remainder, one can show
\begin{equation}\label{eq:epsz-raw-bound}
  |\eps_z(s,z)|
  \le C\frac{\log s}{s}(1+z)e^{-z}
  +C\frac{1}{s}(1+z)e^{-z}
  +C\frac{(\log s)^2}{s^2}.
\end{equation}
Dividing \eqref{eq:epsz-raw-bound} by \(e^{-z}+s^{-1}\), and using
\eqref{eq:z-exp-over-floor}, gives
\[
\begin{aligned}
  \frac{|\eps_z(s,z)|}{e^{-z}+s^{-1}}
  &\le
  C(\frac{\log s}{s}+\frac1s)
  \frac{(1+z)e^{-z}}{e^{-z}+s^{-1}}
  +C(\log s)^2\frac{s^{-2}}{e^{-z}+s^{-1}} \\
  &\lesssim
  \frac{(\log s)^2}{s}
  +\frac{\log s}{s}
  +\frac{(\log s)^2}{s}\lesssim \frac{(\log s)^2}{s}
\end{aligned}
\]
which in turn proves \eqref{eq:Ez-weighted-bound}.
\end{proof}
The two background modes $F_{1,0}$ and $F_{0,1}$ are already known through their asymptotic expansions. The following corollary defines their combined weighed $L^\infty$ energies and is applied whenever a nonlinear term in the source contains one of those modes later.
\begin{corollary}[Well-definedness of the energies of shell-one background modes in weighted norm]
\label{lem:shell-one-asymptotic-bounds}
After fixing \(\Lambda\ge1\), choose \(s_1\) so that \eqref{eq:s1-requirements} hold. That is, $s_1\geq s_{\text{ent}}(\Lambda)$.  Then for all \(s\ge s_1\) and
\(0\le z\le s\),
\begin{equation}\label{eq:F10-zero-tail}
  |F_{1,0}(s,z)|\lesssim_{s_1,\Lambda}\hh_0(s,z),
\end{equation}
\begin{equation}\label{eq:F01-positive-tail}
  |F_{0,1}(s,z)|
 \lesssim_{s_1,\Lambda}\hh_+(s,z).
\end{equation}
Consequently, the weighted norm 
\begin{equation}\label{eq:Bone-def}
  B_1(s_1,\Lambda):=
  \sup_{s\ge s_1}
  \left(
  \norm{\wh_0F_{1,0}(s)}_{L^\infty_z}
  +\norm{\wh_+F_{0,1}(s)}_{L^\infty_z}
  \right)
\end{equation}
for the shell-one modes is well-defined for fixed $\Lambda\geq 1$ and $s_1$ satisfying \eqref{eq:s1-requirements}. 
\end{corollary}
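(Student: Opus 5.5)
The proof is a direct insertion of the asymptotic inputs \eqref{eq:F10-asymptotic-input}--\eqref{eq:F01-remainder-input} into the weights $\hh_0=e^{-z}+s^{-1}$ and $\hh_+=e^{-z}+\Lambda^2(\log s)^2/s^2$. Throughout, $s\ge s_1\ge s_{\text{ent}}(\Lambda)$, so every expansion holds and the constants $C(s_1)$ are fixed. Once \eqref{eq:F10-zero-tail} and \eqref{eq:F01-positive-tail} are proved with constants uniform in $s\ge s_1$ and $z\in[0,s]$, the supremum defining $B_1$ is finite, which gives the well-definedness claim.

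\emph{The zero mode $F_{1,0}$.} We bound each term of \eqref{eq:F10-asymptotic-input} separately.
\begin{itemize}
\item Since $|F_{1,0}|_{z=0}(s)|\le |f_\infty|+C(s_1)\log s_1/s_1$, the term $F_{1,0}|_{z=0}\phi$ is $\lesssim_{s_1} e^{-z}\le \hh_0$.
\item The profile $V_{1,\log}=-2f_\infty ze^{-z}$ satisfies $|ze^{-z}|\le 1$, so $\frac{\log s}{s}|V_{1,\log}|\lesssim |f_\infty|\frac{\log s}{s}$. This falls short of $\hh_0$ by a logarithm. We refine it using $ze^{-z}\le Ce^{-z/2}$: on $\{z\le 2\log s\}$ we have $e^{-z/2}\ge s^{-1}$, so $\frac{\log s}{s}e^{-z/2}\le \log s\, e^{-z}\cdot\frac{e^{z/2}}{s}\le \log s\,e^{-z}$. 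This does not close either.
\end{itemize}
The cleaner route is to split at $z=\log s$. For $z\le \log s$,
\[
\frac{\log s}{s}\,ze^{-z}\le \frac{(\log s)^2}{s}\,e^{-z}\le e^{-z}
\]
for $s$ large. For $z\ge\log s$, $ze^{-z}\le \log s/s$, hence $\frac{\log s}{s}ze^{-z}\le (\log s)^2/s^2\le s^{-1}$. In both regions the term is $\lesssim \hh_0$.
\begin{itemize}
\item The profile $V_1$ is bounded, so $\frac1s|V_1|\lesssim |f_\infty| s^{-1}\le \hh_0$. The constant $-1$ in $V_1$ is the reason the zero-mode weight carries the $s^{-1}$ floor.
\item By \eqref{eq:F10-remainder-pointwise}, $|\tilde r|\le C(s_1)(\log s)^3/s^2\le C(s_1)s^{-1}$.
\end{itemize}
Summing the four terms gives \eqref{eq:F10-zero-tail}.

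\emph{The positive mode $F_{0,1}$.} This is the main point, since the floor $\Lambda^2(\log s)^2/s^2$ is much smaller than $s^{-1}$.
\begin{itemize}
\item The term $(1+\epsilon_0)\phi$ is $\le 2e^{-z}$.
\item The profiles $\Phi_{0,1}$ and $\Psi_{0,1}$ are both of the form $(\alpha+\beta z)e^{-z}$; there is no constant tail, because the $-1$ in $U_1$ is removed by the derivative. Using the same split at $z=\log s$:
  \begin{itemize}
  \item for $z\le\log s$, $\frac{\log s}{s}(1+z)e^{-z}\lesssim \frac{(\log s)^2}{s}e^{-z}\le e^{-z}$;
  \item for $z\ge \log s$, $(1+z)e^{-z}\lesssim \log s/s$, so the contribution is $\lesssim (\log s)^2/s^2\le \Lambda^2(\log s)^2/s^2$, using $\Lambda\ge1$.
  \end{itemize}
\item By \eqref{eq:F01-remainder-input}, $|r_{0,1}|\lesssim_{s^*}(\log s)^2/s^2\le \Lambda^2(\log s)^2/s^2$.
\end{itemize}
Hence $|F_{0,1}|\lesssim_{s_1}\hh_+$, which is \eqref{eq:F01-positive-tail}. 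The implicit constant may be taken independent of $\Lambda$, which is stronger than the stated $\lesssim_{s_1,\Lambda}$.

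The only delicate step is the tail of the positive mode. One must check that the remainder $r_{0,1}$ really is $O((\log s)^2/s^2)$ uniformly in $z$ up to $z=s$, with no $s^{-1}$ plateau. This follows from the $C^1_z$ estimate in Theorem~\ref{asymptotic expansion of epsilon}, since $\partial_z$ kills the constant $-1/s$ part of $\eps$. Everything else is elementary bookkeeping with the split at $z=\log s$.
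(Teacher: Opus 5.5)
Your proof is correct and follows the paper's argument: the asymptotic expansions are inserted term by term into $\hh_0$ and $\hh_+$, the $ze^{-z}$ term of $F_{1,0}$ is handled with the same split at $z=\log s$, and the $C^1_z$ remainder bound is used for $r_{0,1}$. The only difference is cosmetic: for $F_{0,1}$ the paper splits according to whether $e^{-z}\ge\Lambda^2(\log s)^2/s^2$ rather than at $z=\log s$, and both work; you should also delete the abandoned $e^{-z/2}$ detour in your second bullet.
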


\begin{proof}
We first prove the bound for \(F_{1,0}\).  From \eqref{eq:F10-asymptotic-input},
\begin{equation}
  \begin{aligned}
|F_{1,0}(s,z)|
&\le |F_{1,0}|_{z=0}(s)|e^{-z}
 + C\frac{\log s}{s} z e^{-z} 
 + C\frac{1}s\bigl(1+ze^{-z}+e^{-z}+e^{-2z}+e^{-3z}\bigr)
 + |\tilde r(s,z)|.
\end{aligned} 
\end{equation}
Thanks to \eqref{eq:F10-asymptotic-remainder}, the coefficient of the leading term $|F_{1,0}|_{z=0}(s)|e^{-z}$ is bounded by \(C(s_1)e^{-z}\le C(s_1)\hh_0\) for \(s\ge s_1\). Next, we observe that
\begin{equation}
\label{eq:ze-tail-h0}
  \frac{\log s}{s}ze^{-z}\lesssim (e^{-z}+s^{-1})=h_0,
\end{equation}
where the constant is uniform, for all \(s\ge e\). Indeed, splitting the domain into $[0,\log s ]$ and $[\log s,s]$
\begin{equation*}
\frac{\log s}{s}ze^{-z}\leq
    \begin{cases}
          \frac{(\log s)^2}{s}e^{-z}
  \le C e^{-z} &\text{ if }0\le z\le \log s\\
  
\frac{(\log s)}{s} (\log s e^{-\log s})=\frac{(\log s)^2}{s^2}&\text{ if }\log s\le z\le s,
    \end{cases}
\end{equation*}
where we have used that for
\(\log s<z\le s\), the function \(ze^{-z}\) is decreasing on \([1,\infty)\), and
\(\log s\ge1\).
Finally, the pointwise
remainder bound \eqref{eq:F10-remainder-pointwise} gives
\begin{equation}
  |\tilde r(s,z)|
  \le C(s_1)\frac{(\log s)^3}{s^2}(1-e^{-z})
  \le C(s_1)s^{-1}
  \le C(s_1)\hh_0(s,z),
\end{equation}
since \((\log s)^3/s\) is bounded above by $\frac{1}{s}$ for \(s\ge s_1\).  Combining these bounds proves
\eqref{eq:F10-zero-tail}.

We now show the bound for \(F_{0,1}\). Recall that 
\(
  A(s):=\Lambda^2\frac{(\log s)^2}{s^2} \text{ and } \hh_+(s,z)=e^{-z}+A(s).\)
The leading term in \eqref{eq:F01-asymptotic-input} is controlled because
\eqref{eq:epsilon0-input} gives \(|1+\epsilon_0(s)|\le C\) and
\(e^{-z}\le\hh_+\). Since the two profiles satisfy
\begin{equation}
  |\Phi_{0,1}(z)|+|\Psi_{0,1}(z)|\le C(1+z)e^{-z},
\end{equation}
and $\frac{1}{s}\leq \frac{\log s}{s}$ for $s\geq e$, it remains to prove
\begin{equation}
\label{eq:poly-exp-hplus}
  \frac{\log s}{s}(1+z)e^{-z}
  \le C(s_1,\Lambda)\bigl(e^{-z}+A(s)\bigr).
\end{equation}
Again, we split the interval based on whether \(e^{-z}\ge A(s)\).
\begin{equation*}
     \frac{\log s}{s}(1+z)e^{-z}\leq\begin{cases}
         \frac{\log s}{s}(1+2\log s)e^{-z}\leq C e^{-z} &\text{ if }e^{-z}\geq A(s)\\
         \frac{\log s}{s}(1+\log(A(s)^{-1}))A(s)\leq  C\frac{\log s}{s}  \log (s)A(s) &\text{ if }e^{-z}<A(s)
     \end{cases}
\end{equation*}
where we have used that $e^{-z}\geq A(s) \implies z\leq 2 \log s$ in the first region, \(z\mapsto(1+z)e^{-z}\) is decreasing on
\([0,\infty)\) in the second region, and $\Lambda \geq 1,s\geq e$ in both regions.
Finally, the desired result is obtained by using the remainder estimate \eqref{eq:F01-remainder-input} and $\Lambda\geq 1$.
\end{proof}
\subsubsection{Properties of the weights}
The nonlinear source term includes the interactions between two modes in the form of products, primitives, and averages. The following lemma shows how these interactions are controlled under those two weights.
\begin{corollary}[Primitive, product, and average estimates]\label{lem:mixed-weight-estimates}
For {every \(s\ge1\) } and \(0\le z\le s\), the following estimates hold.
\begin{enumerate}[label=\textup{(\roman*)}, leftmargin=2.4em]
\item \emph{Zero-mode primitives.} If \(\norm{\wh_0 f}_{L^\infty}\le A\), then
\begin{equation}\label{eq:pzi-zero-bound}
  |\pzi f(z)|\le2A.
\end{equation}

\item \emph{Ordinary products.}  {Let \(p,q,k\in\mathbb N_0\) with \(k=p+q\).
} If \(|f|\le A\hh_p\) {and } \(|g|\le B\hh_q\) then
\begin{equation}\label{eq:mixed-product-basic}
  \wh_k|fg|
  \le 2 AB
  \begin{cases}
  1, & k=0,\\
  \hh_+, & p\ge1,\ q\ge1,\\
  \hh_0, & k\ge1\text{ and exactly one of }p,q\text{ is }0.
  \end{cases}
\end{equation}

\item \emph{Positive-primitive product.} If \(|f|\le A\hh_0\)
and \(|g|\le B\hh_+\), then
\begin{equation}\label{eq:pzi-zero-positive-product}
  {\wh_+|(\pzi f)\,g|}\le 2 AB.
\end{equation}
\item \emph{Zero-mode product average.} If \(|f|\le A\hh_0\) and \(|g|\le B\hh_0\), then
\begin{equation}\label{eq:zero-avg-product}
  \wh_0 s^{-1}\int_0^s |fg|\dd z\le \frac{7}{2} AB.
\end{equation}
\end{enumerate}
\end{corollary}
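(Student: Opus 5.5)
The four estimates are elementary pointwise inequalities for the explicit weights $\hh_0=e^{-z}+s^{-1}$ and $\hh_+=e^{-z}+A(s)$, $A(s)=\Lambda^2(\log s)^2/s^2$. Throughout I will use $0\le z\le s$, $s\ge1$, and the ordering $\hh_+\le \hh_0$. This ordering needs $A(s)\le s^{-1}$, which holds in the regime $s\ge s_1$ fixed by \eqref{eq:s1-requirements}. I will state that restriction explicitly, since for a general $s\ge1$ the comparison may fail and should instead be absorbed into the constants. I will also use the elementary bounds $\hh_0,\hh_+\le 2$ and $e^{-z}\le \hh_k$.

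\emph{(i)} From $|f|\le A\hh_0$ we get $|\pzi f(z)|\le A\int_0^z(e^{-y}+s^{-1})\dd y\le A(1+z/s)\le 2A$, using $z\le s$.

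\emph{(ii)} I will compare $\hh_p\hh_q$ with $\hh_k$ case by case.
\begin{itemize}
\item If $k=0$, then $p=q=0$ and $\wh_0|fg|\le AB\,\hh_0^2/\hh_0=AB\,\hh_0\le 2AB$.
\item If $p,q\ge1$, then $k\ge1$ and $\wh_+|fg|\le AB\,\hh_+^2/\hh_+=AB\,\hh_+$, which is even better than claimed.
\item If exactly one index vanishes, say $q=0$ and $p=k\ge1$, then $\wh_+|fg|\le AB\,\hh_+\hh_0/\hh_+=AB\,\hh_0$.
\end{itemize}
In every case the factor $2$ is a harmless margin.

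\emph{(iii)} Using (i) with $A$, we have $|\pzi f|\le 2A$. Hence $\wh_+|(\pzi f)g|\le 2A\cdot B\hh_+/\hh_+=2AB$.

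\emph{(iv)} Here I compute explicitly:
\[
s^{-1}\int_0^s\hh_0^2\dd z\le s^{-1}\Big(\tfrac12+\tfrac{2}{s}+1\Big).
\]
The three terms come from $\int_0^s e^{-2z}\dd z\le \tfrac12$, the cross term $2s^{-1}\int_0^s e^{-z}\dd z\le 2s^{-1}$, and $\int_0^s s^{-2}\dd z=s^{-1}$. Multiplying by $\wh_0\le s$, because $\hh_0\ge s^{-1}$, gives at most $\tfrac32+\tfrac2s\le\tfrac72$ for $s\ge1$.

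The only step needing care is the weight comparison in (ii), namely confirming $A(s)\le s^{-1}$. I will cite the choice $s_1\ge s_{\rm ent}(\Lambda)$ in \eqref{eq:s1-requirements}, which was made exactly to guarantee $\Lambda^2(\log s)^2/s^2\le 1/s$.
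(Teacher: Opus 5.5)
Your proof is correct and is the same direct computation from the definitions \eqref{eq:mixed-h} that the paper invokes. None of your four computations actually uses $\hh_+\le\hh_0$, and none uses $\hh_+\le 2$, which can fail for $s$ near $e$ when $\Lambda$ is large. So the restriction to $s\ge s_1$ that you flag is unnecessary, and the estimates hold for every $s\ge1$ as stated.
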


\begin{proof}
This proof follows by definitions of weight functions \eqref{eq:mixed-h}.
\end{proof}

\subsection{Uniform lower bound on the effective damping coefficient}

For all active modes ($i+k\geq 2$) we define the {reference damping rate} by
\begin{equation}\label{eq:Dik}
  \mathcal D_{i,k}(s,z)=
  \begin{cases}
  1+i, &  i\ge2,\,k=0,\\[0.35em]
  (1+i)+(2i+k)\bigl(e^{-z}+\Lambda^2(\log s)^2/s^2\bigr), & i\ge1,\ k\ge1,\\[0.35em]
  k\bigl(e^{-z}+\Lambda^2(\log s)^2/s^2\bigr)
  +\dfrac{\Lambda^2(\log s)^2/s^2}{e^{-z}+\Lambda^2(\log s)^2/s^2}, & i=0,\ k\ge2.
  \end{cases}
\end{equation}
Recall that $G_{i,k}$ is the coefficient
\begin{equation}
\begin{split}
        G_{i,k}:={}&
\frac{k}{s}+3i+1-(2i+2-k)(\phi+\eps)
-(2i+k-1)\frac{\rho'}{\rho}
+\frac{\partial_s\hh_k}{\hh_k} \\
&-\left(\pzi(\phi+\eps)+\frac{z}s\right)\frac{e^{-z}}{\hh_k}.
\end{split}
\end{equation}
of the zeroth order term in the normalized PDE \eqref{eq:normalized-pde}. We now divide into two cases, depending on either $k\geq 1$ or $k=0$. We shall show $G_{i,k}$ can be bounded below by this scale, up to a universal constant, taking into consideration of the influence of non-local terms in the linear part, if they do exist. These lower bounds are later used to establish comparison principles.
\subsubsection{Positive modes}
We restrict to the case where the modes are positive and active ($i+k\geq 2, k\geq 1$). Notice that positive modes do not contain nonlocal terms in the linear part of the PDE. The following lemma shows that, after normalization and for sufficiently large time, their linear part has enough damping, so that later the non-linear source can be absorbed, which establishes a comparison principle.

\begin{lemma}[Uniform positive-mode gap]\label{lem:positive-gap}
 There exist universal constants
\[
  \Lambda_+\ge1,\qquad C_{0,+}>0,\qquad c_+>0,
\]
such that for every
\(\Lambda\ge\Lambda_+\), every \(C_0\ge C_{0,+}\), the following holds. {There exists $s_+(\Lambda)$}, such that for every active positive mode
\(i+k\ge2\), \(k\ge1\), every \(s\ge s_+\), and every \(0\le z\le s\), 
\begin{equation}\label{eq:positive-gap}
  G_{i,k}(s,z)\ge c_+\mathcal D_{i,k}(s,z).
\end{equation}
\end{lemma}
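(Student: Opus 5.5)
The plan is to expand $G_{i,k}$ explicitly with the background decomposition \eqref{eq:bg-model}, isolate an exact leading-order part that is linear in $e^{-z}$, and show that this part dominates $\mathcal D_{i,k}$ while every remaining term is a small multiple of $\mathcal D_{i,k}$.

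\emph{Algebraic reduction.} For $k\ge1$ we have $\hh_k=\hh_+=e^{-z}+A(s)$ with $A=\Lambda^2(\log s)^2/s^2$. Using \eqref{eq:bg-model} and the mean-free normalization, the transport coefficient is
\[
\pzi(\phi+\eps)+\tfrac zs=1-e^{-z}+\pzi\eps_{\rm rem}.
\]
We also have $-\rho'/\rho=C_0A$ by \eqref{eq:rho}, and $\partial_s\hh_+/\hh_+=A'/\hh_+$ with $A'=-\tfrac{2A}{s}\bigl(1-\tfrac1{\log s}\bigr)$. Substituting $e^{-z}/\hh_+=1-A/\hh_+$ gives the decomposition
\[
G_{i,k}=\underbrace{3i-(2i+1-k)e^{-z}+(1-e^{-z})\tfrac{A}{\hh_+}+(2i+k-1)C_0A}_{=:G^{\rm main}}
+\tfrac{2i+2}{s}-(2i+2-k)\eps_{\rm rem}+\tfrac{A'}{\hh_+}-\pzi\eps_{\rm rem}\tfrac{e^{-z}}{\hh_+}.
\]
The $1/s$ terms combine as $\tfrac ks+\tfrac{2i+2-k}{s}=\tfrac{2i+2}{s}\ge0$. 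This is why the potentially negative contribution $-(2i+2-k)(-1/s)$ for $k>2i+2$ causes no harm, and we simply drop this term.

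\emph{Lower bound on the main part.} For $i\ge1$, $k\ge1$, the quantity $3i-(2i+1-k)x$ is affine in $x=e^{-z}\in[0,1]$. At $x=0$ it equals $3i$, and at $x=1$ it equals $i+k-1\ge\tfrac12(i+k)$. Comparing endpoints therefore gives
\[
3i-(2i+1-k)e^{-z}\ge c\bigl(1+i+(2i+k)e^{-z}\bigr)
\]
for a universal $c$ (one can take $c=1/8$). The $A$-part of $\mathcal D_{i,k}$ is supplied by $(2i+k-1)C_0A\ge\tfrac12 C_0(2i+k)A$.

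For $i=0$, $k\ge2$, the main part becomes $(k-1)e^{-z}+(1-e^{-z})A/\hh_+ +(k-1)C_0A$, which already contains $\tfrac k2(e^{-z}+A)$. The only delicate point is recovering the full term $A/\hh_+$ where the factor $1-e^{-z}$ is small, i.e.\ near $z=0$. There, however, $A/\hh_+\le A e^{z}\le 2A$ is tiny compared with $e^{-z}\approx1$. Splitting at $z=\log 2$ shows
\[
(k-1)e^{-z}+(1-e^{-z})\tfrac{A}{\hh_+}\ge c\Bigl(ke^{-z}+\tfrac{A}{\hh_+}\Bigr).
\]

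\emph{Absorbing the errors.} Take the threshold $S$ large in Corollary \ref{lem:epsrem-estimates}, with $s\ge s_+(\Lambda)$.
\begin{itemize}[leftmargin=2em]
\item By \eqref{eq:epsrem-pointwise-split}, $|(2i+2-k)\eps_{\rm rem}|\le(2i+2+k)\bigl(o_S(1)e^{-z}+CA/\Lambda^2\bigr)$. The first piece is absorbed by the $c(2i+k)e^{-z}$ and $c(1+i)$ parts once $o_S(1)$ is small. The second piece is absorbed by $\tfrac12C_0(2i+k)A$ once $C_0\ge C_{0,+}$.
\item $|A'/\hh_+|\le\tfrac2s\,\tfrac{A}{\hh_+}$. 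This is absorbed by $c(1+i)$ when $i\ge1$, and by $c\,A/\hh_+$ when $i=0$, for $s$ large.
\item The term $\pzi\eps_{\rm rem}\,e^{-z}/\hh_+$ is bounded by \eqref{eq:epsrem-localized-small} by $\tfrac C\Lambda\bigl(e^{-z}+A/\hh_+\bigr)$. Choosing $\Lambda\ge\Lambda_+$ large makes this at most half of the corresponding main terms. Note $A/\hh_+\le1\le 1+i$.
\end{itemize}
Collecting these bounds gives \eqref{eq:positive-gap} with $c_+=c/4$.

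The step I expect to be the main obstacle is the case $i=0$ in the intermediate region where $e^{-z}\sim A$. There the only damping is $ke^{-z}+A/\hh_+$, which is of size roughly $\Lambda\log s/s$ and comes out of the weight; the primitive error $\pzi\eps_{\rm rem}$ is comparable to it up to a factor $1/\Lambda$. This is precisely why the localized estimate \eqref{eq:epsrem-localized-small} is needed, rather than the crude global bound \eqref{eq:epsrem-primitive-small}.
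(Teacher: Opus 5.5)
Your proof is correct and follows the paper's argument: you split $G_{i,k}$ into an explicit part (your $G^{\rm main}$, essentially the paper's $d^+_{i,k}+R_{i,k}$), compare affine functions of $e^{-z}$ at the endpoints for $i\ge1$, and absorb the $\eps_{\rm rem}$, $\partial_s\hh_+$ and $\pzi\eps_{\rm rem}$ errors via Corollary~\ref{lem:epsrem-estimates}, $C_0$ and $\Lambda$; the only deviation is that you treat $i=0$ by splitting at $z=\log 2$ instead of using the paper's identity $d^+_{0,2}=\frac{e^{-2z}+A}{\hh_+}+\frac{2}{s}+\frac{\partial_s\hh_+}{\hh_+}$. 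One small correction to your closing heuristic, which does not affect the proof: where $e^{-z}\sim A$ one has $A/\hh_+\approx\frac12$, so the damping is of order one there, and the minimum $\approx\sqrt{A}=\Lambda\log s/s$ of $ke^{-z}+A/\hh_+$ actually occurs at $e^{-z}\sim\sqrt{A}$.
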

\begin{proof}
{In this proof, \(S\) denotes a temporary lower-time threshold. At the end, $s_+\geq S$ is chosen after we obtain all the restriction on \(S\).}
As usual, we recall:
\[
  A(s):=\Lambda^2\frac{(\log s)^2}{s^2},
  \qquad \hh_+(s,z)=e^{-z}+A(s).
\]
For positive modes, \(k\ge1\), we have \(\hh_k=\hh_+\). Combining the definition of $G_{i,k}$ \eqref{eq:Geff-def}, the definition of $\eps_{\rm rem}$ \eqref{eq:epsrem-def}, and the definition of $\rho$ \eqref{eq:rho}, we decompose $G_{i,k}$ as
\begin{equation}\label{eq:positive-G-decomp}
  G_{i,k}=d_{i,k}^+ + R_{i,k}+E_{i,k},
\end{equation}
\begin{equation}\label{eq:model-positive}
\begin{aligned}
  d_{i,k}^+
  :={}&3i+1+(2i+2)s^{-1}-(2i+2-k)e^{-z}
  -\frac{e^{-z}(1-e^{-z})}{\hh_+}
  +\frac{\partial_s\hh_+}{\hh_+},
\end{aligned}
\end{equation}
\begin{equation}\label{eq:positive-radius-piece}
  R_{i,k}:=(2i+k-1)C_0A(s),
\end{equation}
\begin{equation}\label{eq:positive-error-piece}
  E_{i,k}:=-(2i+2-k)\eps_{\rm rem}
  -\pzi\eps_{\rm rem}\frac{e^{-z}}{\hh_+}.
\end{equation}
By direct computation and use of the definition of $A$ and $h_+$ we have
\[ \left|\frac{\partial_s\hh_+}{\hh_+}\right|
  =\left|\frac{A'(s)}{\hh_+}\right| \le \left|\frac{C\frac{A(s)}{s}}{A(s)}\right| 
  = Cs^{-1} \]
uniformly for \(s\ge S\).  This is the only loss inside the model coefficient
\(d_{i,k}^+\).
Let us first assume \(i\ge1\) and \(k\ge1\).  Using $h_+\geq e^{-z}$, \eqref{eq:model-positive}
gives
\begin{equation}\label{eq:i-positive-model}
\begin{aligned}
  d_{i,k}^+
  &\ge 3i+1-(2i+2-k)e^{-z}-(1-e^{-z})-o_S(1) \\
  &=3i(1-e^{-z})+(i-1+k)e^{-z}-o_S(1).
\end{aligned}
\end{equation}
We now compare the last line with the corresponding scale in \eqref{eq:Dik}. Using the fact that the function $\frac{3i(1-x)+(i-1+k)x}{(1+i)+(2i+k)x}$ is fractionally linear in $x$, and thus the minimum on $x\in[0,1]$ is attained at the end point, we conclude that based on $i,k\geq1$
{
\begin{equation}\label{eq:positive-model-controls-local-scale}
\begin{split}
       \frac{3i(1-e^{-z})+(i-1+k)e^{-z}}{\bigl((1+i)+(2i+k)e^{-z}\bigr)}&\geq \min(\frac{3i}{1+i},\frac{i+k-1}{3i+k+1})\geq \frac 15
\end{split}
\end{equation}
}
Therefore, for $S$ large enough,
\begin{equation}\label{eq:i-positive-local-gap}
    \begin{split}
        d_{i,k}^+
  \ge \frac{1}{5}\bigl((1+i)+(2i+k)e^{-z}\bigr)-\frac{1}{10}\cdot 2
  \ge\frac{1}{10}\bigl((1+i)+(2i+k)e^{-z}\bigr)\qquad i\ge1,\ k\ge1
    \end{split}
\end{equation}
For $i,k\geq 1$, using the definition of $E_{i,k}$ \eqref{eq:positive-error-piece}, the bound on $\eps_{\rm rem}$ \eqref{eq:epsrem-pointwise-split} and the primitive bound \eqref{eq:epsrem-localized-small}
\begin{equation}\label{eq:positive-error-lower}
\begin{split}
      E_{i,k}&\ge -\abs{(2i+2-k)\eps_{\rm rem}}
  -\abs{\pzi\eps_{\rm rem}}\frac{e^{-z}}{\hh_+}\\
  &\ge
  -\abs{o_S(1)}(2i+k+1)e^{-z}
  -C(2i+k+1)A(s)
  -\frac{C}{\Lambda}\left(e^{-z}+\frac{A(s)}{\hh_+}\right)\\
  &\ge -C\abs{o_S(1)}(2i+k)e^{-z}-\frac{C}{\Lambda}\left((2i+k)e^{-z}+1\right)-C(2i+k+1)A(s)\\
         &\geq -C\left(\abs{o_S(1)}+\frac{1}{\Lambda}\right)(1+i+(2i+k)e^{-z})-C(2i+k+1)A(s)\\
         &\geq -\frac{1}{20}(1+i+(2i+k)e^{-z})-C(2i+k+1)A(s)\qquad i\ge1,\ k\ge1
\end{split}
\end{equation}
for $\Lambda^+$ and then $S$ large enough. Finally, by definition \eqref{eq:positive-radius-piece}, for $i,k\geq 1$, the last term in \eqref{eq:positive-error-lower} is absorbed into $R_{i,k}$:
\begin{equation}\label{eq:R-i,k}
\begin{split}
        R_{i,k}-C(2i+k+1)A(s)&\geq (2i+k-1)C_0A(s)- C(2i+k+1)A(s)\\
        &=(\frac{C_0}{3}-C)(2i+k-1)A(s)\geq \frac{1}{20}(2i+k)A(s)\qquad i\ge1,\ k\ge1
\end{split}
\end{equation}
where we have chosen $C_0$ large enough. Consequently, combining \eqref{eq:i-positive-local-gap}, \eqref{eq:positive-error-lower} and \eqref{eq:R-i,k}, we have
\[
  G_{i,k}
  \ge c\bigl((1+i)+(2i+k)e^{-z}+(2i+k)A(s)\bigr)
  =c\mathcal D_{i,k},
  \qquad i\ge1,\ k\ge1.
\]
It remains to {prove the case for} \(i=0\), {in which case}
\(k\ge2\), as $i+k\geq 2$.
The borderline case is \(k=2\).  In that case, by definition of $h_+$ and \(|A'(s)|\le C A(s)/s\)
\begin{equation}\label{eq:i0-k2}
\begin{aligned}
  d_{0,2}^+
  &=\frac{e^{-2z}+A(s)}{\hh_+}
    +2s^{-1}+\frac{\partial_s\hh_+}{\hh_+}\geq \frac{1}{2} \frac{\bigl(e^{-z}+A(s)\bigr)}{h_+}e^{-z}+\frac{1}{2} \frac{A(s)}{h_+}+\frac{A'(s)}{h_+}\geq \frac{1}{2}e^{-z}+(\frac{1}{2}-\frac{C}{s})\frac{A(s)}{h_+}
\end{aligned}
\end{equation}
After increasing \(S\) so that \(C/s\le1/4\) for all \(s\ge S\), this yields
\begin{equation}\label{eq:i0-k2-lower}
  d_{0,2}^+
  \ge
  \frac{1}{2} e^{-z}+\frac{1}{4}\frac{A(s)}{\hh_+}
  = \frac{1}{4}\left(2e^{-z}+\frac{A(s)}{\hh_+}\right).
\end{equation}
Next, for arbitrary \(k\ge2\), observe that
\begin{equation}\label{eq:i0-model-control}
  d_{0,k}^+=d_{0,2}^+ +(k-2)e^{-z}
  \ge \frac{1}{4}\left(k e^{-z}+\frac{A(s)}{\hh_+}\right),
  \qquad i=0,\ k\ge2.
\end{equation}
For \(i=0\), the error estimate \eqref{eq:positive-error-lower} is still valid up until the second line,
\[
  E_{0,k}
  \ge
  -\abs{o_S(1)}(k+1)e^{-z}
  -C(k+1)A(s)
  -\frac{C}{\Lambda}\left(e^{-z}+\frac{A(s)}{\hh_+}\right).
\]
The terms
\(
  \abs{o_S(1)}(k+1)e^{-z}
  \quad\hbox{and}\quad
  \frac{C}{\Lambda}\left(e^{-z}+\frac{A(s)}{\hh_+}\right)
\)
are absorbed by \eqref{eq:i0-model-control}, after enlarging \(\Lambda_+\)
and increasing \(S\). Indeed, for \(k\ge2\),
\((k+1)e^{-z}\le Ck e^{-z}\), while
\(e^{-z}+A(s)/\hh_+\le k e^{-z}+A(s)/\hh_+\).  Thus, after these choices are made, the
only remaining negative term from \(E_{0,k}\) is the loss
\(C(k+1)A(s)\), which is again controlled through $ R_{0,k}$. Indeed,
since \(k+1\le3(k-1)\) for \(k\ge2\), choosing \(C_0\) large enough gives
\[
  R_{0,k}-C(k+1)A(s)=((k-1)C_0-C(k+1))A(s)\gtrsim  k A(s).
\]
Therefore, by definition of $G_{0,k}$,
\begin{equation}\label{eq:i0-D-lower}
\begin{aligned}
  G_{0,k}
  &\ge c\left(k e^{-z}+\frac{A(s)}{\hh_+}\right)+c kA(s)
   =c\left(k\hh_+ +\frac{A(s)}{\hh_+}\right)
   =c\mathcal D_{0,k}\qquad k\geq 2,
\end{aligned}
\end{equation}
where the terms \(A(s)/\hh_+\) and $ke^{-z}$ above comes from the estimate
\eqref{eq:i0-model-control}, and the term \(kA(s)\) comes from the previous equation on $R_{0,k}$.
\end{proof}

\subsubsection{Zero modes}
Here, we deal with $i\geq 2, k=0$. Zero modes contain a nonlocal term that can potentially weaken the local damping.  This lemma shows that the local damping still dominates the largest possible contribution of the nonlocal term by a uniform constant.
\begin{lemma}[Effective zero-mode gap]\label{lem:zero-mode-gap}
{There exist {a universal} $c_{\rm zm}>0$ and {$s_{\rm zm}(\Lambda)$}, such that,
for every active zero mode $(i,0)$ with $i\ge2$, every $s\ge s_{\rm zm}$, and every
$0\le z\le s$,}
\begin{equation}\label{eq:zero-gap}
\begin{aligned}
\Gamma_i(s,z):={}&G_{i,0}(s,z)
-(2i+1)\wh_0(s,z)|\phi-\eps_z|(s,z)
       \int_0^z\hh_0(s,y)\dd y \\
&-4(i+1)\wh_0(s,z)s^{-1}
       \int_0^s |\phi+\eps|(s,y)\hh_0(s,y)\dd y \\
&\ge c_{\rm zm}(1+i)=c_{\rm zm} \mathcal D_{i,0}.
\end{aligned}
\end{equation}
\end{lemma}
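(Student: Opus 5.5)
The plan is to expand every coefficient in $\Gamma_i$ with the background input \eqref{eq:bg-model}, reduce the leading part to an explicit rational function of $x:=e^{-z}\in[e^{-s},1]$ and $y:=s^{-1}$, and bound that rational function below by $i-1$ up to $O((1+i)/s)$. The remaining error terms will be $o_S(1)(1+i)$ by Corollary~\ref{lem:epsrem-estimates}. Since $i\ge2$ gives $i-1\ge\frac13(1+i)$, this yields \eqref{eq:zero-gap} with, say, $c_{\rm zm}=\frac16$ once $s$ is large.

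\emph{Step 1 (decomposition).} Here $\hh_0=x+y$. We have $\partial_s\hh_0/\hh_0=-s^{-2}/\hh_0\ge -s^{-1}$, and $-(2i-1)\rho'/\rho\ge0$, so this term only helps and we drop it. By \eqref{eq:background-mass-compat} and \eqref{eq:bg-model},
\[
\pzi(\phi+\eps)+\frac zs=1-x+\pzi\eps_{\rm rem}.
\]
So, with $t:=x/(x+y)\in[0,1]$,
\[
G_{i,0}\ge 3i+1-(2i+2)x-(1-x)t-\frac1s-E^{(1)},
\]
where $E^{(1)}=(2i+2)|\eps_{\rm rem}|+|\pzi\eps_{\rm rem}|\,t$. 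For the first nonlocal term, use $\int_0^z\hh_0=1-x+zy$ and $|\phi-\eps_z|\le x+|\eps_z|$. This gives the bound
\[
(2i+1)\,t\,(1-x)+(2i+1)\frac{zx}{s(x+y)}+(2i+1)\frac{|\eps_z|}{\hh_0}(1+zy).
\]
Its second piece is $\le(2i+1)ze^{-z}\le C(1+i)\cdot$ (tiny) after splitting $z\lessgtr\log s$. Its third piece is $\le C(2i+1)(\log s)^2/s$ by \eqref{eq:Ez-weighted-bound}. For the averaged term, use $|\phi+\eps|\le e^{-y}+s^{-1}+|\eps_{\rm rem}|$ and integrate against $\hh_0$. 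This gives $\int_0^s|\phi+\eps|\hh_0\le\frac12+C/s+C(\log s)^2/s$. The third contribution there comes from \eqref{eq:epsrem-pointwise-split} integrated over $[0,s]$; the $o_S(1)e^{-z}$ part gives $o_S(1)$. Hence that term is at most $(2i+2)(1-t)(1+o_S(1))$, since $\wh_0 s^{-1}=y/(x+y)=1-t$.

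\emph{Step 2 (the model inequality).} Collecting the leading parts gives
\[
3i+1-(2i+2)x-(2i+2)t(1-x)-(2i+2)(1-t)=i-1-(2i+2)\,x(1-t)=i-1-(2i+2)\frac{xy}{x+y}.
\]
The key algebraic identity is that the three $x$-dependent losses combine into $(2i+2)(1+x-tx)$. The last term is $\ge -(2i+2)/s$. This is where $i\ge2$ is essential: at $i=1$ the gap would vanish. Note that the far field ($t\to0$, where the pressure average costs $2(i+1)$) and the near field ($t\to1$, where the primitive kernel costs up to $(2i+1)(1-x)$) each leave exactly $i-1$.

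\emph{Step 3 (errors), the main obstacle.} All error terms must be shown to be $o_S(1)(1+i)$ uniformly in $z\in[0,s]$ and $i$. The delicate one is $(2i+2)|\eps_{\rm rem}|$. By \eqref{eq:epsrem-pointwise-split} it is $\le(2i+2)\bigl(o_S(1)x+C(\log s)^2/s^2\bigr)$, which is fine because the $o_S(1)$ multiplies only the bounded factor $x$ and not $1/\hh_0$. The other losses, $|\pzi\eps_{\rm rem}|\,t\le C(\log s)^2/s$ and the $(2i+2)(1-t)o_S(1)$ from the average, are uniform by \eqref{eq:epsrem-primitive-small}. Choosing $s_{\rm zm}(\Lambda)\ge s_1$ so that these total at most $\frac16(1+i)$, we obtain $\Gamma_i\ge i-1-\frac16(1+i)-C(1+i)/s\ge\frac16(1+i)$, which is the claimed bound with $c_{\rm zm}=\frac16$.
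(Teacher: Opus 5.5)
Your argument is essentially the paper's proof: the same expansion of $G_{i,0}$ via \eqref{eq:bg-model}, the same ratio $t=\theta=e^{-z}/\hh_0$ (with $\wh_0 s^{-1}=1-\theta$), the same bounds on the two nonlocal terms, and the same model identity $i-1-(2i+2)e^{-z}(1-\theta)\ge i-1-(2i+2)s^{-1}$, with all errors absorbed as $o_S(1)(1+i)$ via Corollary~\ref{lem:epsrem-estimates}.

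Two small repairs are needed.

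First, the piece $(2i+1)\frac{ze^{-z}}{s(e^{-z}+s^{-1})}$ cannot be made small through the bound $ze^{-z}$, which is of order one near $z=1$. On $z\le\log s$ you must instead use $\theta\le1$ to get $z/s\le\log s/s$, and only for $z>\log s$ use $\hh_0\ge s^{-1}$ to get $ze^{-z}\le\log s/s$; this is the split the paper performs.

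Second, budget the errors as, say, at most $\frac1{12}(1+i)$ each. Then the final constant $\frac16$ survives the extra $-C(1+i)/s$ loss.
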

\begin{proof}
{As usual, let $S$ denote a temporary lower-time threshold, by the end of the proof, we choose
$s_{\rm zm}$ no smaller than the final required lower bound on $S$.}
Set
\[
  \theta(s,z):=\frac{e^{-z}}{\hh_0(s,z)}
  =\frac{e^{-z}}{e^{-z}+s^{-1}},
\]
which implies $1-\theta=s^{-1}/\hh_0=s^{-1}w_0$. Using the formula of coefficient
\eqref{eq:Geff-def}, we decompose
\begin{equation}\label{eq:Gi0-expanded}
\begin{aligned}
G_{i,0}={}&3i+1-(2i+2)e^{-z}+(2i+2)s^{-1}
-(2i+2)\eps_{\rm rem} \\
&+(2i-1)C_0A(s)-\frac{s^{-2}}{\hh_0}
-\theta(1-e^{-z})-\pzi\eps_{\rm rem}\frac{e^{-z}}{\hh_0}.
\end{aligned}
\end{equation}
The two terms
$(2i+2)s^{-1}$ and $(2i-1)C_0A(s)$ are nonnegative, so we may drop them for a
lower bound.
The remaining errors are controlled by Lemma~\ref{lem:epsrem-estimates}:
from \eqref{eq:epsrem-pointwise-split},
\[
  |(2i+2)\eps_{\rm rem}|
  \le (2i+2)\bigl(|o_S(1)|e^{-z}+C(\log s)^2/s^2\bigr)
  \le |o_S(1)|(1+i),
\]
after increasing $S$, while
\eqref{eq:epsrem-primitive-small} gives
\[
  |\pzi\eps_{\rm rem}|\frac{e^{-z}}{\hh_0}
  \le |\pzi\eps_{\rm rem}|
  \le |o_S(1)|
  \le |o_S(1)|(1+i),
\]
and by definition $ \frac{s^{-2}}{\hh_0}\le \frac{s^{-2}}{s^{-1}}=s^{-1}
  \le |o_S(1)|(1+i)$.
Consequently,
\begin{equation}\label{eq:zero-local-model-lower}
  G_{i,0}
  \ge
  3i+1-(2i+2)e^{-z}-\theta(1-e^{-z})
  -|o_S(1)|(1+i).
\end{equation}
Next, by definition of $\theta$, $\phi=e^{-z}$ and \eqref{eq:Ez-weighted-bound} 
\begin{equation}\label{eq:zero-first-volterra-model}
\begin{aligned}
\wh_0|\phi-\eps_z|\int_0^z\hh_0
&\le \left(\theta+\frac{|\eps_z|}{\hh_0}\right)(1-e^{-z}+zs^{-1}) \\
&\le \bigl(\theta+|o_S(1)|\bigr)(1-e^{-z}+zs^{-1}) \leq \theta(1-e^{-z})+|o_S(1)|
\end{aligned}
\end{equation}
The last inequality holds since $\frac{e^{-z}}{e^{-z}+s^{-1}}z s^{-1} \lesssim \frac{\log s}{s}$, which can be proved by splitting into $z\le2\log s$ and
$z>2\log s$: in the first region it is at most $2\log s/s$, and in the
second region $e^{-z}\le s^{-2}$ gives
$ze^{-z}/(s(e^{-z}+s^{-1}))\le z/s^2\le s^{-1}$.  Multiplying
\eqref{eq:zero-first-volterra-model} by the coefficient $2i+1$ appearing in
\(\Gamma_i\) gives:
\begin{equation}\label{eq:zero-first-volterra-term}
  (2i+1)\wh_0|\phi-\eps_z|\int_0^z\hh_0
  \le (2i+1)\theta(1-e^{-z})+|o_S(1)|(1+i).
\end{equation}
Finally, by \eqref{eq:epsrem-pointwise-split}, $z\leq s$ and the definition of $h_0$,
\begin{equation}\label{eq:zero-average-model}
\begin{aligned}
\int_0^s |\phi+\eps|\hh_0\dd y
&\le \int_0^s |e^{-y}-s^{-1}|(e^{-y}+s^{-1})\dd y
   +\int_0^s |\eps_{\rm rem}|\hh_0\dd y \\
 &\leq  \int_0^s |e^{-2y}-s^{-2}|\dd y
   + |o_S(1)|\int_0^s e^{-y}\hh_0\dd y
 +C\frac{(\log s)^2}{s^2}\int_0^s\hh_0\dd y\\
 &\le \frac{1}{2}+|o_S(1)|.
\end{aligned}
\end{equation}
 Therefore, by the relation between $\theta$ and $w_0$,
\begin{equation}\label{eq:zero-average-term}
  4(i+1)\wh_0s^{-1}\int_0^s |\phi+\eps|\hh_0
  \le \bigl(2+|o_S(1)|\bigr)(i+1)(1-\theta).
\end{equation}
Combining \eqref{eq:zero-local-model-lower},
\eqref{eq:zero-first-volterra-term},
\eqref{eq:zero-average-term}, $0<\theta<1$ and $e^{-z}(1-\theta)\leq s^{-1}$ we have
\begin{align*}
\Gamma_i
&\ge
3i+1-(2i+2)e^{-z}-(2i+2)\theta(1-e^{-z})-2(i+1)(1-\theta)-|o_S(1)|(1+i) \\
&= i-1-(2i+2)e^{-z}(1-\theta)-|o_S(1)|(1+i)\\
&\geq \frac{1}{3}(i+1)-2(i+1)s^{-1}-|o_S(1)|(1+i)=(\frac{1}{3}-|o_S(1)|)(i+1)\geq \frac{1}{6}(1+i),
\end{align*}
for $S$ large enough.
\end{proof}

\subsection{The comparison principles}
{We now prove a modified comparison principle, in an abstract form. This comparison principle incorporates the effect of the non-local operator (if it exists), which may potentially diminish the damping. It indicates that the solution remains bounded in the whole domain, whenever it is bounded initially and the source is small, relative to some scale, under the assumption that the linear coefficient of the PDE dominates the potential influence of the nonlocal operator and that {the boundary is a characteristic trajectory of the transport operator}. This abstract result is then readily applied to obtain the maximum-principle result for the equation of $U_{i,k}$.}
{
\begin{lemma}[Modified comparison principle]\label{lem:moving-strip-comparison}
Let \(s_{\mathrm{in}}>0\), \(R>0\), \(\mu>0\) be positive constants and let \(b,G,S\),
and \(q>0,\,P\ge0\) be continuous on the moving strip \( \{(s,z):s\ge s_{\mathrm{in}},\ 0\le z\le s\}\), consider the PDE in the form of:
\begin{equation}\label{eq:abstract-comparison-equation}
  \partial_sV+b\partial_zV+GV-\mathcal L_s[V]=S
\end{equation}
suppose that
\begin{enumerate}
    \item The transport field $b$ satisfies $ b(s,0)=0,\,b(s,s)=1$.
    \item The linear operator satisfies the point-wise bound 
    \begin{equation}\label{eq:abstract-operator-bound}
  \abs{\mathcal L_s[f](z)}
  \le \norm{f}_{L^\infty(0,s)}P(s,z)
\end{equation}
\item The difference of zeroth order coefficient and the upper bound of the norm of the linear operator, $P$, is bounded below,
\begin{equation}\label{eq:abstract-gap}
  G(s,z)-P(s,z)\ge \mu q(s,z)
\end{equation}
Then the following implication holds on the moving strip:
If initially, the solution $V$ is bounded
\begin{equation}\label{eq:abstract-init-bounded}
     |V(s_{\mathrm{in}},z)|\le R,\qquad 0\le z\le s_{\mathrm{in}},
\end{equation}
and the source is controlled
\begin{equation}\label{eq:abstract-source-small}
  |S(s,z)|\le \frac{\mu}{2}R q(s,z),\qquad s\ge s_{\mathrm{in}},\quad 0\le z\le s,
\end{equation}
then the solution can be bounded via
\[
  |V(s,z)|\le R,\qquad s\ge s_{\mathrm{in}},\quad 0\le z\le s.
\]
\end{enumerate}
\end{lemma}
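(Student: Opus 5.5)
We argue by a maximum-principle/first-exit-time argument applied to $|V|$ on the moving strip, exploiting that the nonlocal part $\mathcal L_s$ is dominated by $P$ whenever $\|V(s)\|_{L^\infty}\le R$. It is cleanest to prove a slightly stronger statement with a margin. Fix $\delta>0$ and set $R_\delta:=R+\delta$. We show $|V(s,z)|<R_\delta$ on the strip; letting $\delta\to0$ then gives the claim. Initially $|V(s_{\mathrm{in}},\cdot)|\le R<R_\delta$, so by continuity the set of times $s$ on which $\sup_{0\le z\le s}|V(s,z)|<R_\delta$ contains a neighborhood of $s_{\mathrm{in}}$. Arguing by contradiction, let $s^\ast$ be the first time at which $\max_{0\le z\le s^\ast}|V(s^\ast,z)|=R_\delta$, attained at some $z^\ast\in[0,s^\ast]$. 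By symmetry ($V\mapsto -V$, $S\mapsto -S$ preserves all hypotheses) we may assume $V(s^\ast,z^\ast)=R_\delta>0$.

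\textbf{Key step: deriving a contradiction at the touching point.} We evaluate the equation along the characteristic through $(s^\ast,z^\ast)$, i.e.\ we consider $\frac{d}{ds}V(s,Z(s))$ with $Z'=b$. The hypothesis $b(s,0)=0$ keeps $z=0$ invariant, and $b(s,s)=1$ means the moving boundary $z=s$ is itself a characteristic. Hence the characteristic ending at $(s^\ast,z^\ast)$ stays in the strip for $s\le s^\ast$, in every case $z^\ast=0$, $z^\ast\in(0,s^\ast)$, $z^\ast=s^\ast$. Since $V(s,Z(s))<R_\delta$ for $s<s^\ast$ and equals $R_\delta$ at $s^\ast$, we get $\frac{d}{ds}V(s,Z(s))|_{s^\ast}\ge0$, i.e.\ $(\partial_s+b\partial_z)V(s^\ast,z^\ast)\ge0$. (If $V$ is only $C^1$ along characteristics, we use the one-sided difference quotient, which suffices.) On the other hand, the equation together with \eqref{eq:abstract-operator-bound}, $\|V(s^\ast)\|_{L^\infty}=R_\delta$, \eqref{eq:abstract-gap} and \eqref{eq:abstract-source-small} gives at $(s^\ast,z^\ast)$:
\[
(\partial_s+b\partial_z)V = S-GR_\delta+\mathcal L_s[V]\le \tfrac{\mu}{2}Rq-GR_\delta+PR_\delta\le \tfrac{\mu}{2}Rq-\mu qR_\delta<0,
\]
using $q>0$ and $R<R_\delta$. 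This contradicts the previous inequality, so no such $s^\ast$ exists.

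\textbf{Main obstacle.} The delicate point is the boundary behaviour on a moving domain. Every boundary point must be reached by an interior characteristic going backward in time, so that no inflow data is needed there; this is exactly what $b(s,0)=0$ and $b(s,s)=1$ encode. We also need the first-touching time to be well defined, which requires continuity of $\sup_z|V(s,\cdot)|$ in $s$ on the strip; this follows from the continuity of $V$ on the closed strip. The strict margin $\delta$ is what allows us to use the nonstrict gap $\mu q$ without worrying about degenerate equality cases.
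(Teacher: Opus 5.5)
Your argument is correct and is essentially the paper's proof: a first-contact contradiction with a margin. The paper uses the growing barrier $R+\eta(1+s-s_{\mathrm{in}})$ and treats $z_\ast\in(0,s_\ast)$, $z_\ast=0$, $z_\ast=s_\ast$ separately, the latter two by restricting to the characteristic curves $z=0$ and $z=s$; your constant margin $R+\delta$ suffices because $q>0$ already makes the final inequality strict. One small overstatement: a backward characteristic staying in the strip for all $s\le s^\ast$ would need uniqueness (e.g.\ Lipschitz $b$), but you only need a short backward arc, which exists by Peano at interior points and can be taken to be $z\equiv0$ or $z=s$ itself at boundary points.
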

\begin{proof}
Let \(\eta>0\), defining the super solution to be
\[
  B(s)=R+\eta(1+s-s   {_{\mathrm{in}}} ),
\]
and suppose  that $\abs{V}\leq R$ does not hold.
Introduce the two signed differences
\[
  W^\pm(s,z):=\pm V(s,z)-B(s).
\]
At the first contact, one of \(W^+\) or \(W^-\) vanishes at \((s_*,z_*)\), we thus set $W^\pm$ to be the branch that such contact occurs. For earlier times,
\[
  W^\pm(s,z)\le |V(s,z)|-B(s)<0\quad(s<s_*),
\]
and \(W^\pm(s_*,z)\le0\) for \(0\le z\le s_*\). Notice that necessarily $s_*>s_\text{in}$, by the condition on $s_\text{in}$ \eqref{eq:abstract-init-bounded}.
We claim that
\begin{equation}\label{eq:zero-boundary-contact-transport-derivative}
  (\partial_s+b\partial_z)W^\pm(s_*,z_*)\ge0.
\end{equation}
There are three cases.
If \(0<z_*<s_*\), then \(z\mapsto W^\pm(s_*,z)\) has a spatial maximum at \(z_*\),
so \(\partial_zW^\pm(s_*,z_*)=0\).  Also, for \(s<s_*\) close to \(s_*\), the fixed
point \(z_*\) still lies inside the slice \(0\le z\le s\), and
\(W^\pm(s,z_*)<0=W^\pm(s_*,z_*)\).  Therefore, by definition,
\( \partial_sW^\pm(s_*,z_*)\ge0,\) which proves \eqref{eq:zero-boundary-contact-transport-derivative} in the
interior case.
If \(z_*=0\), we restrict \(W^\pm\) to the lower boundary and set
\(w_0(s):=W^\pm(s,0)\).  For every    {\(s_{\mathrm{in}}\le s<s_*\)} , the first-contact definition gives
\(w_0(s)<0\), while \(w_0(s_*)=0\).  Hence, in this case
\( w_0'(s_*)\ge0.\)
Since \(b(s,0)=0\), we have
\( w_0'(s_*)=\partial_sW^\pm(s_*,0)=(\partial_s+b\partial_z)W^\pm(s_*,0)\geq 0.\)
If \(z_*=s_*\), we restrict \(W^\pm\) to the moving upper boundary and set
\(w_1(s):=W^\pm(s,s)\).  For every    {\(s_{\mathrm{in}}\le s<s_*\)}, \(w_1(s)<0\), while \(w_1(s_*)=0\).  Thus
\(w_1'(s_*)\ge0.\)
By chain rule and \(b(s,s)=1\),
\(  w_1'(s_*)=\partial_sW^\pm(s_*,s_*)+\partial_z W^\pm(s_*,s_*)
  =(\partial_s+b\partial_z)W^\pm(s_*,s_*)\geq 0.\)
On the other hand, using the definition
\eqref{eq:abstract-comparison-equation}, \(W^\pm(s_*,z_*)=0\), \(W^\pm=\pm V-B\) and \(B'(s)=\eta\), we see at \((s_*,z_*)\)
\begin{align*}
(\partial_s+b\partial_z)W^\pm(s_*,z_*)
&=\pm S(s_*,z_*)-G(s_*,z_*)(\pm V(s_*,z_*))+
  \mathcal L_{s_*}[\pm V(s_*,\cdot)](z_*)-\eta \\
&=\pm S(s_*,z_*)-G(s_*,z_*)B(s_*)
  \pm\mathcal L_{s_*}[V(s_*,\cdot)](z_*)-\eta \\
\end{align*}
Next, combining with the condition on the source \eqref{eq:abstract-source-small}, the operator bound \eqref{eq:abstract-operator-bound}, \(|\pm V(s_*,y)|\le B(s_*)\), $B\geq R$, and the gap on $G-P$ \eqref{eq:abstract-gap}, we have
\begin{equation*}
    \begin{split}
        (\partial_s+b\partial_z)W^\pm(s_*,z_*)&\leq \frac{\mu}{2}R q(s_*,z_*)-G(s_*,z_*)B(s_*)+ B(s_*)P(s_*,z_*)-\eta\\
        &\leq \frac{\mu}{2}R q(s_*,z_*)-\mu q(s_*,z_*) B(s_*)-\eta\\
        &\leq  (-\mu R+\frac{\mu}{2}R) q(s_*,z_*)-\eta\leq -\eta,
    \end{split}
\end{equation*}
which contradicts \eqref{eq:zero-boundary-contact-transport-derivative}.  Hence
\(|V(s,z)|\le B(s)\) for all    {\(s\ge s_{\mathrm{in}}\)} , \(0\le z\le s\). Finally, for each fixed
\(s\), letting \(\eta\downarrow0\) gives \(|V(s,z)|\le R\).
\end{proof}
}

{The following lemma identifies the constants, functions and linear operator in the abstract comparison principle, and turns it into a maximum principle ready to be applied for the normalized variable $U_{i,k}$.}
\begin{lemma}[Comparison principle for all active modes]\label{lem:linear-comparison}
Let \(\Lambda_+\) and \(C_{0,+}\) be the universal
constants supplied by Lemma~\ref{lem:positive-gap}.  There exist universal
constant
\(\mu_0>0,\)
such that, for every
\[
  \Lambda\ge\Lambda_+,
  \qquad
  C_0\ge C_{0,+},
\]
there exists {$
  s_{\mathrm{lin}}(\Lambda)$} and for $s_0\geq s_{\mathrm{lin}}$, the following holds for every active pair \(i+k\ge2\). Let \(U_{i,k}\)
solve, on the moving strip
\(s\ge s_0\), \(0\le z\le s\), the normalized linear equation
\[
\begin{aligned}
\partial_sU_{i,k}
&+\left(\pzi(\phi+\eps)+\frac{z}s\right)\partial_zU_{i,k}
+G_{i,k}U_{i,k} =\one_{k=0}\mathcal K_i[U_{i,0}]+S_{i,k}.
\end{aligned}
\]
Assume that
\[
  |U_{i,k}(s_0,z)|\le R,
  \qquad 0\le z\le s_0.
\]
If, for every \(s\ge s_0\), \(0\le z\le s\),
\begin{equation}\label{eq:linear-comparison-source}
  |S_{i,k}(s,z)|\le \frac{\mu_0}{2}R\mathcal D_{i,k}(s,z),
\end{equation}
then
\begin{equation}\label{eq:linear-comparison-conclusion}
  |U_{i,k}(s,z)|\le R
  \qquad (s\ge s_0,\ 0\le z\le s).
\end{equation}
\end{lemma}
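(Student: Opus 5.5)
The plan is to apply the modified comparison principle, Lemma~\ref{lem:moving-strip-comparison}, with $V=U_{i,k}$, $s_{\mathrm{in}}=s_0$, $q=\mathcal D_{i,k}$, and $\mu=\mu_0:=\min(c_+,c_{\rm zm})$. Here $c_+$ comes from Lemma~\ref{lem:positive-gap} and $c_{\rm zm}$ from Lemma~\ref{lem:zero-mode-gap}. We set
$$s_{\mathrm{lin}}(\Lambda):=\max\bigl(s_+(\Lambda),s_{\rm zm}(\Lambda)\bigr).$$
This is independent of $(i,k)$, because both gap lemmas are uniform over all active pairs. The transport field is $b=\pzi(\phi+\eps)+z/s$. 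By \eqref{eq:transport-tangent-boundary} it satisfies $b(s,0)=0$ and $b(s,s)=1$, which is hypothesis (1). This uses the mean-free compatibility \eqref{eq:background-mass-compat}. The coefficient $G=G_{i,k}$, the source $S=S_{i,k}$, and $q=\mathcal D_{i,k}>0$ are continuous on the moving strip.

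For positive modes ($k\ge1$) there is no nonlocal term. We take $\mathcal L_s=0$ and $P=0$. Hypothesis (3), $G_{i,k}\ge\mu_0\mathcal D_{i,k}$, is then exactly \eqref{eq:positive-gap} for $s\ge s_+$. This requires $\Lambda\ge\Lambda_+$ and $C_0\ge C_{0,+}$, which are the assumptions in the statement.

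For zero modes ($k=0$, $i\ge2$) we take $\mathcal L_s=\mathcal K_i$ from \eqref{eq:normalized-zero-operator}. Pulling $\|V\|_{L^\infty(0,s)}$ out of both integrals gives
$$|\mathcal K_i[V](z)|\le \|V\|_{L^\infty(0,s)}P(s,z),$$
where
$$P:=(2i+1)\wh_0|\phi-\eps_z|\int_0^z\hh_0\,dy+4(i+1)\wh_0s^{-1}\int_0^s|\phi+\eps|\hh_0\,dy .$$
This is hypothesis (2). By definition $G_{i,0}-P=\Gamma_i$. Lemma~\ref{lem:zero-mode-gap} then gives $G_{i,0}-P\ge c_{\rm zm}\mathcal D_{i,0}\ge\mu_0\mathcal D_{i,0}$ for $s\ge s_{\rm zm}$, which is hypothesis (3).

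With these choices, the initial bound \eqref{eq:abstract-init-bounded} and the source condition \eqref{eq:abstract-source-small} are exactly the assumptions of the present lemma. The comparison principle then yields \eqref{eq:linear-comparison-conclusion}.

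The only delicate point is checking that the abstract lemma's first-contact argument applies to our nonlocal operator. Two things need care. First, at the contact time the sup of $|V(s_*,\cdot)|$ is at most $B(s_*)$; this is what the operator bound \eqref{eq:abstract-operator-bound} needs, and it holds because it involves only the current time slice. Second, $P$ must be continuous, which holds given the regularity of $\eps$ from Theorem~\ref{asymptotic expansion of epsilon}. Both are routine, so I expect no genuine obstacle beyond bookkeeping of the thresholds.
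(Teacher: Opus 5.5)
Your proposal is correct and follows the paper's proof essentially verbatim. You instantiate Lemma~\ref{lem:moving-strip-comparison} with $q=\mathcal D_{i,k}$ (this equals $1+i$ for zero modes, the paper's choice), take $\mathcal L_s=\mathcal K_i$ with majorant $P_i$ for $k=0$ and $\mathcal L_s=0$, $P=0$ for $k\ge1$, and read the gap condition off Lemmas~\ref{lem:zero-mode-gap} and~\ref{lem:positive-gap}; your explicit choices $\mu_0=\min(c_+,c_{\rm zm})$ and $s_{\rm lin}(\Lambda)=\max(s_+(\Lambda),s_{\rm zm}(\Lambda))$ simply make precise what the paper leaves implicit.
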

\begin{proof}
{Let us consider the case $k=0$, the transport coefficient equality is given by \eqref{eq:transport-tangent-boundary}. In light of Lemma \ref{lem:moving-strip-comparison}, we identify
\(c_0:=c_{\mathrm{zm}},\,
  s_{\mathrm{cmp}}:=s_{\mathrm{zm}},\)
where \(c_{\mathrm{zm}}\) and \(s_{\mathrm{zm}}\) are supplied by
Lemma~\ref{lem:zero-mode-gap}. We identify $q(s,z):=1+i,\,\mathcal L_s=\mathcal K_i, S=S_{i,0},\mu=\mu_0$ and \[
\begin{aligned}
P_i(s,z):={}&
(2i+1)\wh_0(s,z)|\phi-\eps_z|(s,z)
       \int_0^z\hh_0(s,y)\dd y \\
&+4(i+1)\wh_0(s,z)s^{-1}
       \int_0^s |\phi+\eps|(s,y)\hh_0(s,y)\dd y .
\end{aligned}
\] By definition \eqref{eq:normalized-zero-operator}, this immediately gives
\[
  |\mathcal K_i[f](s,z)|
  \le \norm{f}_{L^\infty(0,s)}P_i(s,z).
\]
The gap condition
\eqref{eq:abstract-gap} is precisely Lemma~\ref{lem:zero-mode-gap}, namely
\[
  G_{i,0}(s,z)-P_i(s,z)\ge c_{\mathrm{zm}}(1+i),
  \qquad s\ge s_{\mathrm{cmp}}.
\]
In light of Lemma \ref{lem:positive-gap}, the case of $k\geq 1$ is similar. In particular, the nonlocal operator is simply zero.
}
\end{proof}

\subsection{Non-linear source estimate}
{Recall that the comparison principles established earlier require the nonlinear source to be small relative to the {reference damping rate}. We shall first prove combinatorial estimates, corresponding to different types of inter-modal interactions, which are contained inside the nonlinear source. Once they are established, we shall how that the nonlinear source can indeed be controlled, whenever we assume that all the active modes inside the source satisfy the target bound, as well as the background information on shell-one modes.}
\subsubsection{Combinatorial properties}\label{subsubsec:combinatorial}
\begin{lemma}[Convolutional estimates]\label{lem:endpoint-convolution}
Let \(b\ge3\).  There is a uniform constant \(C\) such that, for every
\(m\ge2\),
\begin{equation}\label{eq:endpoint-conv-one}
(1+   {m} )^b\sum_{r=1}^{m-1}
\frac{1}{(1+r)^b(1+m-r)^b}
\le C\left(\frac{3}{4}\right)^b,
\end{equation}
and consequently
\begin{equation}\label{eq:endpoint-conv-count}
(1+   {m} )^b\sum_{r=1}^{m-1}
\frac{1+r}{(1+r)^b(1+m-r)^b}
\le C(1+   {m} )\left(\frac{3}{4}\right)^b.
\end{equation}
\end{lemma}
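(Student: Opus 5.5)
The plan is to exploit the symmetry $r\leftrightarrow m-r$ and show that the summand is largest at the endpoints $r=1$ and $r=m-1$. I would then bound the whole sum by a geometric-type tail that starts from these endpoint contributions. The second estimate \eqref{eq:endpoint-conv-count} should follow from the first at once, since $1+r\le 1+m$ for $1\le r\le m-1$. So all the work goes into \eqref{eq:endpoint-conv-one}.

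By symmetry it suffices to bound twice the sum over $1\le r\le m/2$ (the middle term counted once is harmless). On this range $1+m-r\ge 1+m/2\ge (1+m)/2$. The naive bound is therefore
\[
\frac{(1+m)^b}{(1+m-r)^b}\le 2^b .
\]
This loses too much, because we need $(3/4)^b$. Instead I would keep the exact ratio:
\[
\frac{(1+m)^b}{(1+r)^b(1+m-r)^b}=\left(\frac{1+m}{(1+r)(1+m-r)}\right)^b .
\]
The key inequality is
\[
\frac{1+m}{(1+r)(1+m-r)}\le \frac{1+m}{2m}\le \frac34\qquad (1\le r\le m-1,\ m\ge2).
\]
It holds because $(1+r)(1+m-r)$ is concave in $r$, so on $[1,m-1]$ it is minimized at the endpoints, where it equals $2m$.

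The endpoint bound alone only gives each term $\le (3/4)^b$. With $m-1$ terms that would yield $m(3/4)^b$, which is not uniform in $m$. To get uniformity I would use decay away from the endpoints. For $r\ge2$ and $r\le m/2$, one has $(1+r)(1+m-r)\ge (1+r)(1+m)/2$, so the term is at most
\[
\left(\frac{2}{1+r}\right)^b .
\]
Summing over $r\ge 2$ gives at most $(2/3)^b\sum_{r\ge2}\bigl(3/(1+r)\bigr)^b$. Since $b\ge3$, the sum converges and is bounded by $1+\sum_{r\ge3}(3/(1+r))^3\le C$. As $(2/3)^b\le(3/4)^b$, the contribution from $r\ge2$ is $\le C(3/4)^b$. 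The $r=1$ term (and its mirror $r=m-1$) is $\le (3/4)^b$ by the endpoint bound. Adding these gives \eqref{eq:endpoint-conv-one}.

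The only delicate point is this split, endpoint versus bulk. The endpoint gives exactly the $3/4$ base, attained at $m=2$. The bulk must be shown to have base $\le 3/4$ with a summable prefactor, and this is where $b\ge3$ (really $b>1$) is used. No other obstacle is expected.
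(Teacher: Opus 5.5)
Your proposal is correct and follows the paper's proof step for step. The paper likewise reduces by the symmetry $r\mapsto m-r$ to the range $r\le m/2$, bounds the endpoint term by $\bigl(\tfrac{1+m}{2m}\bigr)^b\le(3/4)^b$, bounds the bulk terms by $\bigl(\tfrac{2}{1+r}\bigr)^b$ with sum $C(2/3)^b\le C(3/4)^b$ using $b\ge3$, and derives the second estimate from $1+r\le1+m$.
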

\begin{proof}
To prove \eqref{eq:endpoint-conv-one}, we write
\[
  T_m(r):=(1+   {m} )^b\frac{1}{(1+r)^b(1+m-r)^b},
  \,\qquad 1\le r\le m-1.
\]
The summand is symmetric under \(r\mapsto m-r\), since
\(T_m(m-r)=T_m(r)\).  Hence, for every \(m\ge2\),
\begin{equation}\label{eq:endpoint-left-half-proof}
  \sum_{r=1}^{m-1}T_m(r)
  \le 2\sum_{1\le r\le \lfloor m/2\rfloor}T_m(r).
\end{equation}
The endpoint of the summation on the left hand side is \(r=1\) and
\( T_m(1)=\left(\frac{1+m}{2m}\right)^b
  \le \left(\frac{3}{4}\right)^b,
 \text{ for }m\ge2.\)
It remains to estimate the sum in with index
\(2\le r\le \lfloor m/2\rfloor\).  This range may be empty, for instance when
\(m=2\) or \(m=3\).  If it is nonempty, then in particular, \(r\le m/2\), so
   {\(1+m-r\ge (1+m)/2\).  Therefore
} \[
  T_m(r)
  =\left(\frac{1+m}{(1+r)(1+m-r)}\right)^b
  \le \left(\frac{2}{1+r}\right)^b.
\]
Consequently,
\[
\begin{aligned}
  \sum_{2\le r\le \lfloor m/2\rfloor}T_m(r)
  \le \sum_{r=2}^{\infty}\left(\frac{2}{1+r}\right)^b  & = \left(\frac{2}{3}\right)^b
     \left(1+\sum_{n=4}^{\infty}\left(\frac{3}n\right)^b\right)\\
     &\leq \left(\frac{2}{3}\right)^b
     \left(1+\sum_{n=4}^{\infty}\left(\frac{3}n\right)^3\right)\lesssim \left(\frac{2}{3}\right)^b
   \leq \left(\frac{3}{4}\right)^b,
\end{aligned}
\]
where, in the third line, we used \(b\ge3\).  Combining
this with \eqref{eq:endpoint-left-half-proof} proves
\eqref{eq:endpoint-conv-one}. Finally, to obtain \eqref{eq:endpoint-conv-count}, we simply use \(1+r\le1+m\) inside the sum.
\end{proof}

\begin{lemma}[Factorial and shell-sum estimates]\label{lem:active-active-preliminaries}
Assume \(a\ge3\) and define \(\Theta_a=\left(\frac{3}{4}\right)^a\).
Then the following estimates hold uniformly in all admissible indices.\par\smallskip
\noindent\emph{Factorial-binomial estimates.}
If {\(A_1,C_1,P,K\in\mathbb N\)}, then
\begin{equation}\label{eq:factorial-block}
\binom{P+K}{P}\binom{A_1+C_1}{A_1}
\frac{(A_1+P)!(C_1+K+1)!}{(A_1+C_1+P+K)!}
\le C_1+K+1.
\end{equation}
In particular, if \(0\le p\le k-1\), \(\ell=k-p\), and \(r=i-q\), then
\begin{equation}\label{eq:shifted-block}
\binom{k}{p+1}\binom{2i+1}{2q}
\frac{(2q+p+1)!(2r+\ell+1)!}{(2i+k+1)!}
\le 2(1+r+\ell).
\end{equation}
\smallskip
For \(q\ge2\), \(r=i-q\), and \(k\ge1\),
\begin{equation}\label{eq:zero-positive-shifted-factor}
 k\binom{2i+1}{2q}
\frac{(2q+1)!(2r+k+1)!}{(2i+k+1)!}
\lesssim (1+i).
\end{equation}
{\noindent\emph{Endpoint shell sums.}  For every integer \(m\ge2\),
\begin{equation}\label{eq:shell-sum-small}
\begin{gathered}
\sum_{\substack{m_1,m_2\ge2\\ m_1+m_2=m}}
\frac{(1+m)^{a+1}}{(1+m_1)^{a+1}(1+m_2)^{a+1}}
\le C\Theta_a, \\
\sum_{\substack{m_1,m_2\ge2\\ m_1+m_2=m+1}}
\frac{(1+m)^{a+1}}{(1+m_1)^{a+1}(1+m_2)^{a+1}}
\le C\Theta_a.
\end{gathered}
\end{equation}
}
Let \(k\ge2\), then
\begin{equation}\label{eq:shell-sum-counted}
\sum_{\substack{0\le q\le i,\ 1\le p\le k-1\\ q+p\ge2,\ i-q+k-p\ge2}}
\frac{(1+i+k)^{a+1}(1+i-q+k-p)}{(1+q+p)^{a+1}(1+i-q+k-p)^{a+1}}
\le C\Theta_a(1+i+k).
\end{equation}
All the constants $C$ in
these estimates are independent of \((i,k)\) and of \(a\).
\end{lemma}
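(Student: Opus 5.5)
The plan is to reduce every factorial estimate to a Vandermonde-type inequality between binomial coefficients, and every shell sum to the endpoint convolution bound of Lemma~\ref{lem:endpoint-convolution}. The one genuine difficulty is \eqref{eq:zero-positive-shifted-factor}.

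\emph{Factorial-binomial estimates.} For \eqref{eq:factorial-block}, write
\[
\frac{(A_1+P)!\,(C_1+K+1)!}{(A_1+C_1+P+K)!}=\frac{C_1+K+1}{\binom{A_1+C_1+P+K}{A_1+P}} .
\]
The claim is then equivalent to
\[
\binom{P+K}{P}\binom{A_1+C_1}{A_1}\le\binom{A_1+C_1+P+K}{A_1+P}.
\]
This is a single term of the Vandermonde identity $\sum_j\binom{P+K}{j}\binom{A_1+C_1}{A_1+P-j}=\binom{A_1+C_1+P+K}{A_1+P}$, taken at $j=P$. The argument only needs the indices to be nonnegative integers, so I will read $\mathbb N$ as $\mathbb N_0$ where needed.

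For \eqref{eq:shifted-block}, substitute $A_1=2q$, $C_1=2r+1$, $P=p+1$, $K=\ell-1\ge0$. Since $2q+2r+1=2i+1$ and $(p+1)+(\ell-1)=k$, the left side is exactly the block above. It is therefore bounded by $C_1+K+1=2r+\ell+1\le2(1+r+\ell)$.

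\emph{The estimate \eqref{eq:zero-positive-shifted-factor}.} Here Vandermonde alone loses a factor, so I will compute exactly instead. Cancelling gives
\[
\binom{2i+1}{2q}\frac{(2q+1)!\,(2r+k+1)!}{(2i+k+1)!}
=(2q+1)\,\frac{(2r+k+1)!}{(2r+1)!}\,\frac{(2i+1)!}{(2i+k+1)!}
=(2q+1)\prod_{t=1}^{2q}\frac{2r+1+t}{2r+k+1+t}.
\]
Each factor in the product is at most $x:=\frac{2i+1}{2i+k+1}<1$. The $k$ prefactor is absorbed by the two largest factors: they contribute at most $x^2$, and $k\,x^2\cdot\frac{2i+k+1}{2i+1}=k\,x\lesssim 1+i$.

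It then remains to show that $(2q+1)$ times the remaining $2q-2\ge2$ factors is $O(1)$ uniformly. I will try to get this by comparing each remaining factor with $\frac{2r+1+t}{2r+2+t}$, or by splitting into the cases $k\le 2i+1$ and $k>2i+1$. I expect this bookkeeping to be the main obstacle, because the naive bound $(2q+1)x^{2q-2}$ is not uniformly small when $x$ is close to $1$. If it resists, the fallback is to keep the factors $t=2q-2,2q-1,2q$ and use $\prod_{t}(1+\frac{k}{2r+1+t})^{-1}\le (1+\frac{k}{2i+1})^{-3}$ together with $(2q+1)\le 2i+1$.

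\emph{Shell sums.} For \eqref{eq:shell-sum-small}, rerun the proof of Lemma~\ref{lem:endpoint-convolution} with $b=a+1\ge4$, restricted to $m_1\ge2$. The constraint $m_1,m_2\ge2$ forces $m\ge4$ and removes the $r=1$ endpoint. The new endpoint term is $T_m(2)=\big(\frac{1+m}{3(m-1)}\big)^{b}\le(5/9)^b$, and the tail is bounded by $\sum_{r\ge3}(2/(1+r))^b\lesssim(1/2)^b$ as before. Both are $\le C\Theta_a$. The second line follows in the same way after replacing $m$ by $m+1$ and using $(1+m)\le(2+m)$.

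For \eqref{eq:shell-sum-counted}, group the terms by $m_1=q+p$ and $m_2=i+k-m_1$, both at least $2$. For fixed $m_1$ the number of admissible pairs $(q,p)$ is at most $1+m_1$. Using $(1+m_1)(1+m_2)\le(1+m)(1+m_1)$ with $m=i+k$, the sum is at most
\[
(1+m)\sum_{m_1}\frac{(1+m)^{a}}{(1+m_1)^{a}(1+m_2)^{a}}.
\]
Applying the restricted endpoint bound with $b=a\ge3$ gives $C\Theta_a(1+i+k)$. All constants here come only from the geometric tails and are independent of $a$ and $(i,k)$.
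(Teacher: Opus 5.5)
Your arguments for \eqref{eq:factorial-block}, \eqref{eq:shifted-block}, \eqref{eq:shell-sum-small} and \eqref{eq:shell-sum-counted} are correct and follow the paper. This covers Vandermonde at $j=P$, the substitution $(A_1,C_1,P,K)=(2q,2r+1,p+1,\ell-1)$, grouping by $n=q+p$ with at most $1+n$ pairs, and the endpoint convolution bound. Rerunning that bound with $m_1\ge2$ is harmless, though citing \eqref{eq:endpoint-conv-one} for the subsum already suffices. Your remark that $\mathbb N$ must be read as $\mathbb N_0$ for the substitution is also right.

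The gap is \eqref{eq:zero-positive-shifted-factor}: neither of your plans works.

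\emph{The primary split.} It spends the whole $(1+i)$ budget on $k$ times the two largest factors and then requires
\[
(2q+1)\prod_{t=1}^{2q-2}\frac{2r+1+t}{2r+k+1+t}=O(1)
\]
uniformly. This is false. For $k=1$ the product telescopes to $\frac{2r+2}{2r+2q}$, so the quantity tends to $2q+1$ as $r\to\infty$ and is unbounded in $q$. No case distinction ($k\le 2i+1$ versus $k>2i+1$) can rescue a false intermediate claim.

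\emph{The fallback.} Keeping three factors yields $k(2q+1)(1+x)^{-3}$ with $x=k/(2i+1)$. For $k=2i+1$, $q=i$ this equals $(2i+1)^2/8$, a factor $i$ too large, whereas the true quantity there is at most $(2i+1)^2 2^{-2i}$. More generally, keeping any bounded number $j$ of factors cannot work, since $\sup_{x\ge0}x(1+x)^{-j}$ is of order $1/j$ and cannot compensate the prefactor $2q+1$.

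\emph{The missing idea.} All $2q$ factors must absorb both prefactors $k$ and $2q+1$ at once. Bound each factor by $(1+x)^{-1}$ and write $k=(2i+1)x$:
\[
k(2q+1)\prod_{t=1}^{2q}\frac{2r+1+t}{2r+k+1+t}\le(2i+1)\,(2q+1)\,x(1+x)^{-2q}\le(2i+1)\,\frac{2q+1}{2q}.
\]
The last step is Bernoulli's inequality $(1+x)^{2q}\ge 1+2qx$, which gives $x(1+x)^{-2q}\le \frac{1}{2q}$. The paper performs exactly this reduction; it bounds $(2q+1)x(1+x)^{-2q}$ by maximizing at $x=1/(2q-1)$ instead of using Bernoulli.
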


\begin{proof}
For \eqref{eq:factorial-block}, we use the following consequence
from Vandermonde's formula.  With
\(n_1=A_1+C_1\), \(j_1=A_1\), \(n_2=P+K\), and \(j_2=P\), Vandermonde equality
\[
\binom{n_1+n_2}{j_1+j_2}
=\sum_j \binom{n_1}{j}\binom{n_2}{j_1+j_2-j}.
\]
with \(j=j_1\) yields
\[
\frac{(A_1+C_1+P+K)!}{(A_1+P)!(C_1+K)!}=\binom{A_1+C_1+P+K}{A_1+P}
\ge \binom{A_1+C_1}{A_1}\binom{P+K}{P}.
\]
Multiplying by
\((A_1+P)!(C_1+K+1)!/(A_1+C_1+P+K)!\) gives
\eqref{eq:factorial-block}.  The shifted factorial-binomial estimate
\eqref{eq:shifted-block} is \eqref{eq:factorial-block} under
\((A_1,P,C_1,K)=(2q,p+1,2r+1,\ell-1)\) with \(\ell=k-p\ge1\) and $r=i-q$.
For \eqref{eq:zero-positive-shifted-factor}, rewrite the finite product as:
\[
\begin{aligned}
& k\binom{2i+1}{2q}
\frac{(2q+1)!(2r+k+1)!}{(2i+k+1)!}  
= k(2q+1)
  \frac{(2i+1)!}{(2r+1)!}
  \frac{(2r+k+1)!}{(2i+k+1)!} 
= k(2q+1)
 \prod_{h=1}^{2q}\frac{2r+1+h}{2r+k+1+h}.
\end{aligned}
\]
Each factor in the last product is at most
\((2i+1)/(2i+1+k)\), since it is a increasing function in $h$, $h\leq 2q$ and $r+q=i$.  Therefore, with \(x=k/(2i+1)\), the left-hand side is bounded by
\[
(2i+1)x(2q+1)(1+x)^{-2q}.
\]
For \(q\ge2\), the function \(x(2q+1)(1+x)^{-2q}\) is uniformly bounded in
\(x\ge0\).  Indeed, its maximum occurs at \(x=(2q-1)^{-1}\), and the maximum value
is
\[
\frac{2q+1}{2q-1}\left(1+\frac{1}{2q-1}\right)^{-2q}\lesssim 1,
\]
which proves \eqref{eq:zero-positive-shifted-factor}.
{For the first estimate in
\eqref{eq:shell-sum-small}, apply
\eqref{eq:endpoint-conv-one}.
For the second estimate, notice that
\[
\begin{aligned}
&\sum_{\substack{m_1,m_2\ge2\\ m_1+m_2=m+1}}
\frac{(1+m)^{a+1}}{(1+m_1)^{a+1}(1+m_2)^{a+1}}\le
\sum_{\substack{m_1,m_2\ge2\\ m_1+m_2=m+1}}
\frac{(2+m)^{a+1}}{(1+m_1)^{a+1}(1+m_2)^{a+1}}
\le C\Theta_a,
\end{aligned}
\]
where the last inequality follows from
\eqref{eq:endpoint-conv-one} with $m\leftarrow 1+m$.
This proves both estimates in \eqref{eq:shell-sum-small}.
}
Finally, to prove \eqref{eq:shell-sum-counted}, we write
\(n=q+p,\,m=i+k\), thus \(i-q+k-p=m-n\), and 
restrictions in the sum imply
\(2\le n\le m-2.\)
For each fixed \(n\), the condition \(q+p=n\) determines \(p\) once \(q\) is chosen,
so there are at most \(1+n\) admissible pairs \((q,p)\).  Hence the left-hand side
of \eqref{eq:shell-sum-counted} is bounded by
\[
\begin{aligned}
&(1+i+k)^{a+1}
\sum_{n=2}^{m-2}
\frac{(1+n)(1+m-n)}{(1+n)^{a+1}(1+m-n)^{a+1}}=
(1+m)^{a+1}
\sum_{n=2}^{m-2}
\frac{1}{(1+n)^a(1+m-n)^a}.
\end{aligned}
\]
If \(m<4\), the range is empty.  Otherwise, 
\eqref{eq:endpoint-conv-one} gives the desired inequality.
\end{proof}
\subsubsection{Decomposition of the nonlinear source}\label{subsec:source-decomposition}
{The smallness of source can either come from induction assumption on previous modes or background information. Since each inter-modal interaction only appears as a product of pair, it is therefore convenient to isolate the interactions involving at least one shell-one mode, from the interactions among active modes, and estimate them separately. This is achieved by the following decomposition of nonlinear source.}\par
For integers \(i,k\ge0\) with \(i+k\ge2\), split
\begin{equation}\label{eq:full-source-family-decomposition}
\begin{aligned}
N_{i,k}&=N_{i,k}^{\sharp}+N_{i,k}^{\flat},\\
N_{i,k}^{\sharp}
&:=\mathrm I_{i,k}^{\sharp}+\mathrm {II}_{i,k}^{\sharp}+\mathrm {III}_{i,k}^{\sharp}
  +\mathrm {IV}_{i,k}^{\sharp}+\mathrm {V}_{i,k}^{\sharp},\\
N_{i,k}^{\flat}
&:=\mathrm I^{\flat}_{i,k}+\mathrm {II}^{\flat}_{i,k}+\mathrm {III}^{\flat}_{i,k}
  +\mathrm {IV}^{\flat}_{i,k}+\mathrm V^{\flat}_{i,k}.
\end{aligned}
\end{equation}
The terms in \(N_{i,k}^{\sharp}\) are those for which both nonlinear factors
have index sum at least two:
\begin{align}
\mathrm I_{i,k}^{\sharp}
&:=\one_{i\ge2}\one_{k\ge1}(2i+1)(\pzi F_{i,0})F_{0,k+1},
\label{eq:family-I-aa}\\
\mathrm {II}_{i,k}^{\sharp}
&:=\one_{i\ge3}\sum_{q=2}^{i-1}
\binom{2i+1}{2q}(\pzi F_{q,0})F_{i-q,k+1},
\label{eq:family-II-aa}\\
\mathrm {III}_{i,k}^{\sharp}
&:=-\sum_{\substack{0\le p\le k,\ 0\le q\le i\\
q+p\ge2,\ i-q+k-p\ge2}}
\binom{k}{p}\binom{2i+1}{2q+1}F_{q,p}F_{i-q,k-p},
\label{eq:family-III-aa}\\
\mathrm {IV}_{i,k}^{\sharp}
&:=\one_{k\ge1}
\sum_{\substack{0\le p\le k-1,\ 0\le q\le i\\
q+p\ge2,\ i-q+k-p\ge2}}
\binom{k}{p+1}\binom{2i+1}{2q}F_{q,p}F_{i-q,k-p},
\label{eq:family-IV-aa}\\
\mathrm {V}_{i,k}^{\sharp}
&:=\one_{k=0}\one_{i\ge4}
\sum_{q=2}^{i-2}\binom{2i+2}{2q+1}s^{-1}
\int_0^sF_{q,0}F_{i-q,0}\,\dd z.
\label{eq:family-V-aa}
\end{align}
The remaining terms contain at least one of the two index-sum-one factors
\(F_{1,0}\) or \(F_{0,1}\):
\begin{align}
\mathrm I^{\flat}_{i,k}
&:=\one_{i=1}\one_{k\ge1}3(\pzi F_{1,0})F_{0,k+1},
\label{eq:family-I-flat}\\
\mathrm {II}^{\flat}_{i,k}
&:=\one_{i\ge2}\binom{2i+1}{2}(\pzi F_{1,0})F_{i-1,k+1},
\label{eq:family-II-flat}\\
\mathrm {III}^{\flat}_{i,k}
&:=-\sum_{\substack{0\le p\le k,\ 0\le q\le i\\
(q,p)\ne(0,0),(i,k)\\
q+p=1\ \text{or}\ i-q+k-p=1}}
\binom{k}{p}\binom{2i+1}{2q+1}F_{q,p}F_{i-q,k-p},
\label{eq:family-III-flat}\\
\mathrm {IV}^{\flat}_{i,k}
&:=\one_{k\ge1}
\sum_{\substack{0\le p\le k-1,\ 0\le q\le i\\
(q,p)\ne(0,0)\\
q+p=1\ \text{or}\ i-q+k-p=1}}
\binom{k}{p+1}\binom{2i+1}{2q}F_{q,p}F_{i-q,k-p},
\label{eq:family-IV-flat}\\
\mathrm V^{\flat}_{i,k}
&:=\one_{k=0}\one_{i\ge2}
\sum_{\substack{1\le q\le i-1\\q=1\ \text{or}\ i-q=1}}
\binom{2i+2}{2q+1}s^{-1}\int_0^sF_{q,0}F_{i-q,0}\,\dd z.
\label{eq:family-V-flat}
\end{align}
The restrictions above partition every summand of \eqref{eq:Nik}, and thus
\eqref{eq:full-source-family-decomposition} is disjoint and exhaustive. Recall that the induction hypothesis has the form of 
\begin{equation}\label{eq:available-active-normalized}
  A_{j,\ell}(s)\wh_\ell(s,z)|F_{j,\ell}(s,z)|\le M\quad j+l\geq 2
\end{equation}
We shall obtain a bound on \(N_{i,k}^{\sharp},N_{i,k}^{\flat}\) when \eqref{eq:available-active-normalized} is satisfied for each $F_{j,\ell}(s,z)$ appearing in them.
\subsubsection{Estimate on interactions among non-shell-one modes}
{Thanks to the combinatorial inequalities we developed in sub-subsection \ref{subsubsec:combinatorial}, for computational simplicity, let us now fix $$a:=3,$$ once for all. Consequently, in view of Lemma \ref{lem:active-active-preliminaries}$, \Theta_a=\Theta_3\lesssim 1$ can be absorbed as a universal constant, whenever it applies.}\par
The following proposition estimates \(N_{i,k}^{\sharp}\), in which both nonlinear factors have index sum at least two. 
\begin{lemma}[{Uniform estimates} for \(N_{i,k}^{\sharp}\)]\label{lem:mixed-combinatorics}
{Let \(\Lambda\ge1\), \(a=3\), \(0<\rho_0\le1\), and
\(s_0\ge s_1\geq s_{\text{ent}}(\Lambda)\) (\(s_{\text{ent}}(\Lambda)\) defined in \eqref{eq:s1-requirements}).  Let \(i,k\ge0\) be integers with \(i+k\ge2\), and suppose
that every factor occurring in \(N_{i,k}^{\sharp}\) is available in the sense
of \eqref{eq:available-active-normalized}.  Then, on
\(s\ge s_0\), \(0\le z\le s\),
\begin{equation}\label{eq:active-active-sum-bound}
  A_{i,k}\wh_k|N_{i,k}^{\sharp}|
  \le CM^2\mathcal D_{i,k},
\end{equation}
with a universal constant \(C\).}
\end{lemma}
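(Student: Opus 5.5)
We estimate the five families $\mathrm I^\sharp,\dots,\mathrm V^\sharp$ separately. Each term gets the same three-step treatment: insert the available bounds $|F_{j,\ell}|\le M A_{j,\ell}^{-1}\hh_\ell$, use Corollary~\ref{lem:mixed-weight-estimates} to turn the product of weights into the target weight $\hh_k$ times a spatial factor, and then bound the resulting purely combinatorial coefficient by the shell-sum estimates of Lemma~\ref{lem:active-active-preliminaries} with $a=3$.

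\textbf{Powers of $\rho$.} This bookkeeping is the same in every family. A product $F_{q,p}F_{i-q,k-p}$ carries $\rho^{-(2q+p-1)}\rho^{-(2(i-q)+k-p-1)}=\rho^{-(2i+k-2)}$. Multiplying by $A_{i,k}\propto\rho^{2i+k-1}$ leaves a factor $\rho\le\rho_0\le1$. The primitive-type terms $(\pzi F_{q,0})F_{i-q,k+1}$ carry exponents summing to $2i+k-1$, so they leave $\rho^0=1$. In every case we lose nothing, and we can simply bound the $\rho$-factor by $1$. The remaining factorial ratio in every family has the form
\[
\binom{\cdot}{\cdot}\binom{\cdot}{\cdot}\,\frac{(2q+p+1)!\,(2r+\ell+1)!}{(2i+k+1)!}\cdot\frac{(1+i+k)^{4}}{(1+q+p)^{4}(1+r+\ell)^{4}},
\qquad r=i-q,\ \ell=k-p.
\]

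\textbf{Factorial part, family by family.} For $\mathrm{III}^\sharp$, Vandermonde with $\binom{k}{p}\binom{2i+1}{2q+1}$ gives the bound $1$ directly: this is \eqref{eq:factorial-block} without the extra $+1$. For $\mathrm{IV}^\sharp$ we use \eqref{eq:shifted-block}, which gives $2(1+r+\ell)$. For $\mathrm I^\sharp$ and $\mathrm{II}^\sharp$ (index shift $k\to k+1$ in the second factor) we use \eqref{eq:zero-positive-shifted-factor}, where the factor $k$ comes from $\one_{k\ge1}$. The $\mathrm I^\sharp$ case is the $q=i$ instance, with the polynomial weight handled by hand. For $\mathrm V^\sharp$, $\binom{2i+2}{2q+1}(2q+1)!(2r+1)!/(2i+1)!\lesssim (2i+2)$.

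\textbf{Polynomial part.} The weights $(1+i+k)^4/((1+m_1)^4(1+m_2)^4)$ are then summed over the shell by \eqref{eq:shell-sum-small}, or by \eqref{eq:shell-sum-counted} when a linear factor $(1+r+\ell)$ is present. This produces a bound $C(1+i+k)$ at worst.

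\textbf{Spatial weights.} For $\mathrm{III}^\sharp$ and $\mathrm{IV}^\sharp$, \eqref{eq:mixed-product-basic} gives $\wh_k|F F|\le 2M^2\cdot\{1,\hh_+,\hh_0\}$.
\begin{itemize}
\item When $k=0$ we get $CM^2(1+i)$, which is exactly $\mathcal D_{i,0}$.
\item When $i\ge1$, $k\ge1$, we need $(1+i+k)\cdot\hh$-factors bounded by $(1+i)+(2i+k)\hh_+$.
\item When $i=0$, $k\ge2$, we need $k\hh_+\le\mathcal D_{0,k}$.
\end{itemize}
For $\mathrm I^\sharp,\mathrm{II}^\sharp$, Corollary~\ref{lem:mixed-weight-estimates}(iii) gives $\wh_+|\pzi F_{q,0}\,F_{r,k+1}|\le 2M^2$, with no $\hh$ gain. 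There the coefficient must come out as $(1+i)$ only, which is why \eqref{eq:zero-positive-shifted-factor} is stated with $(1+i)$. For $\mathrm V^\sharp$, (iv) gives $\tfrac72M^2$, which is compatible with $\mathcal D_{i,0}=1+i$.

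\textbf{Main obstacle.} The delicate point is matching the growth $(1+i+k)$ from the counted shell sum against $\mathcal D_{i,k}$ when $k\gg i$. For $k\ge1$ the extra $k$-growth must be absorbed by a factor $\hh_+$ or $\hh_0$. In $\mathrm{III}^\sharp$ and $\mathrm{IV}^\sharp$, at least one factor has $p\ge1$ or $\ell\ge1$, so the product weight provides $\hh_+$ or $\hh_0$. I would split into two cases.
\begin{itemize}
\item If both $p,\ell\ge1$, we get $\hh_+$, and $(2i+k)\hh_+\le\mathcal D_{i,k}$ suffices.
\item If exactly one vanishes (say $p=0$, $\ell=k$), we get an $\hh_0$ factor, which a priori is larger than $\hh_+$ in the tail. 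The combinatorial weight must then be better: with $p=0$ the sum collapses to one index $q$, and the factorial factor is $\binom{2i+1}{2q+1}$ with no $\binom{k}{p}$ growth, so the coefficient is $O(1+i)$ rather than $O(1+i+k)$.
\end{itemize}
I expect this case analysis, carried out for both $i\ge1$ and $i=0$ (where $\mathcal D_{0,k}$ has no constant term and $\hh_0$ must be compared with $A(s)/\hh_+$), to be the step requiring the most care. Once it is done, collecting all five families gives $A_{i,k}\wh_k|N^\sharp_{i,k}|\le CM^2\mathcal D_{i,k}$.
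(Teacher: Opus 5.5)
Your architecture (five families, the $\rho$-bookkeeping, weight products from Corollary~\ref{lem:mixed-weight-estimates}, shell sums, interior/endpoint split) is the paper's. The gap is the combinatorial step you yourself flag as the main obstacle: as written, it fails for Family~IV.

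At $p=0$ the Family~IV coefficient is $\binom{k}{1}\binom{2i+1}{2q}=k\binom{2i+1}{2q}$. So your justification ``no $\binom{k}{p}$ growth'' holds only for Family~III. With the bound you assign to Family~IV, \eqref{eq:shifted-block}, this subrange gets a coefficient of order $1+i+k$ against the weight product $\wh_+\hh_0\hh_+=\hh_0$. In the tail $z\sim s$ this is $\sim k/s$, whereas $\mathcal D_{i,k}\sim(1+i)+(2i+k)\Lambda^2(\log s)^2/s^2$. The required constant therefore grows like $s/(\Lambda\log s)^2$ as $k\to\infty$.

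What saves the estimate is that the factorial ratio itself decays in $k$:
\[
\prod_{h=1}^{2q}\frac{2r+1+h}{2r+k+1+h}\le(1+x)^{-2q},\qquad x=\frac{k}{2i+1},
\]
and $x(2q+1)(1+x)^{-2q}$ is bounded uniformly. This is exactly \eqref{eq:zero-positive-shifted-factor}, which the paper uses for precisely this subrange. You instead attached that inequality to Families~I and~II, where it is not the relevant quantity. Their coefficients contain no factor $k$ (the indicator $\one_{k\ge1}$ does not supply one). The second factor carries $(2r+k+2)!$ rather than $(2r+k+1)!$. And Family~II also contains $k=0$.

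As a result, Family~II is not handled. Its factorial part is
\[
(2q+1)(2r+2)\prod_{h=1}^{2q-1}\frac{2r+2+h}{2r+k+2+h},
\]
which for $k=0$ and $q\approx i/2$ is of order $i^2$. It is not $O(1+i)$ termwise, so the scheme ``bound the factorial part termwise, then sum the polynomial weights'' does not close. You must spend one power of the polynomial weight on the two linear factors:
\begin{itemize}
\item for $k=0$, use \eqref{eq:endpoint-conv-one} with exponent $a$ instead of $a+1$;
\item for $k\ge1$, use the paper's bound by $X_{q,k}B_{q,k}^a$ with $X_{q,k}\le 1+i$, splitting into $q\le i-q$ and $q>i-q$.
\end{itemize}
Family~I survives your route only because $r=0$ there, so the discrepancy is a factor $(k+2)/k\le 3$.

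Likewise, the claim that Vandermonde gives the bound $1$ in Family~III is false. The two factorial arguments add up to $2i+k+2$, so \eqref{eq:factorial-block} gives $1+2r+\ell$, with equality $2r+1$ when $k=0$. This is harmless in the interior range, where \eqref{eq:shell-sum-counted} and the $\hh_+$ gain apply. At the endpoints the explicit product formula gives $\le 2r+1$. But the linear factor must be carried.

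Finally, your concern about $i=0$ at the endpoints is moot. The subranges $p=0$ and $p=k$ force $q\ge 2$ or $i-q\ge2$, hence $i\ge2$, and there $\mathcal D_{i,k}\ge 1+i$.
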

\begin{proof}
{Notice that, by the choice of $s_1\geq s_{\text{ent}}(\Lambda)$ (defined in \eqref{eq:s1-requirements}) for every \(s\ge s_0\geq s_1\) and \(0\le z\le s\),
}\begin{equation}{\label{eq:hplus-hzero-large-s}
  \hh_+(s,z)\le\hh_0(s,z),
  \qquad
  \wh_0(s,z)\le \wh_+(s,z),
  \qquad
  \wh_0(s,z)\hh_+(s,z)\le1.
}\end{equation}
Furthermore, since $\rho(s)$ is decreasing and $\rho(s_0)=\rho_0$, we see
\begin{equation}
    \rho(s)\leq \rho_0 \leq 1 \text{ for all }s\geq s_0.
\end{equation}
These two facts will be used in this proof extensively. We shall show that
\begin{equation}\label{eq:active-active-family-bound}
A_{i,k}\wh_k|\mathrm I_{i,k}^{\sharp}|,
\quad\ldots,\quad
A_{i,k}\wh_k|\mathrm {V}_{i,k}^{\sharp}|
\le CM^2\mathcal D_{i,k},
\end{equation}
which, upon summing these estimates and using the definition of
\(N_{i,k}^{\sharp}\) in \eqref{eq:full-source-family-decomposition}, proves
\eqref{eq:active-active-sum-bound}.

\medskip
\noindent\textbf{Step 1: Three normalized templates.}
We pre-compute the three normalized reductions used in the five families. The symbol \(\mathfrak c\) denotes the concrete numerical coefficient
of the current summand. In the family estimates below, \(\mathfrak c\) is 
replaced by the actual coefficient of in the summation.
Recall from \eqref{eq:Aik-def} that, for every \(j,\ell\ge0\) with \(j+\ell\ge2\),
\begin{equation}\label{eq:Ajell-recall}
  A_{j,\ell}(s)
  =
  \frac{(1+j+\ell)^{a+1}\rho(s)^{2j+\ell-1}}{(2j+\ell+1)!}.
\end{equation}
\medskip
\noindent\emph{Type A: ordinary products.}
For a product \(\mathfrak c F_{q,p}F_{i-q,k-p}\) from Family III or IV,
both factors have index sum at least two, so the assumption in the
statement gives
\begin{equation}\label{eq:type-A-Astep}
  A_{i,k}\wh_k |\mathfrak c F_{q,p}F_{i-q,k-p}|
  \le
  M^2|\mathfrak c|\frac{A_{i,k}}{A_{q,p}A_{i-q,k-p}}
  \wh_k\hh_p\hh_{k-p}.
\end{equation}
Plugging the definition of \(A\) into the quotient gives
\begin{equation}\label{eq:rho-power-product}
  \begin{aligned}
 \frac{A_{i,k}}{A_{q,p}A_{i-q,k-p}}
 & =
 \rho(s)
 \frac{(1+i+k)^{a+1}}{(1+q+p)^{a+1}(1+i-q+k-p)^{a+1}}\times
 \frac{(2q+p+1)!(2(i-q)+k-p+1)!}{(2i+k+1)!}.
\end{aligned}
\end{equation}
Combining \eqref{eq:type-A-Astep} and \eqref{eq:rho-power-product},
every Type A summand satisfies
\begin{equation}\label{eq:type-A-ready}
  \begin{aligned}
  A_{i,k}\wh_k |\mathfrak c F_{q,p}F_{i-q,k-p}|
  &\le
  M^2 |\mathfrak c|\rho(s)
  \frac{(1+i+k)^{a+1}}{(1+q+p)^{a+1}(1+i-q+k-p)^{a+1}} \\
  &\quad\times
  \frac{(2q+p+1)!(2(i-q)+k-p+1)!}{(2i+k+1)!}
  \wh_k\hh_p\hh_{k-p}.
\end{aligned}
\end{equation}
\medskip
\noindent\emph{Type B: primitive products.}
For a primitive product \(\mathfrak c(\pzi F_{q,0})F_{i-q,k+1}\) from
Family I or II, both factors have index sum at least two. Combining with the primitive estimate in Lemma~\ref{lem:mixed-weight-estimates}, when
\(k\geq 0\), we compute (notice when $k=0$, we use \eqref{eq:hplus-hzero-large-s} to bound $w_0$ by $w_+$)
\begin{equation}\label{eq:type-B-Astep}
  A_{i,k}\wh_k |\mathfrak c(\pzi F_{q,0})F_{i-q,k+1}|
  \le
  C M^2 |\mathfrak c|\frac{A_{i,k}}{A_{q,0}A_{i-q,k+1}}.
\end{equation}
Plugging the definition of \(A_{i,k}\) into the quotient gives
\begin{equation}\label{eq:primitive-A-quotient}
  \begin{aligned}
\frac{A_{i,k}}{A_{q,0}A_{i-q,k+1}}
&=
\frac{(1+i+k)^{a+1}}{(1+q)^{a+1}(2+i-q+k)^{a+1}}\times
\frac{(2q+1)!(2(i-q)+k+2)!}{(2i+k+1)!}.
\end{aligned}
\end{equation}
Therefore, each Type B summand
satisfies
\begin{equation}\label{eq:type-B-ready}
  \begin{aligned}
  A_{i,k}\wh_k |\mathfrak c(\pzi F_{q,0})F_{i-q,k+1}|
  &\le
  C M^2 |\mathfrak c|
  \frac{(1+i+k)^{a+1}}{(1+q)^{a+1}(2+i-q+k)^{a+1}}\times
  \frac{(2q+1)!(2(i-q)+k+2)!}{(2i+k+1)!}.
\end{aligned}
\end{equation}
\medskip
\noindent\emph{Type C: averaged products.}
For the averaged term \(\mathfrak c s^{-1}\int_0^sF_{q,0}F_{i-q,0}\,\dd z\)
from Family V, the zero-mode average estimate
\eqref{eq:zero-avg-product} implies
\begin{equation}\label{eq:type-C-Astep}
  A_{i,0}\wh_0
  \left|\mathfrak c s^{-1}\int_0^sF_{q,0}F_{i-q,0}\,\dd z\right|
  \le
  C M^2 |\mathfrak c|\frac{A_{i,0}}{A_{q,0}A_{i-q,0}}.
\end{equation}
Plugging the definition of \(A\) into the quotient gives
\begin{equation}\label{eq:average-A-quotient}
  \begin{aligned}
\frac{A_{i,0}}{A_{q,0}A_{i-q,0}}
&=
\rho(s)
\frac{(1+i)^{a+1}}{(1+q)^{a+1}(1+i-q)^{a+1}}
\frac{(2q+1)!(2(i-q)+1)!}{(2i+1)!}.
\end{aligned}
\end{equation}
Therefore, every Type C summand satisfies
\begin{equation}\label{eq:type-C-ready}
  \begin{aligned}
  A_{i,0}\wh_0
  \left|\mathfrak c s^{-1}\int_0^sF_{q,0}F_{i-q,0}\,\dd z\right|
  &\le
  C M^2 |\mathfrak c|\rho(s)
  \frac{(1+i)^{a+1}}{(1+q)^{a+1}(1+i-q)^{a+1}} \times
  \frac{(2q+1)!(2(i-q)+1)!}{(2i+1)!}.
\end{aligned}
\end{equation}
For the rest of this proof, any term involving \(F_{1,0}\) or \(F_{0,1}\) is not in the families below;
These terms are estimated separately in Lemma~\ref{lem:shell-one-smallness}. We now attempt to bound
\eqref{eq:family-I-aa}--\eqref{eq:family-V-aa}, using the computations we did above.
\medskip

\noindent\textbf{Step 2: The primitive families.}
The two primitive families are of Type B; Family I is a single endpoint term and Family II is an interior sum.

\smallskip
\noindent\emph{Family I: primitive endpoint term \eqref{eq:family-I-aa}.}
This family is nonzero only for \(i\ge2\), \(k\ge1\). It is of Type B term with
\(q=i\) and \(\mathfrak c=2i+1\). The Type B computation reads
\[
A_{i,k}\wh_+ (2i+1)|\pzi F_{i,0}F_{0,k+1}|
\le
C M^2(2i+1)\frac{A_{i,k}}{A_{i,0}A_{0,k+1}}.
\]
The non-polynomial part of the fraction is uniformly bounded in this range:
\[
 E_{i,k}:=(2i+1)\frac{(2i+1)!(k+2)!}{(2i+k+1)!}\le C,
 \qquad i\ge1,\ k\ge1.
\]
Indeed, for $i\ge 1$, we can expand
\(E_{i,k}
=(2i+1)\frac{(k+2)!}{(2i+2)(2i+3)\cdots(2i+k+1)},\)
and observe
\(\frac{E_{i,k+1}}{E_{i,k}}=\frac{k+3}{2i+k+2}<1.\)
Thus, the worst case for \(k\ge1\) is \(k=1\), where
\(E_{i,1}=6\frac{2i+1}{2i+2}\le6.\)
Therefore,
\begin{equation}\label{eq:fam1-aa}
(2i+1)\frac{A_{i,k}}{A_{i,0}A_{0,k+1}}
\le
C
\frac{(1+i+k)^{a+1}}{(1+i)^{a+1}(2+k)^{a+1}}.
\end{equation}
For the polynomial part, we apply the second estimate in \eqref{eq:shell-sum-small} with
\(m=i+k\): Since the active shells are \(i\geq 2\) and \(k+1\geq 2\), whose sum is \(m+1\), the polynomial part is thus bounded by the summation, which, in turn, is 
bounded by \(C\). Using the restriction \(i\ge2\) and \(k\ge1\), the
definition \eqref{eq:Dik} gives \(\mathcal D_{i,k}\ge 1+i\ge1\). The
Type B estimate gives the Family I bound.
\medskip

\smallskip
\noindent\emph{Family II: interior primitive sum \eqref{eq:family-II-aa}.}
In this case, the summation ranges from
\( 2\le q\le i-1.\)
which implies
\(  i-q+k+1\ge2.\)
Each summand is of Type B term with
\(\mathfrak c=\binom{2i+1}{2q}\). The factorial-binomial coefficient ratio is
therefore
\begin{equation}\label{eq:fam2-factorial-cancel}
\begin{aligned}
&\binom{2i+1}{2q}
\frac{(2q+1)!(2(i-q)+k+2)!}{(2i+k+1)!}
=(2q+1)(2(i-q)+2)
\prod_{h=1}^{2q-1}
\frac{2(i-q)+2+h}{2(i-q)+k+2+h}.
\end{aligned}
\end{equation}
For \(k=0\), the product in \eqref{eq:fam2-factorial-cancel} is exactly one.
For \(k\ge1\), each factor in the product is at most one.
We claim that
\begin{equation}\label{eq:fam2-summed}
\sum_{q=2}^{i-1}
\binom{2i+1}{2q}
\frac{A_{i,k}}{A_{q,0}A_{i-q,k+1}}
\le C(1+i).
\end{equation}
If the sum is empty, there is nothing to prove. For \(k=0\),
\eqref{eq:fam2-factorial-cancel} gives
\[
\begin{aligned}
\text{left side of \eqref{eq:fam2-summed}}
&\le
C\sum_{q=2}^{i-1}
(1+q)(1+i-q)
\frac{(1+i)^{a+1}}{(1+q)^{a+1}(2+i-q)^{a+1}} \\
&\le
C(1+i)(1+i)^a
\sum_{q=2}^{i-1}\frac{1}{(1+q)^a(2+i-q)^a} \\
&\le C(1+i),
\end{aligned}
\]
where in the last line we used \((1+i)^a\le(2+i)^a\) and the endpoint convolution estimate
\eqref{eq:endpoint-conv-one} with exponent \(b=a\) and convolution parameter
\(m=i+1\).
Now assume \(k\ge1\). Since the product in
\eqref{eq:fam2-factorial-cancel} is at most one, the \(q\)-summand on the left
side of \eqref{eq:fam2-summed} satisfies
\begin{equation}\label{eq:fam2-positive-summand}
\begin{aligned}
&\binom{2i+1}{2q}
\frac{A_{i,k}}{A_{q,0}A_{i-q,k+1}} \\
&\qquad\le
C\,X_{q,k}\,B_{q,k}^{a},
\end{aligned}
\qquad
X_{q,k}:=(1+i-q)\frac{1+i+k}{2+i-q+k},
\quad
B_{q,k}:=\frac{1+i+k}{(1+q)(2+i-q+k)}.
\end{equation}
We simplify
\[
  X_{q,k}
  =(1+i-q)+\frac{(1+i-q)(q-1)}{2+i-q+k}
  \le (1+i-q)+(q-1)
  \le 1+i, \text{ uniformly in }k.
\]
Now, we split the sum into
\(q\le i-q\) and \(q>i-q\). If \(q\le i-q\), then
\[
B_{q,k}
=\frac{1+i+k}{(1+q)(2+i-q+k)}
\le \frac{2}{1+q},
\]
and hence the coefficient sum on this half satisfies (Since $a\geq 3$, the constant below is uniform.)
\[
\sum_{\substack{2\le q\le i-1\\ q\le i-q}}
X_{q,k}B_{q,k}^a
\le (1+i)\sum_{q=2}^{\infty}\left(\frac{2}{1+q}\right)^a
\le C(1+i)\left(\frac{2}{3}\right)^a.
\]
If \(q>i-q\), then for fixed \(q\) and \(i-q\) the quantity
\((1+i+k)/(2+i-q+k)\) is decreasing in \(k\). Combining with \(k\ge1\) and the fact that the restriction \(q>i-q\) gives \(i+2=q+(i-q)+2\le2(q+1)\),
\[
B_{q,k}
\le \frac{i+2}{(q+1)(i-q+3)}\le \frac{2}{i-q+3}
\le \frac{2}{i-q+2}.
\]
Therefore, the
coefficient sum on this half satisfies
\[
\sum_{\substack{2\le q\le i-1\\ q>i-q}}
X_{q,k}B_{q,k}^a
\le (1+i)\sum_{j=1}^{\infty}\left(\frac{2}{j+2}\right)^a
\le C(1+i)\left(\frac{2}{3}\right)^a.
\]
Since \(a=3\), this proves \eqref{eq:fam2-summed}. The range in Family II is
\(2\le q\le i-1\), which is empty unless \(i\ge3\). Hence, if \(k\ge1\),
then \(1+i\le\mathcal D_{i,k}\) by \eqref{eq:Dik}; if \(k=0\), then
\(1+i=\mathcal D_{i,0}\). Applying the Type B computation
\eqref{eq:type-B-ready} gives the Family II bound.
\medskip

\noindent\textbf{Step 3: The ordinary product families.}
Families III and IV are of Type A. For Families III, The cases $k=0$ and $k=1$ are treated differently. For $k=0$ the index range of Families III is then further divided into three subranges, depending on the index $q$, where different combinatorial estimates and weight products are used. 

\smallskip
\noindent\emph{Family III: ordinary product sum \eqref{eq:family-III-aa}.}
This family is of Type A. The range in \eqref{eq:family-III-aa} is
\[
0\le p\le k,\qquad 0\le q\le i,\qquad
q+p\ge2,\qquad i-q+k-p\ge2.
\]
For each summand in
this range, Type A and the factorial-binomial estimate
\eqref{eq:factorial-block}, with
\[
A_1=2q+1,\qquad C_1=2(i-q),\qquad P=p,
\qquad K=k-p,
\]
give the coefficient simplification
\begin{equation}\label{eq:ordinary-aa-coeff}
  \begin{aligned}
&\binom{k}{p}\binom{2i+1}{2q+1}
\frac{A_{i,k}}{A_{q,p}A_{i-q,k-p}}\le
C\rho(s)
\frac{(1+i+k)^{a+1}\bigl(1+i-q+k-p\bigr)}{(1+q+p)^{a+1}\bigl(1+i-q+k-p\bigr)^{a+1}}.
\end{aligned}
\end{equation}
The remaining weight product is the one
from \eqref{eq:type-A-ready}, namely
\(\wh_k\hh_p\hh_{k-p}.\)
We first restrict to the case where $k\geq 1$, in this case, the range \(0\le p\le k\) is the disjoint union of
\[
1\le p\le k-1,\qquad p=0,\qquad p=k,
\]
whereas the case \(k=0\) is treated separately at the end. In the estimates below, we shall repeatedly use the fact that
\(\rho(s)\le1\). 
\smallskip

\emph{Positive--interior subrange.} Assume \(k\ge1\) and restrict to the subrange
\[
1\le p\le k-1,
\qquad 0\le q\le i,
\qquad q+p\ge2,
\qquad i-q+k-p\ge2.
\]
This subrange is empty unless \(k\ge2\).
Using the reduction \eqref{eq:ordinary-aa-coeff}, the quantity to be
bounded is the following coefficient sum:
\begin{equation}\label{eq:ordinary-pp-pure-sum}
\sum_{\substack{1\le p\le k-1,\ 0\le q\le i\\ q+p\ge2,\ i-q+k-p\ge2}}
\frac{(1+i+k)^{a+1}\bigl(1+i-q+k-p\bigr)}{(1+q+p)^{a+1}\bigl(1+i-q+k-p\bigr)^{a+1}}
\le C(1+i+k),
\end{equation}
where \eqref{eq:shell-sum-counted} applies directly. The
weight product becomes
\(  \wh_k\hh_p\hh_{k-p}=\wh_+\hh_+^2=\hh_+.\)
Since this subrange has \(k\ge2\),
\((1+i+k)\hh_+\le C(2i+k)\hh_+\le C\mathcal D_{i,k}.\)
Thus \eqref{eq:type-A-ready}, \eqref{eq:ordinary-aa-coeff}, and
\eqref{eq:ordinary-pp-pure-sum} give the desired bound on the
Positive--interior part of Family III.
\smallskip

\emph{Positive--left-endpoint subrange.} Assume \(k\ge1\) and restrict to
\[
p=0,
\qquad 0\le q\le i,
\qquad q\ge2,
\qquad i-q+k\ge2.
\]
Directly computation on this subrange gives a sharper bound:
\[
\binom{2i+1}{2q+1}
\frac{(2q+1)!(2(i-q)+k+1)!}{(2i+k+1)!}
=(2(i-q)+1)\prod_{h=1}^{2q}
\frac{2(i-q)+1+h}{2(i-q)+k+1+h}
\le C(1+i).
\]
Therefore the quantity to be bounded after the Type A reduction is
\begin{equation}\label{eq:ordinary-zero-positive-first-pure}
(1+i)
\sum_{\substack{0\le q\le i\\ q\ge2,\ i-q+k\ge2}}
\frac{(1+i+k)^{a+1}}{(1+q)^{a+1}(1+i-q+k)^{a+1}}
\le C(1+i),
\end{equation}
{where the first estimate in \eqref{eq:shell-sum-small} is applied. }
The weight product in
\eqref{eq:type-A-ready} is
\( \wh_k\hh_0\hh_k=\wh_+\hh_0\hh_+=\hh_0\le C,\)
since this subrange has
\(q\ge2\), it is empty unless \(i\ge2\). Hence, for \(k\ge1\),
\(\mathcal D_{i,k}\ge 1+i\) by \eqref{eq:Dik}. Combining this with
\eqref{eq:type-A-ready} and
\eqref{eq:ordinary-zero-positive-first-pure} gives the desired bound on
the Positive--left-endpoint part of Family III.
\smallskip

\emph{Positive--right-endpoint subrange.} Assume \(k\ge1\) and restrict to
\[
p=k,
\qquad 0\le q\le i,
\qquad q+k\ge2,
\qquad i-q\ge2.
\]
Again, upon applying \eqref{eq:ordinary-aa-coeff} with
\(p=k\) and
\(C(1+i-q)\le C(1+i)\), the quantity to be bounded after the Type A
reduction is
\begin{equation}\label{eq:ordinary-positive-zero-second-pure}
(1+i)
\sum_{\substack{0\le q\le i\\ i-q\ge2,\ q+k\ge2}}
\frac{(1+i+k)^{a+1}}{(1+q+k)^{a+1}(1+i-q)^{a+1}}
\le C(1+i),
\end{equation}
{where the first estimate in \eqref{eq:shell-sum-small} is applied.}
The weight product in \eqref{eq:type-A-ready} is
\(\wh_k\hh_k\hh_0=\wh_+\hh_+\hh_0=\hh_0\le C.\)
Since this subrange has \(i-q\ge2\), it is empty unless \(i\ge2\). Hence
\(\mathcal D_{i,k}\ge1+i\) by \eqref{eq:Dik}. Combining this with
\eqref{eq:type-A-ready} and
\eqref{eq:ordinary-positive-zero-second-pure} gives the desired bound on
the Positive--right-endpoint of Family III.
\smallskip

\emph{Special zero case.} Finally, assume \(k=0\), then necessarily \(p=0\), and
the Family III range is
\[
2\le q\le i-2.
\]
The weight product in \eqref{eq:type-A-ready} is
\(  \wh_0\hh_0^2=\hh_0\le C.\)
Similarly, upon applying the first estimate in \eqref{eq:shell-sum-small},  the coefficient sum is bounded via 
\begin{equation}\label{eq:ordinary-zero-zero-pure}
(1+i)
\sum_{q=2}^{i-2}
\frac{(1+i)^{a+1}}{(1+q)^{a+1}(1+i-q)^{a+1}}
\le C(1+i).
\end{equation}
Since \(\mathcal D_{i,0}=1+i\), \eqref{eq:type-A-ready} gives the required bound.\par
These three subranges and the two cases above cover all the terms of Family
III, thus Family III is bounded by \(CM^2\mathcal D_{i,k}\).
\medskip
\smallskip

\noindent\emph{Family IV: shifted  product sum \eqref{eq:family-IV-aa}.}
This family is also of Type A . Each summand has the nonlinear product
\[
  F_{q,p}F_{i-q,k-p}.
\]
The Family IV has range \(k\ge1\) and
\(0\le p\le k-1\), so the second factor above is always a positive mode. Applying the shifted factorial-binomial estimate \eqref{eq:shifted-block}
to the Type A coefficient gives
\begin{equation}\label{eq:shifted-aa-coeff}
  \begin{aligned}
&\binom{k}{p+1}\binom{2i+1}{2q}
\frac{A_{i,k}}{A_{q,p}A_{i-q,k-p}}\le
C\rho(s)
\frac{(1+i+k)^{a+1}\bigl(1+i-q+k-p\bigr)}{(1+q+p)^{a+1}\bigl(1+i-q+k-p\bigr)^{a+1}}.
\end{aligned}
\end{equation}
Let us first consider the subrange \(k\ge1\), \(1\le p\le k-1\),
\(0\le q\le i\), with \(q+p\ge2\) and \(i-q+k-p\ge2\). Applying
\eqref{eq:shell-sum-counted} gives
\begin{equation}\label{eq:shifted-pp-pure-sum}
\sum_{\substack{1\le p\le k-1,\ 0\le q\le i\\ q+p\ge2,\ i-q+k-p\ge2}}
\frac{(1+i+k)^{a+1}\bigl(1+i-q+k-p\bigr)}{(1+q+p)^{a+1}\bigl(1+i-q+k-p\bigr)^{a+1}}
\le C(1+i+k).
\end{equation}
Since \(p\ge1\) and \(k-p\ge1\), the weight product in
\eqref{eq:type-A-ready} is
\( \wh_k\hh_p\hh_{k-p}=\wh_+\hh_+^2=\hh_+,\)
and \((1+i+k)\hh_+\le C\mathcal D_{i,k}\). Therefore, this part of Family IV is bounded by
\(CM^2\mathcal D_{i,k}\).
It remains to consider the subrange \(k\ge1\), \(p=0\), \(0\le q\le i\),
with \(q\ge2\) and \(i-q+k\ge2\). The shifted
estimate \eqref{eq:zero-positive-shifted-factor}, applied to this
zero--positive product, gives
\begin{align}
&\sum_{\substack{i\ge q\ge2\\ i-q+k\ge2}}
\binom{k}{1}\binom{2i+1}{2q}
\frac{A_{i,k}}{A_{q,0}A_{i-q,k}} \notag\le
C(1+i)
\sum_{\substack{q\ge2\\ i-q+k\ge2}}
\frac{(1+i+k)^{a+1}}{(1+q)^{a+1}(1+i-q+k)^{a+1}}.\label{eq:shifted-one-positive-reduction}
\end{align}
The right hand side is further bounded above via
\begin{equation}\label{eq:shifted-one-positive-sum}
(1+i)
\sum_{\substack{q\ge2\\ i-q+k\ge2}}
\frac{(1+i+k)^{a+1}}{(1+q)^{a+1}(1+i-q+k)^{a+1}}
\le C(1+i)
\end{equation}
{by the first estimate in \eqref{eq:shell-sum-small}. Since \(p=0\), the
weight product in}
\eqref{eq:type-A-ready} is thus
\(  \wh_k\hh_0\hh_k=\wh_+\hh_0\hh_+=\hh_0\le C.\)
Since \(q\ge2\), it is empty unless \(i\ge2\). Thus,
\eqref{eq:Dik} gives \(1+i\le\mathcal D_{i,k}\), and this part of Family IV
is thus bounded by \(CM^2\mathcal D_{i,k}\).
\medskip

\noindent\textbf{Step 4: The averaged product sum family.}
Family V is of Type C and occurs only for zero mode.

\smallskip

\noindent\emph{Family V: averaged product sum \eqref{eq:family-V-aa}.} This family corresponds to
Type C. In this case \(k=0\), and the range is \(2\le q\le i-2\). Multiplying the Type C quotient by the actual coefficient gives 
\[
\binom{2i+2}{2q+1}
\frac{(2q+1)!(2(i-q)+1)!}{(2i+1)!}=2i+2.
\]
Therefore,
\begin{equation}\label{eq:fam5-zero}
\binom{2i+2}{2q+1}
\frac{A_{i,0}}{A_{q,0}A_{i-q,0}}
\le
C\rho(s)(1+i)
\frac{(1+i)^{a+1}}{(1+q)^{a+1}(1+i-q)^{a+1}}.
\end{equation}
The coefficient sum satisfies
\begin{equation}\label{eq:fam5-zero-pure-sum}
(1+i)
\sum_{q=2}^{i-2}
\frac{(1+i)^{a+1}}{(1+q)^{a+1}(1+i-q)^{a+1}}
\le C(1+i)
\end{equation}
{by the first estimate in \eqref{eq:shell-sum-small}. Since
\(\mathcal D_{i,0}=1+i\), the Type C}
reduction gives the Family V bound.

\end{proof}

\subsubsection{Estimate on interactions with or within shell-one modes}
{The following lemma estimates the complementary part \(N_{i,k}^{\flat}\), whose component contains at least one of \(F_{1,0}\) or \(F_{0,1}\). Recall that by Corollary  \ref{lem:shell-one-asymptotic-bounds}, for each $ s_1\geq s^*_1(\Lambda)$, $B_1=B_1(s_1,\Lambda)$ is defined in \eqref{eq:Bone-def} and thus for $s\geq s_1, 0\leq z\leq s$,
\begin{equation}\label{eq:available-base-pointwise-recall}
  |F_{1,0}(s,z)|\le B_1\hh_0(s,z),
  \qquad
  |F_{0,1}(s,z)|\le B_1\hh_+(s,z).
\end{equation}}
\begin{lemma}[Uniform coefficients for \(N_{i,k}^{\flat}\)]\label{lem:shell-one-smallness}
{Let \(\Lambda\ge1\), \(a=3\), \(0<\rho_0\le1\),
\(s_0\ge s_1\geq s^*_1(\Lambda)\) (defined in \eqref{eq:s1-requirements}) and $B_1(s_1,\Lambda)$ be defined in Corollary \ref{lem:shell-one-asymptotic-bounds}.  Let \(i,k\ge0\) be integers with \(i+k\ge2\).  Assume that
every factor in \(N_{i,k}^{\flat}\) is available in
the sense of either \eqref{eq:available-active-normalized} or
\eqref{eq:available-base-pointwise-recall}. Then,
on \(s\ge s_0\), \(0\le z\le s\),
\begin{equation}\label{eq:shell-one-sum-bound}
  A_{i,k}\wh_k|N_{i,k}^{\flat}|
  \le C\rho_0(B_1M+B_1^2)\mathcal D_{i,k},
\end{equation}
with a universal constant \(C\).}
\end{lemma}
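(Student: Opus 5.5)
The plan mirrors the proof of Lemma~\ref{lem:mixed-combinatorics}: I will bound each of the five families $\mathrm I^\flat_{i,k},\dots,\mathrm V^\flat_{i,k}$ separately and reuse the three normalized templates (Types A, B, C). The one change is that a shell-one factor has no normalizing coefficient $A_{1,0}$ or $A_{0,1}$. Its size is measured instead by $B_1$ through \eqref{eq:available-base-pointwise-recall}, and the normalization quotient becomes $A_{i,k}/A_{j,\ell}$ with $(j,\ell)$ the active partner. If both factors are shell-one, which happens only for $i+k=2$, the quotient is just $A_{i,k}$.

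The mechanism giving the factor $\rho_0$ is the following. Suppose $F_{q,p}$ with $q+p=1$ multiplies an active $F_{j,\ell}$, where $j=i-q$ and $\ell=k-p$ (or the shifted index in the primitive families). Then
\[
\frac{A_{i,k}}{A_{j,\ell}}=\frac{(1+i+k)^{a+1}}{(1+j+\ell)^{a+1}}\,\rho^{(2i+k)-(2j+\ell)}\,\frac{(2j+\ell+1)!}{(2i+k+1)!}.
\]
Here $(2i+k)-(2j+\ell)\in\{1,2\}$ in the ordinary families III, IV and V, so one gets $\rho^{1\text{ or }2}\le\rho_0$ because $\rho\le\rho_0\le1$. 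In the primitive families I and II, the exponent loss is $2$ from the primitive of $F_{1,0}$, offset by one $z$-derivative on the partner $F_{i-1,k+1}$, so the net exponent is $1$. The polynomial ratio is bounded by $2^{a+1}$, since $j+\ell\ge i+k-1\ge\frac12(1+i+k)$. The factorial ratio times the binomial coefficient is of size at most $C(1+i+k)$. For example, for $\mathrm{III}^\flat$ with $q=1,p=0$ one gets $\binom{2i+1}{3}\frac{(2i+k-1)!}{(2i+k+1)!}\lesssim 1+i$, and for $p=1,q=0$ one gets $k(2i+1)\frac{(2i+k)!}{(2i+k+1)!}\lesssim 1+i+k$; Family V gives exactly $2i+2$ as before. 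If both factors are shell-one, $A_{i,k}\le\rho_0$ directly.

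The weights are handled with Corollary~\ref{lem:mixed-weight-estimates} together with \eqref{eq:hplus-hzero-large-s}, exactly as in the $\sharp$ case, producing $\hh_+$, $\hh_0\le C$, or $1$.

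The step needing care, and the main obstacle, is matching the factor $(1+i+k)$ against $\mathcal D_{i,k}$ when $i=0$ or $i=1$, because there $\mathcal D_{i,k}$ may be as small as $k\hh_++A/\hh_+$. For $i=0$, the only terms are $\mathrm{III}^\flat$ and $\mathrm{IV}^\flat$ with factors $F_{0,1}F_{0,k-1}$, both positive modes when $k\ge2$. There the weight product is $\wh_+\hh_+^2=\hh_+$, so the term is $\lesssim k\hh_+\le\mathcal D_{0,k}$. The case $k-1=0$ cannot occur, since $(0,0)$ is excluded. The same positive--positive product structure governs the subranges with $p\ge1$ and $k-p\ge1$. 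Whenever a zero mode appears (a factor $F_{1,0}$ or $F_{j,0}$), one has $i\ge1$, so $\mathcal D_{i,k}\ge1+i$, and I need the coefficient to be $\lesssim 1+i$ rather than $1+i+k$. For this I would use the refined product identity, as in the positive--left-endpoint subrange of Lemma~\ref{lem:mixed-combinatorics}. For the primitive term $\mathrm I^\flat$ (with $i=1$), the weight bound $\wh_+|\pzi F_{1,0}F_{0,k+1}|\le 2B_1M$ together with $3A_{1,k}/A_{0,k+1}\lesssim\rho\,\frac{(k+2)!}{(k+3)!}\cdot(\text{poly})\lesssim\rho_0$ suffices, since $\mathcal D_{1,k}\ge2$.

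Summing over the finitely many terms in each family (at most $O(1)$ summands, each of the form above), I obtain $A_{i,k}\wh_k|N^\flat_{i,k}|\le C\rho_0(B_1M+B_1^2)\mathcal D_{i,k}$. The constant is universal because $B_1$ enters only linearly per shell-one factor, and all combinatorial ratios were bounded uniformly in $(i,k)$.
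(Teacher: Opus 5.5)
Your proposal is correct and follows essentially the paper's route: you bound each family separately through the quotient $A_{i,k}/A_{j,\ell}$ against the unnormalized shell-one factor, reduce the weight products to $h_+$, $h_0\le 2$, or $1$, and impose the key requirement that any term carrying a zero-mode weight $h_0$ have coefficient $\lesssim 1+i$ rather than $1+i+k$ (this is exactly the paper's $k=1$ versus $k\ge2$ split in cases (a)--(b)). Only harmless constants are off: Family $\mathrm V^\flat$ gives $\tfrac{2i+2}{6}\rho^2$ rather than $2i+2$, and $A_{0,2}=\tfrac{27}{2}\rho$, so the shell-one--shell-one terms satisfy $A_{i,k}\le C\rho_0$ rather than $A_{i,k}\le\rho_0$.
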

\begin{proof}
\medskip
\noindent\textbf{Step 1: Reusable weight bounds.}
As usual, we notice that
\(\wh_0\le\wh_+\), which follows from \eqref{eq:s1-requirements}, as
\(s_0\ge s_1\).
We will repeatedly use the following immediate consequences of
\eqref{eq:Dik}:
\begin{equation}\label{eq:shell-one-D-lower-bounds}
  \begin{aligned}
\mathcal D_{i,0}&=1+i &&(i\ge2),\\
\mathcal D_{i,k}&\ge1+i,
&\qquad \mathcal D_{i,k}&\ge(2i+k)\hh_+
&& (i\ge1,\ k\ge1),\\
\mathcal D_{i,k}&\ge k\hh_+ &&(i+k\ge2,\ k\ge1).
\end{aligned}
\end{equation}
In particular, for \(k\ge1\), the last inequality gives the following weight-scale
absorptions
\begin{equation}
\begin{split}
      &\hh_+\le \frac{1}k\mathcal D_{i,k}\le\mathcal D_{i,k}\quad k\geq1,\, i+k\geq2\\
     &1\leq \mathcal D_{i,k}\quad (k=0\text{ or }i\geq 1),\, i+k\geq2
\end{split}
\end{equation}
Moreover,
\(s\ge s_0\ge s_1\ge e\) gives
\begin{equation}\label{eq:hzero-bounded}
  \hh_0=e^{-z}+s^{-1}\le2.
\end{equation}
\medskip
\noindent\textbf{Step 2: Reusable reductions.}
Let \(i,k\ge0\) and \(i+k\ge2\), \(\mathfrak c\) be a concrete scalar
coefficient of the current summand and $F_{j,0},F_{j,l}$ satisfy the hypothesis \eqref{eq:available-active-normalized}. Combining \eqref{eq:pzi-zero-positive-product}, \eqref{eq:zero-avg-product}, and \(\wh_0\le\wh_+\), one can derive the following implications.

\begin{enumerate}
    \item If a primitive shell-one term contains \(\pzi F_{1,0}\) and one active
positive factor \(F_{j,\ell}\), then
\begin{equation}\label{eq:shell-pziF10-ready}
\left[
|\mathfrak c|\frac{A_{i,k}}{A_{j,\ell}}
\le C\rho_0\mathcal D_{i,k}
\right]
\Longrightarrow
A_{i,k}\wh_k|\mathfrak c (\pzi F_{1,0})F_{j,\ell}|
\le C\rho_0 B_1M\mathcal D_{i,k}.
\end{equation}
\item If a product contains \(F_{1,0}\) and one active factor ($j+l\geq 2$)
\(F_{j,\ell}\), then
\begin{equation}\label{eq:shell-F10-ready}
\left[
|\mathfrak c|\frac{A_{i,k}}{A_{j,\ell}}\wh_k\hh_0\hh_\ell
\le C\rho_0\mathcal D_{i,k}
\right]
\Longrightarrow
A_{i,k}\wh_k|\mathfrak c F_{1,0}F_{j,\ell}|
\le C\rho_0 B_1M\mathcal D_{i,k}.
\end{equation}
\item  If a  product contains \(F_{0,1}\) and one active factor
\(F_{j,\ell}\), then
\begin{equation}\label{eq:shell-F01-ready}
\left[
|\mathfrak c|\frac{A_{i,k}}{A_{j,\ell}}\wh_k\hh_+\hh_\ell
\le C\rho_0\mathcal D_{i,k}
\right]
\Longrightarrow
A_{i,k}\wh_k|\mathfrak c F_{0,1}F_{j,\ell}|
\le C\rho_0 B_1M\mathcal D_{i,k}.
\end{equation}


\item If an averaged  term contains \(F_{1,0}\) and active
zero mode \(F_{j,0}\), then
\begin{equation}\label{eq:shell-average-F10-ready}
\left[
|\mathfrak c|\frac{A_{i,0}}{A_{j,0}}
\le C\rho_0\mathcal D_{i,0}
\right]
\Longrightarrow
A_{i,0}\wh_0
\left|
\mathfrak c\,s^{-1}\int_0^s F_{1,0}F_{j,0}\,\dd z
\right|
\le C\rho_0B_1M\mathcal D_{i,0}.
\end{equation}
\end{enumerate}

\begingroup
Similarly, upon using \eqref{eq:available-base-pointwise-recall}, for any fixed
coefficient \(\mathfrak c\), the following implications hold among shell-one self-interaction:
\begin{equation}\label{eq:finite-F10F10-ready1}
\left[
|\mathfrak c|A_{i,k}\wh_k\hh_0^2
\le C\rho_0\mathcal D_{i,k}
\right]
\Longrightarrow
A_{i,k}\wh_k|\mathfrak c F_{1,0}^2|
\le C\rho_0B_1^2\mathcal D_{i,k},
\end{equation}
\begin{equation}\label{eq:finite-F10F01-ready2}
\left[
|\mathfrak c|A_{i,k}\wh_k\hh_0\hh_+
\le C\rho_0\mathcal D_{i,k}
\right]
\Longrightarrow
A_{i,k}\wh_k|\mathfrak c F_{1,0}F_{0,1}|
\le C\rho_0B_1^2\mathcal D_{i,k},
\end{equation}
\begin{equation}\label{eq:finite-F01F01-ready3}
\left[
|\mathfrak c|A_{i,k}\wh_k\hh_+^2
\le C\rho_0\mathcal D_{i,k}
\right]
\Longrightarrow
A_{i,k}\wh_k|\mathfrak c F_{0,1}^2|
\le C\rho_0B_1^2\mathcal D_{i,k}.
\end{equation}
For the averaged product (necessarily \((i,k)=(2,0)\)),
\begin{equation}\label{eq:finite-average-F10F10-ready4}
\left[
|\mathfrak c|A_{i,0}
\le C\rho_0\mathcal D_{i,0}
\right]
\Longrightarrow
A_{i,0}\wh_0
\left|
\mathfrak c\,s^{-1}\int_0^sF_{1,0}^2\,\dd z
\right|
\le C\rho_0B_1^2\mathcal D_{i,0}.
\end{equation}
\endgroup

\medskip
\noindent\textbf{Step 3: Shell-one-Non-shell-one interaction.}
 In this step, we only focus on the cases where exactly one of the modes in the product is shell-one ($F_{1,0}$ or $F_{0,1}$), and the other one is thus active. The case where both modes are of shell-one is left in Step 4.

\smallskip
\noindent\emph{Family \(\mathrm{I}^{\flat}\): primitive endpoint term \eqref{eq:family-I-flat}.}
This family occurs only for \(i=1\), \(k\ge1\):
\[
(2i+1)(\pzi F_{i,0})F_{0,k+1}
=3(\pzi F_{1,0})F_{0,k+1}.
\]
From \eqref{eq:Aik-def}, we compute
\begin{equation}\label{eq:fam1-F10-shell}
\begin{aligned}
3\frac{A_{1,k}}{A_{0,k+1}}
&=3
\frac{(k+2)^{a+1}\rho(s)^{k+1}}{(k+3)!}
\frac{(k+2)!}{(k+2)^{a+1}\rho(s)^k}=\frac{3\rho(s)}{k+3}
\le \rho(s)
\le \rho_0.
\end{aligned}
\end{equation}
As \(\mathcal D_{1,k}\ge1\), the hypothesis in \eqref{eq:shell-pziF10-ready} thus holds. Therefore,
\[
A_{1,k}\wh_k|\mathrm I^{\flat}_{1,k}|
\le C\rho_0B_1M\mathcal D_{1,k}.
\]
\medskip
\smallskip
\noindent\emph{Family \(\mathrm{II}^{\flat}\): interior primitive sum \eqref{eq:family-II-flat}.}
This case satisfies $i\geq 2$, the coefficient ratio is
\begin{equation}\label{eq:q-one-primitive}
\begin{aligned}
\binom{2i+1}{2}\frac{A_{i,k}}{A_{i-1,k+1}}
&=\frac{i(2i+1)}{2i+k+1}\rho(s)\le i\rho(s)
\le \rho_0(1+i).
\end{aligned}
\end{equation}
In view of the reduction \eqref{eq:shell-pziF10-ready}, since \(1+i\le\mathcal D_{i,k}\) for both \(k=0\) and
\(k\ge1\),
\[
A_{i,k}\wh_k|\mathrm{II}^{\flat}_{i,k}|
\le C\rho_0B_1M\mathcal D_{i,k}.
\]
\medskip
\smallskip
\noindent\emph{Family \(\mathrm{III}^{\flat}\): ordinary product sum \eqref{eq:family-III-flat}.}
   The restriction in \eqref{eq:family-III-flat} decomposes the sum as:
\[
(q,p)=(1,0),\quad(0,1),\quad(i-1,k),\quad(i,k-1).
\]
These four possibilities place \(F_{1,0}\) or \(F_{0,1}\)
in either factor. In each case, one of the factors has index sum
\((i+k)-1.\)
If \(i+k=2\), that other factor is also shell one; these terms are listed in
\eqref{eq:finite-two-shell-one-list} and estimated in the next step.\par
Assuming \(i+k\ge3\), we observe that
 \[
  \left(\frac{1+i+k}{i+k}\right)^{a+1}
  \le \left(\frac{4}{3}\right)^{a+1},
  \qquad i+k\ge3,
 \]
 and this fixed factor is absorbed into \(C\). Cases (a) and (b) below
verify the hypothesis of \eqref{eq:shell-F01-ready}, whereas cases (c) and
(d) verify the hypothesis of \eqref{eq:shell-F10-ready}.

\noindent\emph{(a) First factor is \(F_{0,1}\).}
This is the case where \((q,p)=(0,1)\), then necessarily \(k\ge1\) and the second factor is
\(F_{i,k-1}\), which has shell number \(i+k-1\). Since \(i+k\ge3\), \(F_{i,k-1}\) is active and
 \begin{equation}\label{eq:ordinary-F01-first}
\begin{aligned}
k\binom{2i+1}{1}\frac{A_{i,k}}{A_{i,k-1}}
&=k(2i+1)\rho(s)
\left(\frac{1+i+k}{i+k}\right)^{a+1}
\frac{1}{2i+k+1}\le C\rho_0 k.
\end{aligned}
\end{equation}
If \(k\ge2\), then the weight product is
\(\wh_k\hh_+\hh_{k-1}
  =\wh_+\hh_+^2
  =\hh_+.\)
Combining this identity with \eqref{eq:ordinary-F01-first} gives the factor
\(C\rho_0 k\hh_+\), which is bounded by
\(C\rho_0\mathcal D_{i,k}\) by
\eqref{eq:shell-one-D-lower-bounds}. If \(k=1\), the active condition
\(i+k\ge3\) gives \(i\ge2\), and the companion factor is the zero mode \(F_{i,0}\). In this case the weight product becomes
   \(  \wh_+\hh_+\hh_0=\hh_0\le2 \le 1+i\le D_{i,1}.\)
The left hand side of \eqref{eq:ordinary-F01-first} is then bounded by
\(C\rho_0\). Both subcases give the estimate
\[
  A_{i,k}\wh_k
  \bigl|k(2i+1)F_{0,1}F_{i,k-1}\bigr|
  \le C\rho_0B_1M\mathcal D_{i,k}.
\]
Notice that if \(i+k=2\), the possible
inputs are exactly \( (i,k)=(0,2)\) and \((1,1)\); the corresponding terms are
listed in \eqref{eq:finite-two-shell-one-list}. 

\noindent\emph{(b) Second factor is \(F_{0,1}\).}
   This is the case where \((i-q,k-p)=(0,1)\). Thus, \((q,p)=(i,k-1)\) the first factor is
\(F_{i,k-1}\) with $k\geq 1$ necessarily. As \(i+k\ge3\), the coefficient ratio is
 \begin{equation}\label{eq:ordinary-F01-second}
\begin{aligned}
k\binom{2i+1}{2i+1}\frac{A_{i,k}}{A_{i,k-1}}
&=k\rho(s)
\left(\frac{1+i+k}{i+k}\right)^{a+1}
\frac{1}{2i+k+1}\le C\rho_0.
\end{aligned}
\end{equation}
If \(k\ge2\geq 1\), the weight product is
is
\(  \wh_k\hh_{k-1}\hh_+
  =\wh_+\hh_+^2
  =\hh_+ \leq  k\hh_+\leq D_{i,k}.\)
If \(k=1\), then \(i+k\ge3\) gives
 \(i\ge2\), and
\(  \wh_+\hh_0\hh_+=\hh_0\le2 \leq 1+i\, \mathcal \le D_{i,1}.\)
Therefore, for both subcases
\[
  A_{i,k}\wh_k
  \bigl|kF_{i,k-1}F_{0,1}\bigr|
  \le C\rho_0B_1M\mathcal D_{i,k}.
\]
Again, when \(i+k=2\), the inputs are exactly \( (i,k)=(0,2)\) and \((1,1)\);
the corresponding terms are listed in
\eqref{eq:finite-two-shell-one-list} and estimated in the next step.

\noindent\emph{(c) First factor is \(F_{1,0}\).}
This is the case where \((q,p)=(1,0)\), so necessarily \(i\ge1\), and the second factor is
\(F_{i-1,k}\), with shell number \(i+k-1\). Since \(i+k\ge3\), 
\begin{equation}\label{eq:ordinary-F10-first}
\begin{aligned}
\binom{2i+1}{3}\frac{A_{i,k}}{A_{i-1,k}}
&=\binom{2i+1}{3}\rho(s)^2
\left(\frac{1+i+k}{i+k}\right)^{a+1}
\frac{1}{(2i+k)(2i+k+1)}\le C\rho_0^2(1+i)
\le C\rho_0(1+i).
\end{aligned}
\end{equation}
If \(k\ge1\), the
weight product is
\(  \wh_k\hh_0\hh_k
  =\wh_+\hh_0\hh_+
  =\hh_0\le2.\)
If \(k=0\), the weight product becomes
\(  \wh_0\hh_0^2=\hh_0\le2.\)
Since \( 1+i\le\mathcal D_{i,k}\), for case (c), it satisfies
\[
  A_{i,k}\wh_k
  \left|\binom{2i+1}{3}F_{1,0}F_{i-1,k}\right|
  \le \rho_0B_1M\mathcal D_{i,k}.
\]
When \(i+k=2\), the possible
inputs are exactly \((i,k)=(1,1)\) and \((2,0)\); the corresponding terms are
listed in \eqref{eq:finite-two-shell-one-list}.

\noindent\emph{(d) Second factor is \(F_{1,0}\).}
 This is the case where \((i-q,k-p)=(1,0)\), so \((q,p)=(i-1,k)\), $i\geq 1$, and the first factor is
\(F_{i-1,k}\). Since \(i+k\ge3\), 
 \begin{equation}\label{eq:ordinary-F10-second}
\begin{aligned}
\binom{2i+1}{2i-1}\frac{A_{i,k}}{A_{i-1,k}}
&=\binom{2i+1}{2}\rho(s)^2
\left(\frac{1+i+k}{i+k}\right)^{a+1}
\frac{1}{(2i+k)(2i+k+1)}\le \rho_0^2
\le \rho_0.
\end{aligned}
\end{equation}
If \(k\ge1\), the first factor is
positive and
\(  \wh_k\hh_k\hh_0
  =\wh_+\hh_+\hh_0
  =\hh_0\le2.\)
If \(k=0\), then
\(  \wh_0\hh_0^2=\hh_0\le2.\)
Using \(\mathcal D_{i,k}\ge1\), we see in case (d),
\[
  A_{i,k}\wh_k
  \left|\binom{2i+1}{2i-1}F_{i-1,k}F_{1,0}\right|
  \le \rho_0B_1M\mathcal D_{i,k}.
\]
When \(i+k=2\), the possible inputs are exactly
 \((i,k)=(1,1)\) and \((2,0)\); the corresponding terms are listed in
\eqref{eq:finite-two-shell-one-list} and estimated in the next step.

The terms argued in (a)--(d) therefore contribute
\(\rho_0B_1M\mathcal D_{i,k}.\)
On the other hand, the corresponding terms listed in \eqref{eq:finite-two-shell-one-list} (shell-one self-interaction terms) are argued in Step 4, where we show they contribute at most \(\rho_0B_1^2\mathcal D_{i,k}\). Therefore, altogether
\[
A_{i,k}\wh_k|\mathrm{III}^{\flat}_{i,k}|
\le \rho_0(B_1M+B_1^2)\mathcal D_{i,k}.
\]

\medskip
\smallskip
\noindent\emph{Family \(\mathrm{IV}^{\flat}\): shifted product sum \eqref{eq:family-IV-flat}.}
In this case, necessarily we have \(k\ge1\), and the second factor always has positive second index
\(k-p\ge1\). The restriction in \eqref{eq:family-IV-flat}
yields the following possibilities
\[
(q,p)=(1,0),\quad(0,1),\quad(i,k-1).
\]
These possibilities place
\(F_{0,1}\) in one of the two factors or \(F_{1,0}\) in the first factor.
In either case, the other factor has index sum \(i+k-1\). If \(i+k=2\), both factors are of shell one; the resulting
terms are listed in \eqref{eq:finite-two-shell-one-list} and estimated
in the next step. \par
We now assume \(i+k\ge3\), below, cases (a) and (b) below
verify the hypothesis of \eqref{eq:shell-F01-ready}, while case (c)
verifies the hypothesis of \eqref{eq:shell-F10-ready}.
\medskip

\noindent\emph{(a) First factor is \(F_{0,1}\).}
This case corresponds to \((q,p)=(0,1)\), and the second factor is
\(F_{i,k-1}\). Thus, necessarily \(k\ge2\). Since \(i+k\ge3\),
 \begin{equation}\label{eq:shifted-F01-first}
\begin{aligned}
\binom{k}{2}\frac{A_{i,k}}{A_{i,k-1}}
&=\binom{k}{2}\rho(s)
\left(\frac{1+i+k}{i+k}\right)^{a+1}
\frac{1}{2i+k+1}\le \rho_0 k.
\end{aligned}
\end{equation}
The weight product becomes
\(  \wh_k\hh_+\hh_{k-1}
  =\wh_+\hh_+^2
  =\hh_+.\)
Therefore, by Step 1,
\(C\rho_0 k\hh_+\le C\rho_0\mathcal D_{i,k}\), which implies
\[
  A_{i,k}\wh_k
  \left|\binom{k}{2}F_{0,1}F_{i,k-1}\right|
  \le C\rho_0B_1M\mathcal D_{i,k}.
\]
Notice that in this subcase, the condition \(i+k=2\) forces the unique input \((i,k)=(0,2)\),
whose term is listed in \eqref{eq:finite-two-shell-one-list} and estimated
in the next step. 

\noindent\emph{(b) Second factor is \(F_{0,1}\).}
In this case, \((i-q,k-p)=(0,1)\), \((q,p)=(i,k-1)\), and the first factor is thus
\(F_{i,k-1}\). Since \(i+k\ge3\), we have
 \begin{equation}\label{eq:shifted-F01-second}
\begin{aligned}
\binom{k}{k}\binom{2i+1}{2i}\frac{A_{i,k}}{A_{i,k-1}}
&=(2i+1)\rho(s)
\left(\frac{1+i+k}{i+k}\right)^{a+1}
\frac{1}{2i+k+1}\le C\rho_0.
\end{aligned}
\end{equation}
If \(k\ge2\), the first factor is positive and the weight product
is
\(  \wh_k\hh_{k-1}\hh_+
  =\wh_+\hh_+^2
  =\hh_+\leq k\hh_+ \leq \mathcal  D_{i,k}.\) On the other hand,
if \(k=1\), then \(i+k\ge3\) forces
\(i\ge2\), and the weight product becomes
   \(  \wh_+\hh_0\hh_+=\hh_0\le2 \), while \(1+i\le \mathcal D_{i,1}.\)
In either case, we have
\[
  A_{i,k}\wh_k
  \bigl|(2i+1)F_{i,k-1}F_{0,1}\bigr|
  \le C\rho_0B_1M\mathcal D_{i,k}.
\]
Again, in this subcase, if \(i+k=2\), the inputs are exactly \((i,k)=(0,2)\) and \((1,1)\); the
corresponding terms are listed in \eqref{eq:finite-two-shell-one-list}
and estimated in the next step. 

\noindent\emph{(c) First factor \(F_{1,0}\).}
In this case \((q,p)=(1,0)\) and the second factor is
\(F_{i-1,k}\). Necessarily, we have \(i\ge1\). Since
\(i+k\ge3\), 
 \begin{equation}\label{eq:shifted-F10-first}
\begin{aligned}
k\binom{2i+1}{2}\frac{A_{i,k}}{A_{i-1,k}}
&=ki(2i+1)\rho(s)^2
\left(\frac{1+i+k}{i+k}\right)^{a+1}
\frac{1}{(2i+k)(2i+k+1)}\\
&\le C\rho_0^2(1+i)
\le C\rho_0(1+i).
\end{aligned}
\end{equation}
The weight product is
\( \wh_+\hh_0\hh_+
  =\hh_0\le2\) and \(1+i\le\mathcal D_{i,k}\), the exact case (c) term satisfies
\[
  A_{i,k}\wh_k
  \left|k\binom{2i+1}{2}F_{1,0}F_{i-1,k}\right|
  \le C\rho_0B_1M\mathcal D_{i,k}.
\]
In this subcase, the condition \(i+k=2\) gives the unique input \((i,k)=(1,1)\), the corresponding term is
listed in \eqref{eq:finite-two-shell-one-list}.\par 
The terms argued in (a)--(c) therefore contribute at most
\(C\rho_0B_1M\mathcal D_{i,k}\), while we will show the terms listed in \eqref{eq:finite-two-shell-one-list} contribute at most \(C\rho_0B_1^2\mathcal D_{i,k}\) by the estimates in Step 4.
Altogether,
\[
A_{i,k}\wh_k|\mathrm{IV}^{\flat}_{i,k}|
\le C\rho_0(B_1M+B_1^2)\mathcal D_{i,k}.
\]
\medskip
\smallskip
\noindent\emph{Family \(\mathrm{V}^{\flat}\): averaged product sum \eqref{eq:family-V-flat}.}
In this case, \(k=0\). If \(i\ge3\), the only possible indices in the summation are \(q=1\) and
\(q=i-1\), corresponding to the companion factor
\(F_{i-1,0}, F_{1,0}\) respectively. Using weight reduction, we compute that for $i\geq 3$
\begin{equation}\label{eq:avg-F10-endpoint}
\begin{aligned}
\binom{2i+2}{3}\frac{A_{i,0}}{A_{i-1,0}}=\frac{2i+2}{6}\rho(s)^2
\left(\frac{i+1}{i}\right)^{a+1}\le C\rho_0^2(1+i)
\le C\rho_0\mathcal D_{i,0}.
\end{aligned}
\end{equation}
   Therefore, the reduction \eqref{eq:shell-average-F10-ready} applies, and by symmetry, each endpoint is bounded by
 \[
 C\rho_0B_1M\mathcal D_{i,0}.
\]
On the other hand, for \(i=2\), the unique summand is the averaged product:
\(F_{1,0}F_{1,0}\), which is the Family \(\mathrm V^{\flat}_{2,0}\) term
listed in \eqref{eq:finite-two-shell-one-list} and we will show it contributes at most \(C\rho_0B_1^2\mathcal D_{i,k}\) in Step 4. Altogether, 
\[
A_{i,k}\wh_k|\mathrm{V}^{\flat}_{i,k}|
\le C\rho_0(B_1M+B_1^2)\mathcal D_{i,k}.
\]

\medskip
\noindent\textbf{Step 4: {Inter-Shell-one interaction}.}
The preceding arguments leave exactly the following six 
contributions, resulting from interactions between shell-one modes:
\begin{equation}\label{eq:finite-two-shell-one-list}
\begin{aligned}
\mathrm{III}^{\flat}_{0,2}&=-2F_{0,1}^2,
&\qquad \mathrm{IV}^{\flat}_{0,2}&=F_{0,1}^2,\\
\mathrm{III}^{\flat}_{1,1}&=-4F_{1,0}F_{0,1},
&\mathrm{IV}^{\flat}_{1,1}&=3F_{1,0}F_{0,1},\\
\mathrm{III}^{\flat}_{2,0}&=-10F_{1,0}^2,
&\mathrm{V}^{\flat}_{2,0}&=20s^{-1}\int_0^sF_{1,0}^2\,\dd z.
\end{aligned}
\end{equation}
Thus the list contains every two-shell-one summand, with no overlap or
omission.
\begingroup
One can explicitly compute that
\begin{equation}\label{eq:finite-output-A-values}
A_{0,2}=\frac{3^{a+1}}{3!}\rho(s),
\qquad
A_{1,1}=\frac{3^{a+1}}{4!}\rho(s)^2,
\qquad
A_{2,0}=\frac{3^{a+1}}{5!}\rho(s)^3.
\end{equation}
The hypotheses \eqref{eq:finite-F10F10-ready1}-\eqref{eq:finite-average-F10F10-ready4} in Step 1 are therefore satisfied. Indeed, \eqref{eq:finite-two-shell-one-list} written in the same order as the six terms has the form of
\begin{equation}\label{eq:finite-two-shell-one-coefficient-checks}
\begin{aligned}
2A_{0,2}\wh_+\hh_+^2
&=2A_{0,2}\hh_+
\le C\rho_0\mathcal D_{0,2},\\
A_{0,2}\wh_+\hh_+^2
&=A_{0,2}\hh_+
\le C\rho_0\mathcal D_{0,2},\\
4A_{1,1}\wh_+\hh_0\hh_+
&=4A_{1,1}\hh_0
\le C\rho_0\mathcal D_{1,1},\\
3A_{1,1}\wh_+\hh_0\hh_+
&=3A_{1,1}\hh_0
\le C\rho_0\mathcal D_{1,1},\\
10A_{2,0}\wh_0\hh_0^2
&=10A_{2,0}\hh_0
\le C\rho_0\mathcal D_{2,0},\\
20A_{2,0}
&\le C\rho_0\mathcal D_{2,0},
\end{aligned}
\end{equation}
where we have used \eqref{eq:shell-one-D-lower-bounds} and $\rho(s)\leq \rho_0\leq 1$. Hence, each contribution resulting from shell-one self-interaction is
bounded by \(C\rho_0B_1^2\mathcal D_{i,k}\).
\endgroup

\medskip
\noindent\textbf{Step 5: summation of the family bounds.}
Summarizing the the estimates in Step 3, we obtain
\begin{equation}\label{eq:shell-one-family-bound}
A_{i,k}\wh_k|\mathrm I^{\flat}_{i,k}|,
\quad\ldots,\quad
A_{i,k}\wh_k|\mathrm V^{\flat}_{i,k}|
\le C\rho_0(B_1M+B_1^2)\mathcal D_{i,k},
\end{equation}
upon summing and using the definition of \(N_{i,k}^{\flat}\) in
\eqref{eq:full-source-family-decomposition} proves
\eqref{eq:shell-one-sum-bound}.
\end{proof}

\subsubsection{Conclusion}
\begin{corollary} [Weighted full-source bound]\label{lem:source-bound}
{Let \(\Lambda\ge1\), \(a=3\), \(0<\rho_0\le1\),
\(s_0\ge s_1\geq s_{\text{ent}}(\Lambda)\), $B_1(s_1,\Lambda)$ be defined in Corollary \ref{lem:shell-one-asymptotic-bounds}.  Let \(i,k\ge0\) be integers with \(i+k\ge2\).
Assume every factor $F_{j,l}$ inside $S_{i,k}$ satisfies either \eqref{eq:available-active-pointwise} or
\eqref{eq:available-base-pointwise}, depending whether $j+l\geq 2$ or $j+l=1$.  Then, for every \(s\ge s_0\) and
\(0\le z\le s\),

\begin{equation}\label{eq:source-bound}
  {|S_{i,k}(s,z)|
  \lesssim \Bigl[ M^2
     +\rho_0(B_1M+B_1^2)
  \Bigr]\mathcal D_{i,k}(s,z)}.
\end{equation}}
\end{corollary}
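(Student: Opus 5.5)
The plan is to assemble Corollary~\ref{lem:source-bound} directly from the two preceding lemmas, using the definition $S_{i,k}=A_{i,k}\wh_kN_{i,k}$ from \eqref{eq:normalized-source-def} and the exhaustive disjoint splitting $N_{i,k}=N_{i,k}^{\sharp}+N_{i,k}^{\flat}$ of \eqref{eq:full-source-family-decomposition}. By the triangle inequality,
\[
|S_{i,k}|\le A_{i,k}\wh_k|N_{i,k}^{\sharp}|+A_{i,k}\wh_k|N_{i,k}^{\flat}|,
\]
so it suffices to bound each piece by a multiple of $\mathcal D_{i,k}$.

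First I will check that the hypotheses of the two lemmas are met. Every factor in $N^{\sharp}_{i,k}$ has index sum at least two, and the assumption \eqref{eq:available-active-pointwise} is exactly the normalized availability \eqref{eq:available-active-normalized}, after multiplying through by $A_{j,\ell}\wh_\ell$. Every factor in $N^{\flat}_{i,k}$ is either active, so \eqref{eq:available-active-normalized} applies, or one of $F_{1,0},F_{0,1}$, which satisfy \eqref{eq:available-base-pointwise}. That bound is \eqref{eq:available-base-pointwise-recall}, valid for $s\ge s_1$ by Corollary~\ref{lem:shell-one-asymptotic-bounds} and the definition \eqref{eq:Bone-def} of $B_1$. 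The parameter ranges $\Lambda\ge1$, $a=3$, $0<\rho_0\le1$, $s_0\ge s_1\ge s_{\text{ent}}(\Lambda)$ coincide with those of Lemmas~\ref{lem:mixed-combinatorics} and~\ref{lem:shell-one-smallness}; the threshold $s_1^*(\Lambda)$ in the latter is the same entrance threshold.

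Lemma~\ref{lem:mixed-combinatorics} then gives $A_{i,k}\wh_k|N^{\sharp}_{i,k}|\le CM^2\mathcal D_{i,k}$, and Lemma~\ref{lem:shell-one-smallness} gives $A_{i,k}\wh_k|N^{\flat}_{i,k}|\le C\rho_0(B_1M+B_1^2)\mathcal D_{i,k}$. Summing these yields \eqref{eq:source-bound} with a universal implicit constant.

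The only point requiring care is that the two families really partition $N_{i,k}$ from \eqref{eq:Nik}, including the edge cases $i+k=2$ and the $\one_{i\ge1}\one_{k\ge1}$ primitive term split between $\mathrm I^{\sharp}$ and $\mathrm I^{\flat}$. This was asserted after \eqref{eq:family-V-flat}; I would re-verify it by matching index ranges, for instance $q=1$ versus $q\ge2$ in the primitive sums, and the condition ``$q+p=1$ or $i-q+k-p=1$'' versus ``both at least $2$'' in the product sums. No new estimate is needed.
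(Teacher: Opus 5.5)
Your proposal is correct and matches the paper's proof: you split $N_{i,k}=N^{\sharp}_{i,k}+N^{\flat}_{i,k}$ via \eqref{eq:full-source-family-decomposition}, apply Lemma~\ref{lem:mixed-combinatorics} and Lemma~\ref{lem:shell-one-smallness}, and sum the two bounds, which is exactly what the paper does. Your extra checks of the hypotheses (including that $s_1^*(\Lambda)$ is the entrance threshold $s_{\text{ent}}(\Lambda)$) and of the exhaustiveness of the partition are consistent with the paper.
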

\begin{proof}
Combine \eqref{eq:full-source-family-decomposition} \eqref{eq:active-active-sum-bound} and \eqref{eq:shell-one-sum-bound}.
\end{proof}

\subsection{Closure of induction}
{In Corollary \ref{lem:source-bound}, the assumption that each active modes in the source term satisfy the target bounds is justified by a nested induction mentioned in Subsection \ref{subsec: nested induction}. Given that the comparison principle and the control of nonlinear source term have been established, we are now ready to close the induction. The following proposition summarizes the choice of parameters and subsequently implements a nested induction, resulting in uniform bounds for all active normalized modes $U_{i,k}$.}
\begin{proposition}[Nested induction]\label{thm:closure}
{Let \(\Lambda_+,C_{0,+}\) be the universal constants from
Lemma~\ref{lem:positive-gap} and $\mu_0$ be given in Lemma \ref{lem:linear-comparison}. {Fix $a=3$, and} choose any
\(\Lambda\ge\Lambda_+\), \(C_0\ge C_{0,+}\) and let \(\mu_0\) universal and $s_{\mathrm{lin}}(\Lambda)$ be the constants supplied by Lemma~\ref{lem:linear-comparison}.  Next, fix any
\(s_1\ge s_{\mathrm{ent}}(\Lambda)\) ($ s_{\mathrm{ent}}(\Lambda)$ defined in \eqref{eq:s1-requirements}), and let
\(B_1=B_1(s_1,\Lambda)\) be defined by \eqref{eq:Bone-def}. Then, choose any
\( s_0\ge\max\bigl\{s_1,s_{\mathrm{lin}}{(\Lambda}),
  s_{\mathrm{rad}}(C_0,\Lambda)\bigr\},\)
where we recall that $s_{\text{rad}}$ is defined in \eqref{eq:bound on analytical radius}. {Finally, choose \(M>0\) small enough, such that \(CM\le\mu_0/4\),} where \(C\) is the universal constant in
Corollary~\ref{lem:source-bound}, {and subsequently choose \(0<\rho_0\le1\) sufficiently small so that
\(C\rho_0(B_1+B_1^2/M)\le\mu_0/4\).}  
Assume that at $s=s_0$, the active initial data satisfy
\begin{equation}\label{eq:uniform-initial}
  \norm{U_{i,k}(s_0)}_{L^\infty_z}\le M,
  \qquad i+k\ge2.
\end{equation}
Then every active mode satisfies
\begin{equation}\label{eq:theorem-target}
  \norm{U_{i,k}(s)}_{L^\infty_z}\le M,
  \qquad i+k\ge2,
  \qquad s\ge s_0.
\end{equation}
}
\end{proposition}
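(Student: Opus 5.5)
The plan is to prove the statement by the nested induction of Subsection~\ref{subsec: nested induction}, outer on the shell index $m=i+k\ge2$ and inner along the order \eqref{eq:shell-order}. At each step, Lemma~\ref{lem:linear-comparison} is applied with $R=M$. Its hypotheses split into two parts. The initial bound $|U_{i,k}(s_0,z)|\le M$ is exactly \eqref{eq:uniform-initial}. The threshold $s_0\ge s_{\mathrm{lin}}(\Lambda)$, together with $\Lambda\ge\Lambda_+$ and $C_0\ge C_{0,+}$, holds by the choice of parameters. So the only thing to verify at each step is the source condition \eqref{eq:linear-comparison-source}:
\[
|S_{i,k}(s,z)|\le \tfrac{\mu_0}{2}M\,\mathcal D_{i,k}(s,z)\quad\text{for all } s\ge s_0,\ 0\le z\le s.
\]
Before starting, I record that $s_0\ge s_{\mathrm{rad}}$ gives $\rho_0/2\le\rho(s)\le\rho_0\le1$ by \eqref{eq:bound on analytical radius}, and that $s_0\ge s_1\ge s_{\mathrm{ent}}(\Lambda)$ makes the shell-one bounds \eqref{eq:available-base-pointwise} hold with the constant $B_1$ from Corollary~\ref{lem:shell-one-asymptotic-bounds}.

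\emph{Availability of the factors.} Fix $(i,k)\in\mathcal S_m$. Every factor $F_{j,\ell}$ in $N_{i,k}$ has $j+\ell=1$ or is active with either $j+\ell\le m-1$, or $j+\ell=m$ and $j<i$. The only same-shell factors come from the primitive terms $(\pzi F_{q,0})F_{i-q,k+1}$ with $q\ge1$, which carry index $(i-q,k+1)$ and so have strictly smaller first index; in particular $F_{0,m}$ has no same-shell factor. All other product terms split $(i,k)$ into two pieces, each of strictly lower shell. Hence, by the outer hypothesis $\mathsf P_{m'}$ for $2\le m'\le m-1$ and the inner hypothesis for the earlier same-shell modes, every active factor satisfies \eqref{eq:available-active-pointwise} on all of $s\ge s_0$. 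For the base shell $m=2$ the lower-shell set is empty, and one checks that $(0,2)$ involves only $F_{0,1}$, that $(1,1)$ involves only $F_{0,2}$ and shell-one modes, and that $(2,0)$ involves only $F_{1,1}$ and shell-one modes. So the same order $(0,2)\to(1,1)\to(2,0)$ works.

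\emph{Closing the source condition.} With every factor available, Corollary~\ref{lem:source-bound} gives
\[
|S_{i,k}|\le C\big[M^2+\rho_0(B_1M+B_1^2)\big]\mathcal D_{i,k}.
\]
The parameter choices bound this by $M(\mu_0/4+\mu_0/4)\mathcal D_{i,k}=\tfrac{\mu_0}{2}M\mathcal D_{i,k}$: first $CM\le\mu_0/4$ handles the $M^2$ term, and then $C\rho_0(B_1+B_1^2/M)\le\mu_0/4$ handles the shell-one terms. Lemma~\ref{lem:linear-comparison} then gives $\|U_{i,k}(s)\|_{L^\infty_z}\le M$ for all $s\ge s_0$. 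This completes the inner step, and after finishing $(m,0)$ it establishes $\mathsf P_m$. Induction on $m$ yields \eqref{eq:theorem-target}.

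\emph{Main obstacle: uniformity and existence.} The point requiring care is that none of the constants depend on $(i,k)$. This holds because $C$, $\mu_0$, $s_{\mathrm{lin}}$ and $B_1$ are universal or depend only on $\Lambda$ and $s_1$. I also expect to need a short justification that each $U_{i,k}$ exists and is continuous on the whole moving strip, since Lemma~\ref{lem:linear-comparison} presupposes a solution. Here the lower-triangular structure helps: at step $(i,k)$ the equation for $F_{i,k}$ is linear, with coefficients and sources built from modes already controlled. So global existence on $[s_0,\infty)$ follows from standard linear transport theory along the characteristics of $b$. Since $b(s,0)=0$ and $b(s,s)=1$, these characteristics do not leave the strip through its boundary.
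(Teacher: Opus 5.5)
Your proposal is correct and follows the paper's proof: the same nested induction in the order \eqref{eq:shell-order}, with Corollary~\ref{lem:source-bound} and the choices $CM\le\mu_0/4$, $C\rho_0(B_1+B_1^2/M)\le\mu_0/4$ giving $|S_{i,k}|\le\frac{\mu_0}{2}M\mathcal D_{i,k}$, and then Lemma~\ref{lem:linear-comparison} with $R=M$. Your explicit check of which factors enter $N_{i,k}$ (the only same-shell factor is $F_{i-1,k+1}$, coming from the $q=1$ primitive term) and your remark on the existence of each $U_{i,k}$ are details the paper leaves as ``bookkeeping''; for $k=0$ that existence step should be phrased as transport plus the bounded nonlocal term $\mathcal K_i$ (handled by Duhamel/Picard iteration), rather than as pure transport.
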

\begin{proof}
{ The choices of \(a\), \(M\) and \(\rho_0\) imply
\(C\left[
     { M
     +\rho_0B_1
     +\rho_0\frac{B_1^2}{M}}
  \right]
  \le \frac{\mu_0}{2}.\)
Since
\(s_0\ge s_{\mathrm{rad}}(C_0,\Lambda)\),
\eqref{eq:bound on analytical radius} gives
\( C_0\Lambda^2
  \frac{(\log s_0)^2+2\log s_0+2}{s_0}
  \le\log2,\)
and hence \( \rho_0/2\le \rho(s)\le \rho_0,\,\text{for }
s\ge s_0.\) Proceeding in the order of Subsection~\ref{subsec: nested induction}, assuming that the desired estimate holds for all the preceding modes, for the current mode, the hypothesis of Corollary~\ref{lem:source-bound} is thus satisfied, which yields the smallness of nonlinear source. In light of Lemma~\ref{lem:linear-comparison}, with $R:=M$, the assumption on initial smallness and the smallness of nonlinear source are thus satisfied, which results a bound of the current mode, for all time. The rest is simply bookkeeping.}

We prove the statements \(\mathsf P_m\) (defined in \eqref{eq:Pm}), \(m\ge2\), using the nested induction
from Subsection~\ref{subsec: nested induction}.
First consider the base shell \(m=2\), where the modes are treated in the
order
\[
  (0,2)\longrightarrow(1,1)\longrightarrow(2,0).
\]
For \((0,2)\), there is no earlier same-shell mode. In particular, its source is
therefore controlled by the shell-one estimates \eqref{eq:available-base-pointwise}, which have already been covered in the proof of Lemma {~} \ref{lem:shell-one-smallness}.  Hence Corollary {~} \ref{lem:source-bound}
applies, and it gives
\[
  |S_{0,2}|
  \le \frac{\mu_0}{2}M\mathcal D_{0,2}.
\]
Combining with the bound on the initial data, the linear comparison lemma (Lemma \ref{lem:linear-comparison}) implies \(\|U_{0,2}\|_{L^\infty}\le M\), which makes the bound on \((0,2)\) available. Repeating the same argument gives the bound on \((1,1)\), and
then, with the bounds on \((0,2)\) and \((1,1)\) available, gives the bound on \((2,0)\). Thus,
\(\mathsf P_2\) is proved.\par

Let \(m\ge3\), we now prove the modes in \(\mathcal S_m\) in the inner order
\eqref{eq:shell-order}. The inner base case is \((0,m)\): as there is no earlier same-shell mode, using only the estimates on lower-shell modes (induction assumption) and shell-one modes (background information),  Corollary~\ref{lem:source-bound} and Lemma~\ref{lem:linear-comparison} apply, yielding a bound for \((0,m)\).  At the step \((i,k)\), the lower-shell estimates and the 
estimates of the same-shell modes with smaller first index and larger second index, and the shell-one
background information are exactly the hypotheses of Corollary {~} \ref{lem:source-bound}.  Therefore,
\[
 |S_{i,k}(s,z)|
  \leq C \Bigl[ M^2
     +\rho_0(B_1M+B_1^2)
  \Bigr]\mathcal D_{i,k}(s,z)
\]
which is at most
\((\mu_0/2)M\mathcal D_{i,k}\), by the parameter choice above. Lemma {~} \ref{lem:linear-comparison} then gives \(\|U_{i,k}\|_{L^\infty}\le M\).  This advances the inner induction, up and until
the last pair \((m,0)\).
Strong induction in \(m\) gives \eqref{eq:theorem-target} for every active
shell, with shell number \(m\ge2\).
\end{proof}
{The following corollary translates the closure result on the normalized hierarchy $(U_{i,k})$, into a result that controls the analytical growth of the original hierarchy $(F_{i,k})$, as well as establishes a lower bound for the analytic radius on the in self-similar coordinates.}
\begin{corollary}[Lower bound on the analytic radius in self-similar variable]
\label{cor:pointwise-step2}
Let $M>0$. There exist constants
{\( \Lambda\ge1,
  \, C_0>0,\,s_0\ge1 \text{ sufficiently large},\text{ as well as, }
  \, M>0 \) sufficiently small and then \(0<\rho_0\le1 \)\text{ sufficiently small},
}constituting a decreasing function
\begin{equation}\label{eq:rho-corollary}
  \rho(s)
  =\rho_0\exp\left(
    -C_0\Lambda^2\int_{s_0}^{s}
      \frac{(\log\sigma)^2}{\sigma^2}\,d\sigma
  \right),
  \qquad s\ge s_0,
\end{equation}
such that the following holds.  Let
\begin{equation}\label{eq:h0-hplus-corollary}
  \hh_0(s,z)=e^{-z}+s^{-1},
  \qquad
  \hh_+(s,z)=e^{-z}+\Lambda^2\frac{(\log s)^2}{s^2}.
\end{equation}
If the active initial coefficients satisfy
\begin{equation}\label{eq:raw-initial}
\begin{aligned}
\left\|
  \frac{F_{i,0}(s_0,\cdot)}{\hh_0(s_0,\cdot)}
\right\|_{L^\infty_z}
&\le
M\frac{(2i+1)!}{(1+i)^{{4}}}\rho_0^{\,1-2i},
&& i\ge2,\\[2mm]
\left\|
  \frac{F_{i,k}(s_0,\cdot)}{\hh_+(s_0,\cdot)}
\right\|_{L^\infty_z}
&\le
M\frac{(2i+k+1)!}{(1+i+k)^{{4}}}\rho_0^{\,1-2i-k},
&& k\ge1,\quad i+k\ge2,
\end{aligned}
\end{equation}
then, for every \(s\ge s_0\) and \(0\le z\le s\),
\begin{equation}\label{eq:raw-conclusion}
\begin{aligned}
|F_{i,0}(s,z)|
&\le
M\frac{(2i+1)!}{(1+i)^{{4}}}\rho(s)^{\,1-2i}\hh_0(s,z),
&& i\ge2,\\[2mm]
|F_{i,k}(s,z)|
&\le
M\frac{(2i+k+1)!}{(1+i+k)^{{4}}}\rho(s)^{\,1-2i-k}\hh_+(s,z),
&& k\ge1,\quad i+k\ge2.
\end{aligned}
\end{equation}
More precisely,
\begin{equation}\label{eq:rho-infinity}
\rho(s)\longrightarrow\rho_\infty
:=\rho_0\exp\left(
-C_0\Lambda^2
\frac{(\log s_0)^2+2\log s_0+2}{s_0}
\right), \text{ and }
\rho_\infty\ge\frac{\rho_0}{2}>0.
\end{equation}
\end{corollary}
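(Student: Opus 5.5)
The plan is to obtain Corollary~\ref{cor:pointwise-step2} as a direct translation of Proposition~\ref{thm:closure}, using the normalization \eqref{eq:Aik-def}. First fix the parameters in the order that proposition prescribes. Fix $a=3$ and take $\Lambda\ge\Lambda_+$, $C_0\ge C_{0,+}$ from Lemma~\ref{lem:positive-gap}. Then take $s_1\ge s_{\mathrm{ent}}(\Lambda)$, which determines $B_1(s_1,\Lambda)$ through Corollary~\ref{lem:shell-one-asymptotic-bounds}. Next choose $s_0\ge\max\{s_1,s_{\mathrm{lin}}(\Lambda),s_{\mathrm{rad}}(C_0,\Lambda)\}$. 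After that, take $M$ small so that $CM\le\mu_0/4$, and finally $\rho_0\le1$ small so that $C\rho_0(B_1+B_1^2/M)\le\mu_0/4$. This ordering matters because $\rho_0$ has to be chosen after $M$ and $B_1$. It is also why the statement says $M$ is taken small and then $\rho_0$ small, so the phrase ``Let $M>0$'' should be read with that restriction in mind.

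Second, I will check that the hypothesis \eqref{eq:raw-initial} is exactly \eqref{eq:uniform-initial}. Since $U_{i,k}=A_{i,k}\wh_kF_{i,k}$ with $A_{i,k}(s_0)=(1+i+k)^{4}\rho_0^{2i+k-1}/(2i+k+1)!$, dividing \eqref{eq:raw-initial} by $\hh_k(s_0,\cdot)$ and multiplying by $A_{i,k}(s_0)$ gives $\|U_{i,k}(s_0)\|_{L^\infty_z}\le M$. Here $\hh_0$ is used for $k=0$ and $\hh_+$ for $k\ge1$, following \eqref{eq:hk-wk}, and the exponent $a+1=4$ matches the one in the statement. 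Proposition~\ref{thm:closure} then yields $\|U_{i,k}(s)\|_{L^\infty_z}\le M$ for all $s\ge s_0$. Unwinding the normalization, as in \eqref{eq:active-raw-bound-form}, gives \eqref{eq:raw-conclusion} pointwise on $0\le z\le s$, with $\rho(s)^{1-2i-k}$ in place of $\rho_0^{1-2i-k}$.

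Third, I will prove the statements about $\rho$. The formula \eqref{eq:rho-corollary} is \eqref{eq:rho-explicit}. Because the integrand is positive, $\rho$ is decreasing. For the limit, I will use the exact antiderivative: since $\frac{d}{d\sigma}\bigl(-\frac{(\log\sigma)^2+2\log\sigma+2}{\sigma}\bigr)=\frac{(\log\sigma)^2}{\sigma^2}$, we get $\int_{s_0}^\infty(\log\sigma)^2\sigma^{-2}\,d\sigma=\frac{(\log s_0)^2+2\log s_0+2}{s_0}$. This gives \eqref{eq:rho-infinity}. The lower bound $\rho_\infty\ge\rho_0/2$ then follows from $s_0\ge s_{\mathrm{rad}}$ by \eqref{eq:bound on analytical radius}.

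There is no real obstacle here. The only point needing care is the bookkeeping of parameter dependencies. The constants $\mu_0$ and $C$ are universal, $B_1$ depends on $(s_1,\Lambda)$, and $\rho_0$ is chosen last. With that ordering, the smallness conditions in Proposition~\ref{thm:closure} are consistent with one another.
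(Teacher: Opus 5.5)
Your proposal is correct and follows the paper's proof. You apply Proposition~\ref{thm:closure} with the parameter ordering fixed there, unwind $U_{i,k}=A_{i,k}\wh_kF_{i,k}$ to pass between \eqref{eq:raw-initial}/\eqref{eq:raw-conclusion} and \eqref{eq:uniform-initial}/\eqref{eq:theorem-target}, and get $\rho_\infty\ge\rho_0/2$ from $s_0\ge s_{\mathrm{rad}}$ via \eqref{eq:bound on analytical radius}. Your explicit antiderivative $-\bigl((\log\sigma)^2+2\log\sigma+2\bigr)/\sigma$ only makes precise the limit computation that the paper leaves implicit, and you are right that $M$ must in fact be taken small despite the opening ``Let $M>0$''.
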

\begin{proof}
Applying Proposition~\ref{thm:closure}, with the definition of $U_{i,k}$ \eqref{eq:Aik-def}, \(U_{i,k}=A_{i,k}\wh_kF_{i,k}\), the definition of $A_{i,k}$ \eqref{eq:Aik-def}, as well as, the definition of weights \eqref{eq:hk-wk} \eqref{eq:mixed-h} gives 
\eqref{eq:raw-conclusion}.
While $\rho$ is decreasing by definition and its limit can be explicitly computed, the lower-boundedness is implied by the choice of $s_0\geq s_\text{rad}$ in Proposition~\ref{thm:closure}, which guarantees $\frac{\rho_0}{2}\leq \rho_\infty\leq \rho(s)\leq \rho_0$ (See \eqref{eq:bound on analytical radius}).
\end{proof}
\subsection{Reconstruction of the 2D blowup solution and its validity}\label{sec:physical-taylor-radius}
Recall that the physical coordinates $(t,Z)$ and self-similar coordinates $(s,z)$ are related by 
\begin{equation}
    s=-\log(T-t),z=\frac{Z}{\abs{\log(T-t)}^{-1}}
\end{equation}
{Whenever the solution $u$ to \eqref{2d primitive boundary condition} exists, we set $u_{i,k}=\partial_X^{2i+1}\partial_Z^k u (X=0)$.} Recall also that the derivatives of \(u\) are related to the rescaled hierarchy by
\begin{equation}\label{eq:xi-F-relation}
  \partial_X^{2i+1}\partial_Z^k u(t,0,Z)
  =(T-t)^{-3i-1}s(t)^k
  F_{i,k}\bigl(s(t),s(t)Z\bigr),
  \qquad 0\le Z\le1.
\end{equation}

The following theorem translates Corollary~\ref{cor:pointwise-step2}
into the original variables and then supplements additional properties for the analytical reconstruction. In particular, the first part gives bounds for the physical derivatives restricted on the symmetry axis, under the assumption of the boundedness of initial condition, and the second part computes a lower bound on the radius of convergence, thus establishes criterion on the validity of analytical extension.

\begin{proposition}
[Physical derivative bounds and time-dependent \(X\)-Taylor radius]
\label{thm:physical-x-radius}
{Let $u_{0,0}=-a$ and $u_{1,0}=b$ satisfies the conclusion of Theorem \ref{refined C1 expansion} and Theorem \ref{first shell mode expansions} respectively. Recall the definition of self-similar time $ s(t):=|\log(T-t)|,$ and that whenever the solution $u$ to \eqref{2d primitive boundary condition} exists, we set $u_{i,k}=\partial_X^{2i+1}\partial_Z^k u (X=0)$.}
{There exist constants $\Lambda\ge1, \, C_0>0,$ then a time \(t_0<T\), sufficiently close to \(T\), as well as \(M>0 \text{ sufficiently small}\text{ and then }0<\rho_0\le1 \text{ sufficiently small},\)}
such that the following
conditional statement holds.  Set $s_0:=s(t_0),$
and define, for \(t\in[t_0,T)\) and \(0\le Z\le1\), the physical weights
\begin{equation}\label{eq:physical-weights}
\begin{aligned}
  \mathcal H_0(t,Z)
  &:= (T-t)^Z+\frac{1}{s(t)},\\
  \mathcal H_+(t,Z)
  &:= (T-t)^Z
      +\Lambda^2\frac{\bigl(\log s(t)\bigr)^2}{s(t)^2}.
\end{aligned}
\end{equation}
Equivalently,
\(
\mathcal H_0(t,Z)=\hh_0\bigl(s(t),s(t)Z\bigr)
\)
and
\(
\mathcal H_+(t,Z)=\hh_+\bigl(s(t),s(t)Z\bigr).
\)
Define also
\begin{equation}\label{eq:physical-rho}
  \rho(\sigma)
  =\rho_0\exp\left(
    -C_0\Lambda^2\int_{s_0}^{\sigma}
      \frac{(\log\tau)^2}{\tau^2}\,d\tau
  \right),
  \quad \rho_0=\rho(s_0),\qquad \sigma\ge s_0,
\end{equation}
{Assume, in addition, that for every \(0\le Z\le1\), the active initial
coefficients satisfy the following two bounds.
\begin{equation}\label{eq:physical-initial}
\begin{aligned}
\left|u_{i,0}(t_0,Z)\right|
&\le
M\frac{(2i+1)!}{(1+i)^{4}}
(T-t_0)^{-3i-1}\rho_0^{\,1-2i}
\mathcal H_0(t_0,Z),
&& i\ge2,\\[2mm]
\left|u_{i,k}(t_0,Z)\right|
&\le
M\frac{(2i+k+1)!}{(1+i+k)^{4}}
(T-t_0)^{-3i-1}s_0^k\rho_0^{\,1-2i-k}
\mathcal H_+(t_0,Z),
&& k\ge1,\quad i+k\ge2.
\end{aligned}
\end{equation}
}
Then the following conclusions hold.
\begin{enumerate}[label=\textup{(\roman*)},leftmargin=2.3em]
\item \emph{Physical derivative bounds.}
For every \(t\in[t_0,T)\) and \(0\le Z\le1\),
{
\begin{equation}\label{eq:physical-derivative-conclusion}
\begin{aligned}
\left|u_{i,0}(t,Z)\right|
&\le
M\frac{(2i+1)!}{(1+i)^{4}}
(T-t)^{-3i-1}\rho\bigl(s(t)\bigr)^{\,1-2i}
\mathcal H_0(t,Z),
&& i\ge2,\\[2mm]
\left|u_{i,k}(t,Z)\right|
&\le
M\frac{(2i+k+1)!}{(1+i+k)^{4}}
(T-t)^{-3i-1}s(t)^k
\rho\bigl(s(t)\bigr)^{\,1-2i-k}
\mathcal H_+(t,Z),
&& k\ge1,\quad i+k\ge2.
\end{aligned}
\end{equation}
}
Moreover,
\begin{equation}\label{eq:physical-rho-limit}
  \rho\bigl(s(t)\bigr)\longrightarrow\rho_\infty
  \quad\text{as }t\uparrow T,
  \qquad
  \rho_\infty\ge\frac{\rho_0}{2}>0.
\end{equation}
\item \emph{Analytic continuation in \(X\).}
Assume, in addition, that the function \(u(t_0)\) is analytic in some
neighborhood of \(X=0\).  Define
{
\begin{equation}\label{eq:u-taylor-series}
  \widetilde u(t,X,Z)
  :=\sum_{i=0}^{\infty}
  \frac{u_{i,0}(t,Z)}{(2i+1)!}X^{2i+1}.
\end{equation}
}
This series has radius of convergence at least
\begin{equation}\label{eq:physical-radius}
  R_X(t):=(T-t)^{3/2}\rho\bigl(s(t)\bigr) \ge \rho_\infty(T-t)^{3/2}
  \ge \frac{\rho_0}{2}(T-t)^{3/2},
\end{equation}
converging absolutely for every \(|X|<R_X(t)\).  Moreover, for each fixed
\(t\in[t_0,T)\) and each fixed \(r\) with \(0<r<R_X(t)\), the series
converges uniformly in \((X,Z)\) on the domain
\[
  |X|\le r,
  \qquad 0\le Z\le1.
\]
\end{enumerate}
\end{proposition}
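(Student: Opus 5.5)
Both parts reduce to Corollary~\ref{cor:pointwise-step2} through the change of variables \eqref{eq:xi-F-relation}, followed by a root test in $X$. We fix $\Lambda,C_0,s_0$, then $M$, then $\rho_0$ exactly as in Proposition~\ref{thm:closure}, and choose $t_0$ with $s(t_0)=s_0$. Then $T-t_0=e^{-s_0}$ is as close to $T$ as required. By \eqref{eq:xi-F-relation}, $u_{i,k}(t_0,Z)=(T-t_0)^{-3i-1}s_0^kF_{i,k}(s_0,s_0Z)$. Moreover $e^{-s_0Z}=(T-t_0)^Z$, which gives $\mathcal H_0(t_0,Z)=\hh_0(s_0,s_0Z)$ and $\mathcal H_+(t_0,Z)=\hh_+(s_0,s_0Z)$. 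With these identities, the hypothesis \eqref{eq:physical-initial} is literally \eqref{eq:raw-initial}, evaluated at $z=s_0Z\in[0,s_0]$. Corollary~\ref{cor:pointwise-step2} then yields \eqref{eq:raw-conclusion} for all $s\ge s_0$ and $0\le z\le s$. Translating back with the same identities at time $t$ gives \eqref{eq:physical-derivative-conclusion}. The limit \eqref{eq:physical-rho-limit} is \eqref{eq:rho-infinity}, since $s(t)\to\infty$ as $t\uparrow T$.

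One point needs care: the hierarchy solution $(u_{i,k})$ must actually exist on $[t_0,T)$, so that \eqref{eq:xi-F-relation} is meaningful for all shells. We will argue as follows. Each shell is a linear PDE whose coefficients come from the already-known lower shells, together with the background $a$ from Theorem~\ref{refined C1 expansion} and $b$ from Theorem~\ref{first shell mode expansions}. These coefficients are continuous on $[t_0,T)$, so by induction on $i+k$ every mode exists up to $T$. The a priori bounds of Proposition~\ref{thm:closure} are then statements about these globally defined functions.

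For (ii), we bound the general term of \eqref{eq:u-taylor-series} using only the $k=0$ estimates. We use $\mathcal H_0\le 2$ and $\rho(s(t))\le\rho_0\le1$. For $i\ge2$ this gives
\[
\frac{|u_{i,0}(t,Z)|}{(2i+1)!}|X|^{2i+1}\le 2M\,\rho(s(t))\,(T-t)^{-1}\,(1+i)^{-4}\Big(\frac{|X|}{(T-t)^{3/2}\rho(s(t))}\Big)^{2i}|X|.
\]
This is dominated by a convergent geometric series, uniformly in $Z\in[0,1]$, whenever $|X|\le r<R_X(t)$. The Weierstrass M-test then gives absolute and uniform convergence on $\{|X|\le r\}\times[0,1]$. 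The finitely many terms with $i\le1$ involve only $u_{0,0}$ and $u_{1,0}$, which are bounded on $[0,1]$ for each fixed $t<T$, so they do not affect convergence. The lower bounds $R_X(t)\ge\rho_\infty(T-t)^{3/2}\ge\frac{\rho_0}{2}(T-t)^{3/2}$ follow because $\rho$ is decreasing, together with \eqref{eq:rho-infinity}.

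The main obstacle is the compatibility bookkeeping rather than any new estimate. The domains $0\le z\le s$ and $0\le Z\le 1$ must match exactly, and $s(t)=|\log(T-t)|$ must coincide with $-\log(T-t)$ on the relevant range, which requires $T-t<1$; this in turn requires $t_0$ close enough to $T$. We will also check that requiring $t_0$ close to $T$ does not conflict with the order in which $M$ and $\rho_0$ are fixed in Proposition~\ref{thm:closure}. It does not, because $s_0$ is chosen before $M$ and $\rho_0$. The analyticity assumption on $u(t_0)$ near $X=0$ is not needed for the convergence statement itself. It serves only to make \eqref{eq:physical-initial} attainable, and in the later identification $\widetilde u=u$ (stated in Theorem~\ref{thm:physical-x-radius_intro}) it guarantees agreement of Taylor coefficients.
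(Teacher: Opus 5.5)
Your proposal is correct and follows essentially the same route as the paper: you translate \eqref{eq:physical-initial} into \eqref{eq:raw-initial} via \eqref{eq:xi-F-relation}, apply Corollary~\ref{cor:pointwise-step2}, and bound the $i\ge2$ Taylor coefficients using $\mathcal H_0\le 2$; your direct geometric domination is equivalent to the paper's root test based on \eqref{eq:three-halves-identity}. Your extra remark on existence of the hierarchy on $[t_0,T)$ addresses a point the paper leaves implicit, with one small correction: within a shell the modes are coupled through $F_{i-1,k+1}$, so each shell is a lower-triangular linear system rather than a single PDE, and this does not affect the argument.
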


\begin{proof}
Choose the parameters $(\Lambda,C_0,M,\rho_0)$ and the initial time \(s_0\) supplied by
Corollary~\ref{cor:pointwise-step2}, and set \(s(t_0)=s_0\). 
The analytical bounds \eqref{eq:physical-derivative-conclusion} and  the convergence/lower-boundedness of radius \eqref{eq:physical-rho-limit} are direct consequences of Corollary \ref{cor:pointwise-step2}.
Fix \(t\in[t_0,T)\), write \(s=s(t)\), and fix \(0\le Z\le1\).
For \(i\ge2\), the first estimate in
\eqref{eq:physical-derivative-conclusion} and
\(\mathcal H_0(t,Z)\le2\) give
\[
  \frac{|u_{i,0}(t,Z)|}{(2i+1)!}
  \le
  \frac{2M}{(1+i)^{4}}
  (T-t)^{-3i-1}\rho(s)^{1-2i}.
\]
The identity
\begin{equation}\label{eq:three-halves-identity}
  (T-t)^{-3i-1}\rho(s)^{1-2i}
  =(T-t)^{1/2}\rho(s)^2
  \left(\frac{1}{(T-t)^{3/2}\rho(s)}\right)^{2i+1}
\end{equation}
therefore yields
\begingroup\small
\begin{align*}
\left(
  \frac{|u_{i,0}(t,Z)|}{(2i+1)!}
\right)^{1/(2i+1)}
\le{}&
\left(2M(T-t)^{1/2}\rho(s)^2\right)^{1/(2i+1)}
(1+i)^{-\frac{4}{2i+1}}
\frac{1}{(T-t)^{3/2}\rho(s)}.
\end{align*}
\endgroup
For fixed \(t\), we have
\(\left(2M(T-t)^{1/2}\rho(s)^2\right)^{1/(2i+1)}\longrightarrow1,\)
and
\((1+i)^{-\frac{4}{2i+1}}
=
\exp\left(-\frac{4\log(1+i)}{2i+1}\right)
\longrightarrow1.\)
Consequently,
\[
\limsup_{i\to\infty}
\left(
  \frac{|u_{i,0}(t,Z)|}{(2i+1)!}
\right)^{1/(2i+1)}
\le
\frac{1}{(T-t)^{3/2}\rho(s)}=\frac{1}{R_X(t)},
\]
which implies the radius of convergence is at least $R_X(t)$.
For a fixed \(0<r<R_X(t)\),
\begin{equation}
\left|
\frac{u_{i,0}(t,Z)}{(2i+1)!}X^{2i+1}
\right|
\le
\frac{2M(T-t)^{1/2}\rho(s)^2}{(1+i)^{4}}
\left(\frac{|X|}{R_X(t)}\right)^{2i+1}\leq C(t)\frac{1}{(1+i)^{4}}\left(\frac{r}{R_X(t)}\right)^{2i+1},
\end{equation}
By the Weierstrass \(M\)-test, we also have the uniform convergence of $\tilde u$ on \(|X|\le r\), \(0\le Z\le1\). 
\end{proof}
{
\begin{remark}\label{rmk:local agreement}
Let $u_0$ satisfy
the analyticity and compatibility assumptions of Theorem
\ref{thm:physical-x-radius_intro}. By local well-posedness, the initial datum
$u_0$ generates a unique two-dimensional solution $u$ of
\eqref{2d primitive intro}, with $u(t_0)=u_0$ and
the maximal existence interval is defined to be $[t_0,T_u)$ with $t_0<T_u\leq T$. In view of Proposition \ref{thm:physical-x-radius}, let $\widetilde u$ be the reconstructed local analytic solution, with $\widetilde u(t_0)=u_0$ and $u_{i,k}$ identified through its Taylor series around $X=0$, satisfying the initial bounds \eqref{eq:physical-initial}. Then, by uniqueness, we conclude that \[
u(t,X,Z)=\widetilde u(t,X,Z)
\]
near the symmetry axis for every $t\in[t_0,T_u)$.
The reconstructed solution exists locally up to $T$, although
the full solution may become singular away from the axis earlier.
\end{remark}
}
{
\begin{remark}\label{rmk:redundancy of initial bound}
   Fix $t_0<T$ sufficiently close to $T$, and let $u_0$ satisfy
the analyticity and compatibility assumptions of Theorem
\ref{thm:physical-x-radius_intro}. We regard $u_0$ as an initial
datum prescribed at time $t_0$. There exists
$\rho_0^*(u_0,M,t_0)>0$ such that $u_0$ satisfies the initial
bound \eqref{eq:physical-initial} at $t=t_0$ whenever
$0<\rho_0\leq\rho_0^*$. 
\end{remark}
\begin{proof}
   Since $u_0$ is odd and analytic, there exist a constant \(C_*(u_0)>0\) and a radius
\(r(u_0)>0\) such that, for all \(i,k\ge0\) and \(Z\in[0,1]\),
\begin{equation}\label{eq:interpolant-cauchy}
 |\partial_X^{2i+1}\partial_Z^k u_0(0,Z)|
 \le C_*(2i+1)!k!r^{-2i-1-k}.
\end{equation}
Since for $s_0=s(t_0)\geq e$,  both weights have the positive lower bound
\[ \mathcal H_{k}(t_0,Z)\ge h^*(t_0):=\min\left\{\frac1{s_0},
       \frac{\Lambda^2(\log s_0)^2}{s_0^2}\right\}>0,
\]
where $\mathcal H_{k}=\mathcal H_+$ for $k\geq 1$.
Using \eqref{eq:interpolant-cauchy}, the normalized left side of
\eqref{eq:physical-initial} at the initial time \(t_0\) is bounded by
\begin{equation}\label{eq:initial-weighted-smallness}
\begin{aligned}
&\frac{(1+i+k)^4(T-t_0)^{3i+1}\rho_0^{2i+k-1}}
 {(2i+k+1)!s_0^k\mathcal H_k(T_0,Z)}
 |\partial_X^{2i+1}\partial_Z^k u_0(0,Z)|\leq\frac{C_*}{h_*r^2}\left(\frac{\rho_0}{r}\right)^{2i+k-1}(2i+k+1)^4,
\end{aligned}
\end{equation}
where we have used the previous lower bound on $\mathcal H_k$, the factorial inequality $a! b!\leq (a+b)!$, and $t_0$ is chosen sufficiently close to $T$, or $s_0$ is sufficiently large. Setting $m=2i+k\geq 2$, and letting $\rho_0\leq \frac{r}{2}\implies\left(\frac{\rho_0}{r}\right)^{m-1}
 \le\frac{\rho_0}{r}\,2^{-(m-2)}$, the right hand side of \eqref{eq:initial-weighted-smallness} is further bounded by
$ \frac{C_*\rho_0}{h_*r^3}
 \sup_{m\ge2}(1+m)^4\,2^{-(m-2)}\lesssim \frac{C_*\rho_0}{h_*r^3}.$ Therefore, in view of \eqref{eq:initial-weighted-smallness}, taking $\rho_0$ sufficiently small enough ensures the left hand side of \eqref{eq:initial-weighted-smallness} is smaller than $M$, which is exactly \eqref{eq:physical-initial} at $t=t_0$.
\end{proof}
Combining Proposition \ref{thm:physical-x-radius}, Remark \ref{rmk:redundancy of initial bound} and Remark \ref{rmk:local agreement}, upon rebasing the start time to zero, yields Theorem \ref{thm:physical-x-radius_intro}.
}
\section*{Acknowledgment}

 C.C funded by the European Union. Views and opinions expressed are however
those of the authors only and do not necessarily reflect those of the European Union or the European
Research Council Executive Agency. Neither the European Union nor the granting authority can be held
responsible for them.
C.C. work is supported by ERC grant (FloWAS, No. 101117820, DOI 10.3030/101117820). C. Collot
was supported by the CY Initiative of Excellence Grant ``Investissements d’Avenir” ANR-16-IDEX0008 via Labex MME-DII, the grant ``Chaire Professeur Junior” ANR-22-CPJ2-0018- 01 of the French
National Research Agency, and the grant BOURGEONS ANR-23-CE40-0014-01 of the French National
Research Agency.

S.I.and L.Q. are partially supported by NSERC grant No. 371637-2025. L. Q. is grateful for the support provided by the Mitacs Globalink Research Award.
\printbibliography
\end{document}